\numberwithin{equation}{section}
\theoremstyle{plain}
\newtheorem{thm}{Theorem}[section]
\newtheorem{lemma}[thm]{Lemma}
\newtheorem{cor}[thm]{Corollary}
\newtheorem{prop}[thm]{Proposition}
\theoremstyle{remark}
\newtheorem{rem}[thm]{Remark}
\newcommand{\mean}[1]{\,-\hskip-1.08em\int_{#1}} %media integrale displayed
\newcommand{\meantext}[1]{\,-\hskip-0.88em\int_{#1}} %media integrale nel testo
\newcommand{\ind}{\mathbbm{1}}
\renewcommand{\Bbb}{\mathbb}
\newcommand{\wconv}{\rightharpoonup}
\newcommand{\ds}{\displaystyle}
\def\ds{\displaystyle}
\def\eps{{\varepsilon}}
\def\R{\mathbb{R}}
\def\F{\mathcal{F}}
\def\HH{\mathcal{H}}
\def\Dr{D}
\def\vf{\varphi}
\newcommand{\cp}{\operatorname{cap}}
\title[Multiphase spectral optimization]{A multiphase shape optimization problem for eigenvalues: qualitative study and numerical results}
\author{Beniamin Bogosel , Bozhidar Velichkov}
\begin{document}

\begin{abstract}
We consider the multiphase shape optimization problem 
$$\min\Big\{\sum_{i=1}^h\lambda_1(\Omega_i)+\alpha|\Omega_i|:\ \Omega_i\ \hbox{open},\ \Omega_i\subset\Dr,\ \Omega_i\cap\Omega_j=\emptyset\Big\},$$
where $\alpha>0$ is a given constant and $\Dr\subset\R^2$ is a bounded open set with Lipschitz boundary. We give some new results concerning the qualitative properties of the optimal sets and the regularity of the corresponding eigenfunctions. We also provide numerical results for the optimal partitions.   
\end{abstract}

\maketitle

%\textbf{Keywords:} shape optimization, multiphase, eigenvalues, optimal partitions
%
%\textbf{2010 Mathematics Subject Classification:} 49J45, 49R05, 35P15, 47A75, 35J25

%%%%%%%%%%%%%%%%%%%%%%%%%%%%%%

\section{Introduction}

In this paper we consider a variational problem in which the variables are subsets of a given \emph{ambient space} or \emph{design region} $\Dr$ and the cost functional depends on the solution of a certain PDE on each of the domains. This type of problems are known as \emph{shape optimization problems} and received a lot of attention from both the theoretical and the numerical community in the last years (we refer to the books \cite{bucurbuttazzo}, \cite{henrot-pierre} and \cite{henroteigs} for an introduction to the topic). A special type of shape optimization problems are the \emph{multiphase shape optimization problems} in which the aim is to find the optimal configuration of $h$ different disjoint sets $\Omega_1,\dots,\Omega_h$ with respect to a certain cost functional $\mathcal{F}$
\begin{equation}\label{optF05}
\min\Big\{\mathcal F\big(\Omega_1,\dots,\Omega_h\big)\ :\ \Omega_i\subset\Dr,\ \Omega_i\cap\Omega_j=\emptyset\Big\}.
\end{equation}
This type of problems may arise in some models studying the population dynamics of several highly competing species or in biology to simulate the behavior of a cluster of cells. In some special cases it is not restrictive from mathematical point of view to assume that the sets $\Omega_i$ fill the entire region $\Dr$. This is for example the case when the functional $\mathcal{F}$ is decreasing with respect to the set inclusion, i.e. if an empty space is left it will be immediately filled by some of the phases $\Omega_i$ decreasing the total optimization cost. Of course, it is always possible to write a multiphase problem as an optimal partition problem by adding the auxiliary phase $\Omega_{h+1}:=\Dr\setminus\Big(\cup_{i=1}^h \Omega_i\Big)$. On the other hand, we notice that in this way we violate the symmetry of the problem since this new phase does not appear in the functional. In some cases this does not change the nature of the problem. Consider for example an optimization cost given by the total length of the boundary $\partial \Big(\cup_{i=1}^h\Omega_i\Big)$, i.e.
$$\F(\Omega_1,\dots,\Omega_h)=\sum_{i=1}^h |\partial\Omega_i|-\sum_{i\neq j} |\partial\Omega_i\cap\partial\Omega_j|.$$
In fact in this case we may introduce the new functional 
$$\widetilde\F(\Omega_1,\dots,\Omega_{h+1})=\frac12\sum_{i=1}^{h+1}|\partial\Omega_i|,$$
which is of the same type. In other cases the introduction of $\Omega_{h+1}$ may change the nature of the problem. Consider for example a functional depending on the principal eigenvalues on each set $\Omega_i$ and the Lebesgue measure $|\Omega_i|$ 
$$\mathcal{F}(\Omega_1,\dots,\Omega_{h})=\frac12\sum_{i=1}^{h}\big(\lambda_1(\Omega_i)+|\Omega_i|\big).$$
Then, the corresponding optimal partition functional is given by 
$$\widetilde\F(\Omega_1,\dots,\Omega_{h+1})=\frac12\sum_{i=1}^{h}\lambda_1(\Omega_i)-|\Omega_{h+1}|,$$
and acts differently on the original sets $\Omega_i$ and the auxiliary set $\Omega_{h+1}$. 

We consider the multiphase shape optimization problem 
\begin{equation}\label{optsum}
\min\Big\{\sum_{i=1}^h\lambda_1(\Omega_i)+\int_{\Omega_i} W_i(x)\,dx\ :\ \Omega_i\ \hbox{open},\ \Omega_i\subset\Dr,\ \Omega_i\cap\Omega_j=\emptyset\Big\},
\end{equation}
where 
\begin{itemize}
\item the ambient space $\Dr$ is a bounded open set with Lipschitz boundary or more generally a compact manifold with or without boundary;
\item $\lambda_1(\Omega_i)$ is the first Dirichlet eigenvalue of $\Omega_i$;
\item $W_i:\Dr\to[0,+\infty]$ are given measurable functions.
% and there are numbers $0<a\le A<+\infty$ such that $a\le W_i\le A$, for every $i=1,\dots,h$. 
\end{itemize}
Our aim is to provide a theoretical and numerical analysis of the problem and to study the qualitative behavior of the solutions from both points of view. We notice that the optimal configurations consists of sets with rounded corners if the weight functions are sufficiently small. This phenomenon can be modeled in a direct way by adding a small curvature term, as $\eps \int_{\partial\Omega}\kappa^2_i$, where $\kappa_i$ is the curvature of $\partial\Omega_i$, but from the numerical point of view the volume term is much simpler to handle and gives the same qualitative behavior. 

In the next two examples we see the optimization problem from two different points of view. 

\begin{rem}[Two limit cases]
In the case $W_i\equiv \alpha$ on $\Dr$, we obtain the following problem: 
\begin{equation}\label{optsum2}
\min\Big\{\sum_{i=1}^h\lambda_1(\Omega_i)+\alpha|\Omega_i|:\ \Omega_i\ \hbox{open},\ \Omega_i\subset\Dr,\ \Omega_i\cap\Omega_j=\emptyset\Big\}.
\end{equation}
The variational problem \eqref{optsum} is widely studied in the literature in the case $\alpha=0$ that corresponds to the classical optimal partition problem. We refer to the papers \cite{coteve03}, \cite{caflin}, \cite{hehote} and \cite{buboou} for a theoretical and numerical analysis in this case. The other limit case appears when the constant $\alpha>0$ is large enough. Indeed, we recall that the solution of the problem 
\begin{equation}
\min\Big\{\lambda_1(\Omega)+\alpha|\Omega|:\Omega\ \hbox{open},\ \Omega\subset\R^2\Big\},
\end{equation}
is a disk of radius $\ds r_\alpha=\left(\frac{\lambda_1(B_1)}{\alpha\pi}\right)^{\frac14}$. It is straightforward to check that if $\alpha>0$ is such that there are $h$ disjoint disks of radius $r_\alpha$ that fit in the box $\Dr$, then the solution of \eqref{optsum} is given by the $h$-uple of these disks. Finding the smallest real number $\overline \alpha>0$, for which the above happens, reduces to solving the optimal packing problem
\begin{equation}\label{optpack}
\max\Big\{r:\ \hbox{there exist}\ h\ \hbox{disjoint balls}\ B_r(x_1),\dots, B_r(x_h)\ \hbox{in}\  \Dr \Big\}.
\end{equation}
\end{rem}

In view of the previous remark the multiphase problem \eqref{optsum}, in variation of the parameter $\alpha$, can be seen as an interpolation between the optimal partition problem (corresponding to the case $\alpha=0$) and the optimal packing problem \eqref{optpack}. It is interesting to notice that in the asymptotic case when $\Dr=\R^2$, the solution of the optimal packing problem consists of balls with centers situated in the the vertices of a infinite hexagonal \emph{honeycomb} partition of the plane. On the other hand, in the case $\alpha=0$ Caffarelli and Lin conjectured that the optimal configuration is precisely the honeycomb partition.
\medskip

\begin{rem}[Competing species with diffusion]
Suppose that $\Omega_i$ represents the habitat of a certain species and that the first eigenfunction $u_i$ on $\Omega_i$, solution of
$$-\Delta u_i=\lambda_1(\Omega_i) u_i\quad\text{in}\quad\Omega_i,\qquad u_i=0\quad\text{on}\quad\partial\Omega_i,\qquad\int_{\Omega_i} u_i^2\,dx=1,$$
is the population distribution. The condition $\Omega_i\cap\Omega_j=\emptyset$ corresponds to the limit assumption that the two species cannot coexists on the same territory. We suppose that $S_i\subset\Dr$ is a closed set representing a distribution of resources and that $\varphi_i:[0,+\infty]\to[0,+\infty]$ is a given increasing function that corresponds to the cost of transportation of resources at a given distance. The population $u_i$ will tend to choose an habitat close to $S_i$. This corresponds to the following multiphase problem 
\begin{equation}\label{optsum3}
\min\Big\{\sum_{i=1}^h\lambda_1(\Omega_i)+\int_{\Omega_i} \varphi_i\big(\text{dist}(x,S_i)\big)\,dx\ :\ \Omega_i\ \hbox{open},\ \Omega_i\subset\Dr,\ \Omega_i\cap\Omega_j=\emptyset\Big\}.
\end{equation}
\end{rem}

\medskip

The first part of the paper is dedicated to the analysis of the solutions of \eqref{optsum}. We summarize the results in the following   
\begin{thm}\label{mainr2}
Suppose that $\Dr\subset\R^2$ is a bounded open set with Lipschitz boundary. Let $0<a\le A$ be two positive real numbers and $W_i:\Dr\to[a,A]$, $i=1,\dots,h$ be given $C^2$ functions. Then there are disjoint open sets $\Omega_1,\dots,\Omega_h\subset\Dr$  solving the multiphase optimization problem \eqref{optsum}. Moreover, any solution to \eqref{optsum} has the following properties:
\begin{enumerate}[(i)]
\item There are no triple points inside $\Dr$, i.e. for every three distinct indices $i,j,k\in\{1,\dots,h\}$ we have $\partial\Omega_i\cap\partial\Omega_j\cap\partial\Omega_k=\emptyset$.
\item There are no double points on the boundary of $\Dr$, i.e. for every pair of distinct indices $i,j\in\{1,\dots,h\}$ we have $\partial\Omega_i\cap\partial\Omega_j\cap\partial\Dr=\emptyset$.
\item If the set $\Dr$ is of class $C^2$, then the first eigenfunctions $u_i\in H^1_0(\Omega_i)$ are Lipschitz continuous on $\overline\Omega_i$. 
\item The set $\ds\Omega=\bigcup_{i=1}^h\Omega_i$ has finite perimeter and the free reduced boundary $\partial^\ast\Omega$ is smooth in $\Dr$. Equivalently the reduced boundary $\partial^\ast\Omega_{h+1}$ of the auxiliary phase $\Omega_{h+1}=\Dr\setminus\Omega$ is smooth in $\Dr$.  
\end{enumerate}
\end{thm}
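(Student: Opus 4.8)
The plan is to work with the first eigenfunctions rather than with the sets and to reduce each phase to an Alt--Caffarelli type one-phase free boundary problem. First I would relax \eqref{optsum} to the class of $h$-tuples of quasi-open subsets of $\Dr$, where, since $\Dr$ is bounded, the admissible class is $\gamma$-compact, while $\lambda_1$ and the volume term are $\gamma$-lower semicontinuous and the disjointness constraint passes to the limit; hence the direct method yields a minimizer. Normalizing the eigenfunctions by $\int u_i^2=1$ and extending them by zero to $\Dr$, we have $\lambda_1(\Omega_i)=\int_\Dr|\nabla u_i|^2$ and $u_iu_j=0$ a.e. The structural point is that, freezing the other phases, each $u_i$ is an almost-minimizer of $v\mapsto\int|\nabla v|^2+\int W_i\,\ind_{\{v\neq0\}}$ under the constraint $\int v^2=1$ and the obstacle $v=0$ on $\bigcup_{j\neq i}\Omega_j$; the eigenvalue enters only through a bounded Lagrange multiplier, so these lower-order terms do not affect the local free boundary analysis.

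From this almost-minimality I would run the standard Alt--Caffarelli regularity theory to obtain that each $u_i$ is locally Lipschitz and nondegenerate, meaning $\sup_{B_r(x_0)}u_i\ge c\,r$ at free boundary points, and that $\{u_i>0\}$ is open with locally finite perimeter. In particular the optimal sets are open, as claimed, and the optimality condition on the free boundary reads $|\nabla u_i|^2=W_i$. When $\Dr$ is of class $C^2$, the boundary barriers available under the exterior ball condition upgrade these interior estimates to Lipschitz continuity of $u_i$ on $\overline{\Omega_i}$, giving (iii).

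For (i) and (ii) I would perform a blow-up at a common boundary point $x_0$. The rescalings $u_k(x_0+r_nx)/r_n$ converge, thanks to the Lipschitz bounds and nondegeneracy, to nonnegative, nonzero, $1$-homogeneous functions $v_k$ with mutually disjoint supports; the homogeneity is pinned down by an Almgren/Weiss-type monotonicity formula for the system, the eigenvalue and volume terms being lower order in the blow-up. Writing $v_k=r\,g_k(\theta)$, subharmonicity of $v_k$ forces $g_k''+g_k\ge0$ on each arc $\{g_k>0\}$ with vanishing endpoints, whence that arc has length at least $\pi$. At an interior triple point three disjoint such arcs would need total length $\ge 3\pi>2\pi$, which is impossible, proving (i). At a point $x_0\in\partial\Dr$ the Lipschitz (exterior cone) structure of $\Dr$ confines all blow-ups to a cone of opening $\beta<2\pi$, while two disjoint arcs of length $\ge\pi$ would need room $\ge 2\pi>\beta$, proving (ii).

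Finally, for (iv) I observe that the two-phase interfaces $\partial\Omega_i\cap\partial\Omega_j$ lie in the measure-theoretic interior of $\Omega=\bigcup_i\Omega_i$ and so do not contribute to $\partial^*\Omega$; hence near any point of $\partial^*\Omega$ only one phase faces the void $\Omega_{h+1}$, and there $\partial^*\Omega$ coincides with the one-phase Alt--Caffarelli free boundary $\{|\nabla u_i|^2=W_i\}$. Finite perimeter follows from the density estimates, and since we are in dimension two the one-phase free boundary has no singular points, so it is $C^{1,\alpha}$ and, by the $C^2$ regularity of $W_i$ and elliptic bootstrap, smooth in $\Dr$. The main obstacle I expect is the second step: proving the decoupled almost-minimality together with Lipschitz continuity, nondegeneracy, and $1$-homogeneity of blow-ups \emph{uniformly in the coupling} $u_iu_j=0$, and in particular controlling the interaction of the phases and the behavior up to the merely Lipschitz boundary $\partial\Dr$, since every later step rests on these estimates.
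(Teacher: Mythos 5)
Your skeleton is reasonable, but it leaves genuine gaps at exactly the points where the paper has to work hardest, and the tool that closes all of them --- the Alt--Caffarelli--Friedman-type monotonicity formulas --- is absent from your proposal. The first gap is the Lipschitz bound near a point of $\partial\Omega_i\cap\partial\Omega_j$. Your reduction of each phase to a one-phase Alt--Caffarelli almost-minimizer breaks down there: the Alt--Caffarelli linear-growth estimate $\sup_{B_r(x_0)}u_i\le Cr$ is proved by an \emph{outward} perturbation (enlarging $\{u_i>0\}$ to fill a ball), and the obstacle $v=0$ on $\Omega_j$ forbids precisely that competitor. The paper instead proves gradient non-degeneracy of $u_j$ at such a point (Corollary \ref{nondegradcor}) and feeds it into the two-phase monotonicity formula (Theorem \ref{teo2phm}), which caps $\frac{1}{r^2}\int_{B_r}|\nabla u_i|^2\,dx$ and hence yields the linear growth bound of Lemma \ref{C2lemma}; your one-phase argument only covers the regime of Lemma \ref{C1lemma}, where the other phases are absent from $B_{2r}(x_0)$.

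The second gap is the blow-up argument for (i) and (ii). The $1$-homogeneity of blow-ups on which your spherical-arc counting relies would require a Weiss/Almgren-type monotonicity formula for the coupled eigenvalue system, which you do not supply and which is not off the shelf; moreover, at a merely Lipschitz point of $\partial\Dr$ blow-ups of the domain need not converge at all. The paper avoids blow-ups entirely: triple points are excluded because the three-phase monotonicity formula (Theorem \ref{teo3phm}) forces the product of the three quantities $\frac{1}{r^2}\int_{B_r}|\nabla u_i|^2\,dx$ to decay like $r^\eps$, contradicting the lower bound each factor inherits from non-degeneracy; boundary double points are excluded by Lemma \ref{lem2ph05}, a two-phase formula with decay that only needs $\liminf_{r\to0}|\Dr^c\cap B_r(x_0)|/|B_r|>0$, which is exactly what Lipschitz regularity of $\partial\Dr$ provides. (Your arc-length count $3\pi>2\pi$ is the Friedland--Hayman inequality underlying these formulas, so the intuition is right, but the quantitative formulas are what turn it into a proof without any homogeneity of blow-ups.) Finally, for (iv) you assert that two-phase interface points lie in the measure-theoretic interior of $\Omega$, so that every point of $\partial^\ast\Omega$ sees only one phase; this is not free --- it is again the content of Lemma \ref{lem2ph05}, applied at a point where $\Omega_{h+1}$ has density $1/2$ and combined with the gradient non-degeneracy of the two eigenfunctions. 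Once that step is in place, your reduction to the one-phase Brian\c con--Lamboley regularity result coincides with the paper's argument.
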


\begin{rem}
We notice that the above result is still valid in dimension $d>2$. We restrict our attention to dimension $2$ since we can avoid some technicalities in the proofs of the Lipschitz continuity of the eigenfunctions and the decay monotonicity formula Lemma \ref{lem2ph05}. In fact, a key step in the proof of the Lipschitz continuity of the eigenfunctions is to show their non-degeneracy on the boundary in terms of the gradients. This question can be handled easily in two dimensions, while for the case $d>2$ we refer to \cite[Theorem 5.9]{buve}, where the case of the Dirichlet energy was considered. 
\end{rem}

For the computation of the optimal partition we use an approach that has as a starting point the algorithm used in \cite{buboou}. We notice that the first eigenvalue of an open set $\Omega\subset\Dr$ can be formally characterized as $\lambda_1(\Omega,+\infty)$, where 
$$\lambda_1(\Omega,C)=\min_{u\in H^1_0(\Omega)\setminus \{0\}}\ \frac{\int_{\Dr}|\nabla u|^2+C\ind_{\Dr\setminus\Omega} u^2\,dx}{\int_\Dr u^2\,dx}.$$
Replacing the characteristic function of $\Omega$ by a function $\varphi:\Dr\to[0,1]$ we can define
$$\lambda_1(\varphi,C)=\min_{u\in H^1_0(\Omega)\setminus \{0\}}\ \frac{\int_{\Dr}|\nabla u|^2+C(1-\varphi) u^2\,dx}{\int_\Dr u^2\,dx},$$
and then replace the optimal partition problem by 
\begin{equation}\label{optsumC}
\min\Big\{\sum_{i=1}^h\lambda_1(\varphi_i,C)+\int_{\Dr} \varphi_i(x)W_i(x)\,dx\ :\ \Omega_i\ \hbox{open},\ \varphi_i:\Dr\to[0,1],\ \sum_{i=1}^h \varphi_i \le 1\Big\}.
\end{equation}
In \cite{buboou} it was proved that as $C \to +\infty$ and $\varphi$ is the characteristic function of a regular set $\Omega$, then the relaxed eigenvalue $\lambda_k(\varphi,C)$ converges to the actual eigenvalue $\lambda_k(\Omega)$. To the authors knowledge, there was no prior study of the rate of convergence in terms of $C$.

In Section \ref{eigcomp} we observe the numerical error of a few simple shapes in terms of $C$ and the discretization parameter, by comparing the values of the eigenvalues computed in the penalized setting, with the ones computed using MpsPack \cite{mpspack}. We observe that as $C$ and the discretization parameter $N$ increase, the errors decrease.  In Section \ref{error-estimate} we use the results of \cite{potentials-bbv} in order to obtain a theoretical upper bound for the relative error
$ |\lambda_k(\Omega)-\lambda_k(\Omega,C)|/\lambda_k(\Omega)$.
Precisely we will prove the following. 

\begin{thm}
Suppose $D\subset \Bbb{R}^N$ is a bounded open set and $\Omega \subset D$ a set with boundary of class $C^2$. Then there exists a constant $K>0$ depending on $\Omega,D,N$, for which we have
\[ \frac{|\lambda_k(\Omega)-\lambda_k(\mu_C)|}{\lambda_k(\Omega)} \leq KC^{-1/(N+4)}.\]
\label{error-thm}
\end{thm}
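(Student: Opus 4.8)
The plan is to exploit the comparison and monotonicity properties of the resolvent operators, reducing the eigenvalue estimate to a comparison of torsion functions, which is the form in which the estimates of \cite{potentials-bbv} apply. Write $\mu_\infty$ for the capacitary measure with $H^1_{\mu_\infty}(D)=H^1_0(\Omega)$, so that $\lambda_k(\mu_\infty)=\lambda_k(\Omega)$, and recall $\mu_C=C\ind_{D\setminus\Omega}\,dx$. One inequality is free: testing the min--max characterization of $\lambda_k(\mu_C)$ with the span of the first $k$ Dirichlet eigenfunctions of $\Omega$, on which the penalization $C\int_{D\setminus\Omega}u^2$ vanishes, gives $\lambda_k(\mu_C)\le\lambda_k(\Omega)$. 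Hence $|\lambda_k(\Omega)-\lambda_k(\mu_C)|=\lambda_k(\Omega)-\lambda_k(\mu_C)\ge 0$, and it remains to bound this gap from above.

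For the reverse bound I would not work with the eigenfunctions directly. Let $w_{\mu_C}\in H^1_0(D)$ solve $-\Delta w_{\mu_C}+\mu_C w_{\mu_C}=1$ and let $w_\Omega\in H^1_0(\Omega)$ solve $-\Delta w_\Omega=1$, extended by $0$ to $D$. Since $\mu_C\le\mu_\infty$ one has $0\le w_\Omega\le w_{\mu_C}$, and the resolvents of $-\Delta+\mu_C$ and of the Dirichlet Laplacian on $\Omega$ are order-preserving operators with nonnegative kernels whose difference applied to the constant function $1$ is exactly $w_{\mu_C}-w_\Omega$. The results of \cite{potentials-bbv} then turn smallness of $\|w_{\mu_C}-w_\Omega\|_{L^1(D)}$ into smallness of the resolvent difference in operator norm, with a dimensional exponent produced by the elliptic/Sobolev estimates, and the min--max principle transfers this to the eigenvalues in the form
\[ \frac{|\lambda_k(\Omega)-\lambda_k(\mu_C)|}{\lambda_k(\Omega)}\le K_1\,\|w_{\mu_C}-w_\Omega\|_{L^1(D)}^{2/(N+4)}, \]
with $K_1$ depending only on $D,N,k$ and $\lambda_k(\Omega)$.

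The analytic heart of the argument is thus the estimate $\|w_{\mu_C}-w_\Omega\|_{L^1(D)}\le K_2\,C^{-1/2}$, and this is where the $C^2$ regularity of $\partial\Omega$ enters. Inside $\Omega$ the potential vanishes, so $w_{\mu_C}-w_\Omega$ is harmonic with boundary datum $w_{\mu_C}|_{\partial\Omega}$; by the maximum principle $0\le w_{\mu_C}-w_\Omega\le \max_{\partial\Omega}w_{\mu_C}$ on $\Omega$. Outside $\Omega$ the function $w_{\mu_C}$ solves $-\Delta w_{\mu_C}+Cw_{\mu_C}=1$, which forces exponential decay away from $\partial\Omega$ on the scale $C^{-1/2}$. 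Using the $C^2$ regularity to introduce normal coordinates in a tubular neighbourhood of $\partial\Omega$, I would compare $w_{\mu_C}$ with the one–dimensional barrier $C^{-1}+(\varphi-C^{-1})e^{-\sqrt{C}\,t}$ and match normal derivatives across $\partial\Omega$ (the inner flux $-\partial_n w_\Omega$ being bounded since $w_\Omega\in C^{1}(\overline\Omega)$), which pins $\max_{\partial\Omega}w_{\mu_C}\sim C^{-1/2}$. Integrating the two regimes — a harmonic function of size $C^{-1/2}$ over $\Omega$ (the dominant contribution), and over $D\setminus\Omega$ a layer of width and height $\sim C^{-1/2}$ together with a bulk of height $\sim C^{-1}$ — yields $\|w_{\mu_C}-w_\Omega\|_{L^1(D)}\le K_2\,C^{-1/2}$. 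Combining with the imported estimate gives $(C^{-1/2})^{2/(N+4)}=C^{-1/(N+4)}$, as claimed.

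The hard part will be the boundary-layer estimate for the torsion functions, specifically the matching argument that localizes the leakage $\max_{\partial\Omega}w_{\mu_C}$ at the scale $C^{-1/2}$: the one–dimensional profile is only a model, and to turn it into a rigorous two-sided barrier up to $\partial\Omega$ one must absorb the curvature terms arising when the Laplacian is written in normal coordinates — which is precisely what the uniform bound on the second fundamental form (the $C^2$ hypothesis) furnishes. Everything else — the free inequality, the maximum-principle bound inside $\Omega$, and the passage from the torsion distance to the eigenvalue gap — is either elementary or quoted from \cite{potentials-bbv}.
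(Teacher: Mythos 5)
Your skeleton coincides with the paper's at the structural level: both arguments get $\lambda_k(\mu_C)\le\lambda_k(\Omega)$ for free, reduce the eigenvalue gap to the $L^1$ distance of the torsion functions $w_C$ and $w$ through the resolvent comparison of \cite{potentials-bbv}, and then split $\int_D(w_C-w)$ into the contribution from $\Omega$ (where $w_C-w$ is harmonic and hence controlled by $\sup_{\partial\Omega}w_C$) and from $\Omega^c$ (where an energy/Cauchy--Schwarz argument gives $O(C^{-1/2})$). The gap lies in how you apportion the exponent $1/(N+4)$ between the two halves of the argument: neither of your two quantitative claims is justified as stated, and they happen to cancel to the right answer. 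The inequality you attribute to \cite{potentials-bbv},
\[
\frac{|\lambda_k(\Omega)-\lambda_k(\mu_C)|}{\lambda_k(\Omega)}\le K_1\,\|w_{\mu_C}-w_\Omega\|_{L^1(D)}^{2/(N+4)},
\]
is not what that reference provides: the estimate actually available (and used in the paper) is \emph{linear} in the torsion distance, $\|\mathcal{R}_{\mu_C}-\mathcal{R}_{\infty_{\Omega^c}}\|_{\mathcal{L}(L^2)}\le C_{N,\Omega}\|w_C-w\|_{L^1}$, combined with $|\lambda_k(\mu_C)^{-1}-\lambda_k(\Omega)^{-1}|\le\|\mathcal{R}_{\mu_C}-\mathcal{R}_{\infty_{\Omega^c}}\|_{\mathcal{L}(L^2)}$. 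The exponent $2/(N+4)$ is not a ``dimensional exponent produced by elliptic/Sobolev estimates''; in the paper $N+4$ appears only later, from optimizing a radius $r_0=C^{-\alpha}$ that balances $r_0^2$ against $C^{-1/2}r_0^{-N/2}$ in a mean-value estimate for $w_C$ on exterior balls. As written, your exponent is reverse-engineered to make the final arithmetic close.

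The second problem is that the step you explicitly defer as ``the hard part'' is precisely where the paper's proof lives, and the paper does something weaker and softer there. It does not prove your boundary-layer scaling $\sup_{\partial\Omega}w_C=O(C^{-1/2})$; it proves only $\sup_{\partial\Omega}w_C=O(C^{-1/(N+4)})$, as follows: the subharmonicity of $w_C+|x-x_0|^2/(2N)$ and the bound $\int_{\Omega^c}w_C^2\le M/C$ give $w_C(x_0)\lesssim r_0^2+C^{-1/2}r_0^{-N/2}$ at centers $x_0$ of exterior balls of radius $r_0$; choosing $r_0=C^{-1/(N+4)}$ yields $w_C(x_0)\lesssim C^{-2/(N+4)}$ at distance $r_0$ from $\partial\Omega$, and the Lipschitz continuity of the torsion function of the enlarged set $\Omega+B_{r_0}$ (this is where the $C^2$/exterior-ball hypothesis enters, via annular barriers, not via curvature terms in normal coordinates) transports the bound to $\partial\Omega$ at the cost of an extra $O(r_0)=O(C^{-1/(N+4)})$, which is the dominant term. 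Your exponential profile $C^{-1}+Ae^{-\sqrt{C}t}$ is the correct heuristic, and if you made it rigorous (you would need to control the cut locus of the distance function and supply an a priori bound for $w_C$ on the outer edge of the tubular neighbourhood) you would get a \emph{stronger} $L^1$ estimate than the paper's --- which, combined with the correct linear resolvent bound, would improve the theorem to $O(C^{-1/2})$. But since you have neither carried out that barrier argument nor identified the correct form of the resolvent estimate, the proof is incomplete at both of its load-bearing points.
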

 This bound on the error makes the convergence result proved in \cite{buboou} more precise. In addition to this, we observe a good concordance between the theoretical bounds and the numerical errors observed in Section \ref{eigcomp}. 

In Section \ref{optim-algo} we present the main lines of the optimization procedure. One challenging issue was to manage well the non overlapping condition $\ds\sum_{i=1}^h \varphi_i \leq 1$. We introduce an extra phase $\varphi_{h+1}$, which represents the void space. Thus we are left to manage an equality condition, instead of an inequality. This allows us to adapt the framework presented in \cite{buboou} to our problem. We use a standard gradient descent algorithm with a line search procedure in order to accelerate the convergence. We observe good stability properties of our proposed algorithm by performing a few optimizations starting from random densities, and by observing that the resulting shape configuration, and cost values are close. In addition to the finite difference framework on a rectangular grid, we also propose an approach based on finite elements, which can be generalized to general plane domains, and even to surfaces in three dimensions. 

In Section \ref{numerical-obs} we present some of the results obtained using the presented numerical frameworks, as well as some numerical observations which motivate the interest in the study of problem \eqref{optsum2}. First, we mention that the numerical results satisfy the theoretical properties proved in \cite{buve} and in Theorem \ref{mainr2}: the lack of triple points, the lack of triple points on the boundary and the lack of angles. Secondly, we observed an interesting connection between the two interesting cases $\alpha = 0$ and the value of $\alpha$ which gives the circle packing, in the periodic setting. It is well known that the hexagonal circle packing in the plane has the maximal density (result attributed to A. Thue, with first rigorous proof by F. Toth). As mentioned above, in the case $\alpha = 0$ (the spectral partition), it is conjectured that the optimal asymptotic partition is the honeycomb partition. This conjecture was supported numerically by the results of \cite{buboou}. As we already mentioned the problem \ref{optsum2} provides a connection between the established result of the circle packing configuration, and the Caffarelli-Lin conjecture that the regular honeycomb tiling is the solution of the spectral optimal partition problem. In our computations we observe that starting from the parameter $\alpha$ which realizes the circle packing in the periodic setting, and decreasing $\alpha$, the shapes forming the optimal partition grow in a monotone fashion. If this observed monotonicity property could be proved theoretically, then a proof that the honeycomb partition is optimal for $\alpha = 0$ will follow. Note that this also applies in the case of the sphere, where it is expected that for $h \in \{3,4,6,12\}$ the optimal spectral partitions are realized by regular tilling of the sphere.

The paper is organized as follows.
In Section \ref{prel} we recall the known results and we introduce the basic notions that we use in the proof of the above results. Section \ref{proof-mainres} is dedicated to the proof of Theorem \ref{mainr2}. In Section \ref{eigcomp} we present the eigenvalue computation method, and we make a few numerical tests, comparing our results to other methods, or to analytical results. Section \ref{error-estimate} is dedicated to the proof of Theorem \ref{error-thm}.
 In Section \ref{optim-algo} we present the optimization algorithm used for calculating the numerical minimizers of \eqref{optsum}. 
 The numerical results and other observations are discussed in Section \ref{numerical-obs}.

\section{Preliminaries and main tools}\label{prel}

\subsection{Eigenvalues and eigenfunctions}
Let $\Omega\subset\R^2$ be an open set. We denote with $H^1_0(\Omega)$ the Sobolev space obtained as the closure in $H^1(\R^2)$ of $C^\infty_c(\Omega)$, i.e. the smooth functions with compact support in $\Omega$, with respect to the Sobolev norm 
$$\|u\|_{H^1}:=\left(\|\nabla u\|_{L^2}^2+\|u\|_{L^2}^2\right)^{1/2}=\left(\int_{\R^2}|\nabla u|^2+u^2\,dx\right)^{1/2}.$$
We note that $H^1_0(\Omega)$ can be characterized as
\begin{equation}\label{sob}
H^1_0(\Omega)=\Big\{u\in H^1(\R^2):\ \cp\big(\{u\neq0\}\setminus\Omega\big)=0\Big\},
\end{equation}
where the capacity $\cp(E)$ of a measurable set $E\subset\R^2$ is defined as
$$\cp(E)=\min\Big\{\|u\|_{H^1}^2:\ u\ge 1\ \hbox{in a neighbourhood of}\ E\Big\}\footnote{for more details see, for example, \cite{gariepy} or \cite{henrot-pierre}}.$$
We notice that the sets of zero capacity have also zero Lebesgue measure, while the converse might be false. We may use the notion of capacity to choose more regular representatives of the functions of the Sobolev space $H^1(\R^d)$. In fact, every function $u\in H^1(\R^d)$ has a representative which is quasi-continuous, i.e. continuous outside a set of zero capacity. Moreover, two quasi-continuous representatives of the same Sobolev function coincide outside a set of zero capacity. Thus we may consider $H^1(\R^2)$ as a space consisting of quasi-continuous functions equipped with the usual $H^1$ norm.

The $k$th eigenvalue of the Dirichlet Laplacian can be defined through the min-max variational formulation 
\begin{equation}
\lambda_k(\Omega):=\min_{S_k\subset H^1_0(\Omega)}\max_{u\in S_k\setminus\{0\}}\frac{\int_{\Omega}|\nabla u|^2\,dx}{\int_\Omega u^2\,dx},
\end{equation}
where the minimum is over all $k$ dimensional subspaces $S_k$ of $H^1_0(\Omega)$. There are functions $u_1,\dots,u_k,\dots$ in $H^1_0(\Omega)$, orthonormal in $L^2(\Omega)$, that solve the equation 
$$-\Delta u_k=\lambda_k(\Omega)u_k,\qquad u_k\in H^1_0(\Omega),$$
in a weak sense in $H^1_0(\Omega)$. In particular, if $k=1$, then the first eigenfunction $u_1$ of $\Omega$ is the solution of the minimization problem 
\begin{equation}\label{lb1}
\lambda_1(\Omega):=\min_{u\in H^1_0(\Omega)\setminus \{ 0\}}\frac{\int_{\Omega}|\nabla u|^2\,dx}{\int_\Omega u^2\,dx}.
\end{equation}
In the sequel we will often see $\lambda_1$ as a functional on the family of open sets. We notice that this functional can be extended to the larger class of quasi-open sets, i.e. the sets $\Omega\subset\R^2$ such that for every $\eps>0$ there exists an open set $\omega_\eps$ of capacity $\cp(\omega_\eps)\le \eps$ such that $\Omega\cap\omega_\eps$ is an open set. We define $H^1_0(\Omega)$ as the set of Sobolev functions $u\in H^1(\R^2)$ such that $u=0$ \emph{quasi-everywhere} (i.e. outside a set of zero capacity) on $\Omega^c$. The first eigenvalue and the first eigenfunctions are still characterized as the minimum and the minimizer of \eqref{lb1}.

We notice that since $|u_1|$ is also a solution of \eqref{lb1}, from now on we will always assume that $u_1$ is non-negative and normalized in $L^2$. Moreover, we have the following properties of $u_1$ on a generic open\footnote{The same properties hold for the first eigenfunction on quasi-open set of finite measure.} set $\Omega$ of finite measure:
\begin{itemize}
\item $u_1$ is bounded and we have the estimate\footnote{We note that the infinity norm of $u_1$ can also be estimated in terms of $\lambda_1(\Omega)$ only as $\|u_1\|_{L^\infty}\le C\lambda_1(\Omega)^{d/4}$. This estimate is more general and can be found in \cite[Example 8.1.3]{davies}.}
\begin{equation}\label{davies}
\|u_1\|_{L^\infty}\le \frac1\pi\lambda_1(\Omega)|\Omega|^{1/2}.
\end{equation}
\item $u_1\in H^1(\R^2)$, extended as zero outside $\Omega$, satisfies the following inequality in sense of distributions: 
\begin{equation}\label{subh}
\Delta u_1+\lambda_1(\Omega)u_1\ge 0\qquad\hbox{in}\qquad \big[C^\infty_c(\R^2)\big]'.
\end{equation}
\item Every point $x_0\in\R^2$ is a Lebesgue point for $u_1$. Pointwise defined as
$$u_1(x_0):=\lim_{r\to0}\mean{B_r(x_0)}{u(x)\,dx},$$
$u_1$ is upper semi-continuous on $\R^2$.
\item $u_1$ is almost subharmonic in sense that for every $x_0\in\R^2$, we have
\begin{equation}\label{quasish}
u_1(x_0)\le \|u_1\|_{L^\infty}\lambda_1(\Omega)r^2+\mean{B_r(x_0)}{u_1(x)\,dx},\qquad\forall r>0.
\end{equation}
\end{itemize}

\subsection{Sets of finite perimeter and reduced boundary}
In the proof of Theorem \ref{mainr2} {\it (iv)} we will need the notion of a reduced boundary. Let $\Omega\subset\R^d$ be a set of finite Lebesgue measure. If the distributional gradient of its characteristic function $\nabla\ind_\Omega$ is a Radon measure such that its total variation $|\nabla\ind_\Omega|(\R^d)$ is finite, then we say that $\Omega$ is of finite perimeter. The perimeter $P(\Omega)$ is the total variation of the gradient and for regular sets coincides with the usual notion of perimeter as surface integral. The reduced boundary $\partial^\ast\Omega$ of a set $\Omega$ of finite perimeter is defined as the set of points where one can define the normal vector to $\Omega$ in the following sense: $x_0\in\partial^\ast\Omega$, if the limit $\ds\lim_{r\to0}\frac{\nabla\ind_\Omega(B_r(x_0))}{|\nabla\ind_\Omega|(B_r(x_0))}$ exists and has Euclidean norm equal to one. We notice that if a point $x_0$ belongs to the reduced boundary, then the density of $\Omega$ in $x_0$ is precisely $1/2$, i.e.
$\ds\lim_{r\to0}\frac{|\Omega\cap B_r(x_0)|}{|B_r(x_0)|}=\frac12$. For more details on the sets of finite perimeter we refer to the books \cite{giusti} and \cite{maggi-book}.

\subsection{The existence theory of Buttazzo and Dal Maso}
The multiphase shape optimization problems of the form \eqref{optF05} admit solutions for a very general cost functional $\mathcal{F}(\Omega_1,\dots,\Omega_h)$. The main existence result in this direction is well known and is due to the classical Buttazzo-Dal Maso result from \cite{budm93}. The price to pay for such a general result is that one has to relax the problem to a wider class of domains, which contains the open ones. Indeed, one notes that the capacitary definition of a Sobolev space \eqref{sob} can be easily extended to generic measurable sets. In particular, it is well known (we refer, for example, to the books \cite{henrot-pierre} and \cite{bucurbuttazzo}) that it is sufficient to restrict the analysis to the class of \emph{quasi-open} sets, i.e. the level sets of Sobolev functions. Since the definition of the first eigenvalue \eqref{lb1} is of purely variational character, one may also extend it to the quasi-open sets and then apply the theorem of Buttazzo and Dal Maso \cite{budm93} to obtain existence for \eqref{optF05} in the family of quasi-open sets under the minimal assumptions of monotonicity and semi-continuity of the function $F$. Thus, the study of the problem of existence of a solution of \eqref{optF05} reduces to the analysis of the regularity of the optimal quasi-open sets.  The precise statement of the Buttazzo-Dal Maso Theorem that we are going to adopt is the following. 
\begin{thm}\label{existh}
Suppose that $\Dr$ is a bounded open sets, $k_1, \dots, k_h$ are natural numbers, $F:\R^h\to\R$ is a continuous function increasing in each variable and let $W_i:\Dr\to[0,+\infty]$ be given measurable functions. Then there is a solution to the problem 
\begin{equation*}
\min\Big\{F(\lambda_{k_1}(\Omega_1),\dots,\lambda_{k_h}(\Omega_h))+\sum_{i=1}^h\int_{\Omega_i}W_i(x)\,dx\ :\ \Omega_i\subset\Dr\ \text{quasi-open}, \Omega_i\cap\Omega_j=\emptyset\Big\}.
\end{equation*} 
\end{thm}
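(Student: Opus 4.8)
The plan is to apply the direct method of the calculus of variations, carried out not in the class of quasi-open sets itself---which is not closed under the natural convergence---but in the larger class $\M_0(\Dr)$ of capacitary measures on $\Dr$, equipped with the $\gamma$-convergence. To each admissible phase $\O\subset\Dr$ I associate the measure $I_\O$ that equals $0$ on quasi-open subsets of $\O$ and $+\infty$ elsewhere, so that $H^1_0(\O)=\{u\in H^1_0(\Dr):\int u^2\,dI_\O<+\infty\}$ and $\lambda_{k}(\O)=\lambda_{k}(I_\O)$, where $\lambda_k(\mu)$ is defined by the same min-max formula over the space $H^1_\mu$. The structural input, due to Buttazzo and Dal Maso \cite{budm93}, is that $\big(\M_0(\Dr),\gamma\big)$ is sequentially compact and that $\lambda_k$ is continuous along $\gamma$-convergent sequences.

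First I would fix a minimizing sequence $(\O_1^n,\dots,\O_h^n)$ of admissible $h$-tuples. Applying the compactness coordinate by coordinate and passing to a single subsequence, I obtain measures $\mu_1,\dots,\mu_h\in\M_0(\Dr)$ with $I_{\O_i^n}\stackrel{\gamma}{\longrightarrow}\mu_i$; by the spectral continuity, $\lambda_{k_i}(\mu_i)=\lim_n\lambda_{k_i}(\O_i^n)$ for each $i$. The obstacle is that a $\gamma$-limit of indicator measures need not itself be an indicator measure, so $\mu_i$ is in general not admissible, and it is precisely here that the monotonicity of $F$ and the sign of $W_i$ will be used.

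To return to genuine sets I would pass through the torsion functions $w_i^n:=w_{\O_i^n}$, the solutions of $-\Delta w_i^n+I_{\O_i^n}w_i^n=1$ on $\Dr$. The $\gamma$-convergence yields $w_i^n\to w_i:=w_{\mu_i}$ strongly in $L^2(\Dr)$ (and, along a further subsequence, a.e.), and $w_i\in H^1_0(\O_i)$ for the quasi-open set $\O_i:=\{w_i>0\}$. Two comparisons then do the work. On the spectral side one has $\mu_i\ge I_{\O_i}$ as capacitary measures, hence $H^1_{\mu_i}\subseteq H^1_0(\O_i)$ and $\lambda_{k_i}(\O_i)\le\lambda_{k_i}(\mu_i)$; since $F$ is increasing in each variable, replacing $\mu_i$ by $\O_i$ does not increase the value $F(\lambda_{k_1}(\cdot),\dots,\lambda_{k_h}(\cdot))$. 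On the volume side I would use that $\ind_{\O_i}=\ind_{\{w_i>0\}}\le\liminf_n\ind_{\{w_i^n>0\}}=\liminf_n\ind_{\O_i^n}$ pointwise a.e.; because $W_i\ge0$, Fatou's lemma gives the lower semicontinuity $\int_{\O_i}W_i\,dx\le\liminf_n\int_{\O_i^n}W_i\,dx$.

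It remains to check disjointness and to assemble the estimates. From $\O_i^n\cap\O_j^n=\emptyset$ one has $w_i^n w_j^n=0$ a.e., and passing to the limit gives $w_iw_j=0$ a.e.; thus the sets $\O_i$ overlap at most on a Lebesgue-negligible set, and a standard capacitary argument allows this overlap to be removed without altering the spaces $H^1_0(\O_i)$, so that the $\O_i$ may be taken pairwise disjoint. Combining the two comparisons of the previous paragraph, the admissible tuple $(\O_1,\dots,\O_h)$ has cost at most $\liminf_n$ of the cost of $(\O_1^n,\dots,\O_h^n)$, i.e. at most the infimum, and is therefore a minimizer. I expect the genuine difficulty of the argument to be concentrated in the passage from the abstract $\gamma$-limit $\mu_i$ back to an admissible quasi-open set: the compactness and spectral continuity of $\gamma$-convergence (quotable from \cite{budm93} and \cite{bucurbuttazzo}) are the technical backbone, but the clean functioning of the proof hinges on the two monotonicities---that $F$ is increasing and that $W_i\ge0$---which together guarantee that neither the spectral term nor the volume term is raised when $\mu_i$ is replaced by $\{w_{\mu_i}>0\}$.
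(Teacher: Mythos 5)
The paper offers no proof of Theorem \ref{existh}: it is imported as ``the precise statement of the Buttazzo--Dal Maso Theorem that we are going to adopt,'' with the proof delegated to \cite{budm93}. Your argument is a correct reconstruction of that standard proof, adapted to the multiphase setting with the weights $W_i$: relaxation to capacitary measures, sequential compactness of $\big(\M_0(\Dr),\gamma\big)$ together with the continuity of $\lambda_k$ along $\gamma$-convergent sequences, and then the return to quasi-open sets through the regular set $\Omega_i=\{w_{\mu_i}>0\}$, with the monotonicity of $F$ and the sign of $W_i$ absorbing the loss of admissibility of the $\gamma$-limits. Two steps that you label as ``standard'' are the ones carrying the real weight and deserve to be named. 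First, the comparison $\mu_i\ge I_{\{w_{\mu_i}>0\}}$, i.e.\ $H^1_{\mu_i}\subseteq H^1_0(\{w_{\mu_i}>0\})$: this is the statement that every $u\in H^1_{\mu_i}$ vanishes quasi-everywhere outside the regular set of $\mu_i$ (proved in \cite{bucurbuttazzo}), and it is exactly what turns $\lambda_{k_i}(\mu_i)$ into an upper bound for $\lambda_{k_i}(\Omega_i)$. Second, the repair of the disjointness constraint: from $w_iw_j=0$ a.e.\ you only obtain $|\Omega_i\cap\Omega_j|=0$, and the upgrade to genuine disjointness rests on the fact that a quasi-open set of zero Lebesgue measure has zero capacity, so that removing $\bigcup_{j\ne i}(\Omega_i\cap\Omega_j)$ from $\Omega_i$ leaves a quasi-open set with the same $H^1_0$. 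Both facts are true, so the proof closes. The one loose end is cosmetic: $F$ is declared on $\R^h$, while along a minimizing sequence some $\lambda_{k_i}(\Omega_i^n)$ could a priori tend to $+\infty$ without the cost blowing up (e.g.\ if $F$ is bounded above); the usual remedy is to extend $F$ monotonically and continuously to $[0,+\infty]^h$, which does not disturb your scheme.
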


\subsection{Regularity of the optimal sets for the first eigenvalue}
The regularity of the optimal sets for the Dirichlet eigenvalues is a difficult question and even in the case of a single phase it is open for higher eigenvalues. For the principal eigenvalue of the Dirichlet Laplacian we have the following result by Lamboley and Brian\c con which relies on an adaptation of the classical Alt-Caffarelli regularity theory to the case of eigenfunctions. We state the result here with a smooth weight function as in the original paper \cite{altcaf}.
\begin{thm}\label{regth}
Suppose that $\Dr\subset\R^2$ is a bounded open set, $W:\Dr\to[a,A]$ is a smooth function and $\Omega$ is a solution of the shape optimization problem
\begin{equation}
\min\Big\{\lambda_1(\Omega)+\int_{\Omega}W(x)\,dx\ :\ \Omega\subset\Dr\ \text{quasi-open}\Big\}.
\end{equation}
Then $\Omega$ is open set of finite perimeter and the boundary $\Dr\cap \partial\Omega$ is locally a graph of a smooth function.
\end{thm}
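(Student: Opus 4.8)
The plan is to recast the shape problem as a one-phase free boundary problem of Alt--Caffarelli type for the first eigenfunction, and then to invoke a weighted adaptation of the regularity theory in \cite{altcaf}. Let $u=u_1\ge 0$ be the first eigenfunction of an optimal set $\Omega$, normalized by $\int_\Dr u^2\,dx=1$, so that $\Omega=\{u>0\}$ up to a set of zero capacity and $\lambda_1(\Omega)=\int_\Dr|\nabla u|^2\,dx$. The first step is to prove that $u$ is an \emph{almost-minimizer} of
\[
J(v):=\int_\Dr|\nabla v|^2\,dx+\int_{\{v>0\}}W(x)\,dx,
\]
in the sense that $J(u)\le J(v)+C\,r^2$ for every competitor $v$ with $v=u$ outside a ball $B_r(x_0)\subset\Dr$ (here $r^2\simeq|B_r|$, since $d=2$). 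Indeed, $\{v>0\}$ is admissible for the shape problem, so optimality together with the Rayleigh characterization \eqref{lb1} gives
\[
\int_\Dr|\nabla u|^2\,dx+\int_\Omega W\,dx\le \frac{\int_\Dr|\nabla v|^2\,dx}{\int_\Dr v^2\,dx}+\int_{\{v>0\}}W\,dx.
\]

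The genuinely non-standard point, and the one I expect to be the main obstacle, is to control the error produced by passing from the nonlinear Rayleigh quotient to the linear Dirichlet energy $\int_\Dr|\nabla v|^2\,dx$. Since $v=u$ outside $B_r$, one has $1-\int_\Dr v^2\,dx=\int_{B_r}(u^2-v^2)\,dx$, and the $L^\infty$-bound \eqref{davies} on the (uniformly bounded) competitors yields $|1-\int_\Dr v^2\,dx|\le C\,r^2$; as $\int_\Dr v^2\,dx\ge\frac12$ and $\int_\Dr|\nabla v|^2\,dx$ stays bounded for small $r$, this gives $\int_\Dr|\nabla v|^2\,dx/\int_\Dr v^2\,dx\le \int_\Dr|\nabla v|^2\,dx+C\,r^2$, which is exactly the claimed almost-minimality. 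The same $O(r^2)$ gauge also absorbs the fact that $u$ solves $-\Delta u=\lambda_1(\Omega)u$ with bounded right-hand side rather than being harmonic: comparing $u$ with its harmonic replacement in $B_r\cap\{u>0\}$ shows that the deviation from harmonicity costs only $O(r^2)$ in energy. This normalization estimate is the technical heart of the argument (it is the step carried out by Brian\c con and Lamboley).

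Once the almost-minimality is available, the conclusions follow the Alt--Caffarelli program adapted to almost-minimizers with a bounded lower order term. Using the subsolution property \eqref{subh} and the almost-subharmonicity \eqref{quasish} one first obtains the local Lipschitz continuity of $u$, and then the \emph{non-degeneracy} estimate $\sup_{B_r(x_0)}u\ge c\,r$ for $x_0\in\overline\Omega$, which uses the lower bound $W\ge a>0$ through comparison with a suitably filled-in competitor. Lipschitz continuity and non-degeneracy together give the two-sided density bounds $c\le|\Omega\cap B_r(x_0)|/|B_r|\le 1-c$ at boundary points; these make $\Dr\cap\partial\Omega$ Ahlfors regular with finite $\mathcal{H}^1$ measure, so that $\Omega$ is open, of finite perimeter, and $\partial\Omega$ agrees with the reduced boundary $\partial^\ast\Omega$.

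Finally, smoothness is obtained by blow-up: rescaling $u$ at a boundary point $x_0$, the eigenvalue term and the oscillation of $W$ disappear in the limit, so every blow-up is a global minimizer of the constant-coefficient functional with $Q^2=W(x_0)$. In dimension two these are precisely the half-plane solutions $x\mapsto\sqrt{W(x_0)}\,(x\cdot\nu)^+$, hence there are no singular free boundary points, and the flatness-implies-regularity step of \cite{altcaf} shows that $\Dr\cap\partial\Omega$ is locally a $C^{1,\alpha}$ graph satisfying the free boundary condition $|\nabla u|=\sqrt{W}$. Since $W$ is smooth, a standard Schauder bootstrap then upgrades the graph to a smooth one, giving the stated regularity.
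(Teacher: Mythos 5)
The paper does not prove Theorem \ref{regth}: it is quoted as an external result of Brian\c con and Lamboley \cite{brla}, who adapt the Alt--Caffarelli scheme \cite{altcaf} to the eigenvalue functional, so there is no internal argument to compare yours against. Your proposal is a faithful reconstruction of the architecture of that cited proof: the reduction to almost-minimality of the one-phase functional $J$, the identification of the Rayleigh-quotient normalization as the key technical obstruction, and the subsequent Lipschitz / non-degeneracy / density-bounds / blow-up / flatness-implies-regularity pipeline, finishing with a Schauder bootstrap. Two points in your sketch deserve tightening, though neither is a gap in the strategy. First, the gauge you derive is really multiplicative rather than additive: from $1-\int_\Dr v^2\,dx=\int_{B_r}(u^2-v^2)\,dx\le \|u\|_{L^\infty}^2|B_r|$ (which, note, needs no bound on $v$ since $v^2\ge0$) one gets $\lambda_1(\{v>0\})\le (1+Cr^2)\int_\Dr|\nabla v|^2\,dx$, and the error $Cr^2\int_\Dr|\nabla v|^2\,dx$ is $O(r^2)$ only for competitors with a priori bounded Dirichlet energy; this is harmless for the harmonic replacements and truncations actually used, but the almost-minimality class has to be stated accordingly. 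Second, you have the roles of the two bounds on $W$ reversed: non-degeneracy comes from a \emph{cut-out} competitor (replacing $u$ by zero near $x_0$, gaining $a\,|\Omega\cap B_{r/2}|$ from $W\ge a$ against a controlled energy increase), whereas the \emph{filled-in} harmonic replacement, paid for through $W\le A$, is what yields the boundary growth estimate and hence Lipschitz continuity.
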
 

\subsection{Shape subsolutions and their properties}
We say that the quasi-open set $\Omega\subset\R^2$ is a shape subsolution for the functional $\lambda_1+\alpha|\cdot|$ if for every quasi-open set $\omega\subset\Omega$ we have 
$$\lambda_1(\Omega)+\alpha|\Omega|\le \lambda_1(\omega)+\alpha|\omega|.$$
The notion of a shape subsolution was introduced by Bucur in \cite{bucur-mink} in order to study the existence of an optimal set for the $k$th eigenvalue and then was more extensively studied in \cite{buve}. We recall the main results from \cite{bucur-mink} and \cite{buve} in the following

\begin{thm}\label{subth}
Suppose that $\Omega$ is a shape subsolution for the functional $\lambda_1+\alpha|\cdot|$. Then
\begin{enumerate}[(a)]
\item $\Omega$ is bounded and its diameter $\hbox{diam}(\Omega)$ is estimated by a constant depending on $\alpha$, $\lambda_1(\Omega)$ and $|\Omega|$;
\item $\Omega$ is of finite perimeter and we have the estimate
\begin{equation}\label{perbound}
P(\Omega)\le \alpha^{-1/2}\lambda_1(\Omega)|\Omega|^{1/2};
\end{equation}
\item there is a lower bound on the eigenvalue $\lambda_1(\Omega)$ given by
\begin{equation}\label{lb1bound}
\lambda_1(\Omega)\ge \big(4\pi \alpha\big)^{1/2};
\end{equation} 
\item If $\Omega'$ is also a shape subsolution for the same functional such that $\Omega\cap\Omega'=\emptyset$, then there are disjoint open sets $\Dr$ and $\Dr'$ such that $\Omega\subset\Dr$ and $\Omega'\subset\Dr'$.
\end{enumerate}
\end{thm}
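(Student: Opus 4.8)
The plan is to read off all four properties from the subsolution inequality $\lambda_1(\Omega)+\alpha|\Omega|\le\lambda_1(\omega)+\alpha|\omega|$ tested against well-chosen inner competitors $\omega\subset\Omega$, using the first normalized eigenfunction $u\ge0$ of $\Omega$ (so $\int u^2=1$ and $-\Delta u=\lambda u$ with $\lambda:=\lambda_1(\Omega)$). For the diameter bound (a) I would argue by slicing: fix a unit vector $e$, set $m(t):=|\Omega\cap\{x\cdot e>t\}|$, and apply the inequality to the competitor $\omega=\Omega\cap\{x\cdot e<t\}$ to obtain $\alpha\,m(t)\le\lambda_1(\omega)-\lambda_1(\Omega)$. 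Bounding the right-hand side by inserting into the Rayleigh quotient a cut-off of $u$ across the hyperplane $\{x\cdot e=t\}$ controls it by the Dirichlet energy of $u$ in a thin slab straddling $\{x\cdot e=t\}$; feeding this back yields a differential inequality that forces the tail mass $m(t)$ to vanish for $t$ large, and the estimates \eqref{davies} and \eqref{quasish} keep all constants explicit in $\alpha,\lambda_1(\Omega),|\Omega|$. Repeating over all directions $e$ bounds $\operatorname{diam}(\Omega)$.

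For the perimeter bound (b) I would use the distributional inequality \eqref{subh}: the measure $\mu:=\Delta u+\lambda u$ is nonnegative and, since $u\in H^1_0(\Omega)$ is extended by zero, concentrated on $\partial\Omega$. Using that $\Omega$ is bounded by (a), testing $\mu$ against a cut-off equal to $1$ on a neighbourhood of $\overline\Omega$ gives the total mass $\mu(\R^2)=\lambda\int_\Omega u\,dx\le\lambda|\Omega|^{1/2}$, by Cauchy--Schwarz and $\|u\|_{L^2}=1$. It then suffices to show that $\mu$ dominates $\sqrt\alpha$ times surface measure on the reduced boundary, equivalently that the inner normal derivative obeys $|\nabla u|\ge\sqrt\alpha$ on $\partial^\ast\Omega$; granting this, $\sqrt\alpha\,P(\Omega)=\sqrt\alpha\,\HH^1(\partial^\ast\Omega)\le\mu(\partial^\ast\Omega)\le\lambda|\Omega|^{1/2}$, which is exactly \eqref{perbound}. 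The nondegeneracy $|\nabla u|\ge\sqrt\alpha$ is where the subsolution hypothesis is essential: near $\partial\Omega$ the term $\lambda\int u^2$ is a lower-order perturbation of the Dirichlet energy and $\alpha|\Omega|$ is a pure volume term, so $u$ is an inner (one-sided) subsolution of the one-phase Alt--Caffarelli functional $\int|\nabla v|^2+\alpha\,\ind_{\{v>0\}}$, whose free-boundary slope condition $|\nabla v|=\sqrt\alpha$ relaxes to the one-sided inequality $|\nabla u|\ge\sqrt\alpha$ under shrinking competitors (cf. \cite{altcaf}).

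Property (c) is then immediate from (b) and the planar isoperimetric inequality $P(\Omega)\ge2\sqrt\pi\,|\Omega|^{1/2}$: combining the two estimates gives $2\sqrt\pi\,|\Omega|^{1/2}\le\alpha^{-1/2}\lambda_1(\Omega)|\Omega|^{1/2}$, that is $\lambda_1(\Omega)\ge(4\pi\alpha)^{1/2}$, which is \eqref{lb1bound}.

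For the separation (d) I would combine the nondegeneracy from (b) with the decay monotonicity formula of Lemma \ref{lem2ph05}. The former shows that $u$ and $u'$ grow at least linearly away from their respective boundaries, while the latter shows that at any putative common point $x_0\in\overline\Omega\cap\overline{\Omega'}$ the joint growth of $u$ and $u'$ is too large to be sustained, so at least one of them must vanish faster than linearly at $x_0$; this contradicts the linear nondegeneracy. Hence $\overline\Omega\cap\overline{\Omega'}=\emptyset$, and since both closures are compact by (a) they lie at strictly positive distance, so disjoint tubular neighbourhoods furnish the required open sets $\Dr\supset\Omega$ and $\Dr'\supset\Omega'$. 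The main obstacle throughout is the quantitative boundary behaviour of the eigenfunctions, namely the linear nondegeneracy underlying both (b) and (d); it is precisely here that the restriction to dimension two, through the two-phase monotonicity formula, simplifies the analysis.
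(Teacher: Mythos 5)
First, a point of reference: the paper does not prove Theorem \ref{subth} at all --- it is recalled from \cite{bucur-mink} and \cite{buve} --- so your proposal can only be measured against the arguments in those references. Your derivation of (c) from (b) via the planar isoperimetric inequality $P(\Omega)\ge 2\sqrt{\pi}\,|\Omega|^{1/2}$ is exactly right and gives the stated constant $(4\pi\alpha)^{1/2}$, and your slicing strategy for (a) is a legitimate route (it correctly rules out far-away components, which a pure density-estimate argument would not). The problems are in (b) and (d).

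For (b) your argument is circular: you bound $\mu(\R^2)=\lambda_1(\Omega)\int u$ and then want $\mu\ge\sqrt{\alpha}\,\HH^1\llcorner\partial^\ast\Omega$ via a normal-derivative bound $|\nabla u|\ge\sqrt{\alpha}$ on $\partial^\ast\Omega$. But the identification of $\mu$ with $|\nabla u|\,\HH^1\llcorner\partial^\ast\Omega$, and indeed the very assertion that $\HH^1(\partial^\ast\Omega)$ controls $P(\Omega)$, presuppose that $\Omega$ has finite perimeter and that $u$ has a trace of its normal derivative there --- which is precisely what (b) is supposed to establish, for a set that is a priori only quasi-open. The proof in \cite{bucur-mink} avoids every regularity issue by testing the subsolution inequality with the superlevel sets $\omega_\eps=\{u>\eps\}$: one gets $\alpha|\{0<u\le\eps\}|+\int_{\{u\le\eps\}}|\nabla u|^2\le 2\eps\lambda_1(\Omega)|\Omega|^{1/2}+o(\eps)$, then the pointwise inequality $2\sqrt{\alpha}\,|\nabla u|\le|\nabla u|^2+\alpha$ on $\{0<u\le\eps\}$ together with the coarea formula yields $\sqrt{\alpha}\,\eps^{-1}\int_0^\eps P(\{u>t\})\,dt\le \lambda_1(\Omega)|\Omega|^{1/2}+o(1)$, and \eqref{perbound} follows by lower semicontinuity as $\eps\to0$. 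For (d) you have overclaimed: you conclude $\overline{\Omega}\cap\overline{\Omega'}=\emptyset$ and even a positive distance, but this is false in general --- adjacent cells of the optimal partition are themselves disjoint subsolutions and do share boundary (the paper's whole analysis of two-phase points presumes this). The statement only asserts the existence of disjoint \emph{open} sets $\Dr\supset\Omega$, $\Dr'\supset\Omega'$, which is compatible with touching closures (think of two open half-planes); its content is the passage from quasi-open to open separation. Moreover your invocation of Lemma \ref{lem2ph05} is not available here: that lemma requires the void region $\{u^+=0\}\cap\{u^-=0\}$ to have positive density at the common point, which is exactly what fails at a genuine two-phase interface, and without that hypothesis the two-phase monotonicity formula only bounds the product of the averaged gradients, producing no contradiction with the linear non-degeneracy of both eigenfunctions.
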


\subsection{Monotonicity formulas for eigenfunctions}
The monotonicity formula of Alt-Caffarelli-Friedman is an essential tool in the study of the behavior of the eigenfunctions in the points of the common boundary of the optimal sets. Since the eigenfunctions are not subharmonic, but satisfy \eqref{subh}, we will need another version of the monotonicity formula from \cite{alcafr}. 
% One can use directly the monotonicity formula of Caffarelli-Jerison-Kenig (see \cite{cajeke} and also the note \cite{mono} for the version that applies to non-continuous functions), which holds in any dimension and for functions with Laplacian bounded from below in a distributional sense. Here we prefer to use the two-dimensional version of Conti-Terracini-Verzini (see \cite{coteve}), which applies only to eigenfunctions (also non-linear), but can be easily refined to obtain density result for the nodal set.  
We state here the following monotonicity theorem for eigenfunctions from \cite{coteve03}, which is a version of the Alt-Caffarelli-Friedman monotonicity formula. We use this result to prove that the eigenfunctions of the optimal sets are Lipschitz continuous everywhere in $\Dr$.
\begin{thm}[Two-phase monotonicity formula]\label{teo2phm}
Consider the unit ball $B_1\subset\R^2$. Let $u^+,u^-\in H^1(B_1)\cap L^\infty(B_1)$ be two non-negative functions with disjoint supports, i.e. such that $\int_{B_1}u^+u^-\,dx=0$, and let $\lambda_+,\lambda_-\ge 0$ be two real numbers such that 
$$\Delta u^++\lambda_+u^+\ge 0 \qquad \hbox{and} \qquad \Delta u^-+\lambda_-u^-\ge 0.$$ 
Then there are constants $1/2\ge r_0>0$ and $C>0$, depending on $d$, $\lambda_+$ and $\lambda_-$, such that for every $r\in(0,r_0)$ we have
\begin{equation}\label{teo2phm1}
\left(\frac{1}{r^{2}}\int_{B_r}|\nabla u^+|^2\,dx\right)\left(\frac{1}{r^{2}}\int_{B_r}|\nabla u^-|^2\,dx\right)\le C\left(1+\|u^++u^-\|^2_{L^\infty(B_{2r_0})}\right)^2.
\end{equation} 
\end{thm}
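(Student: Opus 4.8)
The plan is to follow the classical strategy of Alt–Caffarelli–Friedman, adapted to the functions $u^\pm$ that are only \emph{almost} subharmonic in the sense of \eqref{subh}. First I would introduce the quantity
\begin{equation*}
\Phi(r)=\left(\frac{1}{r^{2}}\int_{B_r}\frac{|\nabla u^+|^2}{|x|^{0}}\,dx\right)\left(\frac{1}{r^{2}}\int_{B_r}|\nabla u^-|^2\,dx\right),
\end{equation*}
and show that, up to the lower order corrections coming from $\lambda_\pm$, a suitably modified version of $\Phi$ is monotone nondecreasing (or at least bounded below away from a uniform constant) on the interval $(0,r_0)$. The estimate \eqref{teo2phm1} then follows by evaluating this monotone quantity at $r=r_0$ and bounding it in terms of $\|u^++u^-\|_{L^\infty(B_{2r_0})}$. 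The role of the $L^\infty$ bound is to control the Dirichlet energies on the fixed ball $B_{r_0}$ by the sup norm and the measure of the ball, which is where the factor $(1+\|u^++u^-\|_{L^\infty}^2)^2$ enters.

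The core computation is the differentiation of $\log\Phi(r)$. Writing $J^\pm(r)=r^{-2}\int_{B_r}|\nabla u^\pm|^2\,dx$, one differentiates and splits each $\tfrac{d}{dr}\log J^\pm$ into a boundary term on $\partial B_r$ and the bulk term. The decisive analytic input is a lower bound on $\tfrac{d}{dr}\log J^\pm$ in terms of the first eigenvalue $\gamma^\pm(r)$ of the Laplace–Beltrami operator on the arc $\partial B_r\cap\{u^\pm>0\}$, obtained by the Cauchy–Schwarz inequality together with the divergence identity that replaces subharmonicity: since $\Delta u^\pm\ge -\lambda_\pm u^\pm$ rather than $\Delta u^\pm\ge 0$, the integration by parts produces an extra term $\lambda_\pm\int_{B_r}u^\pm(\partial_r u^\pm)$ which must be absorbed. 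In two dimensions the eigenvalue estimate on the circular arc is explicit: if the supports of $u^+$ and $u^-$ on $\partial B_r$ are disjoint, the two opening angles $\theta^+(r)+\theta^-(r)\le 2\pi$, and the convexity of $t\mapsto\sqrt{\gamma(t)}$ yields $\sqrt{\gamma^+(r)}+\sqrt{\gamma^-(r)}\ge 2$, which is exactly the borderline inequality that makes the product $\Phi$ monotone in the purely subharmonic case.

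The main obstacle is precisely the presence of the zeroth-order terms $\lambda_\pm u^\pm$, which destroy exact monotonicity and force one to work with a perturbed monotonicity formula valid only on a small interval $(0,r_0)$. My plan here is to show that $\Phi$ satisfies a differential inequality of the form $\tfrac{d}{dr}\log\Phi(r)\ge -C(\lambda_++\lambda_-)$ for $r<r_0$, using \eqref{subh} to control the cross terms and the $L^\infty$ bounds on $u^\pm$ (available since $u^\pm\in L^\infty(B_1)$) to estimate $\int_{B_r}u^\pm\,|\partial_r u^\pm|$ by Young's inequality against the Dirichlet energy. Integrating this differential inequality from $r$ to $r_0$ gives $\Phi(r)\le e^{C(\lambda_++\lambda_-)r_0}\Phi(r_0)$, and the choice of $r_0\le 1/2$ depending only on $\lambda_+,\lambda_-$ (and $d$) keeps the exponential factor bounded. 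It remains to bound $\Phi(r_0)$ by the right-hand side of \eqref{teo2phm1}: using the Caccioppoli-type inequality for the almost-subharmonic functions $u^\pm$ on $B_{2r_0}$, one controls $\int_{B_{r_0}}|\nabla u^\pm|^2$ by $\|u^++u^-\|_{L^\infty(B_{2r_0})}^2$ plus a term involving $\lambda_\pm$, and the product of the two factors produces the claimed bound with a constant $C$ depending on $d$, $\lambda_+$ and $\lambda_-$.
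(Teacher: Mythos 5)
Your overall architecture (differentiate $\log\Phi$, bound each logarithmic derivative from below by $2\sqrt{\gamma^\pm(r)}$ via Cauchy--Schwarz and the eigenvalue of the arc, use $\theta^++\theta^-\le 2\pi$ to recover the $-4/r$, then bound $\Phi(r_0)$ by a Caccioppoli inequality) is the right skeleton, and the last step is fine. The genuine gap is in your treatment of the zeroth-order terms. After integrating by parts you get $\int_{B_r}|\nabla u^\pm|^2\le \int_{\partial B_r}u^\pm\partial_n u^\pm+\lambda_\pm\int_{B_r}(u^\pm)^2$, so the eigenvalue bound degrades to $2\sqrt{\gamma^\pm(r)}\bigl(1-\delta^\pm(r)\bigr)$ with $\delta^\pm(r)=\lambda_\pm\int_{B_r}(u^\pm)^2/\int_{B_r}|\nabla u^\pm|^2$. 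Young's inequality and the $L^\infty$ bound give you no control on $\delta^\pm$, because the Dirichlet energy in the denominator can be arbitrarily small relative to $r^2\|u^\pm\|_\infty^2$. Worse, even if you could force $\delta^\pm\le 1/2$, the inequality $2\sqrt{\gamma^+}+2\sqrt{\gamma^-}\ge 4/r$ is \emph{sharp}, so a multiplicative loss $(1-\delta)$ produces an additive error of order $-\delta/r$, which is not integrable near $0$; your claimed differential inequality $\frac{d}{dr}\log\Phi\ge -C(\lambda_++\lambda_-)$ therefore does not follow, and integrating what you actually get yields only $\Phi(r)\le (r_0/r)^{c}\Phi(r_0)$, which blows up as $r\to 0$. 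Making the perturbative route work requires the dyadic dichotomy machinery of Caffarelli--Jerison--K\"enig (scales where the energy is large versus small), which is substantially more involved than what you sketch.

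The paper does not reprove the theorem: it cites \eqref{teo2phm1} from Caffarelli--Jerison--K\"enig and Conti--Terracini--Verzini. But for the companion decay result (Lemma \ref{lem2ph05}) it carries out the Conti--Terracini--Verzini device that you should adopt here: choose $r_0$ small and a positive radial solution $\vf$ of $-\Delta\vf=\lambda\vf$ in $B_{r_0}$ with $0<a\le\vf\le b$, and set $U_\pm=u^\pm/\vf$. Then $-\di(\vf^2\nabla U_\pm)\le 0$ \emph{exactly}, so the ACF computation goes through verbatim for the weighted energies $r^{-2}\int_{B_r}\vf^2|\nabla U_\pm|^2$ with no error term at all, giving genuine monotonicity of the weighted product (this is item (a) of Lemma \ref{lemma2phm}). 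Since $\vf$ is bounded above and below and $\nabla U_\pm=\vf^{-1}\nabla u^\pm-\vf^{-2}u^\pm\nabla\vf$, the weighted energies are comparable to $\int_{B_r}|\nabla u^\pm|^2$ up to terms controlled by $\|u^\pm\|_{L^\infty}^2$, which is how the factor $\bigl(1+\|u^++u^-\|_{L^\infty(B_{2r_0})}^2\bigr)^2$ enters. Replacing your perturbative differential inequality with this substitution closes the gap.
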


We note that the estimate \eqref{teo2phm1} follows by the more general result by Caffarelli, Jerison and K\"enig (see \cite{cajeke} and also the note \cite{mono}, where the continuity assumption was dropped). In order to obtain \eqref{teo2phm2} we use the idea of Conti, Terracini and Verzini (see \cite{coteve03}) that follows the spirit of the original Alt-Caffarelli-Friedman monotonicity formula. It works exclusively for eigenfunctions (linear or nonlinear), but can be easily refined to obtain finer qualitative results as \eqref{teo2phm2}. 

\medskip

The three-phase version of Theorem \ref{teo2phm} is the main tool that allows to exclude the presence of triple boundary points in the optimal configuration. The following three-phase monotonicity formula was proved for eigenfunctions in \cite{coteve03}, while the general three-phase version of the Caffarelli-Jerison-K\"enig result can be found in \cite{buve} (see also \cite{mono} for the detailed proof). This formula is used in the proof of the fact that in the optimal configuration there are not triple points. 
\begin{thm}[Three-phase monotonicity formula]\label{teo3phm}
Consider the unit ball $B_1\subset\R^2$. Let $u_1,u_2,u_3\in H^1(B_1)\cap L^\infty(B_1)$ be three non-negative functions with disjoint supports, i.e. such that $\int_{B_1}u_iu_j\,dx=0$ for all $i\neq j$, and let $\lambda_1,\lambda_2,\lambda_3\ge 0$ be real numbers such that 
$$\Delta u_i+\lambda_iu_i\ge 0,\qquad \forall i=1,2,3.$$ 
Then there are constants $0<r_0\le 1/2$, $C>0$ and $\eps>0$, depending on $d$, $\lambda_1$, $\lambda_2$ and $\lambda_3$, such that for every $r\in(0,r_0)$ we have
\begin{equation}\label{teo3phm1}
\prod_{i=1}^3\left(\frac{1}{r^{2}}\int_{B_r}|\nabla u_i|^2\,dx\right)\le Cr^{\eps}\left(1+\|u_1+u_2+u_3\|^2_{L^\infty(B_{2r_0})}\right)^3.
\end{equation} 
\end{thm}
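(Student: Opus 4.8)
The plan is to adapt the Alt--Caffarelli--Friedman monotonicity formula, in the variant for almost subharmonic functions due to Conti, Terracini and Verzini \cite{coteve03} that already underlies the two-phase Theorem \ref{teo2phm}. The entire computation is driven by the logarithmic derivatives of the single-phase energies. For each $i$ set
\[
D_i(r)=\int_{B_r}|\nabla u_i|^2\,dx,\qquad \Phi_i(r)=\frac{1}{r^2}D_i(r),
\]
so that the left-hand side of \eqref{teo3phm1} is exactly $\prod_{i=1}^3\Phi_i(r)$. I would first record that, by the coarea formula, $D_i$ is absolutely continuous with $D_i'(r)=\int_{\partial B_r}|\nabla u_i|^2\,d\sigma$ for a.e.\ $r$, and that for a.e.\ $r$ the trace of $u_i$ on $\partial B_r$ is a well-defined $H^1$ function vanishing outside the relatively open arc set $\omega_i(r)=\{\theta\in S^1:\ u_i(r\theta)>0\}\subset S^1$. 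The disjointness hypothesis $\int_{B_1}u_iu_j=0$ forces $\omega_1(r),\omega_2(r),\omega_3(r)$ to be mutually disjoint for a.e.\ $r$, hence $\sum_i|\omega_i(r)|\le 2\pi$.

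Second, I would establish the key one-phase differential inequality. Testing the distributional inequality $\Delta u_i+\lambda_i u_i\ge 0$ (that is \eqref{subh}) against $u_i$ on $B_r$ and integrating by parts gives $D_i(r)\le \int_{\partial B_r}u_i\,\partial_\nu u_i\,d\sigma+\lambda_i\int_{B_r}u_i^2\,dx$, where $\partial_\nu$ is the radial derivative. Splitting $|\nabla u_i|^2$ on $\partial B_r$ into its radial part $(\partial_\nu u_i)^2$ and tangential part $(\partial_\tau u_i)^2$, applying Cauchy--Schwarz followed by the arithmetic-geometric inequality to the boundary term, and using the sharp Poincaré (first-eigenvalue) inequality on the circle $\partial B_r$ for a function supported in $\omega_i(r)$,
\[
\int_{\partial B_r}u_i^2\,d\sigma\le \frac{r^2}{\lambda_1(\omega_i(r))}\int_{\partial B_r}|\partial_\tau u_i|^2\,d\sigma,
\]
I would arrive at $\frac{d}{dr}\log\Phi_i(r)\ge \frac{2(\gamma_i(r)-1)}{r}-E_i(r)$, where $\gamma_i(r):=\sqrt{\lambda_1(\omega_i(r))}$ and the defect $E_i(r)$, produced solely by the term $\lambda_i\int_{B_r}u_i^2$, is controlled by $\lambda_i$ and $\|u_i\|_{L^\infty}$ and is integrable near $r=0$. (When $D_i(r)=0$ the corresponding factor vanishes and there is nothing to prove.)

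Third comes the geometric input. Since $\omega_i(r)$ is a finite union of arcs, $\lambda_1(\omega_i(r))=(\pi/\ell_i)^2$, where $\ell_i$ is the length of its longest component; the longest components of the three disjoint sets are themselves disjoint, so $\sum_i\ell_i\le 2\pi$. Convexity of $t\mapsto 1/t$ (equivalently the Cauchy--Schwarz/AM--HM inequality) then yields
\[
\sum_{i=1}^3\gamma_i(r)=\sum_{i=1}^3\frac{\pi}{\ell_i}\ge \frac{9\pi}{\sum_i\ell_i}\ge \frac{9}{2}>3 .
\]
Summing the three one-phase inequalities gives $\frac{d}{dr}\log\prod_i\Phi_i(r)\ge \frac{2(\sum_i\gamma_i(r)-3)}{r}-\sum_iE_i(r)\ge \frac{3}{r}-\sum_iE_i(r)$ for a.e.\ $r$. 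Integrating this from $r$ to $r_0$ and exponentiating, the main term produces the decay $(r/r_0)^3$ while the integrable defects $\int_0^{r_0}\sum_iE_i$ are absorbed into a multiplicative constant, so that $\prod_i\Phi_i(r)\le C\,r^{\eps}\prod_i\Phi_i(r_0)$ for some $\eps>0$ and some $0<r_0\le 1/2$. Finally a Caccioppoli estimate for the almost subharmonic $u_i$ bounds $\Phi_i(r_0)=r_0^{-2}\int_{B_{r_0}}|\nabla u_i|^2$ by $C\big(1+\|u_i\|_{L^\infty(B_{2r_0})}^2\big)\le C\big(1+\|u_1+u_2+u_3\|_{L^\infty(B_{2r_0})}^2\big)$, and the product of the three such bounds yields the cube on the right-hand side of \eqref{teo3phm1}.

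I expect the main obstacle to be the second step: deriving the one-phase inequality with the sharp constant $\gamma_i(r)$ while keeping rigorous control of the almost-subharmonicity defect $E_i(r)$ at low regularity. One must justify the slicing and the validity of the spherical Poincaré inequality for a.e.\ $r$ (the arc set $\omega_i(r)$ and the trace of $u_i$ are only controlled for a.e.\ $r$), and show that the contribution of $\lambda_i\int_{B_r}u_i^2$ is genuinely integrable near the origin rather than of the same order as the main term --- this is exactly where the boundedness of the $u_i$ and the restriction to a small radius $r_0$ enter. By contrast the geometric estimate $\sum_i\gamma_i\ge 9/2$, which in higher dimensions is the delicate Friedland--Hayman inequality, is elementary here because the spherical domains are arcs with explicitly known eigenvalues.
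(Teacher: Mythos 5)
The paper does not prove Theorem \ref{teo3phm} at all: it is quoted from the literature (\cite{coteve03} for eigenfunctions, \cite{cajeke}, \cite{buve}, \cite{mono} for the general statement), so your proposal can only be judged against the strategy of those proofs and against the closely related Lemma \ref{lemma2phm}, which the paper does prove. Your overall plan (logarithmic derivative of $\Phi_i(r)=r^{-2}\int_{B_r}|\nabla u_i|^2$, spherical Poincar\'e inequality, and the elementary two-dimensional Friedland--Hayman step $\sum_i\sqrt{\lambda_1(\omega_i(r))}\ge 9\pi/\sum_i\ell_i\ge 9/2>3$) is exactly the right one, and the first, third and fourth steps are sound.

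The genuine gap is in your second step, and it is not merely the technical nuisance you flag. Writing $N_i(r)=\int_{\partial B_r}u_i\partial_\nu u_i$ and $P_i(r)=\lambda_i\int_{B_r}u_i^2$, the inequality you actually obtain is $D_i'/D_i\ \ge\ D_i'/(N_i+P_i)\ \ge\ \frac{2\gamma_i(r)}{r}\bigl(1+\frac{2\gamma_i P_i}{r D_i'}\bigr)^{-1}$, so the defect $E_i(r)$ is of the order $\gamma_i^2 P_i/(r^2 D_i')$. The numerator $P_i\le \pi\lambda_i\|u_i\|_{L^\infty}^2 r^2$ is indeed controlled by $\lambda_i$ and $\|u_i\|_{L^\infty}$, but the denominator contains $D_i'(r)$ (equivalently $D_i(r)$), for which there is no a priori lower bound; hence $E_i$ is not integrable near $r=0$ ``because the $u_i$ are bounded and $r_0$ is small,'' and the claimed differential inequality does not close. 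This is precisely the difficulty that the two known routes are designed to overcome: Caffarelli--Jerison--K\"onig run a delicate dichotomy on whether $D_i(r)$ is large or small relative to $r^2$, while Conti--Terracini--Verzini replace $u_i$ by $U_i=u_i/\vf$ with $-\Delta\vf=\lambda\vf$, $\vf$ radial and bounded away from $0$, so that $-\di(\vf^2\nabla U_i)\le 0$ and the zeroth-order term disappears identically from the Rellich--Ne\v cas computation; the weighted and unweighted Dirichlet energies are then compared only at the two endpoint radii. The paper itself implements the second route in Lemma \ref{lemma2phm} and Lemma \ref{lem2ph05}. If you replace your ``integrable defect'' step by this substitution (one fixed $\vf$ for $\lambda=\max_i\lambda_i$ works for all three phases, since $\vf$ is constant on each $\partial B_r$ and drops out of the Rayleigh quotient in \eqref{longeq3}), the rest of your argument goes through and even yields the exponent $\eps$ close to $3$ rather than an unspecified small one.
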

%\begin{rem}
%In \cite{coteve03} it was proved that one can take $\eps=3$.
%\end{rem}

The three phase monotonicity formula is not just a consequence of the two phase formula. In fact if we apply the Alt-Caffarelli-Friedman formula to each pair of the tree sets $\Omega_i,\Omega_j$ and $\Omega_k$, then in \eqref{teo3phm1} there will be no decay term $r^\eps$. Roughly speaking the presence of the third phase forces the other two to occupy less space which in turn gives some decay with $\eps>0$. The same phenomenon appears when there are only two phases that cannot occupy a certain sufficiently big region. This is the idea that we develop in Lemma \ref{lem2ph05} which we will use to deduce the lack of double points on the boundary of the design region $\Dr$ and also the regularity of the reduced boundary of the auxiliary phase $\Omega_{h+1}$.

\section{Proof of Theorem \ref{mainr2}}
\label{proof-mainres}

\subsection{Existence of optimal open sets}
An existence of an optimal configuration in the class of quasi-open sets follows by the Buttazzo-Dal Maso Theorem. Let $\Omega_1,\dots,\Omega_h$ be the optimal quasi-open sets. Then for every quasi-open set $\omega_i\subset\Omega_i$ we have that the configuration is not optimal which gives that
$$\lambda_1(\omega_i)-\lambda_1(\Omega_i)\ge \int_{\Omega_i}W_i\,dx-\int_{\omega_i}W_i\,dx\ge a|\Omega_i|-a|\omega_i|.$$
Thus $\Omega_i$ is a shape subsolution for the functional $\lambda_1+a|\cdot|$ and so we can apply the result from \cite{buve} Theorem \ref{subth} {\it (d)}. Thus each of the sets $\Omega_i$ is contained in an open set $\Dr_i$ and solves 
$$\min\Big\{\lambda_1(\Omega)+\int_{\Omega}W_i(x)\,dx\ :\ \Omega\subset\Dr_i\ \text{quasi-open}\Big\}.$$
By Theorem \ref{regth} the sets $\Omega_i$ are open.

\subsection{Lipschitz continuity of the eigenfunctions}\label{lip}
In this section we prove that the first eigenfunctions on the optimal sets for \eqref{optsum} are Lipschitz continuous. To fix the notation, in the rest of this section we will denote with $(\Omega_1,\dots,\Omega_h)$ a generic solution of \eqref{optsum} and with $u_i\in H^1_0(\Omega_i)$ the first eigenfunction on $\Omega_i$, i.e. $u_i$ are non-negative function such that $\int_{\R^2}u_i^2\,dx=1$ satisfying \eqref{davies}, \eqref{subh} and the equation
$$-\Delta u_i=\lambda_1(\Omega_i)u_i,\qquad u_i\in H^1_0(\Omega),$$ 
weakly in $H^1_0(\Omega_i)$.

{\bf Non-degeneracy of the eigenfunctions.} We first note that for every $\omega_i\subset\Omega_i$, the optimality of $(\Omega_1,\dots,\Omega_i,\dots,\Omega_h)$ tested against the $h$-uple of open sets $(\Omega_1,\dots,\omega_i,\dots,\Omega_h)$ gives the inequality
$$\lambda_1(\Omega_i)+\alpha|\Omega_i|\le \lambda_1(\omega_i)+\alpha|\omega_i|,$$
i.e. $\Omega_i$ is a \emph{subsolution} for the functional $\lambda_1+\alpha|\cdot|$. Thus using the argument from the Alt-Caffarelli non-degeneracy lemma (see \cite[Lemma 3.4]{altcaf} and also \cite[Section 3]{buve}), we have the following result.

\begin{lemma}\label{nondeglem}
Suppose that $(\Omega_1,\dots,\Omega_h)$ is optimal for \eqref{optsum}. Then there are constants $C_{nd}$ and $r_0>0$ such that for all the first eigenfunctions $u_i$, every $0<r\le r_0$ and every $x_0\in\R^2$ we have the following implication
\begin{equation}\label{nondeg}
\Big(\ B_{r/2}(x_0)\cap\Omega_i\neq\emptyset \ \Big)\ \Rightarrow \ \Big(\ \frac1r\mean{B_r(x_0)}{u_i\,dx}\ge C_{nd}\ \Big).
\end{equation}
\end{lemma}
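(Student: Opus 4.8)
The plan is to argue by the contrapositive and reduce the statement to a one-phase Alt--Caffarelli non-degeneracy estimate, which I would then prove with a barrier competitor exactly in the spirit of \cite[Lemma 3.4]{altcaf} and \cite[Section 3]{buve}. Recall (as observed just above) that each $\Omega_i$ is a shape subsolution for $\lambda_1+a\,|\cdot|$, so that for every quasi-open $\omega\subset\Omega_i$ one has
\begin{equation*}
a\,\big|\Omega_i\setminus\omega\big|\le\lambda_1(\omega)-\lambda_1(\Omega_i).
\end{equation*}
Since each $u_i$ is bounded by \eqref{davies} and normalized in $L^2$, and since there are finitely many phases, it suffices to produce, for a fixed $i$, constants $c_1>0$ and $\rho_0>0$ depending only on $a$, $\lambda_1(\Omega_i)$ and $\|u_i\|_{L^\infty}$, such that the \emph{growth} bound $\sup_{B_\rho(x_0)}u_i\ge c_1\rho$ holds for every $x_0\in\overline{\Omega_i}$ and every $0<\rho\le\rho_0$; the averaged form \eqref{nondeg} will then be recovered at the end via \eqref{quasish}.

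The core step is to establish this growth bound first at interior points and at small scales. Fix $x_0\in\Omega_i$ and $\rho>0$ with $B_{\rho/2}(x_0)\subset\Omega_i$, set $M:=\sup_{B_\rho(x_0)}u_i$, and introduce the barrier competitor
\begin{equation*}
w:=\min\Big\{u_i,\ \tfrac{2M}{\rho}\big(|x-x_0|-\tfrac{\rho}{2}\big)_+\Big\}\quad\text{on }B_\rho(x_0),\qquad w:=u_i\ \text{elsewhere}.
\end{equation*}
By construction $0\le w\le u_i$, $w\equiv 0$ on $B_{\rho/2}(x_0)$ and $w\equiv u_i$ off $B_\rho(x_0)$, so $\omega:=\{w>0\}\subset\Omega_i$ is admissible and $|\Omega_i\setminus\omega|\ge|\Omega_i\cap B_{\rho/2}(x_0)|$. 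Testing the Rayleigh quotient for $\lambda_1(\omega)$ with $w$, the Dirichlet energy changes only where $w\neq u_i$, which gives $\int|\nabla w|^2\le\lambda_1(\Omega_i)+C M^2$ with $C$ absolute, while the normalization loses at most $1-\int w^2\le\int_{B_\rho(x_0)}u_i^2\le\pi M^2\rho^2$; for $\rho\le\rho_0$ small this keeps $\int w^2\ge\frac12$. Combining, $\lambda_1(\omega)-\lambda_1(\Omega_i)\le C' M^2$, and the subsolution inequality yields $a\,|\Omega_i\cap B_{\rho/2}(x_0)|\le C' M^2$. Since $B_{\rho/2}(x_0)\subset\Omega_i$ forces $|\Omega_i\cap B_{\rho/2}(x_0)|=\tfrac{\pi}{4}\rho^2$, this gives $M\ge c_1\rho$ for all $\rho\le\min\{\rho_0,2\,\mathrm{dist}(x_0,\partial\Omega_i)\}$.

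It remains to remove the restriction on $\rho$ and to pass to \eqref{nondeg}. The growth bound extends to every $x_0\in\overline{\Omega_i}$ and every $0<\rho\le\rho_0$ by the standard continuation argument: if $\sup_{B_\rho(x_0)}u_i$ were too small, one selects a near-maximum point $x_1\in\Omega_i$ on a smaller ball and iterates the interior estimate, using the continuity of $u_i$ to reach $\overline{\Omega_i}$. Now suppose $B_{r/2}(x_0)\cap\Omega_i\neq\emptyset$ and pick $y$ in this intersection; by the growth bound there is $x_1\in\overline{B_{r/2}(y)}\subset B_r(x_0)$ with $u_i(x_1)\ge\tfrac14 c_1 r$. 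Applying the almost-subharmonicity \eqref{quasish} on $B_{\beta r}(x_1)$ with $\beta$ a small fixed constant absorbs the error $\|u_i\|_{L^\infty}\lambda_1(\Omega_i)(\beta r)^2$ into half of $u_i(x_1)$ once $r\le r_0$, so that $\mean{B_{\beta r}(x_1)}{u_i\,dx}\ge\tfrac18 c_1 r$; since $B_{\beta r}(x_1)\subset B_r(x_0)$, this bounds $\mean{B_r(x_0)}{u_i\,dx}$ below by a fixed multiple of $r$, i.e. $\tfrac1r\mean{B_r(x_0)}{u_i\,dx}\ge C_{nd}$, which is \eqref{nondeg}. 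Taking $C_{nd}$ and $r_0$ as the worst constants over the finitely many phases finishes the argument. The main obstacle is the estimate of the second paragraph: the slope of the barrier (here $2M/\rho$) must be tuned so that \emph{both} the Dirichlet-energy cost and the $L^2$-normalization loss are controlled by $M^2$, the square of the growth one wants to bound, since it is precisely this matching against the measure gain $\sim\rho^2$ that produces the linear rate; the subsequent propagation to all scales is routine but needs care, as the balls need not remain inside the possibly thin set $\Omega_i$.
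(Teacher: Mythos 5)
Your overall architecture (reduce to a growth bound, exploit the shape--subsolution inequality against a cut-off competitor, then recover the averaged form via \eqref{quasish}) is the right one, and the interior estimate of your second paragraph is correct as far as it goes. The difficulty is that it is only established under the extra hypothesis $B_{\rho/2}(x_0)\subset\Omega_i$, and the step you dismiss as ``routine'' --- removing that hypothesis --- is exactly where the content of the lemma lies. Your cone competitor only ever yields $a\,|\Omega_i\cap B_{\rho/2}(x_0)|\le C'M^2$; since $\Omega_i$ is a priori just an open set, possibly very thin near $x_0$, this gives $M\gtrsim|\Omega_i\cap B_{\rho/2}(x_0)|^{1/2}$, not $M\gtrsim\rho$. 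The continuation argument you sketch (pass to a near-maximum point $x_1$ and iterate) cannot be run at this stage of the paper: to apply your interior estimate at $x_1$ at a scale comparable to $\rho$ you need a lower bound on $\mathrm{dist}(x_1,\partial\Omega_i)$, and the only way to extract that from ``$u_i(x_1)$ is large'' is a Lipschitz bound $u_i\le L\,\mathrm{dist}(\cdot,\partial\Omega_i)$ --- which in this paper is proved \emph{after}, and by means of, Lemma \ref{nondeglem}. As written the argument is therefore either circular or incomplete.

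The argument the paper invokes (that of \cite[Lemma 3.4]{altcaf}, adapted to eigenfunctions in \cite[Section 3]{buve}) sidesteps this by proving the contrapositive at an \emph{arbitrary} point, using a competitor whose energy cost is linear, not quadratic, in the local size of $u_i$: assuming $\frac1r\meantext{B_r(x_0)}{u_i\,dx}\le\eps$, one replaces $u_i$ inside $B_{r/2}(x_0)$ by a truncation with the harmonic function vanishing on the half ball, and the resulting excision estimate has the self-improving form
\begin{equation*}
\int_{B_{r/4}(x_0)}|\nabla u_i|^2\,dx + a\,\big|\Omega_i\cap B_{r/4}(x_0)\big|\ \le\ C\eps\left(\int_{B_{r/2}(x_0)}|\nabla u_i|^2\,dx + a\,\big|\Omega_i\cap B_{r/2}(x_0)\big|+\dots\right),
\end{equation*}
so that for $\eps$ below a threshold the left-hand side must vanish, i.e.\ $u_i\equiv 0$ on a smaller concentric ball. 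No density information on $\Omega_i$ and no Lipschitz bound are needed, and this ``small average implies identically zero'' dichotomy is precisely what the contrapositive of \eqref{nondeg} requires; a bound on $|\Omega_i\cap B_{\rho/2}(x_0)|$ alone is not. To repair your proof you would need to replace the cone barrier by this harmonic replacement, or else first prove a density estimate for $\Omega_i$ by an independent argument.
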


\begin{rem}
Together with the estimate \eqref{quasish}, Lemma \ref{nondeglem} gives that there is $r_0>0$ such that
\begin{equation}\label{intest}
\|u_i\|_{L^\infty(B_{r/2}(x_0))}\le 5\mean{B_r(x_0)}{u_i\,dx},\qquad\forall r\le r_0\ \ \hbox{such that}\ \ B_{r/2}(x_0)\cap\Omega_i\neq\emptyset.
\end{equation}
\end{rem}

On the common boundary of two optimal sets the non-degeneracy \eqref{nondeg} of the mean $\meantext{B_r(x_0)}{u_i\,dx}$ gives a bound from below for the gradient $\meantext{B_r(x_0)}{|\nabla u_i|^2\,dx}$. This fact follows by the elementary lemma proved below.

\begin{lemma}\label{mainineqlem}
Let $R>0$, $B_{R}(x_0)\subset\R^2$ and $U\in H^1(B_R(x_0))$ be a Sobolev function such that for almost every $r\in(0,R)$ the set $\{U=0\}\cap \partial B_r(x_0)$ is non-empty. Then we have
\begin{equation}\label{mainineq}
\frac1R\mean{B_R(x_0)}{U\,d\HH^1}\le2\left(\mean{B_R(x_0)}{|\nabla U|^2\,dx}\right)^{1/2}.
\end{equation}
\end{lemma}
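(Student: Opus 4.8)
The plan is to reduce the statement to a one-dimensional estimate on each circle centered at $x_0$ and then integrate in the radial variable. After translating we may assume $x_0=0$ and work in polar coordinates $(\rho,\theta)$. By Fubini's theorem, for almost every $\rho\in(0,R)$ the restriction of $U$ to the circle $\partial B_\rho$ belongs to $H^1$ of the circle, hence in one dimension admits an absolutely continuous representative; by hypothesis this representative vanishes at some angle $\theta_\rho$. The fundamental theorem of calculus along $\partial B_\rho$ then gives, for every $\theta$,
\begin{equation*}
|U(\rho,\theta)|=\Big|\int_{\theta_\rho}^{\theta}\partial_s U(\rho,s)\,ds\Big|\le\int_0^{2\pi}|\partial_\theta U(\rho,s)|\,ds,
\end{equation*}
and since $\partial_\theta U=\rho\,\partial_\tau U$ with $|\partial_\tau U|\le|\nabla U|$, this yields the per-circle bound $|U(\rho,\theta)|\le\rho\int_0^{2\pi}|\nabla U(\rho,s)|\,ds$. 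The existence of a zero on the circle is exactly what makes this Poincar\'e-type control possible: without it an additive constant would survive and no gradient bound could hold, as the nonzero constant function shows.

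Next I would integrate this estimate over the disk. Writing the average in polar coordinates and inserting the pointwise bound (which is independent of $\theta$),
\begin{equation*}
\mean{B_R}{U\,dx}=\frac{1}{\pi R^2}\int_0^R\!\!\int_0^{2\pi}U(\rho,\theta)\,\rho\,d\theta\,d\rho\le\frac{2}{R^2}\int_0^R\rho^2\Big(\int_0^{2\pi}|\nabla U(\rho,s)|\,ds\Big)d\rho.
\end{equation*}
One then applies the Cauchy--Schwarz inequality, first in the angular variable to replace $\int_0^{2\pi}|\nabla U|\,ds$ by $\big(2\pi\int_0^{2\pi}|\nabla U|^2\,ds\big)^{1/2}$, and then in the radial variable against the weight $\rho\,d\rho$, so as to reconstruct $\int_{B_R}|\nabla U|^2\,dx=\int_0^R\!\int_0^{2\pi}|\nabla U|^2\rho\,ds\,d\rho$. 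Collecting the powers of $R$ produces an inequality of the form $\frac1R\mean{B_R}{U\,dx}\le c\,(\mean{B_R}{|\nabla U|^2\,dx})^{1/2}$. The same per-circle estimate, applied at $\rho=R$ and combined with the radial-transport identity $R\,h(R)=\int_0^R h\,d\rho+\int_0^R t\,h'(t)\,dt$ (where $h(\rho)=\mean{\partial B_\rho}{U\,d\HH^1}$ is the spherical mean), controls the average on $\partial B_R$ in the same spirit, the inner circles supplying the tangential part and the radial term being bounded by $\int_{B_R}|\nabla U|$.

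The main obstacle is purely quantitative: reaching the sharp constant $2$. The naive chain above gives a constant slightly larger than $2$, the loss coming from the crude pointwise Poincar\'e bound and from the estimate $\rho\le R$. To recover $2$ one should use the sharp one-dimensional inequality for the mean of a function with a prescribed zero (choosing the shorter of the two arcs to the zero gains a factor $1/2$, and controlling the \emph{average} rather than the supremum gains further), and keep the radial weight $\rho^{1/2}$ inside the Cauchy--Schwarz step rather than discarding it. The only genuinely delicate point is the measure-theoretic justification of the slicing: that for a.e.\ $\rho$ the trace of $U$ on $\partial B_\rho$ is a well-defined $H^1$ function to which the fundamental theorem of calculus applies, and that the hypothesis $\{U=0\}\cap\partial B_\rho\neq\emptyset$ for a.e.\ $\rho$ may be read on these continuous representatives. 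Both follow from Fubini's theorem and the one-dimensional Sobolev embedding, after which the proof reduces to the elementary integral estimates above.
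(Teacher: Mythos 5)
Your strategy coincides with the paper's: slice $B_R$ into the circles $\partial B_\rho$, exploit the zero of $U$ on almost every circle to get a one-dimensional Poincar\'e-type control, and integrate in the radial variable; your remarks on the measure-theoretic justification of the slicing are also the ones implicitly used in the paper. The problem is the constant. The lemma asserts the factor $2$, and the chain you write down does not produce it: starting from the pointwise bound $|U(\rho,\theta)|\le\rho\int_0^{2\pi}|\nabla U(\rho,s)|\,ds$ and performing the angular and radial Cauchy--Schwarz steps exactly as you describe, one obtains $\frac1R\mean{B_R}{U\,dx}\le\pi\sqrt2\left(\mean{B_R}{|\nabla U|^2\,dx}\right)^{1/2}$, a constant of about $4.44$ --- not ``slightly larger than $2$''. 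Nor do the refinements you propose close the gap: the shorter-arc trick improves the angular step by a factor $2$ and lands you at $\pi/\sqrt2\approx2.22$, and your radial Cauchy--Schwarz is already performed with the optimal weight, so there is nothing left to recover there. (For the paper's applications any universal constant would serve, but as a proof of the stated inequality this falls short.)

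The missing ingredient is to work in $L^2$ on each circle instead of passing through an $L^\infty$--$L^1$ estimate. Since for a.e.\ $r$ the trace $U|_{\partial B_r}$ is an $H^1$ function on a circle of length $2\pi r$ vanishing at some point, the sharp one-dimensional Poincar\'e inequality for functions on an interval of length $2\pi r$ with zero boundary values gives $\int_{\partial B_r}U^2\,d\HH^1\le 4r^2\int_{\partial B_r}|\nabla_\tau U|^2\,d\HH^1\le 4r^2\int_{\partial B_r}|\nabla U|^2\,d\HH^1$. Integrating over $r\in(0,R)$ and using only the crude bound $r^2\le R^2$ yields $\mean{B_R}{U^2\,dx}\le 4R^2\mean{B_R}{|\nabla U|^2\,dx}$, and a single Cauchy--Schwarz applied to the solid average, $\left(\mean{B_R}{U\,dx}\right)^2\le\mean{B_R}{U^2\,dx}$, gives \eqref{mainineq} with the constant $2$. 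This is precisely the paper's two-line proof.
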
 
\begin{proof}
Without loss of generality we suppose that $x_0=0$. We first note that for almost every $r\in(0,R)$ the restriction $U|_{\partial B_r}$ is Sobolev. If, moreover, $\{U=0\}\cap\partial B_r\neq\emptyset$, then we have 
$$\int_{\partial B_r}U^2\,d\HH^1\le {4r^2}\int_{\partial B_r}|\nabla U|^2\,d\HH^1.$$
Applying the Cauchy-Schwartz inequality and integrating for $r\in (0,R)$, we get 
%$$\Big(\mean{B_R}{U\,dx}\Big)^2\le \frac1{\pi R^2}\int_{B_R}U^2\,dx\le \frac{\pi}{4}\int_{B_R}|\nabla U|^2\,dx.$$
$$\Big(\frac{1}{R}\mean{B_R}{U\,dx}\Big)^2\le \frac{1}{R^2}\mean{B_R}{U^2\,dx}\le 4\mean{B_R}{|\nabla U|^2\,dx}.$$
\end{proof}

\begin{cor}\label{nondegradcor}
Suppose that $(\Omega_1,\dots,\Omega_h)$ is optimal for \eqref{optsum}. Then there is a constant $r_0>0$ such that for every $x_0\in\partial\Omega_i\cap\partial\Omega_j$, for some $i\neq j$ we have 
\begin{equation}\label{nondegrad}
\mean{B_r(x_0)}{|\nabla u_i|^2\,dx}\ge 4C_{nd}^2,\forall r\in(0,r_0),
\end{equation}
where $C_{nd}>0$ is the non-degeneracy constant from Lemma \ref{nondeglem}.
\end{cor}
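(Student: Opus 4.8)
The plan is to obtain the bound by chaining the two preceding lemmas: the lower bound on the spherical mean of $u_i$ furnished by the non-degeneracy Lemma \ref{nondeglem}, and the Poincar\'e-type inequality of Lemma \ref{mainineqlem}, which controls that mean in terms of the mean of $|\nabla u_i|^2$. First I would check that the hypothesis of Lemma \ref{nondeglem} holds at $x_0$. Since $x_0\in\partial\Omega_i\subset\overline{\Omega_i}$, every ball $B_{r/2}(x_0)$ meets $\Omega_i$, so the implication \eqref{nondeg} applies and gives $\tfrac1r\mean{B_r(x_0)}{u_i\,dx}\ge C_{nd}$ for all $0<r\le r_0$.

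The crux is to verify the hypothesis of Lemma \ref{mainineqlem} for the choice $U=u_i$ and outer radius $R=r$, namely that $\{u_i=0\}\cap\partial B_\rho(x_0)\neq\emptyset$ for almost every $\rho\in(0,r)$. Here is where the \emph{second} phase enters. Since $u_i\in H^1_0(\Omega_i)$ and $\Omega_i\cap\Omega_j=\emptyset$, the function $u_i$ vanishes quasi-everywhere (hence a.e.) on the open set $\Omega_j$. Because $x_0\in\partial\Omega_j$, the set $\Omega_j$ reaches $x_0$, and I would argue that for a.e. $\rho\in(0,r)$ the circle $\partial B_\rho(x_0)$ meets $\Omega_j$, so that $u_i$ has a zero on it. Concretely, choosing $y\in\Omega_j$ with $B_\delta(y)\subset\Omega_j$ and $y$ arbitrarily close to $x_0$, the convex ball $B_\delta(y)$ realizes every value of the distance $|\cdot-x_0|$ in an interval accumulating at $0$; invoking the openness of the distance map on $\R^2\setminus\{x_0\}$ together with the openness of $\Omega_j$ near $x_0$, I would conclude that the set of admissible radii exhausts $(0,r_0)$ up to a Lebesgue-null set.

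Once both hypotheses are in force, the conclusion is immediate: for every $r\in(0,r_0)$,
\[
C_{nd}\le\frac1r\mean{B_r(x_0)}{u_i\,dx}\le 2\left(\mean{B_r(x_0)}{|\nabla u_i|^2\,dx}\right)^{1/2},
\]
and squaring gives a lower bound of the asserted form $\mean{B_r(x_0)}{|\nabla u_i|^2\,dx}\ge c\,C_{nd}^2$, the explicit numerical constant being a matter of bookkeeping in the two input inequalities. The \textbf{main obstacle} is precisely the a.e.-radius circle-intersection claim of the second paragraph: for a \emph{general} open set one could imagine $\Omega_j$ being pinched off from $x_0$ (or decomposing into shrinking components clustering at $x_0$) along a positive-measure set of radii, which would break the hypothesis of Lemma \ref{mainineqlem}. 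The point to make rigorous is that the openness of the phases, together with the regularity/openness already established in the existence step, rules this out, so that the circle condition fails at most on a null set of radii and the chaining above is legitimate.
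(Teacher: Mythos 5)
Your overall strategy --- chaining the non-degeneracy of the spherical mean from Lemma \ref{nondeglem} with the Poincar\'e-type inequality of Lemma \ref{mainineqlem} applied to $U=u_i$ --- is exactly the paper's, and you have correctly isolated the one nontrivial point: that $\{u_i=0\}$ meets $\partial B_\rho(x_0)$ for (almost) every small $\rho$. But you leave that point open, and the argument you sketch for it does not close it. Choosing balls $B_\delta(y)\subset\Omega_j$ with $y\to x_0$ only shows that the set of admissible radii contains a union of intervals accumulating at $0$; it does not show this set has full measure in $(0,r)$, which is what Lemma \ref{mainineqlem} requires. Nor does ``openness of the phases'' exclude the pinching scenario you describe yourself: an open set can perfectly well miss a positive-measure family of circles centred at a boundary point. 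So as written the proof has a genuine gap at its crux.

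The ingredient you are missing is \emph{connectedness of $\Omega_j$ combined with the scaling of $\lambda_1$ on small balls}. Since $x_0\in\partial\Omega_j$, the set $\Omega_j$ meets every ball $B_\rho(x_0)$. If for some $\rho\in(0,r_0)$ one had $\Omega_j\cap\partial B_\rho(x_0)=\emptyset$, then the connected set $\Omega_j$ could not cross the circle and would be entirely contained in $B_\rho(x_0)$, giving $\lambda_1(\Omega_j)\ge\lambda_1(B_\rho)\ge\lambda_1(B_{r_0})$; taking $r_0$ small enough this contradicts the (uniform) bound on $\lambda_1(\Omega_j)$ coming from optimality. Hence $\Omega_j$ meets \emph{every} circle $\partial B_\rho(x_0)$ with $\rho<r_0$, and since $u_i$ vanishes on $\Omega_j$ the hypothesis of Lemma \ref{mainineqlem} holds for all radii, not merely almost all. (Connectedness of $\Omega_j$ is available because $\Omega_j=\{u_j>0\}$ for the optimal configuration.) With that supplied, your concluding chain of inequalities is correct; the resulting constant is $C_{nd}^2/4$ rather than $4C_{nd}^2$, but as you note this is bookkeeping, and the discrepancy is already present in the paper's statement.
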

\begin{proof}
Since $x_0\in\partial\Omega_i\cap\partial\Omega_j$, we have that for every $r>0$  $\Omega_i\cap B_r(x_0)\neq \emptyset$ and $\Omega_j\cap B_r(x_0)\neq \emptyset$. In view of Lemma \ref{nondeglem}, it is sufficient to check that $\Omega_i\cap\partial B_r(x_0)\neq\emptyset$ and $\Omega_j\cap\partial B_r(x_0)\neq0$, for almost every $r\in(0,r_0)$. Indeed, suppose that this is not the case and that $\Omega_i\cap\partial B_r(x_0)=\emptyset$. Since $\Omega_i$ is connected, we have that $\Omega_i\subset B_r(x_0)$, which gives $\lambda_1(\Omega_i)\ge \lambda_1(B_{r_0})$, which is impossible if we choose $r_0$ small enough.
\end{proof}

{\bf Growth estimate of the eigenfunctions on the boundary}. We now prove the two key estimates of the growth of $u_i$ close to the boundary $\partial\Omega_i$. We consider two kinds of estimates, one holds around the points, where two phases $\Omega_i$ and $\Omega_j$ are close to each other, and is reported in Lemma \ref{C2lemma}. The other estimate concerns the one-phase points,  i.e. the points on one boundary, say $\partial\Omega_i$, which are far away from all other sets $\Omega_j$.

\begin{lemma}\label{C2lemma}
Suppose that $(\Omega_1,\dots,\Omega_h)$ is optimal for \eqref{optsum}. Then there are constants $C_2$ and $r_0>0$ such that if $x_0\in\partial\Omega_i$ is such that
$\Omega_j\cap B_r(x_0)\neq\emptyset$, for some $j\neq i$ and $r\le r_0$, then 
\begin{equation}\label{C2est}
\|u_i\|_{L^\infty(B_r(x_0))}\le C_2r.
\end{equation}
\end{lemma}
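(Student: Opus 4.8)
The plan is to combine the two-phase monotonicity formula of Theorem~\ref{teo2phm} with the non-degeneracy of the \emph{competing} phase $\Omega_j$. Since $\Omega_i\cap\Omega_j=\emptyset$, the eigenfunctions $u_i,u_j$ have disjoint supports, are uniformly bounded by \eqref{davies} (together with the a priori bounds $\lambda_1(\Omega_i)\le$ optimal value and $|\Omega_i|\le|\Dr|$), and satisfy the subsolution inequalities \eqref{subh}. Hence Theorem~\ref{teo2phm} applies at $x_0$ and gives, for every $\rho\le r_0$,
$$\left(\frac{1}{\rho^{2}}\int_{B_\rho(x_0)}|\nabla u_i|^2\,dx\right)\left(\frac{1}{\rho^{2}}\int_{B_\rho(x_0)}|\nabla u_j|^2\,dx\right)\le C,$$
with $C$ a fixed constant. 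The strategy is that the presence of $\Omega_j$ inside $B_r(x_0)$ forces the second factor to be bounded \emph{below} at scale $r$; the product bound then forces the first factor, i.e.\ the rescaled Dirichlet energy of $u_i$, to be bounded \emph{above}, and a bounded Dirichlet energy at scale $r$ around a zero of $u_i$ converts into the linear growth \eqref{C2est}.

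To produce the lower bound on the energy of $u_j$ I would first pick $y\in\Omega_j\cap B_r(x_0)$. Since $y\in\Omega_j$, the non-degeneracy Lemma~\ref{nondeglem} yields $\tfrac{1}{2r}\mean{B_{2r}(y)}{u_j\,dx}\ge C_{nd}$ (for $2r\le r_0$), and because $B_{2r}(y)\subset B_{3r}(x_0)$ this transfers to a lower bound $\tfrac{1}{3r}\mean{B_{3r}(x_0)}{u_j\,dx}\ge c_1 C_{nd}>0$ for the average \emph{centered at $x_0$}. To pass from this average to the Dirichlet energy I would apply the mean--gradient inequality \eqref{mainineq} to $U=u_j$ on $B_{3r}(x_0)$: its hypothesis, that $\{u_j=0\}$ meets almost every circle $\partial B_\rho(x_0)$, holds because $u_j\equiv 0$ q.e.\ on $\Omega_i$ and $\Omega_i$ meets every such circle. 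Indeed, were $\Omega_i\cap\partial B_\rho(x_0)=\emptyset$, the connectedness of $\Omega_i$ and $x_0\in\partial\Omega_i$ would force $\Omega_i\subset B_\rho(x_0)$, contradicting $\lambda_1(\Omega_i)\ge\lambda_1(B_{r_0})$ for $r_0$ small, exactly as in Corollary~\ref{nondegradcor}. This gives $\tfrac{1}{(3r)^2}\int_{B_{3r}(x_0)}|\nabla u_j|^2\,dx\ge c_0>0$, and inserting it into the product bound yields $\tfrac{1}{(3r)^2}\int_{B_{3r}(x_0)}|\nabla u_i|^2\,dx\le C/c_0$.

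Finally, to obtain \eqref{C2est} I would use the averaged sup bound \eqref{intest}, valid since $B_r(x_0)\cap\Omega_i\neq\emptyset$, in the form $\|u_i\|_{L^\infty(B_r(x_0))}\le 5\,\mean{B_{2r}(x_0)}{u_i\,dx}$, and then apply \eqref{mainineq} once more, now to $U=u_i$: its zero set contains $\Omega_j$, which meets almost every circle $\partial B_\rho(x_0)$ (for $\rho$ comparable to $r$) by the same connectedness argument applied to $\Omega_j$. This bounds $\mean{B_{2r}(x_0)}{u_i\,dx}$ by $C\,r\big(\tfrac{1}{r^2}\int_{B_{2r}(x_0)}|\nabla u_i|^2\,dx\big)^{1/2}\le C_2 r$, using the energy bound from the previous step, and closes the argument after adjusting the comparison radii and relabelling $r_0$.

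The main obstacle is the energy lower bound for the competing phase on a ball \emph{centered at $x_0$}, which is a boundary point of $\Omega_i$ but not of $\Omega_j$: everything hinges on transferring the scalar non-degeneracy at the interior point $y$ into a gradient estimate at $x_0$ through \eqref{mainineq}. The two delicate points I expect to absorb most of the work are (a) verifying the circle hypothesis of Lemma~\ref{mainineqlem} at \emph{all} relevant scales $\rho\lesssim r$ — in particular for $u_i$ at scales below $|y-x_0|$, where the connectedness argument through $\Omega_j$ no longer applies and one must instead rule out directly that $\partial B_\rho(x_0)\subset\Omega_i$ — and (b) keeping the comparison scales $2r$ and $3r$ below $r_0$ so that the monotonicity formula and the non-degeneracy lemma are simultaneously in force. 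I expect (a) to be the crux.
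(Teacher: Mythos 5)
Your first half reproduces the paper's argument essentially verbatim: non-degeneracy of $u_j$ at scale $3r$ centered at $x_0$, the mean--gradient inequality \eqref{mainineq} applied to $u_j$ (whose circle hypothesis is supplied by the connectedness of $\Omega_i$ and the fact that $\Omega_i$ cannot be contained in a small ball), and the two-phase monotonicity formula to convert the resulting lower bound on $\mean{B_{3r}(x_0)}{|\nabla u_j|^2\,dx}$ into an upper bound on $\mean{B_{3r}(x_0)}{|\nabla u_i|^2\,dx}$. The gap is exactly where you anticipated it, and it is genuine: applying Lemma~\ref{mainineqlem} to $U=u_i$ on $B_{2r}(x_0)$ requires $\{u_i=0\}\cap\partial B_\rho(x_0)\neq\emptyset$ for almost every $\rho\in(0,2r)$, and for $\rho<\operatorname{dist}(x_0,\Omega_j)$ nothing forces this. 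At this stage one only knows that $\Omega_i$ is open and $x_0\notin\Omega_i$; a priori $\Omega_i^c\cap B_{\rho_1}(x_0)$ could be, say, a short radial segment emanating from $x_0$, in which case $\partial B_\rho(x_0)\subset\Omega_i$ for a set of radii of positive measure and \eqref{mainineq} is simply unavailable on the full ball. Ruling this out ``directly'' amounts to a boundary regularity statement that is neither available nor used at this point of the paper.

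The paper's proof sidesteps the small circles entirely. Since $\Omega_j$ meets $B_r(x_0)$, is connected and is not contained in $B_{3r_0}$, \emph{every} circle $\partial B_{\tilde r}(x_0)$ with $\tilde r\in(r,3r)$ meets $\Omega_j\subset\{u_i=0\}$; so the circle-wise Poincar\'e inequality underlying Lemma~\ref{mainineqlem} is applied only on the annulus $B_{3r}\setminus B_{2r}$, giving $\int_{B_{3r}\setminus B_{2r}}u_i\,dx\lesssim r^2\big(\int_{B_{3r}}|\nabla u_i|^2\,dx\big)^{1/2}$. The mean value theorem then extracts a single good radius $R\in(2r,3r)$ with $\int_{\partial B_R}u_i\,d\HH^1\lesssim r\big(\int_{B_{3r}}|\nabla u_i|^2\,dx\big)^{1/2}$, and the almost-subharmonicity \eqref{subh} of $u_i$ (precisely, the subharmonicity of $u_i-\lambda_1(\Omega_i)\|u_i\|_{L^\infty}(R^2-|x|^2)$) together with the Poisson kernel transfers this circle average inward to $\sup_{B_r}u_i$, up to an $O(r^2)$ error. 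Replacing your final step (\eqref{intest} followed by \eqref{mainineq} on the full ball) by this annulus-plus-Poisson argument is what closes the proof; as written, your proposal is missing that idea.
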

\begin{proof}
Without loss of generality we suppose that $0=x_0\in\partial\Omega_i$. Let now $0<r\le r_0$ be such that $\Omega_j\cap B_r\neq\emptyset$. Choosing $r_0$ small enough we may apply Lemma \ref{nondeg} obtaining that 
$$\mean{B_{3r}}{u_j\,dx}\ge 3C_{nd}\,r.$$
Again by choosing $r_0$ small enough we may suppose that for every $r\in(0,r_0)$ we have $\partial B_{3r}\cap\Omega_i\neq0$. Indeed, if this is not the case for some $r$, then the set $\Omega_i$ is entirely contained in $B_{3r}$ and so $\lambda_1(\Omega_i)\ge \lambda_1(B_{3r})\ge \lambda_1(B_{3r_0})$, contradicting the optimality of $\Omega_i$. Thus, we may apply the estimate \eqref{mainineq} for $u_j$ obtaining 
$$C_{nd}^2\le \Big(\frac1{3r}\mean{B_{3r}}{u_j\,dx}\Big)^2\le 4\mean{B_{3r}}{|\nabla u_j|^2\,dx}.$$
By the two-phase monotonicity formula applied for $u_i$ and $u_j$, we get that there is a constant $C>0$ such that 
$$\frac{4C}{C_{nd}^2}\ge \mean{B_{3r}}{|\nabla u_i|^2\,dx}.$$
Since $B_r\cap\Omega_j\neq\emptyset$, by choosing $r_0$ small enough an reasoning as above we may suppose that for every $\tilde r\in(r,3r)$ $\partial B_{\tilde r}\cap\Omega_j\neq 0$. Thus, reasoning as in Lemma \ref{mainineqlem}, we get that 
$$4(3r)^2\int_{B_{3r}\setminus B_{2r}}|\nabla u_i|^2\,dx\ge \int_{B_{3r}\setminus B_{2r}}u_i^2\,dx\ge \frac{1}{5\pi r^2}\Big(\int_{B_{3r}\setminus B_{2r}}u_i\,dx\Big)^2.$$

By the mean value formula, there is $R\in(2r,3r)$ such that
\begin{equation}\label{C2est1}
\int_{\partial B_{R}}u_i\,dx\le \frac{1}{r}\int_{2r}^{3r}\Big(\int_{\partial B_s}u_i\,d\HH^1\Big)\,ds\le 27r\Big(\int_{B_{3r}}|\nabla u_i|^2\,dx\Big)^{1/2}
\end{equation} 
We now note that by \eqref{subh} the function $v(x)=u_i(x)-\lambda_1(\Omega_i)\|u_i\|_{L^\infty}(R^2-|x|^2)$ is subharmonic. Then, for every $x\in B_r$, we use the Poisson formula 
\begin{equation}\label{C2est2}
\begin{array}{ll}
\ds u_i(x)-\lambda_1(\Omega_i)\|u_i\|_{L^\infty}(3r)^2&\ds\le \frac{R^2-|x|^2}{2\pi R}\int_{\partial B_{R}}\frac{u_i(y)}{|y-x|^2}\,d\HH^{1}(y)\le 9\mean{\partial B_{R}}{u_i\,d\HH^1}.
\end{array}
\end{equation}
Using the non-degeneracy of $u_i$ (Lemma \ref{nondeglem}) and combining the estimates from \eqref{C2est1} and \eqref{C2est2} we get
\begin{equation}\label{C2est3}
\|u_i\|_{L^\infty(B_r)}\le 3^6 r\Big(\int_{B_{3r}}|\nabla u_i|^2\,dx\Big)^{1/2}\le \frac{2 \sqrt{C} 3^6}{C_{nd}}r.
\end{equation}
\end{proof}

The following Lemma is similar to \cite[Lemma 3.2]{altcaf} and \cite[Lemma 3.1]{brla}. We sketch the proof below for the sake of completeness.

\begin{lemma}\label{C1lemma}
Suppose that $(\Omega_1,\dots,\Omega_h)$ is optimal for \eqref{optsum}. Then there are constants $C_1>0$ and $r_0>0$ such that if $x_0\in\partial\Omega_i$ and $0<r\le r_0$ are such that
$\Omega_j\cap B_{2r}(x_0)=\emptyset$, for every $j\neq i$, then 
\begin{equation}\label{C1est}
\|u_i\|_{L^\infty(B_r(x_0))}\le C_1r.
\end{equation}
\end{lemma}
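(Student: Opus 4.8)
The plan is to adapt the Alt-Caffarelli upper Lipschitz bound to the present eigenfunction setting, exploiting the fact that near a one-phase point the competitor analysis only involves the single set $\Omega_i$. Throughout I normalize $x_0=0$, and I note that, since $\Omega_i$ is open with $0\in\partial\Omega_i$, we have $u_i(0)=0$.

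First I would reduce the $L^\infty$ bound to a bound on a spherical average. By the almost-subharmonicity estimate \eqref{intest} (applicable since $B_r(0)\cap\Omega_i\neq\emptyset$) we get $\|u_i\|_{L^\infty(B_r)}\le 5\mean{B_{2r}}{u_i\,dx}$ for $2r\le r_0$. Let $v$ solve $-\Delta v=\lambda_1(\Omega_i)u_i$ in $B_{2r}$ with $v=u_i$ on $\partial B_{2r}$. By \eqref{subh} the difference $v-u_i$ is superharmonic with zero boundary trace, hence $v\ge u_i\ge 0$; moreover $v$ differs from its harmonic part $v_{\rm harm}$ (the harmonic replacement of $u_i$) by a term bounded by $\lambda_1(\Omega_i)\|u_i\|_{L^\infty}r^2$, which is of lower order than the target $Cr$. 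Since $v_{\rm harm}(0)=\mean{\partial B_{2r}}{u_i\,d\HH^1}$ by the mean value property, combining these gives $\|u_i\|_{L^\infty(B_r)}\le C\,\mean{\partial B_{2r}}{u_i\,d\HH^1}+Cr^2$, so it suffices to prove the linear bound $\mean{\partial B_{2r}}{u_i\,d\HH^1}\le Cr$ on the spherical average.

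Next I would extract the energy estimate from optimality. Since $\Omega_j\cap B_{2r}=\emptyset$ for $j\neq i$, the filled set $\omega=\Omega_i\cup B_{2r}(0)$ is admissible (disjoint from the other phases and, for $r$ small, contained in the open set $\Dr_i$ provided by Theorem~\ref{subth}). Testing the minimality of $\Omega_i$ against $\omega$ gives $\lambda_1(\Omega_i)-\lambda_1(\omega)\le\int_{\omega\setminus\Omega_i}W_i\le A|B_{2r}|\le Cr^2$. Using the function equal to $v$ on $B_{2r}$ and to $u_i$ outside as a competitor in the Rayleigh quotient for $\lambda_1(\omega)$, together with the weak formulation of the equation for $v$ and the inequality $\int_{B_{2r}}(v^2-u_i^2)\ge\int_{B_{2r}}(v-u_i)^2$, I obtain, after absorbing the lower-order eigenvalue terms, the estimate $\int_{B_{2r}}|\nabla(v-u_i)|^2\le Cr^2$.

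Finally, the linear bound on the average is obtained exactly as in \cite[Lemma 3.2]{altcaf} and \cite[Lemma 3.1]{brla}. Writing $\sigma=\mean{\partial B_{2r}}{u_i\,d\HH^1}=v_{\rm harm}(0)$ and applying Harnack to the non-negative harmonic function $v_{\rm harm}$, one has $v-u_i=v\ge c\sigma$ on the zero set $\{u_i=0\}\cap B_r$. Since $v-u_i\in H^1_0(B_{2r})$, a Poincaré inequality then forces $\int_{B_{2r}}|\nabla(v-u_i)|^2\ge c\,\sigma^2\,|\{u_i=0\}\cap B_r|/r^2$, which compared with the energy estimate yields $\sigma\le Cr$ as soon as the zero set has a definite density $|\{u_i=0\}\cap B_r|\ge c r^2$. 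The main obstacle is precisely this last point: transferring the energy estimate into a pointwise control of the average requires knowing that the complement of $\Omega_i$ occupies a fixed fraction of the ball at a free-boundary point, a measure-theoretic consequence of the subsolution property that I would establish as in the Alt-Caffarelli theory, while the non-degeneracy Lemma~\ref{nondeglem} controls the complementary direction and guarantees that $u_i$ indeed vanishes on a nontrivial portion of $B_r$.
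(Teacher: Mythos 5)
Your overall strategy is the right one and coincides with the paper's: since the other phases are absent from $B_{2r}$, the filled set $\Omega_i\cup B_{2r}$ is an admissible competitor; one compares $u_i$ with a (super)harmonic replacement $v$ in $B_{2r}$, extracts from optimality an energy estimate on $\int_{B_{2r}}|\nabla(v-u_i)|^2$, converts it into a bound on the spherical average $\sigma=\mean{\partial B_{2r}}{u_i\,d\HH^1}$, and finishes with the Poisson kernel. The reduction of the $L^\infty$ bound to the spherical average and the derivation of the energy estimate are both sound.

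The gap is in the final step, and you have flagged it yourself without resolving it. You throw away information when you bound $\int_{\omega\setminus\Omega_i}W_i$ by $A|B_{2r}|\le Cr^2$: the optimality comparison actually yields $\int_{B_{2r}}|\nabla(v-u_i)|^2\le A\,|B_{2r}\setminus\Omega_i|$, i.e.\ a bound proportional to the measure of the \emph{zero set}, not of the whole ball (this is \eqref{C1est2}, using $\Omega_i=\{u_i>0\}$). The Alt--Caffarelli lower bound \eqref{C1est3} carries exactly the same factor $|\{u_i=0\}\cap B_{2r}|$ in front of $\sigma^2$, so the two occurrences cancel and one obtains $\sigma\le Cr$ using only that $|\{u_i=0\}\cap B_{2r}|>0$ --- a soft fact proved by superharmonicity (if the zero set were null, $v$ would coincide with $u_i$, hence $u_i$ would be superharmonic and strictly positive in $B_{2r}$, contradicting $0\in\partial\Omega_i$). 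Having weakened the right-hand side to $Cr^2$, you instead need the quantitative density bound $|\{u_i=0\}\cap B_r|\ge c\,r^2$, and your plan to ``establish it as in the Alt--Caffarelli theory'' is circular: in \cite{altcaf} the density estimates are proved \emph{after}, and by means of, the linear growth bound you are trying to prove here (together with non-degeneracy). Note also that Lemma \ref{nondeglem} only gives positive density of $\Omega_i$ near the free boundary, not of its complement, so it cannot supply the missing ingredient. As written the argument does not close; the fix is simply to keep $|B_{2r}\setminus\Omega_i|$ on the right-hand side of the energy estimate and let it cancel against the same factor on the left.
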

\begin{proof}
Without loss of generality we may suppose that $x_0=0$. Since $\Omega_j\cap B_{2r}=\emptyset$, for every $j\neq i$, we may use the $h$-uple $(\Omega_1,\dots,\Omega_i\cap B_{2r},\dots,\Omega_h)$ to test the optimality of $(\Omega_1,\dots,\Omega_i,\dots,\Omega_h)$. Thus we have
\begin{equation}\label{C1est1}
\begin{array}{ll}
\ds\int_{\R^2}|\nabla u_i|^2\,dx+\alpha|\Omega_i|&\ds=\lambda_1(\Omega_i)+\alpha|\Omega_i|\le \lambda_1(\Omega_i\cap B_{2r})+\alpha|\Omega_i\cap B_r|\\
&\ds\le \frac{\int_{\R^2}|\nabla \widetilde u_i|^2\,dx}{\int_{\R^2}\widetilde u_i^2\,dx}+\alpha|\Omega_i\cap B_{2r}|\le \int_{\R^2}|\nabla \widetilde u_i|^2\,dx+\alpha|\Omega_i\cap B_{2r}|,
\end{array}
\end{equation}
where we used the test function $\widetilde u_i\in H^1_0(\Omega_i\cap B_{2r})$ defined as $\widetilde u_i=v_i\ind_{B_{2r}}+u_i\ind_{B_{2r}^c}$ and $v_i\in H^1(B_{2r})$ is the solution of the obstacle problem
\begin{equation}
\min\Big\{\int_{B_{2r}}|\nabla v|^2\,dx:\ v\in H^1(B_{2r}),\ v-u_i\in H^1_0(B_{2r}),\ v\ge u_i\Big\}.
\end{equation}
By \eqref{C1est1} an the fact that $v_i$ is harmonic on the set $\{v_i>u_i\}$, we get
\begin{equation}\label{C1est2}
\int_{B_{2r}}|\nabla (u_i-v_i)|^2\,dx=\int_{B_{2r}}\big(|\nabla u_i|^2-|\nabla v_i|^2\big)\,dx\le \alpha|B_{2r}\setminus \Omega_i|.
\end{equation}
Now, reasoning as in \cite[Lemma 3.2]{altcaf} (see also \cite[Lemma 4.3.20]{tesi} and \cite{buve}), there is a constant $C>0$ such that 
\begin{equation}\label{C1est3}
\big|\{u_i=0\}\cap B_{2r}\big|\left(\frac1{2r}\mean{\partial B_{2r}}{u_i\,d\HH^1}\right)^2\le C\int_{B_{2r}}|\nabla (u_i-v_i)|^2\,dx.
\end{equation}
Now we note that by the optimality of $\Omega_i$, we have $\Omega_i=\{u_i>0\}$ and $|B_{2r}\cap\{u_i=0\}|>0$ (if $|B_{2r}\cap\{u_i=0\}|=0$, then by the optimality $v_i=u_i$ in $B_{2r}$; thus $u_i$ is superharmonic in $B_{2r}$ and so $u_i>0$ in $B_{2r}$, which contradicts the assumption $0\in\partial \Omega_i$).
Now \eqref{C1est2} and \eqref{C1est3} give  
\begin{equation}\label{C1est31/2}
\frac1{2r}\mean{\partial B_{2r}}{u_i\,d\HH^1}\le\sqrt{C/\alpha}.
\end{equation}
Since the function $\Big\{x\mapsto \Big(u_i(x)-\lambda_1(\Omega_i)\|u_i\|_{L^\infty}(4r^2-|x|^2)\Big)\Big\}$ is subharmonic, we can use the Poisson formula for every $x\in B_{r}$
\begin{equation}\label{C1est4}
\begin{array}{ll}
\ds u_i(x)-4\lambda_1(\Omega_i)\|u_i\|_{L^\infty}r^2&\ds\le \frac{(2r)^2-|x|^2}{4\pi r}\int_{\partial B_{2r}}\frac{u_i(y)}{|y-x|^2}\,d\HH^{1}(y)\le 4\mean{\partial B_{2r}}{u_i\,d\HH^1}.
\end{array}
\end{equation}

By the non-degeneracy of $u_i$ (Lemma \ref{nondeglem}) and \eqref{C1est4}, we have that for $r_0$ small enough 
\begin{equation*}
\frac{\|u_i\|_{L^\infty(B_{r})}}{r}\le \frac5{2r}\mean{\partial B_{2r}}{u_i\,d\HH^1}\le5\sqrt{C/\alpha},
\end{equation*}
which gives the claim.
\end{proof}

We combine the estimates from Lemma \ref{C1lemma} and Lemma \ref{C2lemma}, obtaining the following  

\begin{prop}\label{C12prop}
Suppose that $(\Omega_1,\dots,\Omega_h)$ is optimal for \eqref{optsum}. Then there are constants $r_0>0$ and $C_{12}>0$ such that for every $i\in\{1,\dots,h\}$ we have 
\begin{equation}\label{C12est}
\|u_i\|_{L^\infty(B_r(x_0))}\le C_{12}\, r, \qquad \forall r\in(0,r_0).
\end{equation}
\end{prop}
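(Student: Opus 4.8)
The plan is to establish the growth bound \eqref{C12est} at every boundary point $x_0\in\partial\Omega_i$ by merging the two-phase estimate of Lemma \ref{C2lemma} with the one-phase estimate of Lemma \ref{C1lemma}, organizing the two alternatives according to whether a competing phase meets the ball $B_{2r}(x_0)$. First I would fix $r_0>0$ small enough that both lemmas are applicable at the relevant scales whenever $r\in(0,r_0)$; concretely $r_0$ should be no larger than the radius supplied by Lemma \ref{C1lemma} and no larger than half the radius supplied by Lemma \ref{C2lemma}, so that the two-phase estimate may legitimately be invoked at scale $2r$.

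With $r_0$ so chosen, fix $x_0\in\partial\Omega_i$ and $r\in(0,r_0)$, and distinguish two mutually exclusive and exhaustive cases. If there exists an index $j\neq i$ with $\Omega_j\cap B_{2r}(x_0)\neq\emptyset$, then Lemma \ref{C2lemma} applied at the radius $2r$ gives $\|u_i\|_{L^\infty(B_{2r}(x_0))}\le 2C_2\,r$, and since $B_r(x_0)\subset B_{2r}(x_0)$ the bound $\|u_i\|_{L^\infty(B_r(x_0))}\le 2C_2\,r$ follows. In the complementary case $\Omega_j\cap B_{2r}(x_0)=\emptyset$ for every $j\neq i$, which is exactly the hypothesis of Lemma \ref{C1lemma}; applying that lemma at radius $r$ yields $\|u_i\|_{L^\infty(B_r(x_0))}\le C_1\,r$. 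Setting $C_{12}:=\max\{2C_2,C_1\}$ then covers both alternatives and proves \eqref{C12est}.

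I do not anticipate a real obstacle at this stage: the analytic substance—the non-degeneracy of $u_i$, the two-phase Alt--Caffarelli--Friedman monotonicity formula behind Lemma \ref{C2lemma}, and the Alt--Caffarelli comparison argument behind Lemma \ref{C1lemma}—has already been packaged into the two lemmas, so what remains is a purely combinatorial dichotomy. The only point that genuinely requires care is the matching of scales: because Lemma \ref{C2lemma} is phrased with the competing phase meeting $B_r(x_0)$ whereas Lemma \ref{C1lemma} is phrased with $B_{2r}(x_0)$ empty of other phases, the correct splitting ball is $B_{2r}(x_0)$, which forces the two-phase estimate to be used at scale $2r$ and accounts both for the factor $2$ in $C_{12}$ and for the requirement that $2r$ stay within the admissible range of Lemma \ref{C2lemma} when choosing $r_0$.
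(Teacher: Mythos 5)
Your proposal is correct and is exactly the argument the paper intends: the paper simply states that Proposition \ref{C12prop} follows by combining Lemma \ref{C1lemma} and Lemma \ref{C2lemma}, and your dichotomy on whether a competing phase meets $B_{2r}(x_0)$, with the two-phase estimate applied at scale $2r$ and the resulting constant $C_{12}=\max\{2C_2,C_1\}$, is the natural (and correct) way to carry out that combination, including the right adjustment of $r_0$.
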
 

{\bf Conclusion of the proof of the Lipschitz continuity of the eigenfunctions.} We now use the estimate from Proposition \ref{C12prop} to deduce the Lipschitz continuity of $u_i$. The argument is standard and we recall it briefly for the sake of completeness. It is based on the following classical lemma.
\begin{lemma}\label{gradestlem}
Suppose that $B_r\subset\R^2$, $f\in L^\infty(B_r)$ and $u\in H^1(B_r)$ satisfies the equation 
$$-\Delta u=f\quad\hbox{weakly in}\quad [H^1_0(B_r)]'.$$ 
Then there is a dimensional constant $C>0$ such that the following estimate holds
\begin{equation}\label{gradest}
\|\nabla u_i\|_{L^\infty(B_{r/2})}\le C\left(\|f\|_{L^\infty(B_r)}+\frac{\|u\|_{L^\infty(B_r)}}{r}\right).
\end{equation} 
\end{lemma}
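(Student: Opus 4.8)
This is the classical interior gradient estimate for the Poisson equation, and the plan is to prove it by rescaling to the unit ball and then splitting $u$ into a harmonic part plus a particular solution. The two ingredients are, on the one hand, the $C^{1,\gamma}$ regularity of the Newtonian potential of a bounded source and, on the other, the elementary gradient estimate for harmonic functions coming from the mean value property.

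First I would reduce to the case $r=1$ by scaling. Setting $u_r(x)=u(rx)$ for $x\in B_1$, one checks that $u_r$ solves $-\Delta u_r=\tilde f$ weakly in $[H^1_0(B_1)]'$ with $\tilde f(x)=r^2 f(rx)$, so that $\|\tilde f\|_{L^\infty(B_1)}=r^2\|f\|_{L^\infty(B_r)}$, while $\|u_r\|_{L^\infty(B_1)}=\|u\|_{L^\infty(B_r)}$ and $\|\nabla u_r\|_{L^\infty(B_{1/2})}=r\,\|\nabla u\|_{L^\infty(B_{r/2})}$. Dividing the unit-ball estimate by $r$ and using $r\le 1$ to absorb the resulting factor $r\|f\|_{L^\infty(B_r)}\le\|f\|_{L^\infty(B_r)}$ then yields \eqref{gradest}; so it suffices to prove
\[\|\nabla u\|_{L^\infty(B_{1/2})}\le C\big(\|f\|_{L^\infty(B_1)}+\|u\|_{L^\infty(B_1)}\big)\]
on $B_1$.

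On $B_1$ I would decompose $u=v+w$, where $w\in H^1_0(B_1)$ is the weak solution of $-\Delta w=f$ with $w=0$ on $\partial B_1$, and $v:=u-w$ is therefore weakly harmonic, hence smooth, in $B_1$. For the particular solution $w$ I would invoke the standard elliptic estimates for the Poisson equation with bounded data: the bound $\|w\|_{W^{2,p}(B_1)}\le C_p\|f\|_{L^\infty(B_1)}$ together with the Sobolev embedding $W^{2,p}\hookrightarrow C^{1,\gamma}$ (valid for $p>2$ in dimension two) gives $\|\nabla w\|_{L^\infty(B_1)}\le C\|f\|_{L^\infty(B_1)}$, while comparison with the paraboloid $\tfrac14\|f\|_{L^\infty(B_1)}(1-|x|^2)$ via the maximum principle gives $\|w\|_{L^\infty(B_1)}\le C\|f\|_{L^\infty(B_1)}$. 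For the harmonic part $v$ I would use the interior gradient estimate coming directly from the mean value property: each $\partial_i v$ is harmonic, so for $x_0\in B_{1/2}$ one writes $\partial_i v(x_0)$ as its average over $B_{1/4}(x_0)\subset B_{3/4}$ and integrates by parts to obtain $|\nabla v(x_0)|\le C\|v\|_{L^\infty(B_{3/4})}$. Since $\|v\|_{L^\infty(B_{3/4})}\le\|u\|_{L^\infty(B_1)}+\|w\|_{L^\infty(B_1)}$, adding the two bounds yields the unit-ball estimate.

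The argument is entirely classical and I do not expect any serious obstacle; the only genuine subtlety is that a merely bounded right-hand side does not make $u$ twice differentiable, so one cannot control $\|D^2 u\|_{L^\infty}$ directly. It is thus essential to use only the first-order ($C^{1,\gamma}$) regularity of the Newtonian-potential part $w$ and to treat the gradient of the harmonic remainder $v$ separately, rather than attempting a single second-derivative estimate. The other point to keep track of is the scaling bookkeeping that produces $\|f\|_{L^\infty}$ rather than $r\|f\|_{L^\infty}$ on the right-hand side, which uses $r\le 1$ and is harmless in the regime of small $r$ in which the lemma is applied.
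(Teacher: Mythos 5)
Your proof is correct. Note that the paper does not actually prove Lemma \ref{gradestlem}: it is stated as a classical fact on which the (sketched) Lipschitz argument rests, so there is no proof of record to compare against. Your argument --- rescaling to the unit ball, splitting $u$ into the part $w\in H^1_0(B_1)$ with $-\Delta w=f$ (controlled via global $W^{2,p}$ estimates and the embedding $W^{2,p}\hookrightarrow C^{1,\gamma}$ for $p>2$, plus a paraboloid barrier for $\|w\|_{L^\infty}$) and the harmonic remainder $v=u-w$ (whose gradient is controlled by the mean value property) --- is the standard route and is sound; Weyl's lemma gives the smoothness of $v$ needed to apply the mean value identity. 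The only point worth recording is the one you already flag: the estimate \eqref{gradest} as literally stated is not scale invariant, and the rescaling actually yields the sharper bound $C\big(r\|f\|_{L^\infty(B_r)}+r^{-1}\|u\|_{L^\infty(B_r)}\big)$, which implies \eqref{gradest} only for $r\le 1$ (or $r$ bounded above by a fixed constant, at the price of enlarging $C$). This is harmless here, since the paper invokes the lemma only for radii below a small threshold $r_0$, but strictly speaking the hypothesis $r\le 1$ (or the sharper scale-invariant form) should be recorded in the statement.
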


We prove the following result which implies Theorem \ref{mainr2} {\it (iii)} since if the bounded open set $\Dr\subset'\R^2$ has boundary of class $C^2$, then the function $w_\Dr$ defined below is Lipschitz continuous on $\overline\Dr$.
\begin{thm}
Let $\Dr\subset\R^2$ be a bounded open set. Let $(\Omega_1,\dots,\Omega_h)$ be optimal for \eqref{optsum}. Then the corresponding first eigenfunctions $u_1,\dots, u_h$ are locally Lipschitz continuous in $\Dr$. If, moreover, $\Dr$ is such that the weak solution $w_\Dr$ of the problem 
$$-\Delta w_{\Dr}=1,\qquad w_\Dr\in H^1_0(\Dr),$$
is Lipschitz continuous on $\R^2$, then the first eigenfunctions $u_1,\dots, u_h$ are globally Lipschitz continuous on $\R^2$.
\end{thm}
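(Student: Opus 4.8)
The plan is to reduce the Lipschitz continuity of each $u_i$ to a uniform pointwise bound on the gradient inside $\Omega_i=\{u_i>0\}$: if $|\nabla u_i|\le L$ on $\Omega_i$ and $u_i$ vanishes continuously on the complement, a standard argument gives that $u_i$ is $L$-Lipschitz on $\R^2$. The mechanism producing such a bound is the following. Fix $x\in\Omega_i$ and set $d=\text{dist}(x,\{u_i=0\})$, so that $B_d(x)\subset\Omega_i$ and $u_i$ solves $-\Delta u_i=\lambda_1(\Omega_i)u_i$ there with bounded right hand side. If we can establish the linear bound $\|u_i\|_{L^\infty(B_d(x))}\le Cd$, then Lemma~\ref{gradestlem} applied on $B_d(x)$ yields
$$|\nabla u_i(x)|\le C'\Big(\lambda_1(\Omega_i)\,\|u_i\|_{L^\infty(B_d(x))}+\frac{\|u_i\|_{L^\infty(B_d(x))}}{d}\Big)\le C''$$
with $C''$ independent of $x$. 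Thus everything reduces to proving linear growth of $u_i$ on balls centred at the points where $u_i$ vanishes.

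For the local statement I would use only the interior growth estimate. Let $K\subset\Dr$ be compact and $x\in K\cap\Omega_i$. The nearest zero $z$ of $u_i$ lies on $\partial\Omega_i$, and as long as $d<\min\{r_0,\text{dist}(K,\partial\Dr)\}$ we have $z\in\partial\Omega_i\cap\Dr$; then Proposition~\ref{C12prop} gives $\|u_i\|_{L^\infty(B_{2d}(z))}\le 2C_{12}d$, and since $B_d(x)\subset B_{2d}(z)$ the required bound $\|u_i\|_{L^\infty(B_d(x))}\le 2C_{12}d$ follows. When $d$ is bounded below, $B_{\rho}(x)\subset\Omega_i$ for a fixed $\rho$ and Lemma~\ref{gradestlem} together with the global $L^\infty$ bound \eqref{davies} gives the gradient bound directly. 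This produces a uniform bound on $|\nabla u_i|$ over $K\cap\Omega_i$, hence local Lipschitz continuity in $\Dr$.

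For the global statement the only boundary points of $\Omega_i$ not reached by Proposition~\ref{C12prop} are those lying on $\partial\Dr$, and these I would handle by a barrier built from $w_\Dr$. Using \eqref{subh}, the function $v=\lambda_1(\Omega_i)\|u_i\|_{L^\infty}\,w_\Dr-u_i$ satisfies $-\Delta v\ge0$ in $\Dr$ and belongs to $H^1_0(\Dr)$ (since $\Omega_i\subset\Dr$), so the maximum principle gives $v\ge0$, that is
$$u_i\le \lambda_1(\Omega_i)\|u_i\|_{L^\infty}\,w_\Dr\qquad\text{in }\Dr.$$
If $w_\Dr$ is Lipschitz on $\R^2$ and vanishes on $\partial\Dr$, then $w_\Dr(y)\le L_w\,\text{dist}(y,\partial\Dr)$, so for $x_0\in\partial\Dr$ and $r\le r_0$ every $y\in B_r(x_0)$ satisfies $u_i(y)\le \lambda_1(\Omega_i)\|u_i\|_{L^\infty}L_w\,r$, i.e. $\|u_i\|_{L^\infty(B_r(x_0))}\le Cr$. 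Combined with Proposition~\ref{C12prop}, linear growth now holds at every point of $\partial\Omega_i\subset\Dr\cup\partial\Dr$, so the reduction of the first paragraph applies at every $x\in\Omega_i$ and we obtain $|\nabla u_i|\in L^\infty(\Omega_i)$, i.e. global Lipschitz continuity on $\R^2$.

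The main difficulty is precisely the behaviour near $\partial\Dr$: Proposition~\ref{C12prop} is proved from the optimality of the configuration and the two-phase monotonicity formula, both of which are interior tools and give no information at points of $\partial\Dr$. The comparison with the torsion function $w_\Dr$ is what repairs this gap, transferring the regularity of the fixed container $\Dr$ into the linear decay of the eigenfunctions; this is why the extra hypothesis enters only in the global statement, and why a $C^2$ boundary of $\Dr$ (which makes $w_\Dr$ Lipschitz) suffices to conclude Lipschitz regularity up to $\partial\Dr$.
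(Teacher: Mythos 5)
Your argument is correct and follows essentially the same route as the paper: interior points are handled by combining the linear growth estimate of Proposition~\ref{C12prop} at the nearest point of $\partial\Omega_i$ with the gradient bound of Lemma~\ref{gradestlem}, while points near $\partial\Dr$ are controlled via the comparison $u_i\le \lambda_1(\Omega_i)\|u_i\|_{L^\infty}w_\Dr$ coming from \eqref{subh} and the maximum principle, exactly as in the paper's proof. The only (cosmetic) difference is that you phrase the barrier step as a growth estimate centred at points of $\partial\Dr$ and then reuse the interior reduction, whereas the paper runs the dichotomy $3r\gtrless\operatorname{dist}(x_0,\partial\Dr)$ directly at interior points $x_0$.
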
 
\begin{proof}
Let $r_0>0$ be the constant from Proposition \ref{C12prop} and fix $r_1\le r_0/2$. Let $x_0\in\Omega_i$ be such that $\hbox{dist}(x_0,\partial\Dr)\ge r_1$. If $r:=\hbox{dist}(x_0,\partial\Omega_i)\ge r_1$, then by \eqref{gradest}, we have 
\begin{equation}\label{gradest1}
|\nabla u_i (x_0)|\le C\left(\lambda_1(\Omega_i)+r_1^{-1}\right)\|u_i\|_{L^\infty}.
\end{equation} 
If $r:=\hbox{dist}(x_0,\partial\Omega_i)<r_1$, then we set $y_0\in\partial\Omega_i$ to be such that $|x_0-y_0|=\hbox{dist}(x_0,\partial\Omega_i)$. Using Proposition \ref{C12prop} and again \eqref{gradest}, we have 
\begin{equation}\label{gradest2}
\begin{array}{ll}
\ds|\nabla u_i (x_0)|&\ds\le C\left(\lambda_1(\Omega_i)\|u_i\|_{L^\infty}+\frac{\|u_i\|_{L^\infty(B_r(x_0))}}{r}\right)\\
\\
&\ds\le C\left(\lambda_1(\Omega_i)\|u_i\|_{L^\infty}+\frac{\|u_i\|_{L^\infty(B_{2r}(y_0))}}{r}\right)\le C\Big(\lambda_1(\Omega_i)\|u_i\|_{L^\infty}+2C_{12}\Big),
\end{array}
\end{equation} 
which gives the local Lipschitz continuity of $u_i$. 

If the function $w_\Dr$ is Lipschitz continuous on $\R^d$, we consider for every point $x_0\in\Omega_i$ two possibilities for $r:=\hbox{dist}(x_0,\partial\Omega_i)$: if $3r\ge\hbox{dist}(x_0,\partial\Dr)$, then the maximum principle $u_i\le \lambda_1(\Omega_i)\|u_i\|_{L^\infty}w_\Dr$ and the gradient estimate \eqref{gradest} gives
\begin{equation}\label{gradest3}
\begin{array}{ll}
\ds|\nabla u_i (x_0)|&\ds\le C\left(\lambda_1(\Omega_i)\|u_i\|_{L^\infty}+\frac{\|u_i\|_{L^\infty(B_r(x_0))}}{r}\right)\\
\\
&\ds\le C\lambda_1(\Omega_i)\|u_i\|_{L^\infty}\left(1+\frac{\|\nabla w_\Dr\|_{L^\infty}}{r}\big(\hbox{dist}(x_0,\partial\Dr)+r\big)\right)\\
\\
&\ds\le C\lambda_1(\Omega_i)\|u_i\|_{L^\infty}\Big(1+4\|\nabla w_\Dr\|_{L^\infty}\Big).
\end{array}
\end{equation} 
If $3r\le\hbox{dist}(x_0,\partial\Dr)$ and $r\le r_0/2$, then the gradient estimate \eqref{gradest} gives again \eqref{gradest2}. If $r\ge r_0/2$, then we have \eqref{gradest1} with $r_1=r_0/2$ and this concludes the proof. 
\end{proof} 
 
\subsection{A monotonicity formula with decay} 
In order to prove the lack of double points on the boundary of $\Dr$ and the regularity of the auxiliary phase $\Omega_{h+1}$ we will need special type of a two phase monotonicity formula in which the supports of the eigenfunctions cannot invade certain prescribed zone. In this case the product of the two gradients $\ds \left(\frac{1}{r^{2}}\int_{B_r}|\nabla u^+|^2\,dx\right)\left(\frac{1}{r^{2}}\int_{B_r}|\nabla u^-|^2\,dx\right)$ decays as $r\to0$. The result is in the spirit of the three phase formula but the proof follows the idea of the proof of the two phase formula that was carried out in \cite{coteve03}. 
\begin{lemma}\label{lem2ph05}
Consider the unit ball $B_1\subset\R^2$. Let $u^+,u^-\in H^1(B_1)\cap L^\infty(B_1)$ be two non-negative functions with disjoint supports, i.e. such that $\int_{B_1}u^+u^-\,dx=0$, and let $\lambda_+,\lambda_-\ge 0$ be two real numbers such that 
$$\Delta u^++\lambda_+u^+\ge 0 \qquad \hbox{and} \qquad \Delta u^-+\lambda_-u^-\ge 0.$$ 
If, moreover, the set $\Omega:=B_1\cap\{u^+=0\}\cap \{u^-=0\}$ has positive density in $0$, in the sense that 
$$\liminf_{r\to 0}\frac{|\Omega\cap B_r|}{|B_r|}=c>0,$$
then there is some $\eps>0$, depending on $d$, $\lambda_+$, $\lambda_-$ and $c$ such that 
\begin{equation}\label{teo2phm2}
\left(\frac{1}{r^{2}}\int_{B_r}|\nabla u^+|^2\,dx\right)\left(\frac{1}{r^{2}}\int_{B_r}|\nabla u^-|^2\,dx\right)=o(r^\eps).
\end{equation} 
\end{lemma}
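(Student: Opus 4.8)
The plan is to run the Alt--Caffarelli--Friedman computation for eigenfunctions in the form used by Conti, Terracini and Verzini for Theorem~\ref{teo2phm}, while bookkeeping the extra angular room created by $\Omega$; this room forces a strict sign in the logarithmic derivative of the ACF functional and thereby produces the decay. In $\R^2$ set
$$\Phi^\pm(r)=\frac{1}{r^2}\int_{B_r}|\nabla u^\pm|^2\,dx,\qquad J(r)=\Phi^+(r)\,\Phi^-(r),$$
and differentiate $\log\Phi^\pm$. Writing $\theta^\pm(r)$ for the angular measure of the arc $\{u^\pm>0\}\cap\partial B_r$, a Rellich integration by parts combined with the subsolution inequalities $\Delta u^\pm+\lambda_\pm u^\pm\ge0$ and the sharp first-eigenvalue inequality for arcs of the circle gives
$$\frac{d}{dr}\log\Phi^\pm(r)\ \ge\ \frac{2}{r}\bigl(\alpha^\pm(r)-1\bigr)-E^\pm(r),\qquad \alpha^\pm(r)=\frac{\pi}{\theta^\pm(r)},$$
where $E^\pm$ collects the zero-order contributions of the terms $\lambda_\pm u^\pm$. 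Since the $u^\pm$ are bounded, $\lambda_\pm$ are fixed and the radii are small, these errors are controlled exactly as in the proof of Theorem~\ref{teo2phm} in \cite{coteve03}: they amount to a bounded perturbation with $\int_0^{r_0}(E^++E^-)(s)\,ds\le C$ for $r_0$ small, so I treat them as harmless and focus on the gain in the main term.

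By the disjointness $\int_{B_1}u^+u^-\,dx=0$, the two arcs together with the ``empty'' arc $\Omega\cap\partial B_r$ cover $\partial B_r$, so $\theta^+(r)+\theta^-(r)\le 2\pi-\mu(r)$, where $\mu(r)=\HH^1(\Omega\cap\partial B_r)/r$. Minimizing $\pi/\theta^++\pi/\theta^-$ under this constraint (convexity, with optimum at $\theta^\pm=\pi-\mu/2$) yields the strict gain $\alpha^+(r)+\alpha^-(r)-2\ge \mu(r)/\pi$, hence, summing the two logarithmic derivatives,
$$\frac{d}{dr}\log J(r)\ \ge\ \frac{2}{\pi}\,\frac{\mu(r)}{r}-E(r),\qquad E=E^++E^-.$$
Integrating from $\rho$ to a fixed small $r_0$ and using $\int_\rho^{r_0}E\le C$ gives
$$J(\rho)\ \le\ e^{C}J(r_0)\,\exp\!\Big(-\frac{2}{\pi}\int_\rho^{r_0}\frac{\mu(s)}{s}\,ds\Big),$$
so everything reduces to a lower bound for $\int_\rho^{r_0}\mu(s)/s\,ds$.

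This last reduction is the step I expect to be most delicate, because the density hypothesis controls the $s\,ds$-weighted integral $|\Omega\cap B_r|=\int_0^r s\,\mu(s)\,ds$, whereas I need the $ds/s$-weighted integral $\int \mu(s)/s\,ds$. To pass between the two weights I would argue at a fixed small dilation ratio. From $|\Omega\cap B_r|\ge \tfrac{c}{2}|B_r|$ for all small $r$ one has $\int_0^r s\mu(s)\,ds\ge a r^2$ with $a=\tfrac{c\pi}{2}$; choosing $\theta\in(0,1)$ with $\pi\theta^2<a$ and subtracting the trivial bound $\int_0^{\theta r}s\mu\,ds\le \pi\theta^2 r^2$ gives $\int_{\theta r}^{r}s\mu(s)\,ds\ge \tfrac{a}{2}r^2$, whence, using $1/s\ge s/r^2$ on $(\theta r,r)$,
$$\int_{\theta r}^{r}\frac{\mu(s)}{s}\,ds\ \ge\ \frac{1}{r^2}\int_{\theta r}^{r}s\mu(s)\,ds\ \ge\ \frac{a}{2}.$$
Telescoping this over the scales $r_0,\theta r_0,\theta^2 r_0,\dots$ down to $\rho$ produces $\int_\rho^{r_0}\mu(s)/s\,ds\ge \kappa\log(r_0/\rho)-C'$ with $\kappa=a/(2\log(1/\theta))>0$. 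Inserting this into the bound for $J(\rho)$ yields $J(\rho)\le C''\rho^{2\kappa/\pi}$, and choosing any $\eps<2\kappa/\pi$ gives the claimed $J(\rho)=o(\rho^\eps)$. The genuinely new point compared with \cite{coteve03} is precisely this conversion of a volume-density lower bound into a logarithmic lower bound on the eigenvalue gap; the eigenfunction ACF machinery and the control of the zero-order error terms are inherited from there.
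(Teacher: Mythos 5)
Your proposal is correct and follows essentially the same route as the paper: a lower bound on the logarithmic derivative of the ACF product in which the empty arc $\Omega\cap\partial B_r$ forces a strict gain proportional to its angular measure (your $\alpha^++\alpha^--2\ge\mu/\pi$ is exactly the paper's $\frac{4}{r}(-1+\frac{1}{1-\theta})\ge\frac{4\theta}{r}$), followed by the same fixed-dilation telescoping that converts the volume-density hypothesis into a logarithmically divergent lower bound for $\int\mu(s)\,ds/s$. The only difference is cosmetic: the paper removes the zero-order terms by dividing $u^\pm$ by a radial eigenfunction $\vf$ so that the weighted functional is genuinely monotone, whereas you carry them as integrable error terms $E^\pm$ controlled by the machinery of \cite{coteve03} and \cite{cajeke} — both devices are standard and the paper itself invokes the latter inequality in the final step.
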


The proof of Lemma \ref{lem2ph05} is based on Lemma \ref{lemma2phm}, which involves the auxiliary functions $U_1$ and $U_2$ constructed below.
Let $\lambda:=\max\{\lambda_+,\lambda_-\}$ and let $r_0>0$ be small enough such that there is a positive radially symmetric function $\vf\in H^1(B_{r_0})$ satisfying 
\begin{equation}\label{2phm1}
-\Delta \vf=\lambda\vf\ \ \hbox{in}\ \  B_{r_0}, \qquad 0<a\le \vf\le b,
\end{equation}
for some constants $0<a\le b$ depending on $d$, $\lambda$ and $r_0$.
We now introduce the notation 
\begin{equation}\label{2phm2}
U_1:=\frac{u^+}{\vf} \qquad \hbox{and} \qquad U_2:=\frac{u^-}{\vf}.
\end{equation}        
\begin{rem}\label{oss2phm}
A direct computation of the gradient of $U_i$ on $B_{r_0}$ gives 
\begin{equation*}
\begin{array}{ll}
\nabla U_1=&\vf^{-1}\nabla u^+-\vf^2 u^+\nabla\vf.
\end{array}
\end{equation*}\end{rem}
We define the function $\Phi:[0,r_0]\to\R^+$ as
\begin{equation}\label{2phmPhi}
\Phi(r):=\left(\frac{1}{r^{2}}\int_{B_r}\vf^2|\nabla U_1|^2\,dx\right)\left(\frac{1}{r^{2}}\int_{B_r}\vf^2|\nabla U_2|^2\,dx\right).
\end{equation}

\begin{lemma}\label{lemma2phm}
Consider the unit ball $B_1\subset\R^2$. Let $u^+,u^-\in H^1(B_1)\cap L^\infty(B_1)$ be as in Lemma \ref{lem2ph05} and let $\Phi:[0,r_0]\to\R^+$ be given by \eqref{2phmPhi}. Then 
\begin{enumerate}[(a)]
\item $\Phi$ is increasing on the interval $(0,r_0)$;
\item If, moreover, the set $\Omega:=B_1\cap\{u^+=0\}\cap \{u^-=0\}$ has positive density in $0$, then there are constants $C>0$ and $\eps>0$ such that
$$\frac{1}{r^\eps}\Phi(r)\le \frac{C}{r_0^\eps}\Phi(r_0).$$ 
\end{enumerate}
\end{lemma}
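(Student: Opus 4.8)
The plan is to run the Alt--Caffarelli--Friedman (ACF) scheme in the weighted form for which the substitution $U_i=u^\pm/\vf$ was designed, and then to upgrade the resulting monotonicity to quantitative decay by measuring how much the void set $\Omega$ widens the spectral gap on each sphere. The decisive preliminary observation is that $\vf$ turns the two one-sided inequalities into \emph{weighted subharmonicity}: from the identity $\di(\vf^2\nabla U_i)=\vf\,\Delta u-u\,\Delta\vf$ (with $u=u^+$ for $i=1$ and $u=u^-$ for $i=2$), together with $\Delta u+\lambda_\pm u\ge 0$, $\lambda_\pm\le\lambda$, $u\ge 0$ and $-\Delta\vf=\lambda\vf$ from \eqref{2phm1}, one obtains $\di(\vf^2\nabla U_i)\ge 0$ in $B_{r_0}$. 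This is exactly the property that replaces the subharmonicity of the $u_i$ in the classical formula.

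Writing $F_i(r)=\int_{B_r}\vf^2|\nabla U_i|^2\,dx$, so that $\Phi=D_1D_2$ with $D_i=r^{-2}F_i$, I would next establish the basic logarithmic-derivative inequality. Testing $\di(\vf^2\nabla U_i)\ge 0$ against $U_i\ge 0$ and integrating by parts over $B_r$ gives $F_i(r)\le\int_{\partial B_r}\vf^2\,U_i\,\partial_\nu U_i\,d\HH^1$. Decomposing the boundary gradient into its normal and tangential parts, using that $\vf$ is radial (so the weight factors out of the spherical integrals), and invoking the first Dirichlet eigenvalue $\lambda_i(r)=\gamma_i(r)^2$ of the Laplace--Beltrami operator on $\omega_i(r)=\{U_i>0\}\cap\partial B_r\subset S^1$, a Cauchy--Schwarz and AM--GM argument (the heart of the ACF formula) yields $F_i'(r)/F_i(r)\ge 2\gamma_i(r)/r$ for a.e.\ $r$. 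Consequently
$$\frac{d}{dr}\log\Phi(r)=\sum_{i=1,2}\Big(-\frac2r+\frac{F_i'(r)}{F_i(r)}\Big)\ge\frac{2\big(\gamma_1(r)+\gamma_2(r)-2\big)}{r}.$$

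For (a) it then suffices to know $\gamma_1(r)+\gamma_2(r)\ge 2$. Since $u^+u^-=0$ a.e., the sets $\omega_1(r),\omega_2(r)$ are disjoint; writing $\theta_i$ for the angular measure of $\omega_i(r)$, the first eigenvalue is governed by the longest component, so $\gamma_i\ge\pi/\theta_i$, whence $\gamma_1+\gamma_2\ge\pi/\theta_1+\pi/\theta_2\ge 2$ because $\theta_1+\theta_2\le 2\pi$. Thus $\Phi'\ge 0$. For (b) I would sharpen this spherical inequality: with void angle $|\Theta(r)|=2\pi-\theta_1-\theta_2$ the same convexity estimate gives the \emph{linear} improvement $\gamma_1(r)+\gamma_2(r)-2\ge\tfrac1\pi|\Theta(r)|$. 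Finally I convert the density hypothesis into a logarithmic bound: writing $m(s)=|\Omega\cap B_s|=\int_0^s t\,|\Theta(t)|\,dt$ in polar coordinates, the assumption $\liminf_{s\to0} m(s)/(\pi s^2)=c>0$ gives $m(s)/s^2\ge c\pi/2$ for $s\le r_0$ small, and integrating by parts yields $\int_r^{r_0}\frac{|\Theta(s)|}{s}\,ds\ge c\pi\log(r_0/r)-\pi$. Combining the last three displays and integrating from $r$ to $r_0$ produces $\log\big(\Phi(r_0)/\Phi(r)\big)\ge 2c\log(r_0/r)-2$, i.e.\ the claim with $\eps=2c$ and $C=e^2$.

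The routine but genuinely delicate part is making the ACF step rigorous for merely Sobolev subsolutions: the a.e.\ differentiability of $F_i$, the validity of the integration by parts against the distributional inequality $\di(\vf^2\nabla U_i)\ge 0$ (interpreting the divergence as a nonnegative measure paired with the nonnegative quasi-continuous $U_i$), and the trace/Poincar\'e estimate on $\partial B_r$ for a.e.\ $r$. The conceptually new ingredient, and the one I expect to demand the most care, is the quantitative spectral gap $\gamma_1+\gamma_2-2\ge\tfrac1\pi|\Theta|$ together with the passage from the Lebesgue density of $\Omega$ to the logarithmic integral of $|\Theta(s)|$, since this is precisely what converts the monotonicity of part (a) into the decay $o(r^\eps)$ asserted in \eqref{teo2phm2}.
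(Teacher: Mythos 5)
Your proof is correct and follows essentially the same route as the paper: the weighted subharmonicity $\di(\vf^2\nabla U_i)\ge 0$, the derivative estimate $\Phi'/\Phi\ge \tfrac{2}{r}\big(\gamma_1(r)+\gamma_2(r)-2\big)$ obtained via integration by parts, Cauchy--Schwarz and spherical symmetrization, and the linear improvement of the spectral gap in terms of the void angle are precisely the computations \eqref{longeq1}--\eqref{longeq4} and \eqref{mainder}. The only (harmless) divergence is in the last step of (b): you integrate $\int_r^{r_0}|\Theta(s)|s^{-1}\,ds$ by parts against $m(s)/s^2$ to get explicit constants $\eps=2c$, $C=e^2$, whereas the paper derives the averaged bound $\int_{rc/2}^{r}\theta(s)\,ds\ge c_0 r$ and concludes by showing a geometric sequence $r_n^{-\eps}\Phi(r_n)$ is decreasing; both arguments are valid.
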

\begin{proof}
We first estimate the derivative of $\Phi$, using the notations $\nabla_n u$ and $\nabla_\tau u$ respectively for the normal and the tangential part of the gradient $\nabla u$ on the boundary of $\partial B_r$.
\begin{align}
\frac{\Phi'(r)}{\Phi(r)}&=-\frac{4}{r}+\sum_{i=1,2}\frac{\int_{\partial B_r}\vf^2|\nabla U_i|^2\,d\HH^{1}}{\int_{B_r}\vf^2|\nabla\widetilde U_i|^2\,dx}\nonumber\\
&\ge-\frac{4}{r}+\sum_{i=1,2}\frac{\int_{\partial B_r}\vf^2\big(|\nabla_\tau U_i|^2+|\nabla_n U_i|^2\big)\,d\HH^{1}}{\int_{\partial B_r}\vf^2 U_i|\nabla_n U_i|\,d\HH^{1}}\label{longeq1}\\
&\ge-\frac{4}{r}+\sum_{i=1,2}\frac{2\left(\int_{\partial B_r}\vf^2|\nabla_n U_i|^2\,d\HH^{1}\right)^{1/2}\left(\int_{\partial B_r}\vf^2|\nabla_\tau U_i|^2\,d\HH^{1}\right)^{1/2}}{\left(\int_{\partial B_r}\vf^2 U_i^2\,d\HH^{1}\right)^{1/2}\left(\int_{\partial B_r}\vf^2 |\nabla_n U_i|^2\,d\HH^{1}\right)^{1/2}}\label{longeq2}\\
&=-\frac{4}{r}+2\sum_{i=1,2}\left(\frac{\int_{\partial B_r}|\nabla_\tau U_i|^2\,d\HH^{1}}{\int_{\partial B_r} U_i^2\,d\HH^{1}}\right)^{1/2}\label{longeq3}\\
&\ge-\frac{4}{r}+ 2\sum_{i=1,2}\sqrt{\lambda_1(\partial B_r\cap\{U_i>0\})}\nonumber\\
&\ge-\frac{4}{r}+\sum_{i=1,2}\frac{2\pi}{\HH^{1}(\partial B_r\cap\{U_i>0\})},\label{longeq4}
\end{align}
where \eqref{longeq1} follows by integration by parts and the inequality $-\hbox{div}(\varphi^2\nabla U_i)\leq 0$ obtained using Remark \ref{oss2phm}; \eqref{longeq2} is obtained by applying the mean quadratic-mean geometric inequality in the nominator and the Cauchy-Schwartz inequality in the denominator; \eqref{longeq3} is due to the fact that $\vf$ is constant on $\partial B_r$; \eqref{longeq4} follows by a standard symmetrization argument. Setting 
$$\theta(r):=\frac{\HH^1(\Omega\cap\partial B_r)}{\HH^1(\partial B_r)},$$
and applying the mean arithmetic-mean harmonic inequality to \eqref{longeq4}, we get
\begin{equation}\label{mainder}
\frac{\Phi'(r)}{\Phi(r)}\ge \frac{4}{r}\Big(-1+\frac{1}{1-\theta(r)}\Big)\ge \frac{4\theta(r)}{r},
\end{equation} 
which gives {\it (a)}. In order to prove $(b)$, we note that for $r_0>0$ small enough we have the density estimate 
$$|\Omega\cap B_r|\ge c| B_r|,\qquad \forall 0<r\le r_0.$$
Using the fact that $\frac{\partial}{\partial r}|\Omega \cap B_r|=\HH^1(\Omega\cap\partial B_r)=2\pi r\theta(r)$ we get
\begin{equation}\label{aptheta1}
\int_0^r 2\pi s(\theta(s)-c)\,ds\ge 0,\qquad \forall r\in(0,r_0).
\end{equation}
As a consequence we have that 
\begin{equation}\label{aptheta100}
\int_{rc/2}^r 2\pi s\left(\theta(s)-\frac{c}2\right)\,ds\ge 0,\qquad \forall r\in(0,r_0).
\end{equation}
Indeed, if this is not the case, then 
$$0\le \int_0^r 2\pi s(\theta(s)-c)\,ds\le \int_0^{cr/2} 2\pi s(1-c)\,ds-\int_{cr/2}^r 2\pi s\frac{c}2\,ds\le-\pi r^2c(1-c)^2,$$
which is in contradiction with \eqref{aptheta1}. By \eqref{aptheta100}, we get that there is a constant $c_0>0$ such that
\begin{equation}\label{aptheta101}
\int_{rc/2}^r\theta(s)\,ds\ge c_0 r,\qquad \forall r<r_0.
\end{equation}
By \eqref{mainder} we have 
\begin{align*}
\log\big(r^{-\eps}\Phi(r)\big)-\log\Big(\big(rc/2\big)^{-\eps}\Phi(rc/2)\Big)&=\int_{rc/2}^r\left(-\frac\eps{s}+
\frac{\Phi'(s)}{\Phi(s)}\right)\,ds\\
&\ge \int_{rc/2}^r\frac{4}{s}\left(-\frac{\eps}{4}+\theta(s)\right)\,ds\ge \eps\log(c/2)+4c_0,
\end{align*}
which is positive for $\eps>0$ small enough. Thus, we obtain that the sequence
$$a_n:=r_n^{-\eps}\Phi(r_n),\qquad\hbox{where}\quad r_n=(c/2)^nr_0,$$
is decreasing and so, by rescaling we obtain {\it (b)}.
\end{proof}

\begin{proof}[Proof of Lemma \ref{lem2ph05}]
We first note that as a consequence of Remark \ref{oss2phm}, we have the estimates:
\begin{equation}\label{teo2phm3}
\begin{array}{rl}
\ds\int_{B_r}\frac{|\nabla u^{\pm}|^2}{|x|^{d-2}}\,dx&\ds\le\ 2\int_{B_r}\vf^2\frac{|\nabla U_{12}|^2}{|x|^{d-2}}\,dx+2\|\vf^{-1}\nabla\vf\|^2_{L^\infty(B_{r_0})}\int_{B_r}\frac{u^2}{|x|^{d-2}}\,dx,\\
\\
\ds\int_{B_r}\vf^2\frac{|\nabla U_{12}|^2}{|x|^{d-2}}\,dx&\ds\le\  2\int_{B_r}\frac{|\nabla u^{\pm}|^2}{|x|^{d-2}}\,dx+2\|\vf^{-1}\nabla\vf\|^2_{L^\infty(B_{r_0})}\int_{B_r}\frac{u^2}{|x|^{d-2}}\,dx.
\end{array}
\end{equation}
Taking in consideration the inequality 
\begin{equation}
\int_{B_{r_0}}\frac{|\nabla u^{\pm}|^2}{|x|^{d-2}}\,dx\le C\left(1+\int_{B_{2r_0}}|u^\pm|^2\,dx\right), 
\end{equation}
proved in \cite{cajeke}, we obtain the claim by Lemma \ref{lemma2phm} and simple arithmetic.
\end{proof}

\subsection{Multiphase points and regularity of the free boundary}
This subsection is dedicated to the proof of {\it (i)}, {\it (ii)} and {\it (iv)} of Theorem \ref{mainr2}.
 
{\bf Lack of triple points.} The lack of triple points was proved in \cite{buve} in the more general case of partitions concerning general functionals depending on the spectrum of the Dirichlet Laplacian. The original proof uses the notion of an energy subsolution. In the present case the lack of triple points follows directly. In fact if there are three phases $\Omega_i$, $\Omega_j$, $\Omega_k$ such that the intersection of their boundaries contains a point $x_0$, then by the non-degeneracy of the gradient (Corollary \ref{nondegradcor}) we have that the product $\ds \prod_{i=1}^3\left(\frac{1}{r^{2}}\int_{B_r}|\nabla u_i|^2\,dx\right)$ remains bounded from below by a strictly positive constant, which is in contradiction with the three-phase monotonicity formula (Theorem \ref{teo3phm}).

{\bf Lack of two-phase points on the boundary of the box.}
Our first numerical simulations showed the lack of double points (i.e. points on the boundary of two distinct sets) on the boundary of the box $\Dr$. We first notice that there is a quick argument that proves the above claim in the case when the boundary $\partial\Dr$ is smooth. Indeed, if this is the case and if $x_0\in\partial\Dr$, then there is a ball $B\subset\Dr^c$ such that $x_0\in\partial B$. Since the gradient of the first eigenfunction $u$ on $B$ satisfies the non-degeneracy inequality \eqref{nondegrad}, we can use the three-phase monotonicity formula to conclude the proof. 

If the boundary $\partial\Dr$ is only Lipschitz we need to use Lemma \ref{lem2ph05}. Suppose, by absurd, that there is a point $x_0\in \partial\Omega_i\cap\partial\Omega_j\cap\partial\Dr$. If $u_i$ and $u_j$ are the first eigenfunctions on $\Omega_i$ and $\Omega_j$, by Corollary \ref{nondegrad} we have 
\begin{equation}\label{double1}
\mean{B_r(x_0)}{|\nabla u_i|^2\,dx}\ge C_{nd}\qquad\hbox{and}\qquad\mean{B_r(x_0)}{|\nabla u_j|^2\,dx}\ge C_{nd},
\end{equation}
for small enough $r>0$ and some non-degeneracy constant $C_{nd}>0$. Since $\partial\Dr$ is Lipschitz, we have the density estimate $\ \ds\liminf_{r\to0}\frac{|\Dr^c\cap B_r(x_0) |}{|B_r|}>0\ $ and so, we can apply Lemma \ref{lem2ph05}, obtaining a contradiction.

{\bf Regularity of the auxiliary set $\ds\Omega_{h+1}=\Dr\setminus\Big(\bigcup_{i=1}^h\Omega_i\Big)$.}  We first notice that since each of the sets $\Omega_1,\dots, \Omega_h$ is a shape subsolution for $\lambda_1+a|\cdot|$, we have that each of these sets has finite perimeter by Theorem \ref{subth}. As a consequence $\Omega_{h+1}$ also has finite perimeter. Suppose that $x_0\in \Dr\cap\partial^\ast\Omega_{h+1}$. 

Suppose that $x_0$ is on the boundary of at most one phase, i.e. that there is ball $B_r(x_0)$ and an index $i\in\{1,\dots,h\}$ such that $B_r(x_0)=\big(B_r(x_0)\cap\Omega_i\big)\cup\big(B_r(x_0)\cap\Omega_{h+1}\big)$. Then the set $\Omega_i$ is a solution of 
$$\min\Big\{\lambda_1(\Omega)+\int_{\Omega}W_i(x)\,dx\ :\ \Omega\subset \Dr_i\cap B_r(x_0),\ \Omega\ \text{open}\Big\},$$
where the set $D_i$ is given by Theorem \ref{subth}. By the regularity result of Brian\c con and Lamboley Theorem \ref{regth} we have that $\partial^\ast\Omega_{h+1}=\partial\Omega_{h+1}$ in $B_r(x_0)$ and is locally a graph of a smooth function.

Thus in order to conclude it is sufficient to prove that $x_0$ belonging to the boundary of just one of the phases is the only possible case. Indeed, suppose that there is $j\neq i$ such that for every ball $B_r(x_0)$ the sets $B_r(x_0)\cap\Omega_i$ and $B_r(x_0)\cap\Omega_j$ are both non-empty. By the non-degeneracy of the gradients of the eigenfunctions $u_i$ and $u_j$ we have that $\ds\int_{B_r(x_0)}|\nabla u_{i}|^2\,dx\le C_{nd} r^2$ and  $\ds\int_{B_r(x_0)}|\nabla u_{j}|^2\,dx\le C_{nd} r^2$. On the other hand, since $x_0$ is in the reduced boundary of $\Omega_{h+1}$ we have that 
$$\lim_{r\to 0}\frac{|B_r(x_0)\cap \Omega_{h+1}|}{|B_r(x_0)|}=\frac12.$$
Thus by the decay monotonicity formula Lemma \ref{lem2ph05} we get
$$\lim_{r\to0}\left(\frac{1}{r^2}\int_{B_r(x_0)}|\nabla u_{i}|^2\,dx\right) \left(\frac{1}{r^2}\int_{B_r(x_0)}|\nabla u_{j}|^2\,dx\right) =0,$$
which is a contradiction. Thus every point of the reduced boundary belongs to at most one phase and $\partial^\ast\Omega_{h+1}$ is smooth.

\section{Further remarks and open questions}
This section is dedicated to some further developments around Theorem \ref{mainr2}. In particular, using the decay monotonicity formula from Lemma \ref{lem2ph05} and the same argument as in Theorem \ref{mainr2} {\it (iv)} we prove that the optimal set for the second eigenfunction has smooth reduced boundary. We also discuss the extension of Theorem \ref{mainr2} to smooth surfaces and the analogous result in this case.  
\subsection{On the regularity of the optimal set for $\lambda_2$}
Consider the shape optimization problem 
\begin{equation}\label{lb2opt}
\min\Big\{\lambda_2(\Omega)+\alpha|\Omega|\ :\ \Omega\ \text{open},\ \Omega\subset\Dr\Big\},
\end{equation}
where $\Dr\subset\R^2$ is a bounded open set and $\alpha>0$. By the Buttazzo-Dal Maso Theorem this problem admits a solution in the class of quasi-open sets. The question of regularity of the solutions is quite involved and no progress was made for almost two decades until in \cite{bucur-mink} it was proved that every solution has finite perimeter and in \cite{buve} it was proved that there is an open solution characterized through a multiphase problem. In the Theorem below we answer the question of the regularity of the reduced boundary $\partial^\ast \Omega$ of the solutions of \eqref{lb2opt}.
\begin{thm}
Let $\Omega$ be a solution of \eqref{lb2opt}. Then the reduced boundary $\Dr\cap\partial^\ast\Omega$ is smooth.
\end{thm}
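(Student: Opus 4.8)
The plan is to reduce the problem to the two-phase configuration already treated in Theorem \ref{mainr2} and then reproduce the argument of part \textit{(iv)} almost verbatim, the only genuinely new ingredient being the decay formula of Lemma \ref{lem2ph05}. First I would recall from \cite{buve} that \eqref{lb2opt} admits an open solution $\Omega$ whose structure is governed by the second eigenfunction $u$: writing $\Omega^+=\{u>0\}$ and $\Omega^-=\{u<0\}$ for the two nodal domains, one has $\Omega=\Omega^+\cup\Omega^-$ up to a set of zero capacity, and $\lambda_1(\Omega^+)=\lambda_1(\Omega^-)=\lambda_2(\Omega)$, since $u|_{\Omega^\pm}$ is a positive, hence first, eigenfunction on each nodal domain with eigenvalue $\lambda_2(\Omega)$. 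Testing the optimality of $\Omega$ against $\omega\cup\Omega^-$ for $\omega\subset\Omega^+$ (and symmetrically) and using $\lambda_2(\omega\cup\Omega^-)\le\max\{\lambda_1(\omega),\lambda_1(\Omega^-)\}=\lambda_1(\omega)$ shows that each $\Omega^\pm$ is a shape subsolution for $\lambda_1+\alpha|\cdot|$. By Theorem \ref{subth} the two nodal domains then have finite perimeter, so $\Omega$ does as well, and they are contained in disjoint open sets $D^+,D^-$.

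Next I would record the two facts needed to run the argument of part \textit{(iv)}. The parts $u^+=u\vee 0$ and $u^-=(-u)\vee 0$ are non-negative, bounded, have disjoint supports, and, being first eigenfunctions on $\Omega^\pm$ with common eigenvalue $\lambda_2(\Omega)$, satisfy $\Delta u^\pm+\lambda_2(\Omega)u^\pm\ge 0$ in the sense of \eqref{subh}. Moreover the subsolution property yields the non-degeneracy of Lemma \ref{nondeglem}, and since each nodal domain is connected with $\lambda_1=\lambda_2(\Omega)$, the proof of Corollary \ref{nondegradcor} applies unchanged and gives the gradient lower bound $\mean{B_r(x_0)}{|\nabla u^\pm|^2\,dx}\ge 4C_{nd}^2$ at every $x_0\in\partial\Omega^+\cap\partial\Omega^-$ and every small $r$.

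With this in hand the proof splits exactly as in Theorem \ref{mainr2} \textit{(iv)}. Fix $x_0\in\Dr\cap\partial^\ast\Omega$; as a reduced boundary point it has density $1/2$ in $\Omega$, hence lies outside the open set $\Omega$. If some ball $B_r(x_0)$ meets only one nodal domain, say $\Omega^-\cap B_r(x_0)=\emptyset$, then $B_r(x_0)\subset D^+$ and $\Omega^+$ is there a local minimizer of $\lambda_1(\cdot)+\alpha|\cdot|$; the Brian\c con--Lamboley regularity (Theorem \ref{regth}) then shows that $\partial\Omega=\partial\Omega^+$ is a smooth graph near $x_0$ and coincides with $\partial^\ast\Omega$. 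If instead both nodal domains meet every ball around $x_0$, then $x_0\in\partial\Omega^+\cap\partial\Omega^-$, and since $x_0\in\partial^\ast\Omega$ the complement $\Omega^c\subset\{u^+=0\}\cap\{u^-=0\}$ has density $1/2>0$ at $x_0$. Lemma \ref{lem2ph05}, applied with $\lambda_+=\lambda_-=\lambda_2(\Omega)$, then forces $\big(\tfrac1{r^2}\int_{B_r(x_0)}|\nabla u^+|^2\,dx\big)\big(\tfrac1{r^2}\int_{B_r(x_0)}|\nabla u^-|^2\,dx\big)=o(r^\eps)$, contradicting the uniform positive lower bound of the previous paragraph. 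Thus two-phase reduced boundary points cannot occur, every point of $\Dr\cap\partial^\ast\Omega$ is a one-phase point, and the reduced boundary is smooth.

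The hard part will not be the monotonicity-formula contradiction, which is identical to the one in part \textit{(iv)}, but the structural reduction borrowed from \cite{buve}: one must know that the nodal domains of the penalized $\lambda_2$-minimizer are genuine \emph{local} single-phase minimizers, so that Theorem \ref{regth} may be invoked at one-phase points. This is delicate because the penalized $\lambda_2$-optimality transfers to the inequality $\lambda_1(\Omega^+)+\alpha|\Omega^+|\le\lambda_1(\omega)+\alpha|\omega|$ only for inward competitors $\omega\subset\Omega^+$, where $\lambda_1(\omega)\ge\lambda_2(\Omega)$; for outward perturbations the maximum in $\lambda_2(\omega\cup\Omega^-)\le\max\{\lambda_1(\omega),\lambda_2(\Omega)\}$ is governed by the frozen phase $\Omega^-$ and only a volume inequality survives, so the full local minimality must be extracted from the finer two-phase analysis of \cite{buve}.
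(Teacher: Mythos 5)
Your proposal follows essentially the same route as the paper: decompose the optimal set into two disjoint open phases with equal first eigenvalue (the paper invokes the multiphase characterization \eqref{lb2opt2} from \cite{buve}, you use the nodal domains of the second eigenfunction directly, which amounts to the same thing), derive the shape-subsolution property and the gradient non-degeneracy, exclude two-phase points of $\Dr\cap\partial^\ast\Omega$ via Lemma \ref{lem2ph05}, and conclude with Theorem \ref{regth} at one-phase points. The difficulty you flag at the end --- that outward perturbations of a single phase only yield a volume inequality, so the full local minimality needed for the Brian\c con--Lamboley regularity must be extracted from the finer analysis of \cite{buve} --- is a genuine subtlety, but the paper's own proof relies on exactly the same external input and does not elaborate on it either.
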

\begin{proof}
We first notice that it was proved in \cite{buve} that for every solution $\Omega$ of the problem \eqref{lb2opt} there are disjoint open sets $\omega_1,\omega_2\subset\Omega$ of the same measure as $\Omega$, i.e. $|\Omega\setminus(\omega_1\cup\omega_2)|=\emptyset$ such that the set $\omega_1\cup\omega_2$ is still a solution of \eqref{lb2opt} and such that the couple $(\omega_1,\omega_2)$ is a solution to the multiphase problem 
\begin{equation}\label{lb2opt2}
\min\Big\{\max\{\lambda_1(\omega_1),\lambda_1(\omega_2)\}+\alpha|\omega_1|+\alpha|\omega_2|\ :\ \omega_1,\omega_2\ \text{open},\ \omega_{1,2}\subset\Dr,\ \omega_1\cap\omega_2=\emptyset\Big\}.
\end{equation}
We notice that necessarily $\omega_1$ and $\omega_2$ are both connected and $\lambda_1(\omega_1)=\lambda_1(\omega_2)$, otherwise it would be possible to construct a better competitor for \eqref{lb2opt2}. Thus, by confronting the couple $\omega_1,\omega_2$ with a couple $\tilde\omega_1,\omega_2$ where $\widetilde\omega_1\subset\omega_1$ we get that $\omega_1$ is a shape subsolution for the functional $\lambda_1+\alpha|\cdot|$ and analogously $\omega_2$ is a shape subsolution for the same functional. In particular, all the conclusions of Theorem \ref{subth} are valid. Let now $x_0\in\partial^\ast\Omega$. Using the non-degeneracy of the gradient of the first eigenfunctions $u_1\in H^1_0(\omega_1)$ and  $u_2\in H^1_0(\omega_2)$ in $x_0$ and the decay monotonicity formula Lemma \ref{lem2ph05}, and reasoning as in the proof of Theorem \ref{mainr2} {\it (iv)} we get that there is a ball $B_r(x_0)$ that does not intersect one of the sets $\omega_1$ and $\omega_2$. Without loss of generality $B_r(x_0)\cap \omega_2=\emptyset$. Now by the regularity result of Brian\c con and Lamboley \cite{brla} and the fact that $\partial^\ast\omega_1=\partial^\ast\Omega$ in $B_r(x_0)$ we get that $\partial^\ast\Omega$ is regular in a neighbourhood of $x_0$.  
\end{proof}
\begin{rem}
We notice that an estimate on the Hausdorff dimension of the set $\partial\Omega\setminus\partial^\ast\Omega$ is not available at the moment. 
\end{rem}
\subsection{Multiphase shape optimization problems on smooth manifolds}
We notice that all the arguments that we use are local and Theorem \ref{mainr2} can easily be extended to the case where the box $(\Dr,g)$ is a riemannian manifold with or without boundary. In fact the existence of an optimal partition follows by the analogous of the Buttazzo - Dal Maso Theorem proved in \cite{buve}. The Laplace-Beltrami operator $\Delta_g$ in local coordinates satisfies $\eps \Delta\le \Delta_g\le \eps^{-1}\Delta$ as an operator, where $\eps>0$ depends on $\Dr$ and $g$, and analogously the gradient satisfies $\eps |\nabla u|\le |\nabla_g u|\le \eps^{-1}|\nabla u|$, for any function $u\in H^1(\Dr)$ expressed in local coordinates. Thus, the two and three-phase monotonicity formulas are still valid as well as the non-degeneracy of the gradient, the lack of triple points inside $\Dr$ and the lack of double points on the boundary of $\Dr$. We regroup the results that are still valid in the following Theorem.  
 \begin{thm}\label{mainsurf}
Suppose that $\Dr$ is a compact riemannian surface. Let $0<a\le A$ be two positive real numbers and $W_i:\Dr\to[a,A]$, $i=1,\dots,h$ be given $C^2$ functions. Then there are disjoint open sets $\Omega_1,\dots,\Omega_h\subset\Dr$  solving the multiphase optimization problem \eqref{optsum} in $\Dr$. Moreover, any solution to \eqref{optsum} satisfies the conditions {\it (i)}, {\it (ii)} and {\it (iii)} of Theorem \ref{mainr2}.
\end{thm}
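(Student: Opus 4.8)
The plan is to exploit the purely local nature of all the arguments in the proof of Theorem \ref{mainr2}, combined with the quasi-isometry between the Riemannian metric $g$ and the Euclidean one in local coordinates. The two structural facts I would use throughout are the operator comparison $\eps\Delta\le\Delta_g\le\eps^{-1}\Delta$ and the gradient comparison $\eps|\nabla u|\le|\nabla_g u|\le\eps^{-1}|\nabla u|$ valid in any fixed coordinate chart, where $\eps>0$ depends only on $(\Dr,g)$ and the chart. For existence, the analog of the Buttazzo--Dal Maso theorem on compact surfaces proved in \cite{buve} provides a minimizing $h$-uple of quasi-open sets $(\Omega_1,\dots,\Omega_h)$. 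Testing the optimality of $\Omega_i$ against an arbitrary quasi-open $\omega_i\subset\Omega_i$ and using $W_i\ge a$ exactly as in Section \ref{proof-mainres} shows that each $\Omega_i$ is a shape subsolution for the Riemannian functional $\lambda_1+a|\cdot|$; the conclusions of Theorem \ref{subth} (finite perimeter, the eigenvalue lower bound, and the separation of disjoint subsolutions into disjoint open sets $\Dr_i$) remain available in this setting by \cite{buve}. Reading the resulting single-phase problems in charts, where $-\Delta_g$ is a uniformly elliptic divergence-form operator with smooth coefficients, the local version of the Alt--Caffarelli/Brian\c con--Lamboley theory (Theorem \ref{regth}) applies and yields that each $\Omega_i$ is open.

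Next I would localize the analytic toolkit. Fixing a solution and a point $x_0$, in a chart one writes $-\Delta_g u_i=\lambda_1(\Omega_i)u_i$, and the distributional inequality \eqref{subh} has its exact Riemannian counterpart. The non-degeneracy Lemma \ref{nondeglem} and Corollary \ref{nondegradcor} carry over verbatim, since they use only the subsolution property and a Poincar\'e/Rayleigh comparison whose constants degrade merely by powers of $\eps$. The two- and three-phase monotonicity formulas of Theorems \ref{teo2phm} and \ref{teo3phm} and the decay formula of Lemma \ref{lem2ph05} survive the bi-Lipschitz change of metric: in their Conti--Terracini--Verzini form they compare only Dirichlet energies on balls, Rayleigh quotients on spheres, and volume densities $\theta(r)$, each controlled above and below by its Euclidean analog up to the factor $\eps$, so that monotonicity and the decay persist with a modified exponent. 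The growth estimates Lemmas \ref{C1lemma}, \ref{C2lemma} and Proposition \ref{C12prop}, together with the elliptic gradient bound Lemma \ref{gradestlem} applied to $-\Delta_g$, then give the Lipschitz continuity of the $u_i$, which is statement (iii).

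With these ingredients in place, (i) and (ii) follow exactly as in Section \ref{proof-mainres}. At an alleged triple point the non-degeneracy of the three gradients contradicts the three-phase formula of Theorem \ref{teo3phm}; at an alleged double point on $\partial\Dr$ the Lipschitz regularity of $\partial\Dr$ supplies a uniformly positive density of the complement, so Lemma \ref{lem2ph05} forces the product of the two gradient energies to decay, contradicting the lower bound \eqref{nondegrad}. Note that only (i), (ii) and (iii) are claimed, so the reduced-boundary regularity of (iv), which rests on finer global consequences of Theorem \ref{regth}, is not needed here.

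The main obstacle I anticipate is precisely the transfer of the monotonicity formulas: unlike the non-degeneracy and growth estimates, which are soft and survive any uniformly elliptic perturbation, the Alt--Caffarelli--Friedman type formulas encode a sharp interplay between spherical symmetrization and the first eigenvalue of the spherical cap, and one must verify that replacing the round sphere $\partial B_r$ and the Euclidean gradient by their Riemannian counterparts affects only the constants and the decay exponent and does not destroy the sign of $\Phi'(r)$ in Lemma \ref{lemma2phm}. I would handle this by repeating the computation \eqref{longeq1}--\eqref{mainder} with $\vf$ replaced by a radial Riemannian supersolution of $-\Delta_g\vf=\lambda\vf$ and all integrals taken with respect to $g$, checking that the symmetrization step leading to \eqref{longeq4} and the density estimate \eqref{aptheta1} both survive the bi-Lipschitz bounds.
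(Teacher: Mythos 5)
Your proposal is correct and follows essentially the same route as the paper, which proves Theorem \ref{mainsurf} by observing that all arguments in the proof of Theorem \ref{mainr2} are local and transfer to the Riemannian setting via the comparisons $\eps\Delta\le\Delta_g\le\eps^{-1}\Delta$ and $\eps|\nabla u|\le|\nabla_g u|\le\eps^{-1}|\nabla u|$ in coordinate charts, with existence coming from the analogue of the Buttazzo--Dal Maso theorem proved in \cite{buve}. The only difference is that you spell out in more detail the verification that the monotonicity formulas survive the bi-Lipschitz change of metric, a point the paper simply asserts.
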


\section{Numerical eigenvalue computation on a fixed grid}
\label{eigcomp}

There are multiple ways of computing numerically the low Dirichlet-Laplace eigenvalues of a shape $\Omega$, most of them requiring a good description of the boundary (for example finite elements, or fundamental solutions). In our case it is necessary to compute the first eigenvalue of a number of shapes, for which it is difficult to keep track of their boundaries. Thus, having a method which allows us to work on a fixed domain $D$ containing the shape, greatly simplifies the treatment of the problem. Methods of this kind were used in \cite{buboou},\cite{disk-partitions} in the study of spectral minimal partitions. In our study we use the method presented in \cite{buboou}. We did not found any other works in the literature which study the numerical error associated to this method. In this section we present the discretization algorithm, as well as the errors obtained for a few simple shapes.

This eigenvalue computation method is inspired from penalized problems of the form
\begin{equation}
-\Delta u + \mu u = \lambda_k(\mu) u ,\qquad u \in H^1(D)\cap L^2(D,\mu),
\label{penalized}
\end{equation}
where $D$ is a bounded open set in $\Bbb{R}^2$, and $\mu$ is a measure such that $\mu(A)= 0$ whenever $A$ has capacity zero. The case where $\lambda_k$ corresponds to a Dirichlet Laplace eigenvalue of a set $\Omega \subset D$ is included in the formulation \eqref{penalized}. Indeed, if $\infty_{\Omega^c}$ is defined as follows:
\[ \infty_{\Omega^c}(A) = \begin{cases} 0 & \text{ if } \cp(A\cap \Omega) = 0 \\
\infty & \text{ otherwise} \end{cases},\]
then $\lambda_k(\infty_{\Omega^c})=\lambda_k(\Omega)$. We have denoted $\cp(A)$ the capacity of the set $A$. For further details about the penalized formulation \eqref{penalized}, we refer to \cite[Chapter 6]{bucurbuttazzo}.

This formulation suggests the following numerical method: we choose $\mu = (1-\ind_\Omega)C dx$, where $\ind_\Omega$ is the characteristic function of $\Omega$, and $C$ is large. In \cite{buboou} it is proved that as $C \to \infty$ we have $\lambda_k(C(1-\ind_\Omega)dx) \to \lambda_k(\Omega)$. In the following we propose to study the behavior of this eigenvalue computation method with respect to the discretization parameter $N$ and with respect to the choice of $C$. We compare these values with the ones provided by the MpsPack software \cite{mpspack}, which is quite precise.

We consider the domain $D = [-1.5,1.5]^2$ and on it we take a $N \times N$ uniform grid. We discretize a function $u : D \to \Bbb{R}$ by considering its values on this regular grid. For sets $\Omega \subset D$ we consider the approximation of problem \eqref{penalized} defined as
\begin{equation}
-\Delta u + C(1-\ind_\Omega) u = \lambda_k(C\ind_{\Omega^c}dx) u. 
\label{penalizedC}
\end{equation}
This leads us to the discretized matrix problem
\[ (A+C \text{diag}(1-\ind_\Omega))u = \lambda u,\]
where $A$ is the finite difference discretization of the Laplace operator.

We present below the relative error, compared to MpsPack, in function of the measure parameter $C$ and the discretization parameter $N$. In tables \ref{test-disk},\ref{test-square},\ref{weird-shape} and \ref{weird-shape2} we present the maximal relative error $|\lambda_k^{\text{approx}}-
\lambda_k|/\lambda_k
$ (with $1 \leq k \leq 10$) for the unit disk, for the square of side length $2$ and for the shapes presented in Figure \ref{testing-penalized}. Here $\lambda_k$ stands for the analytical value (when available) or the value computed with MpsPack.

In our experiments we observed that for a fixed discretization parameter $N$, the relative error stabilizes itself when $C$ is large enough. This numerical effect seems to be due to the fact that $\Omega$ is approximated using a rectangular grid, so at a given $N$, for large $C$ we only compute the eigenvalue of this discrete approximation of $\Omega$.

\begin{table}
\begin{tabular}{|c|c|c|c|c|c|c|c|}
\hline 
$N$ & $C = 10^3$ & $C = 10^4$ & $C = 10^5$ & $C = 10^6$ & $C = 10^7$ & $C = 10^8$ & $C = 10^9$\\
 \hline
 $ 100$ &$5.2\cdot 10^{-2}$ & $1.8\cdot 10^{-2}$ &$1.2\cdot 10^{-2}$ & $1.1\cdot 10^{-2}$ & $1.1\cdot 10^{-2}$ & $1.1\cdot 10^{-2}$ & $1.1\cdot 10^{-2}$\\
 \hline
 $ 200$ & $5.1\cdot 10^{-2}$ & $1.1\cdot 10^{-2}$ & $1.5\cdot 10^{-3}$ & $5.5 \cdot 10^{-4}$ & $7 \cdot 10^{-4}$ & $7.2 \cdot 10^{-4}$ & $7.2 \cdot 10^{-4}$ \\
 \hline
 $ 300$ & $5.6\cdot 10^{-2}$ & $1.6\cdot 10^{-2}$& $4.7\cdot 10^{-3}$ & $2.8 \cdot 10^{-3}$ & $2.6\cdot 10^{-3}$ &
$2.6\cdot 10^{-3}$ & $2.6\cdot 10^{-3}$ \\
 \hline
 $ 400$ &$5.7\cdot 10^{-2}$ & $1.6\cdot 10^{-2}$  & $3.9\cdot 10^{-3}$ & $1.6\cdot 10^{-3}$  & $1.3\cdot 10^{-3}$ & $1.3\cdot 10^{-3}$  & $1.3\cdot 10^{-3}$\\
 \hline
 $ 500$ & $5.7\cdot 10^{-2}$ & $1.6\cdot 10^{-2}$  & $3.8\cdot 10^{-3}$ & $1.1\cdot 10^{-3}$ & $7.9\cdot 10^{-4}$ & $7.6\cdot 10^{-4}$ & $7.5 \cdot 10^{-4}$ \\
 \hline
\end{tabular}
\caption{Relative errors for the unit disk}
\label{test-disk}
\end{table}

\begin{table}
\begin{tabular}{|c|c|c|c|c|c|c|c|}
\hline 
$N$ & $C = 10^3$ & $C = 10^4$ & $C = 10^5$ & $C = 10^6$ & $C = 10^7$ & $C = 10^8$ & $C = 10^9$\\
 \hline
 $100$ &$5.1\cdot 10^{-2}$ & $1.9\cdot 10^{-2}$ &$1.3\cdot 10^{-2}$ & $1.3\cdot 10^{-2}$ & $1.3\cdot 10^{-2}$ & $1.3\cdot 10^{-2}$ & $1.3\cdot 10^{-2}$\\
 \hline
 $200$ & $4.4\cdot 10^{-2}$ & $5.7\cdot 10^{-3}$ & $3.7\cdot 10^{-3}$ & $4.8 \cdot 10^{-3}$ & $5 \cdot 10^{-3}$ & $5 \cdot 10^{-3}$ & $5 \cdot 10^{-3}$ \\
 \hline
 $ 300$ & $6.2\cdot 10^{-2}$ & $2.2\cdot 10^{-2}$& $1.2\cdot 10^{-2}$ & $  10^{-2}$ & $ 10^{-2}$ &
$ 10^{-2}$ & $10^{-2}$ \\
 \hline
 $400$ &$5.7\cdot 10^{-2}$ & $1.6\cdot 10^{-2}$  & $5\cdot 10^{-3}$ & $2.9\cdot 10^{-3}$  & $2.7\cdot 10^{-3}$ & $2.7\cdot 10^{-3}$  & $2.7\cdot 10^{-3}$\\
 \hline
 $ 500$ & $5.4\cdot 10^{-2}$ & $1.3\cdot 10^{-2}$  & $8.3\cdot 10^{-4}$ & $1.7\cdot 10^{-3}$ & $2\cdot 10^{-3}$ & $2\cdot 10^{-3}$ & $2\cdot 10^{-3}$ \\
 \hline
\end{tabular}
\caption{Relative errors for the square of side length $2$}
\label{test-square}
\end{table}

\begin{table}
\begin{tabular}{|c|c|c|c|c|c|c|c|}
\hline 
$N$ & $C = 10^3$ & $C = 10^4$ & $C = 10^5$ & $C = 10^6$ & $C = 10^7$ & $C = 10^8$ & $C = 10^9$\\
 \hline
 $ 100$ &$6\cdot 10^{-2}$ & $1.8\cdot 10^{-2}$ &$1\cdot 10^{-2}$ & $9.5\cdot 10^{-3}$ & $9.4\cdot 10^{-3}$ & $9.4\cdot 10^{-3}$ & $9.4\cdot 10^{-3}$\\
 \hline
 $ 200$ & $6.5\cdot 10^{-2}$ & $1.8\cdot 10^{-2}$ & $6.1\cdot 10^{-3}$ & $4.4 \cdot 10^{-3}$ & $4.2 \cdot 10^{-3}$ & $4.2 \cdot 10^{-3}$ & $4.2 \cdot 10^{-3}$ \\
 \hline
 $ 300$ & $6.7\cdot 10^{-2}$ & $1.9\cdot 10^{-2}$& $4.9\cdot 10^{-3}$ & $  2.5\cdot 10^{-3}$ & $ 2.2\cdot 10^{-3}$ &
$2.2\cdot 10^{-3}$ & $2.2\cdot 10^{-3}$ \\
 \hline
 $ 400$ &$6.8\cdot 10^{-2}$ & $1.9\cdot 10^{-2}$  & $4.7\cdot 10^{-3}$ & $1.8\cdot 10^{-3}$  & $1.4\cdot 10^{-3}$ & $1.4\cdot 10^{-3}$  & $1.4\cdot 10^{-3}$\\
 \hline
 $ 500$ & $6.9\cdot 10^{-2}$ & $2\cdot 10^{-2}$  & $5.3\cdot 10^{-3}$ & $1.9\cdot 10^{-3}$ & $1.4\cdot 10^{-3}$ & $1.4\cdot 10^{-3}$ & $1.4\cdot 10^{-3}$ \\
 \hline
\end{tabular}
\caption{Relative errors for the shape presented in Figure \ref{testing-penalized} (left)}
\label{weird-shape}
\end{table}

\begin{table}
\begin{tabular}{|c|c|c|c|c|c|c|c|}
\hline 
$N$ & $C = 10^3$ & $C = 10^4$ & $C = 10^5$ & $C = 10^6$ & $C = 10^7$ & $C = 10^8$ & $C = 10^9$\\
 \hline
 $ 100$ &$6.9\cdot 10^{-2}$ & $2.2\cdot 10^{-2}$ &$1.4\cdot 10^{-2}$ & $1.3\cdot 10^{-2}$ & $1.3\cdot 10^{-2}$ & $1.3\cdot 10^{-2}$ & $1.3\cdot 10^{-2}$\\
 \hline
 $ 200$ & $7.2\cdot 10^{-2}$ & $2\cdot 10^{-2}$ & $6.8\cdot 10^{-3}$ & $4.8 \cdot 10^{-3}$ & $4.6 \cdot 10^{-3}$ & $4.6 \cdot 10^{-3}$ & $4.6 \cdot 10^{-3}$ \\
 \hline
 $ 300$ & $7.4\cdot 10^{-2}$ & $2.1\cdot 10^{-2}$& $5.9\cdot 10^{-3}$ & $  3.3\cdot 10^{-3}$ & $ 3\cdot 10^{-3}$ &
$2.9\cdot 10^{-3}$ & $2.9\cdot 10^{-3}$ \\
 \hline
 $ 400$ &$7.6\cdot 10^{-2}$ & $2.2\cdot 10^{-2}$  & $6.1\cdot 10^{-3}$ & $2.8\cdot 10^{-3}$  & $2.4\cdot 10^{-3}$ & $2.4\cdot 10^{-3}$  & $2.4\cdot 10^{-3}$\\
 \hline
 $ 500$ & $7.6\cdot 10^{-2}$ & $2.3\cdot 10^{-2}$  & $5.6\cdot 10^{-3}$ & $1.8\cdot 10^{-3}$ & $1.3\cdot 10^{-3}$ & $1.3\cdot 10^{-3}$ & $1.3\cdot 10^{-3}$ \\
 \hline
\end{tabular}
\caption{Relative errors for the shape presented in Figure \ref{testing-penalized} (right)}
\label{weird-shape2}
\end{table}

\begin{figure}
\begin{center}
\includegraphics[width = 0.4 \textwidth]{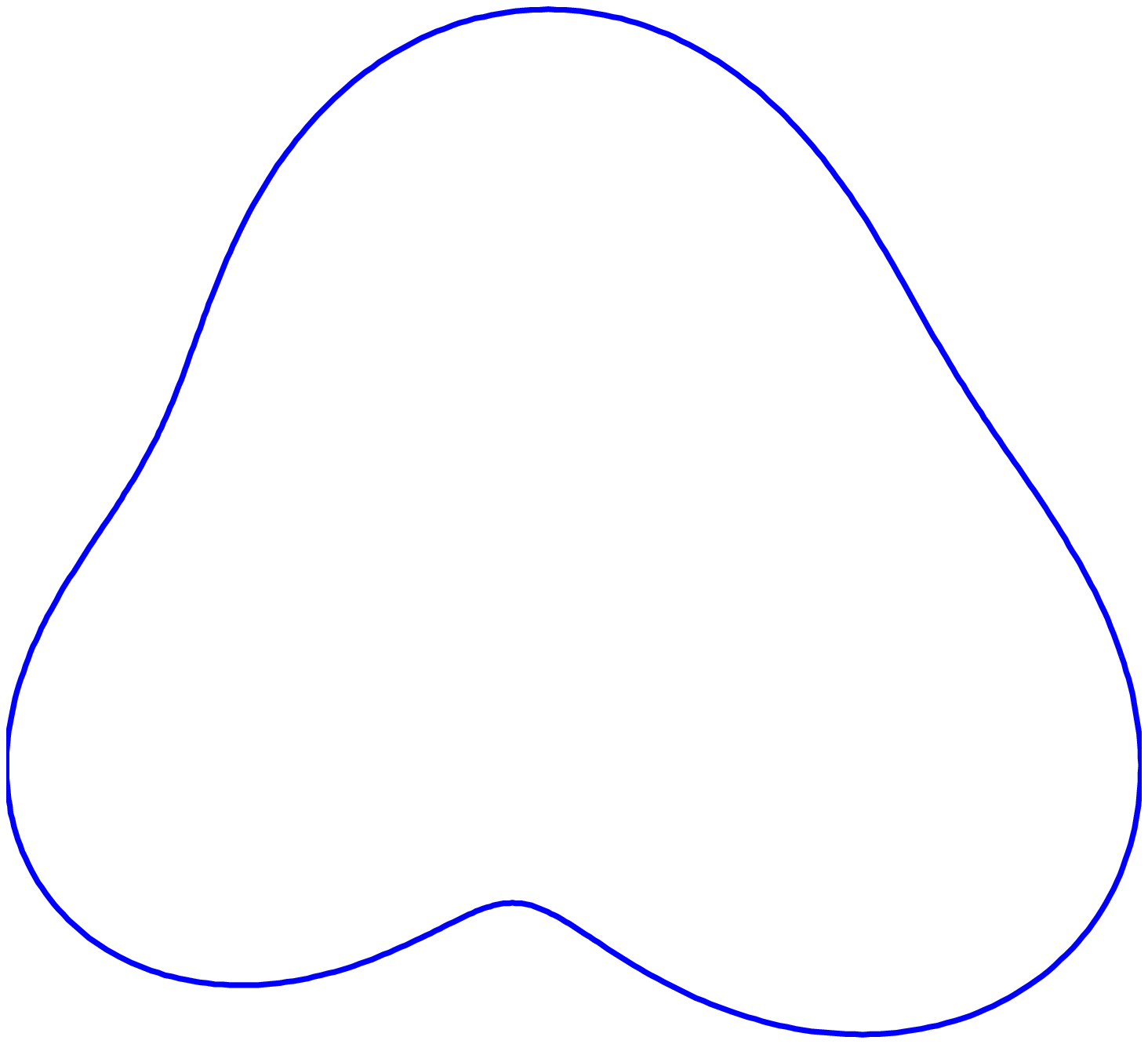} \hspace{0.05\textwidth}
\includegraphics[width = 0.4 \textwidth]{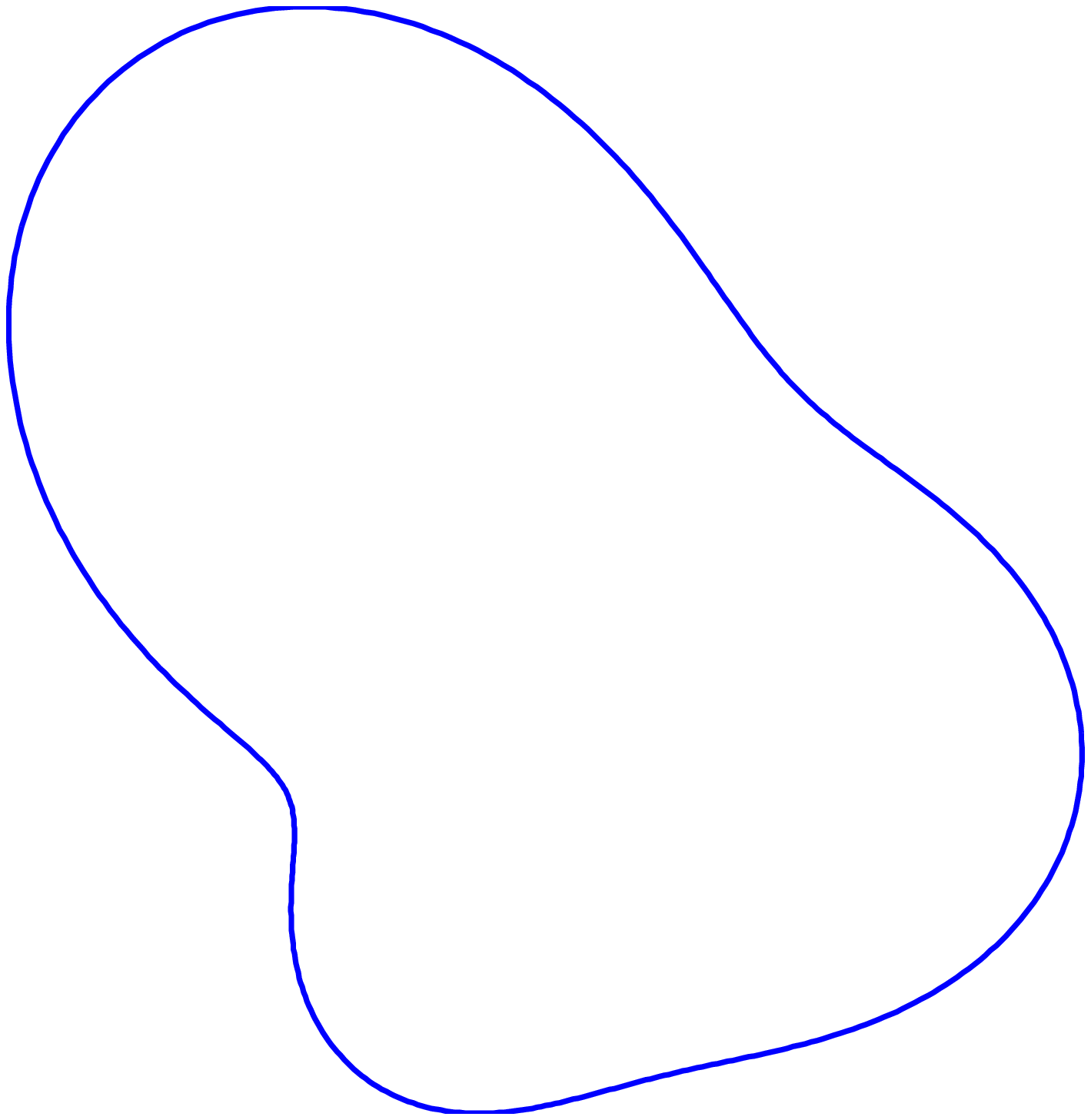}
\end{center}
\caption{Shapes for which we test the method in Table \ref{weird-shape} (left) and Table \ref{weird-shape2} (right)}

\label{testing-penalized}
\end{figure}

\section{Proof of Theorem \ref{error-thm}}
\label{error-estimate}
In this section, we give a theoretical estimate of the relative error obtained when working with the penalized method. We study the difference between the eigenvalue $\lambda_k(C\ind_{\Omega^c}dx)$, given by \eqref{penalizedC}, and $\lambda_k(\Omega)$. We fix $\Omega \subset D$ to be an open set with boundary of class $C^2$. In the following, we denote $\mu_C = C\ind_{\Omega^c}dx$.

 We consider the functions $w,w_C$ defined as follows
\[ -\Delta w = 1 \text{ in }\Omega,\ w \in H_0^1(\Omega),\]
\[ -\Delta w_C+C\ind_{\Omega^c} w_C = 1 \text{ in }D, w_C \in H_0^1(D).\]
Note that the standard maximum principle implies that $w_C \geq w$ on $D$. Using the terminology defined in \cite{potentials-bbv} we note that $\mu_C dx \prec \infty_{\Omega^c} $ so cf. \cite[Lemma 4.3]{potentials-bbv} and \cite[Lemma 4.1]{bucur-mink}  the following estimate holds
\[ \|\mathcal{R}_{\mu_C}-
\mathcal{R}_{\infty_{\Omega^c}}\|_{\mathcal{L}(L^2)} \leq C_{N,\Omega}\|w_C-w\|_{L^1}.\]
In general, we denote $R_\mu$ the resolvent operator associated to the problem
\[ -\Delta u + u\mu = f,\ u \in H_0^1(D)\cap L^2(D,\mu).\]
Using \cite[Corollary 6.1.8]{bucurbuttazzo} we obtain the estimate
\[ \left| \frac{1}{\lambda_k(\mu_C)}-\frac{1}{\lambda_k(\Omega)}\right| \leq \|\mathcal{R}_{\mu_C}-
\mathcal{R}_{\infty_{\Omega^c}}\|_{\mathcal{L}(L^2)} \leq C_{N,\Omega} \|w_C-w\|_{L^1}.\]
Thus, we have
\[ \frac{|\lambda_k(\Omega)-\lambda_k(\mu_C)|}{\lambda_k(\Omega)} \leq \lambda_k(\mu_C)C_{N,\Omega} \int_D w_C-w. \]
The monotonicity property stated in \cite[Proposition 6.1.5]{bucurbuttazzo} shows that $\lambda_k(\mu_C) \leq \lambda_k(\Omega)$. In order to finish the proof, it suffices to give an upper bound for $\ds \int_D w_C-w$. 

We clearly have 
\[ \int_D |\nabla w_C|^2+C \int_{\Omega^c} w_C^2 = \int_D w_C.\]
When $C\to \infty$ we have $w_c \wconv w$ in $H_0^1(D)$, and as a consequence $\ds \lim_{C \to \infty} C\int_{\Omega^c} w_C^2 = 0$. This proves that for $C$ large enough there exists a constant $M$ such that
\[ \left( \int_{\Omega^c} w_C\right)^2 \leq |\Omega^c|\int_{\Omega^c} w_C^2   \leq \frac{M}{C}.\]
Thus 
\[ \int_{\Omega^c}w_C-w \leq \frac{M}{C^{1/2}}.\]
For the estimate of $\ds\int_\Omega w_C-w$ we use the fact that $w_C -w$ is harmonic in $\Omega$, so
\[ \int_\Omega w_C-w \leq \sup_{\partial \Omega} w_C |\Omega|.\]
It remains to estimate $\sup_{\partial \Omega} w_C$.

Assume that $B_{x_0,r_0} \subset \Omega^c$. Then 
\[ -\Delta w_C \leq 1 \text{ in }D,\]
so $\ds w_C +\frac{|x-x_0|^2}{2N}$ is subharmonic in $D$. This implies
\[ w_C(x_0) \leq \frac{1}{\omega_N r_0^N} \int_{B_{x_0,r_0}} (w_c +\frac{|x-x_0|^2}{2N})\leq \frac{r_0^2}{2N}+\frac{1}{\omega_N^{1/2} r_0^{N/2}}\left( \int_{B_{x_0,r_0}} w_C^2\right)^{1/2},\]
where we used the fact that $|x-x_0|\leq r$ and we applied the Cauchy Schwarz inequality.
Thus, for $C$ large enough we have
\[ w_C(x_0) \leq \frac{r_0^2}{2N} +\frac{M}{\omega_N^{1/2} r_0^{N/2}C^{1/2}}.\]
Next, we choose $r_0$ of the form $C^{-\alpha}$, which gives us
\[ w_C(x_0) \leq \frac{1}{2N C^{2\alpha}} +\frac{M}{\omega_N^{1/2} C^{(1-N\alpha)/2}}.\]
We choose $\alpha = 1/(N+4)$, which gives the same exponent for $C$ in the two terms of the above sum. Thus
\[ w_C(x_0) \leq \left(\frac{1}{2N}+\frac{M}{\omega_N^{1/2}}\right) C^{-2/(N+4)}.\]
Clearly, as $C \to \infty$, $x_0$ can be chosen closer and closer to $\partial \Omega$. The fact that $\Omega$ is of class $C^2$ implies that $\Omega$ satisfies an exterior ball condition $B_\rho$. If $d(x_0,\partial \Omega)<\rho$ then we can apply the previous estimate.

To go from $x_0$ to the boundary $\partial \Omega$ we note that the Minkowski sum $\Omega+B_{C^{-\alpha}}$ satisfies an interior and exterior ball condition, if $C$ is large enough. For simplicity, we denote $\Omega' = \Omega+ B_{C^{-\alpha}}$ in the sequel. Thus $\Omega'$ is of class $C^{1,\alpha}$ and $\nabla w_{\Omega'}$ is well defined on $\partial \Omega'$.
 Furthermore, consider $B_{\rho'}$ an exterior ball tangent to $\Omega'$ and another concentric ball $B_R$ such that $B_R$ contains $\Omega'$. The annulus $A$ determined by $B_{\rho'},B_R$ contains $\Omega'$, and thus $w_{\Omega'} \leq w_A$ in $\Omega'$ and $|\nabla w_{\Omega'}|\leq |\nabla w_A|$ on $\partial \Omega'$ . It is well known that $w_A$ is Lipschitz, with a Lipschitz constant depending on $\rho'$ and the diameter of $\Omega'$. Thus, on $\partial\Omega'$ we have that $|\nabla w_{\Omega'}|$ is bounded, and since $|\nabla w_{\Omega'}|$ is maximal on the boundary, it follows that $w_{\Omega'}$ is Lipschitz.

The function $w_C-w_{\Omega'}$ is subharmonic on $\partial \Omega'$. As a consequence, we have 
\[ w_{\Omega + B_{C^{-\alpha}}} \geq \left( w_C - \left(\frac{1}{2N}+\frac{M}{\omega_N^{1/2}}\right) C^{-2/(N+4)}\right)^+,\]
which together with the Lipschitz continuity of $w_{\Omega+ B_{C^{-\alpha}}}$ gives us that
\[ w_C |_{\partial \Omega} \leq \left(\frac{1}{2N}+\frac{M}{\omega_N^{1/2}}\right) C^{-2/(N+4)}+M_2  C^{-\alpha},\]
where $M_2$ is the constant in the Lipschitz continuity result.

Thus \[w_C|_{\partial \Omega} \leq \left(\frac{1}{2N}+\frac{M}{\omega_N^{1/2}}\right) C^{-2/(N+4)}+M_2 \ell^{1/2}C^{-1/(N+4)}.\]
Consequently, there exists a constant $M_3$, depending on $\varepsilon,N,D$, such that
\[ \int_D w_c-w \leq M_3C^{-1/(N+4)}.\]

In conclusion, for $C$ large enough, there exists a constant $K$ such that
\[ \frac{|\lambda_k(\Omega)-\lambda_k(\mu_C)|}{\lambda_k(\Omega)} \leq K C^{-1/(N+4)}.\]
\hfill $\square$

\begin{rem}
Using techniques similar to \cite[Lemma 3.4.11]{henrot-pierre} we are able to prove that there is an upper bound of the form $KC^{-\delta}$ (with $K,\delta>0$) for the relative error even in the more general case when $\Omega$ satisfies a $\varepsilon$-cone condition (equivalently, a uniform Lipschitz condition). The drawback is that we do not have an explicit formula for $\delta$, like in the case presented above.
\end{rem}

We remark that in the case $N=2$, studied numerically in the previous section, the relative error is bounded theoretically by a term of order $C^{-1/6}$. If we look at the numerical errors, we see that from $C=10^3$ to $C=10^9$ the errors roughly decrease by one order of magnitude. This is in good correspondence with the theoretical result which predicts a decrease of the relative error by approximately one order of magnitude when $C$ is multiplied by $10^6$. This correspondence shows that this theoretical error bound is close to being sharp in two dimensions. 

\section{Numerical setting and optimization algorithm}
\label{optim-algo}

In order to compute numerically the shape and the position of the optimal sets, we use the procedure described in Section \ref{eigcomp}. This technique has been introduced in \cite{buboou} for the study of the case $\alpha=0$. We recall that the problem we study has the form 
\begin{equation}\label{moplbk}
\min\Big\{\sum_{i=1}^h\lambda_k(\Omega_i)+\alpha|\Omega_i|:\ \Omega_i\subset D\ \hbox{quasi-open},\ \Omega_i\cap\Omega_j=\emptyset\Big\}.
\end{equation}
where with $\lambda_k(\Omega)$ we denote the $k$-th eigenvalue of the Dirichlet Laplacian on $\Omega\subset D$.

For a given measurable function $\vf:\Omega\in[0,1]$ and constant $C>0$, we consider the spectrum of the operator $-\Delta+C(1-\vf)$ on $ D $, consisting on the eigenvalues with variational characterization 
$$\lambda_k(\vf, C):=\min_{S_k\subset H^1_0(\Omega)} \max_{u\in S_k}\frac{\int_{\Omega}|\nabla u|^2+C(1-\vf)u^2\,dx}{\int_\Omega u^2\,dx},$$
where the minimum is over all $k$-dimensional subspaces $S_k$ of $H^1_0( D )$. The corresponding $k$-th eigenfunction satisfies the equation
\begin{equation}
-\Delta u_k+C(1-\varphi)u_k=\lambda_k(\varphi,C)u_k,\qquad u_k\in H_0^1(D),\quad \int_{ D }u_k^2\,dx=1.
\label{eigen}
\end{equation}

 By the general existence theorem of Buttazzo and Dal Maso \cite{budm93}, there is a solution $\big(\varphi_1^C,\dots,\varphi_h^C\big)$ of the problem 
\begin{equation}\label{mopphi}
\min\Big\{\sum_{i=1}^h\Big(\lambda_k(\vf_i, C)+\alpha\int_{ D }\vf_i\,dx\Big):\ \vf_i: D \to [0,1]\ \hbox{measurable},\ \sum_{i=1}^h\vf_i\le1\Big\}.
\end{equation}
Moreover, by the approximation result \cite[Theorem 2.4]{buboou}, or the result given by Theorem \ref{error-thm}, we have that, for every $i=1\dots h$,
$$\lim_{C\to+\infty}\lambda_1(\vf_i^C,C)=\lambda_1(\Omega_i)\qquad\hbox{and}\qquad \lim_{C\to+\infty}\vf_i^C=\ind_{\Omega_i},$$
where the second limit is strong in $L^1( D )$ and the $h$-uple $(\Omega_1,\dots,\Omega_h)$ is optimal for \eqref{moplbk}. 

We were not able to prove that for $k \geq 2$ the functions $\varphi_i^C$ converge to characteristic functions as $C\to \infty$. In \cite{buboou} a concavity argument was used to prove the result, and this argument does not extend to the case $k \geq 2$. In the description of the algorithm we keep $k$ general, but the numerical results presented are for $k=1$. Although we don't have a theoretical justification of the convergence in the case $k=2$, the algorithm behaves well and produces the expected results. For $k \geq 3$ we did not manage to obtain conclusive results.

%
%g we approximate numerically the eigenvalue $\lambda_k(\Omega)$  using the following relaxed formulation:
%
%\begin{equation} \begin{cases} -\Delta u+C(1-\varphi) &=\lambda_k(\varphi,C)u \text{ in }D \\ \hfill u &\in H_0^1(D) \end{cases} 
%\label{eigen}
%\end{equation}
%where $\varphi$ is the measurable function associated to $\Omega$. 
%
% In \cite{buboou} an algorithm was provided for finding numerical candidates for the solution of the problem
%\[ \min \left\{ \sum_{i=1}^h \lambda_k(\Omega_i) : \  (\Omega_i) \text{ is a partition of }D \right\} \] 
%and the partition constraint was simply written as $\displaystyle \sum_{i=1}^h \varphi_i=1$. 
% 
%The problem we are trying to analyze here is of the form
%\[ \min \left\{ \sum_{i=1}^h\left( \lambda_k(\Omega_i) +c |\Omega_i| \right): \  (\Omega_i) \text{ is a family of disjoint subsets of }D \right\}. \] 
%Note that here we do not have a partition constraint, but a non-overlapping constraint, which can be written as $\displaystyle \sum_{i=1}^h \varphi_i\leq1$. Since it is more comfortable to use an equality constraint we have considered an extra subset $\Omega_{h+1}$, which completes the family $(\Omega_i)_{i=1}^h$ to a partition. 

Note that for $\alpha>0$ solutions of problem \ref{moplbk} do not consist of partitions of $D$. Therefore the functions $\varphi_l$ satisfy the non-overlapping constraint $\sum_{i=1}^h \varphi_i \leq 1$. This inequality constraint is not easy to treat numerically, so we choose to add an additional phase, representing the empty space.
 Define $\vf_{h+1}:=1-\sum_{i=1}^h\vf_i$, the empty phase associated to the multiphase problem. Thus \eqref{mopphi}  is equivalent to 
\begin{equation}
\min\Big\{\sum_{i=1}^h\lambda_k(\vf_i, C)-\alpha\int_{ D }\vf_{h+1}\,dx:\ \vf_i: D \to [0,1]\ \hbox{measurable},\ \sum_{i=1}^{h+1}\vf_i=1\Big\},
\end{equation}
which is more suitable for numerical implementation.  In this way \eqref{moplbk} is reformulated as an optimal partitioning problem
%In this way we have transformed the initial problem into an equivalent form:
\[ \min\Big\{ \sum_{i=1}^h \lambda_k(\Omega_i)-\alpha|\Omega_{h+1}| : \ \Omega_i\subset\R^d\ \hbox{quasi-open},\ \Omega_i\cap\Omega_j=\emptyset,\ \hbox{for}\  i,j=1,\dots,h+1 \Big\}. \] 
In this setting the numerical cost computation of the above problem involves the discrete approximation of the measure of $\Omega_{h+1}$ given by
\[ |\Omega_{h+1}| \simeq \frac{1}{N^2}\sum_{i,j=1}^{N^2} \varphi_{i,j}^{h+1}.\]

%\frac{\sum_{i,j=1}^{N^2} \varphi_{i,j}^{{\color{red}h}+1}}{N^2}.\]

In order to use an optimization algorithm we approximate the derivative of the eigenvalues $\lambda_k(\vf_l,C)$ as a function of the values of the phases $\varphi_l$ on the grid points. The precise expression of this derivative was given in \cite{buboou} and has the form
\begin{equation} \partial_{i,j} \lambda_k(\varphi_l,C)= -C(U_{i,j}^l)^2,
\label{discderiv}
\end{equation}
where $U^l$ is the $l$-th normalized eigenvector solution of the corresponding discrete equation.
The discrete derivative of the volume is given by
\[ \partial_{i,j}|\Omega_{h+1}|=1/N^2. \]
 
In order to perform the optimization under the constraint $\sum_{l=1}^{h+1}\varphi_l=1$ we use the projection operator on the simplex 
\[ \Bbb{S}^h = \Big\{ X=(X_1,..,X_{h+1}) \in [0,1]^{h+1} : \sum_{l=1}^{h+1} X_l=1 \Big\}, \] defined by
\[ \Big(\Pi_{\Bbb{S}^h}\varphi^l\Big)_{i,j}=\frac{|\varphi^l_{i,j}|}{\sum_{l=1}^{h+1}|\varphi_{i,j}^l|}. \] 
More details about the justification of the choice of this non orthogonal projection operator can be found in \cite{buboou}. We did not manage to improve this projection procedure. We observed that both aspects: the condition that the sum is equal to $1$ and that the functions $\varphi^l$ take values in $[0,1]$, are essential in the optimization process, and this projection operator preserves them both. 

The optimization procedure proposed in \cite{buboou} was based on a steepest descent algorithm with an adapting step length. We improve the descent algorithm by introducing a linesearch procedure in order to determine the step length. A description of the procedure can be found in Algorithm \ref{linesearch}. The number of iterations is significantly reduced, but each iteration needs multiple function evaluations. 

\begin{algorithm}
\caption{Linesearch algorithm}
\label{linesearch}
\begin{algorithmic}[1]
\Require $\gamma_0$, $\omega>1$, $x$, $d$ (descent direction)
\State $\gamma = \gamma_0$
\State Evaluate the cost $c$ corresponding to $x$
\State $c_0 = c$ (variable which keeps previous cost)
\Repeat 
\State $x_t = x+\gamma d$
\State $x_p = \Pi_{\Bbb{S}^h} (x_t)$ (projection on the constraint)
\State Evaluate the cost $c_p$ corresponding to $x_p$
\If {$c_p<c_0$}
\State $\gamma \gets \omega \gamma$
\Else
\State {\bf break}
\EndIf  
\State $c_0 = c_p$
\Until

\Return $\gamma$
\end{algorithmic}
\end{algorithm}

\begin{algorithm}
\caption{General form of the optimization algorithm}
\begin{algorithmic}[1]
\Require $k,\ \alpha,\ h,\ \varepsilon,\ \gamma_0,\ p_{max}$
\State $p=1$
\State Choose random initial densities $(\varphi^l)$ and project them on the constraint
\Repeat
\State Compute $c = F(\varphi^l)$ (the cost functional)
\State Choose descent direction $d = -\nabla F(\varphi^l)$
\State Find step length $\gamma$ using the linesearch algorithm
\State Update $\varphi^l \gets \varphi^l - \gamma d$
\State $\varphi^l \gets \Pi_{\Bbb{S}^h}(\varphi^l)$ (project on the constraint)

\State $p \gets p+1$
\Until $p=p_{max}$ or $\gamma \|\nabla F(\varphi^l)\|_{\ell^\infty}<\varepsilon$
\end{algorithmic}
\end{algorithm}

\begin{figure}
\includegraphics[width=0.8\textwidth]{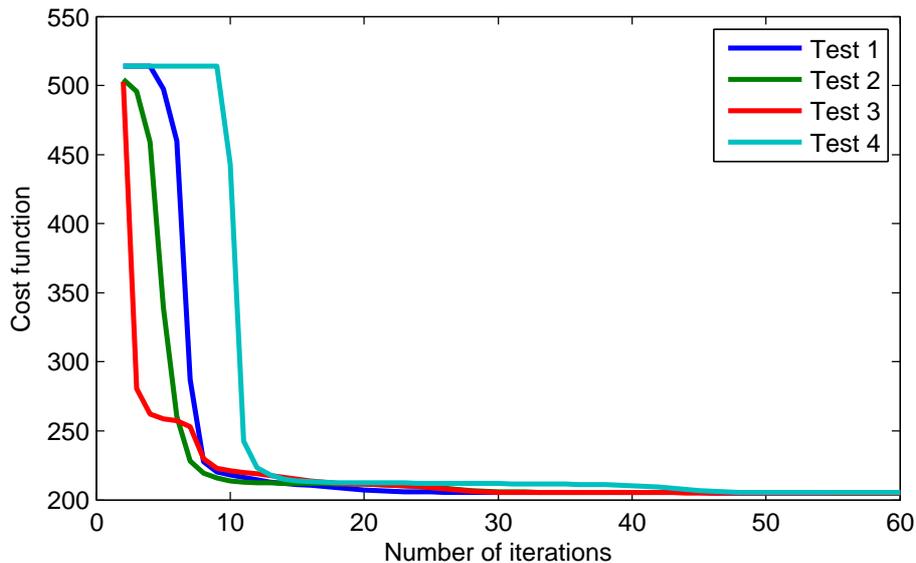}
\caption{Cost evolution in the four cases presented in Figure \ref{regular6}}
\label{costplot}
\end{figure}

In order to test the stability of our modified algorithm, we took a rectangular box which can be paved with regular hexagons, with one edge oriented horizontally, in a periodic setting. One possibility is to choose the edges of the rectangle having a ratio of $\sqrt{3}$, in the case of $6$ cells, or $2/\sqrt{3}$ in the case of $12$ cells. In each case we performed the optimization starting from random densities with sum $1$. We observe that the resulting partitions are equivalent, and the corresponding costs are close. Results can be seen in Figure \ref{regular6} (the case of $6$ cells) and Figure \ref{regular12} (the case of $12$ cells). The cost evolution, in the case of $6$ cells, is plotted in Figure \ref{costplot}.

\begin{figure}
\centering
  \includegraphics[width=0.24\textwidth]{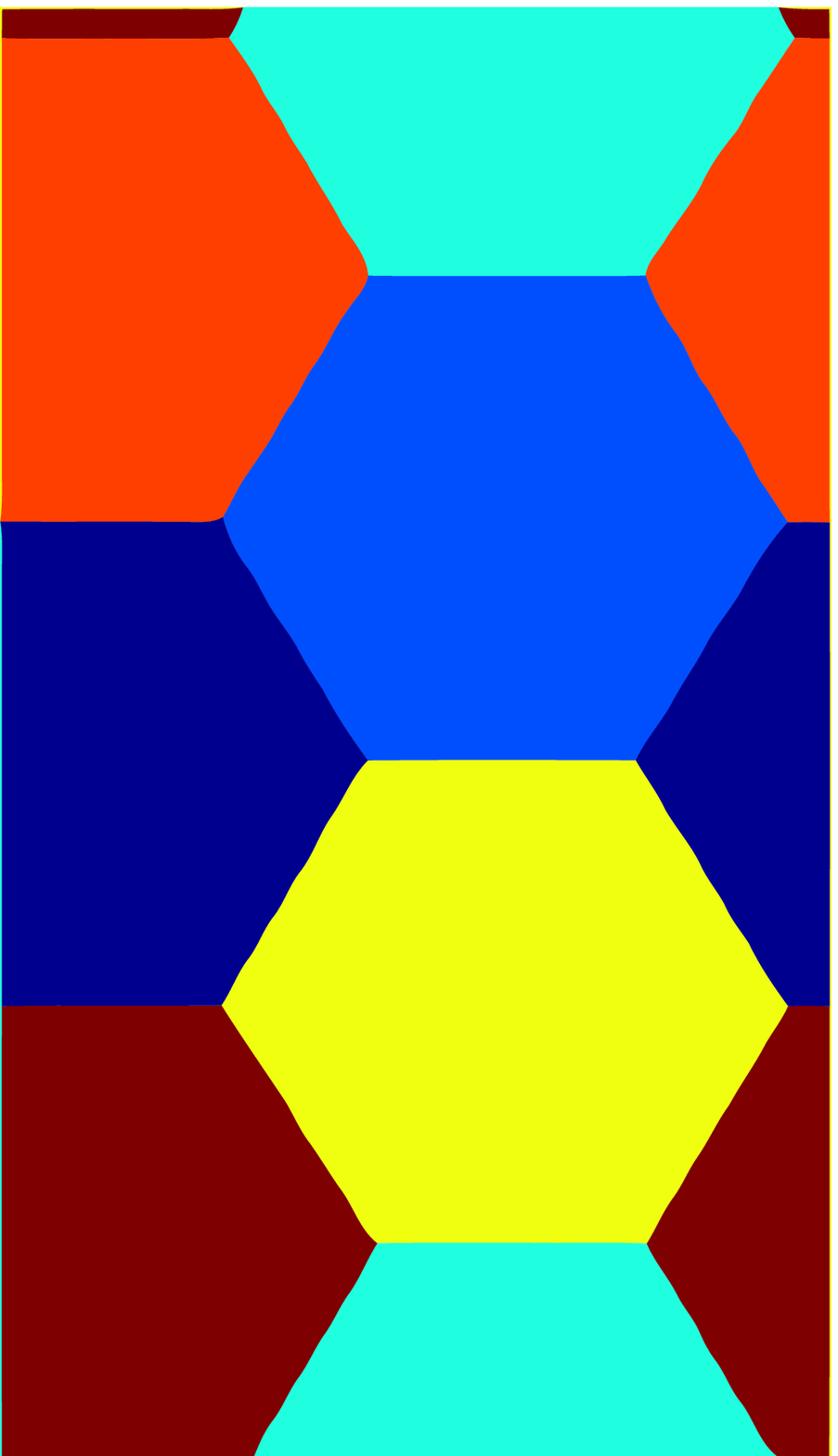}
  ~
  \includegraphics[width=0.24\textwidth]{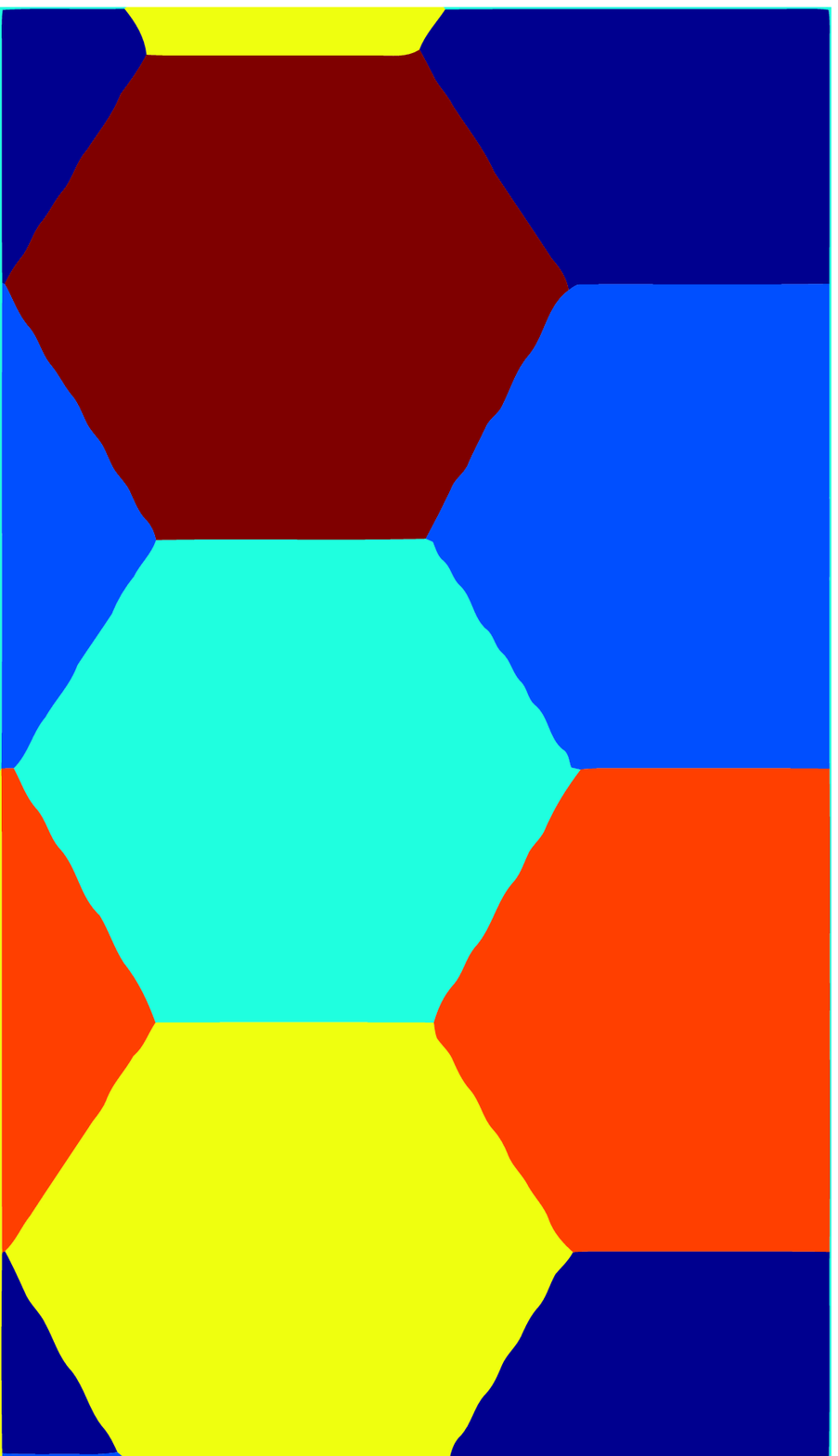} 
  ~
  \includegraphics[width=0.24\textwidth]{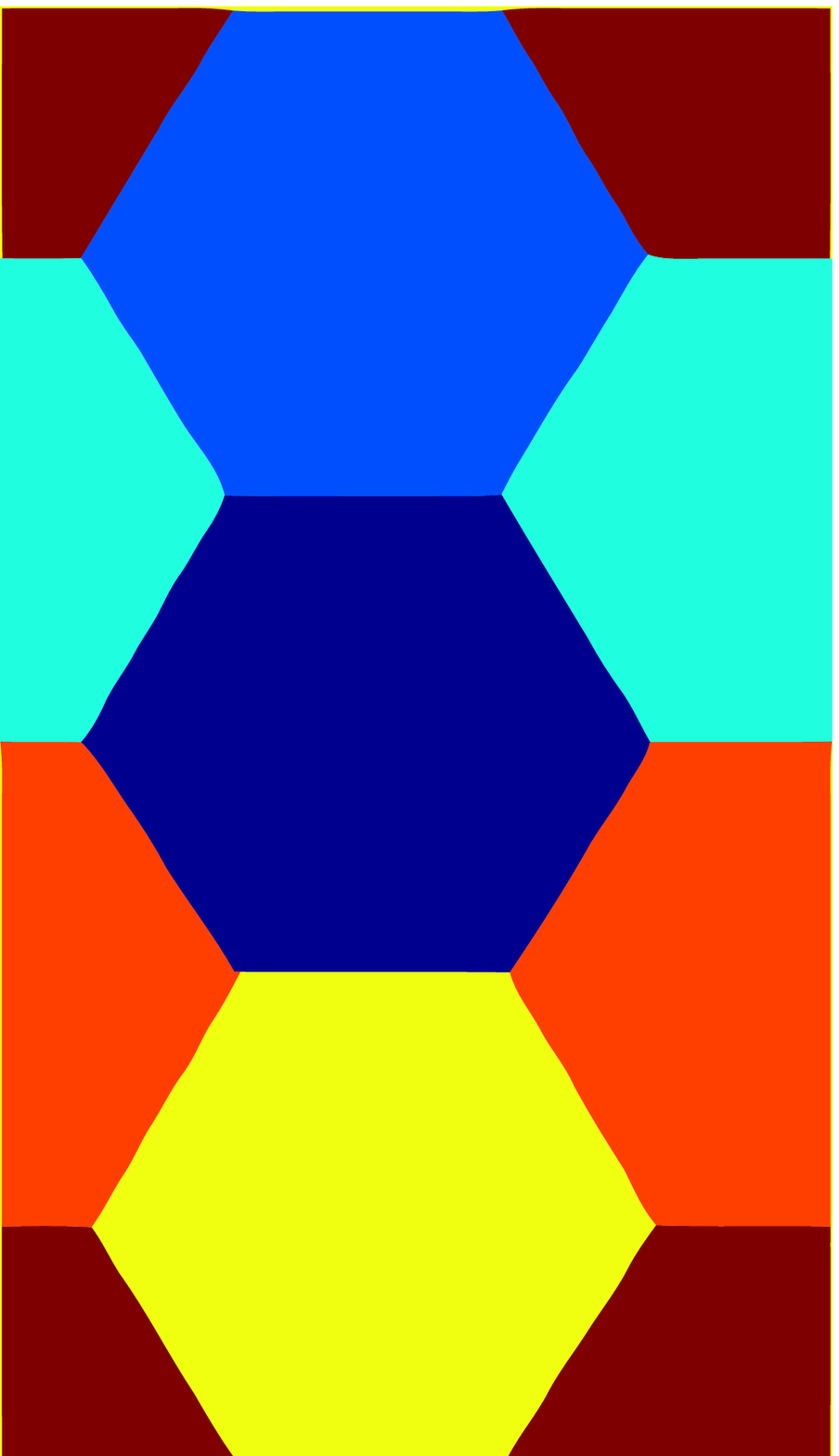}
  ~
  \includegraphics[width=0.24\textwidth]{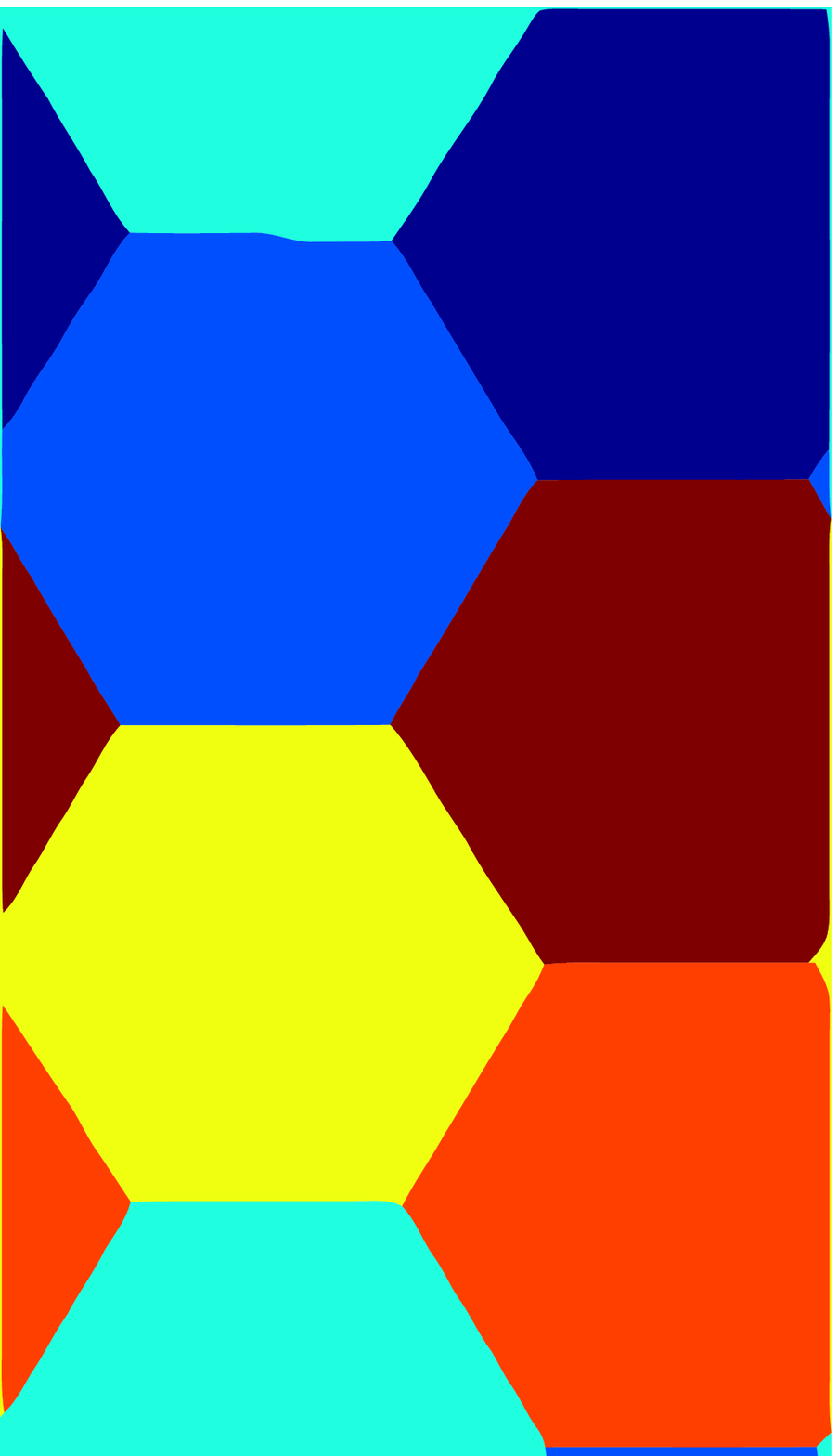}
  \caption{Optimal results - $6$ cells on a periodic domain, starting each time from random densities. Optimal numerical value (left to right): $205.21,205.23,205.22,205.22$}
\label{regular6}
\end{figure}

\begin{figure}
\centering
  \includegraphics[width=0.24\textwidth]{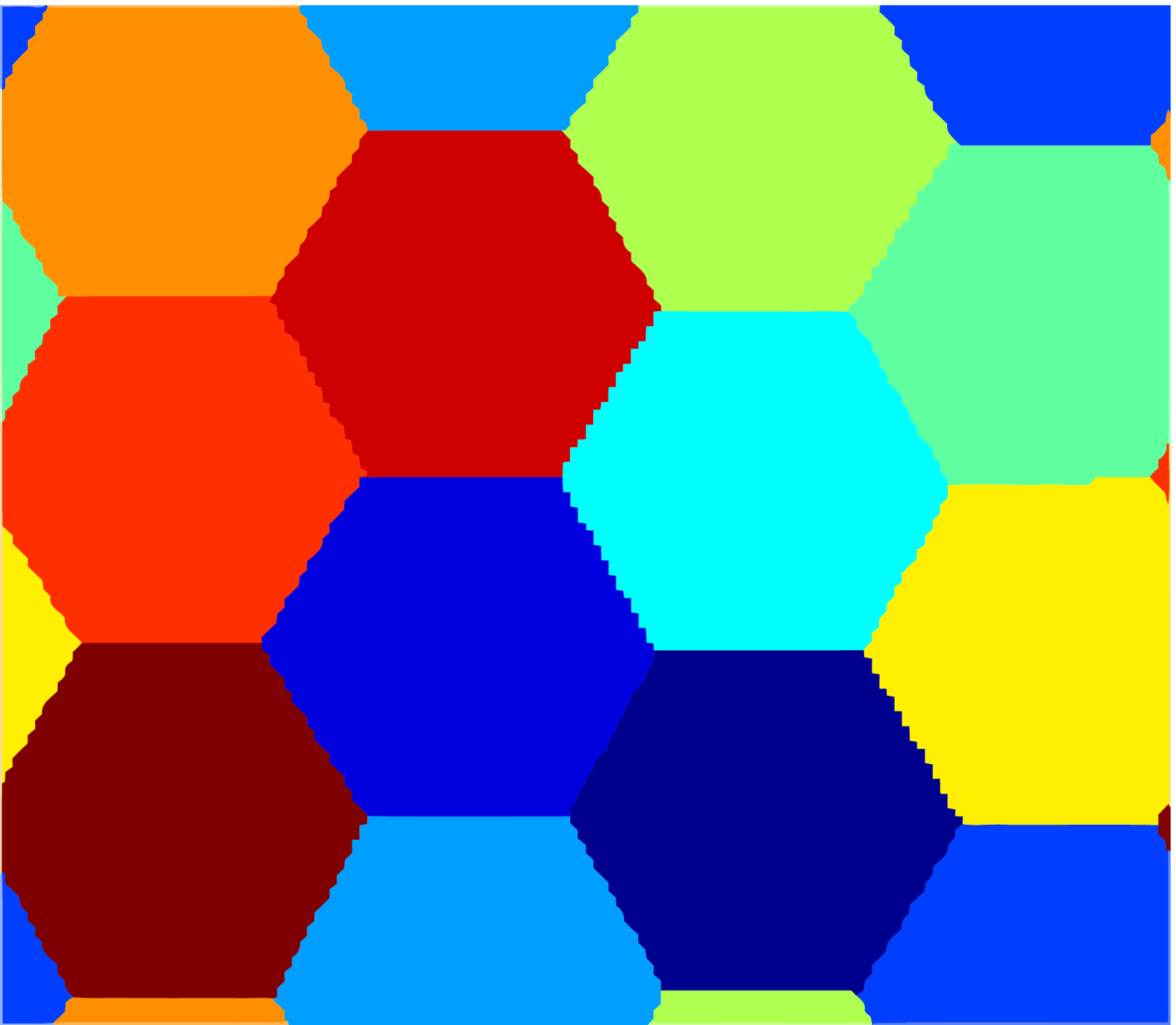}
  ~
  \includegraphics[width=0.24\textwidth]{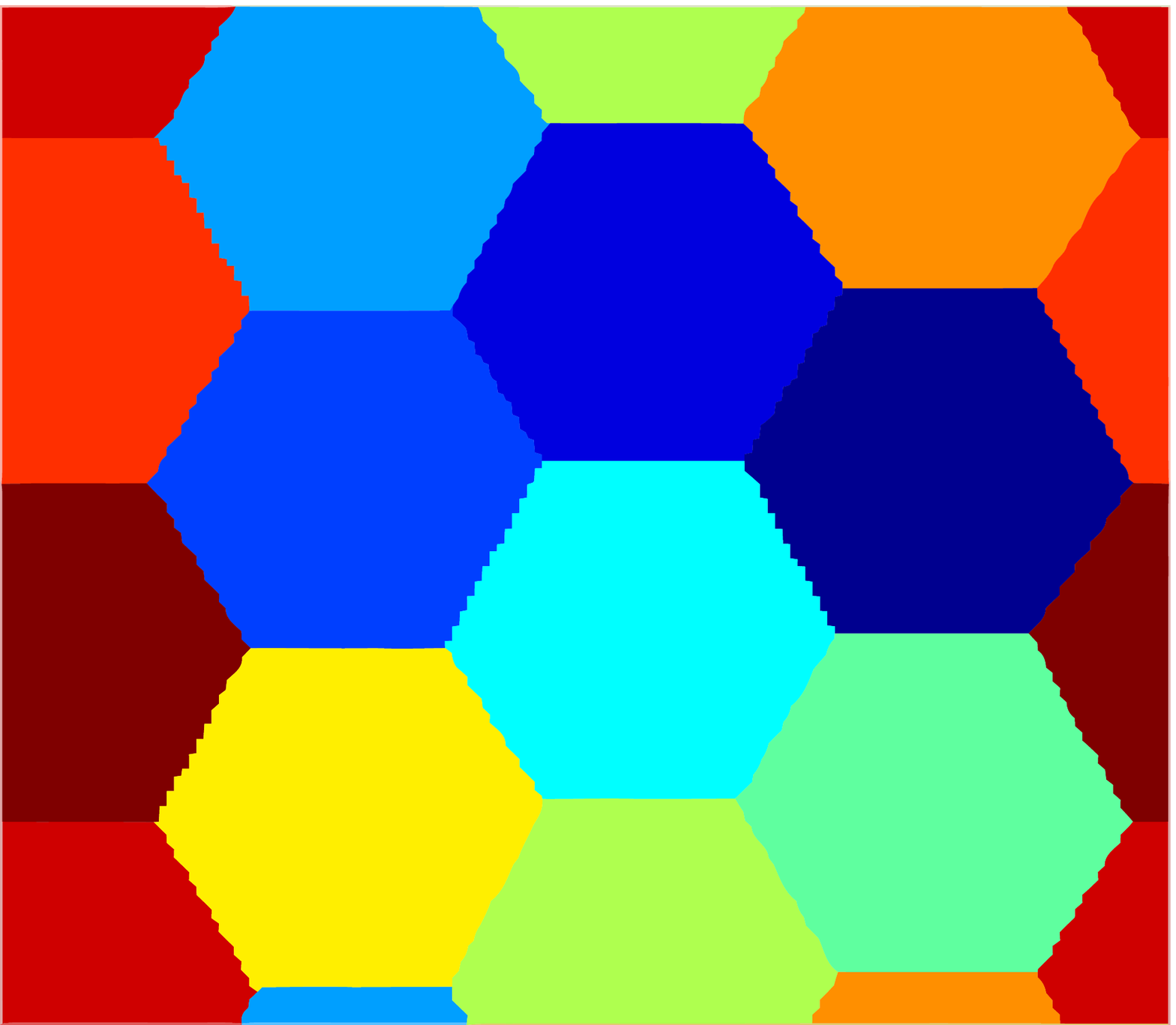} 
  ~
  \includegraphics[width=0.24\textwidth]{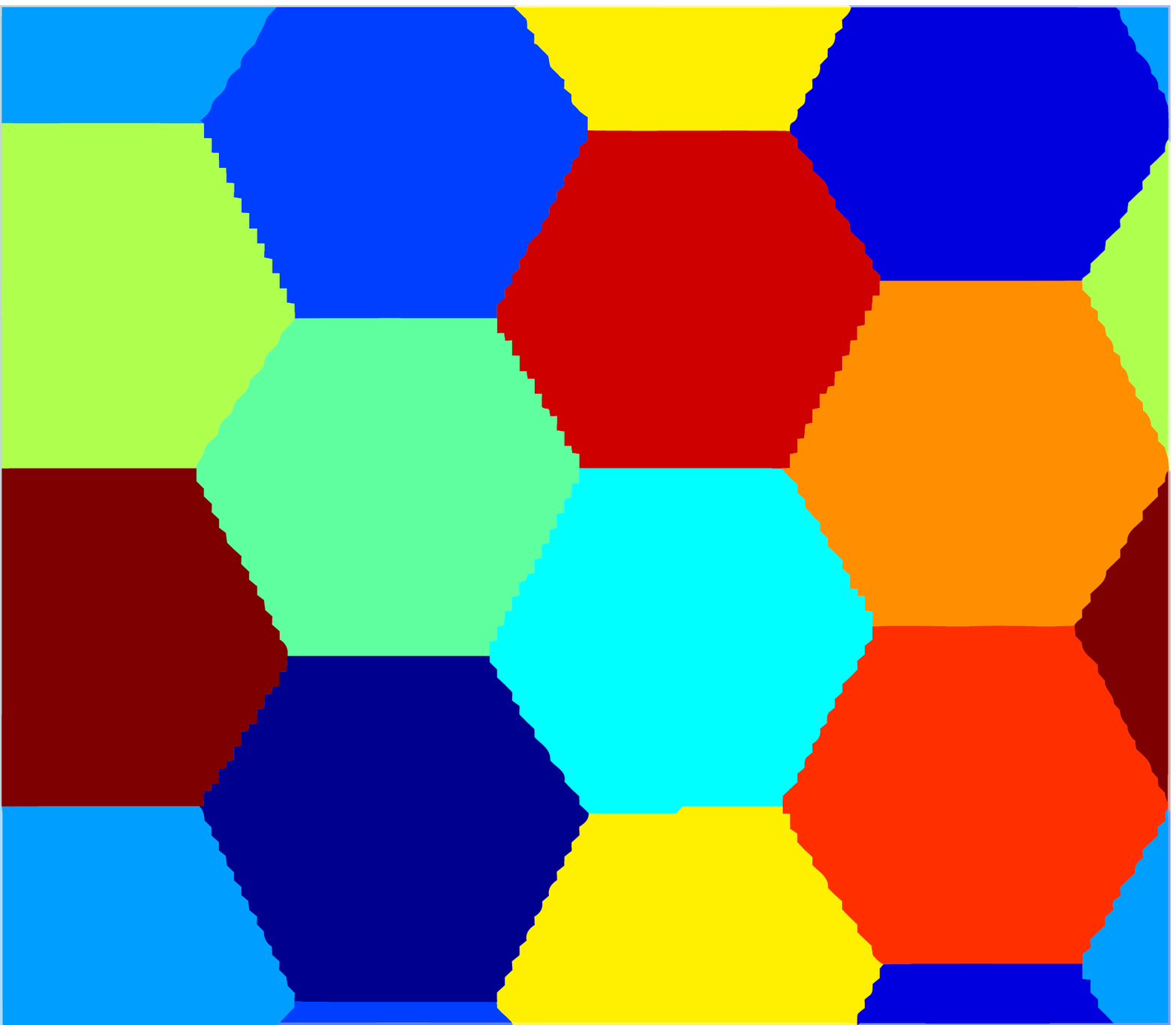}
  ~
  \includegraphics[width=0.24\textwidth]{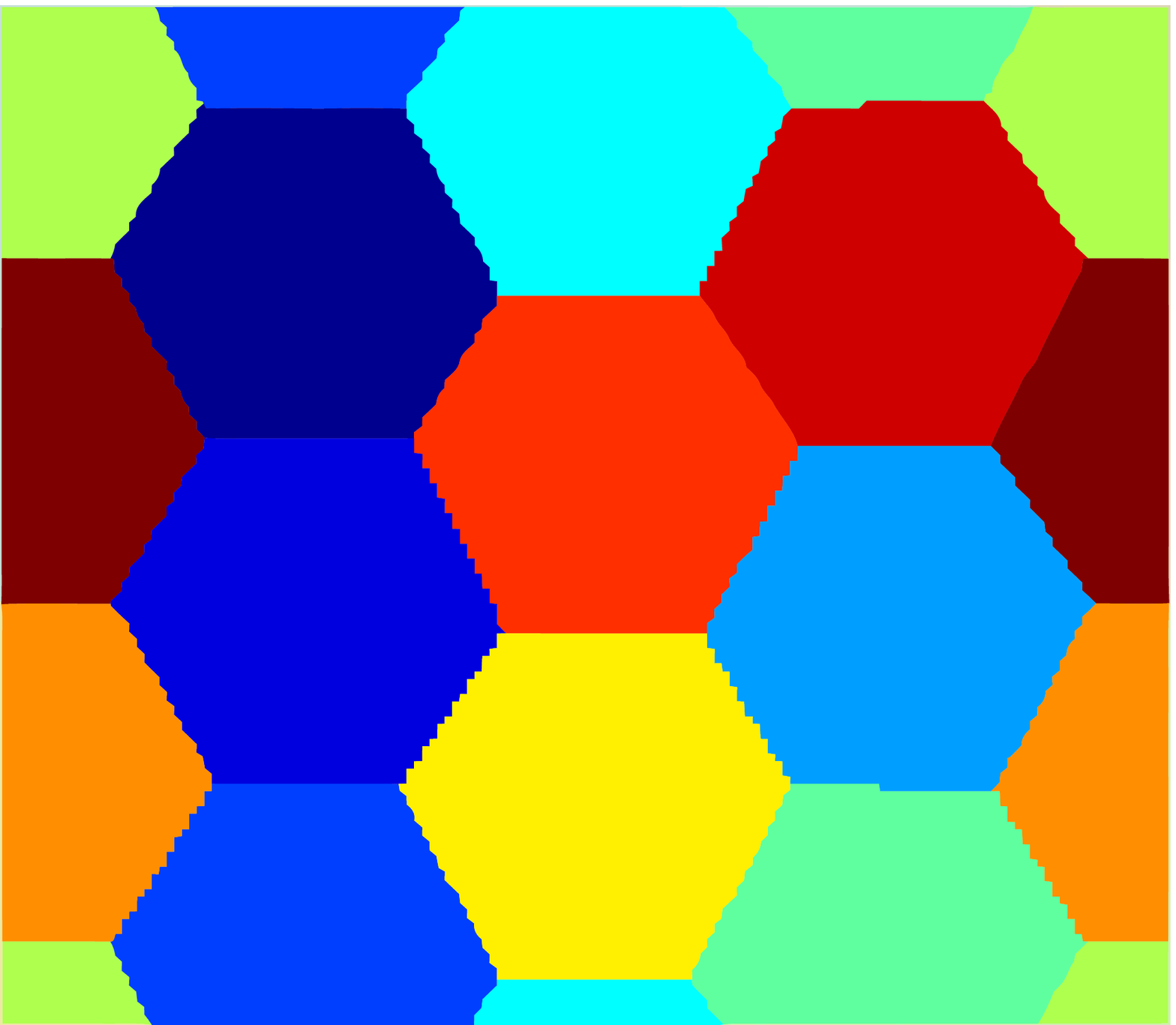}
  \caption{Optimal results - $12$ cells on a periodic domain, starting each time from random densities. Optimal numerical value (left to right): $1512.85,1512.83,1513.12,1513.26$}
\label{regular12}
\end{figure}

In order to be able to study the minimizers of problem \ref{moplbk} in the case where $D$ is not rectangular, we use a finite elements approach. We find a triangulation of $D$ using the software Distmesh \cite{distmesh}, or by specifying a regular triangulation directly (when possible). We compute the associated rigidity and mass matrices $K$ and $M$, respectively. Then, if $\varphi$ is a vector containing the values of the discretization of $\Omega$, we are left to solve the problem
\[ \int_D \nabla u \nabla v+ \int_D C(1-\varphi) uv = \lambda \int_D uv, \]
which has the discrete form
\[ v^TKu + Cv^T \text{diag}(1-\varphi)M u = \lambda v^T M u.\]
Since this is true for each $v$, we are left with the generalized eigenvalue problem
\[ (K+C\text{diag}(1-\varphi)M)u = \lambda M u. \]
In this way, we are able to find numerical minimizers for problem \ref{moplbk} even when $D$ is not rectangular (see Figures \ref{monotonic2},\ref{non-rectangular}). The drawback is that finding generalized eigenvalues is more time consuming than finding eigenvalues. When working on a rectangular domain, using finite differences, we can easily handle discretizations of $200\times 200$ ($40000$ points) on a single machine\footnote{Processor: i7 quad-core 2.2Gh, 6GB of RAM}. For the finite elements case we use triangulations with roughly $5000$ points. 

\section{Discussion of the numerical results}
\label{numerical-obs}

In this section we present some numerical simulations that confirm the theoretical results stated in Theorem \ref{mainr2} and the article \cite{buve}. Furthermore, the numerical simulations in the periodic case, indicate that as $\alpha$ decreases, the cells of the multiphase configuration are monotonically increasing. This was also observed in the case of non-periodic conditions, when the domain has a certain symmetry, which allows a well behaved circle packing. Note that, when the size of the box is well chosen, there exists an optimal parameter $\alpha$, such that the optimal configuration consists of the hexagonal circle packing configuration. If the observed shape monotonicity property is true, then the actual spectral partitioning problem ($\alpha = 0$) can be solved, and the optimal partition is formed of regular hexagons. We note that this result concerning the case $\alpha = 0$ is still an open problem, while results of \cite{buboou} confirm numerically this conjecture (see Figures \ref{monotonic1},\ref{monotonic2}, as well as Figure \ref{nonperiodic}).
%and to see if there are some more particularities of the problem considered, as well as the relations with the problem considered in \cite{bobuou} (i.e. the case $c=0$).

\begin{figure}
  \centering
  \includegraphics[width=0.32\textwidth]{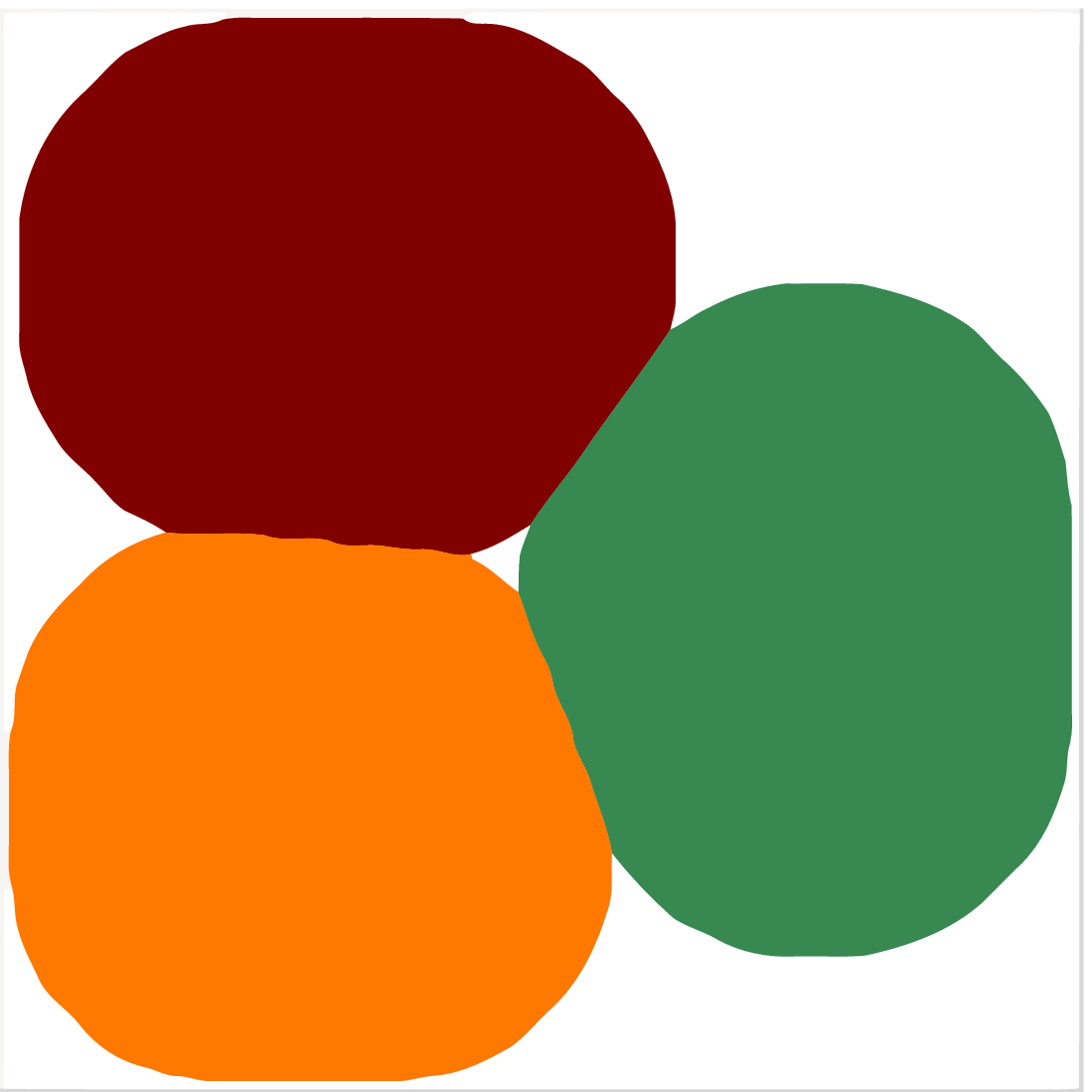}
  ~
  \includegraphics[width=0.32\textwidth]{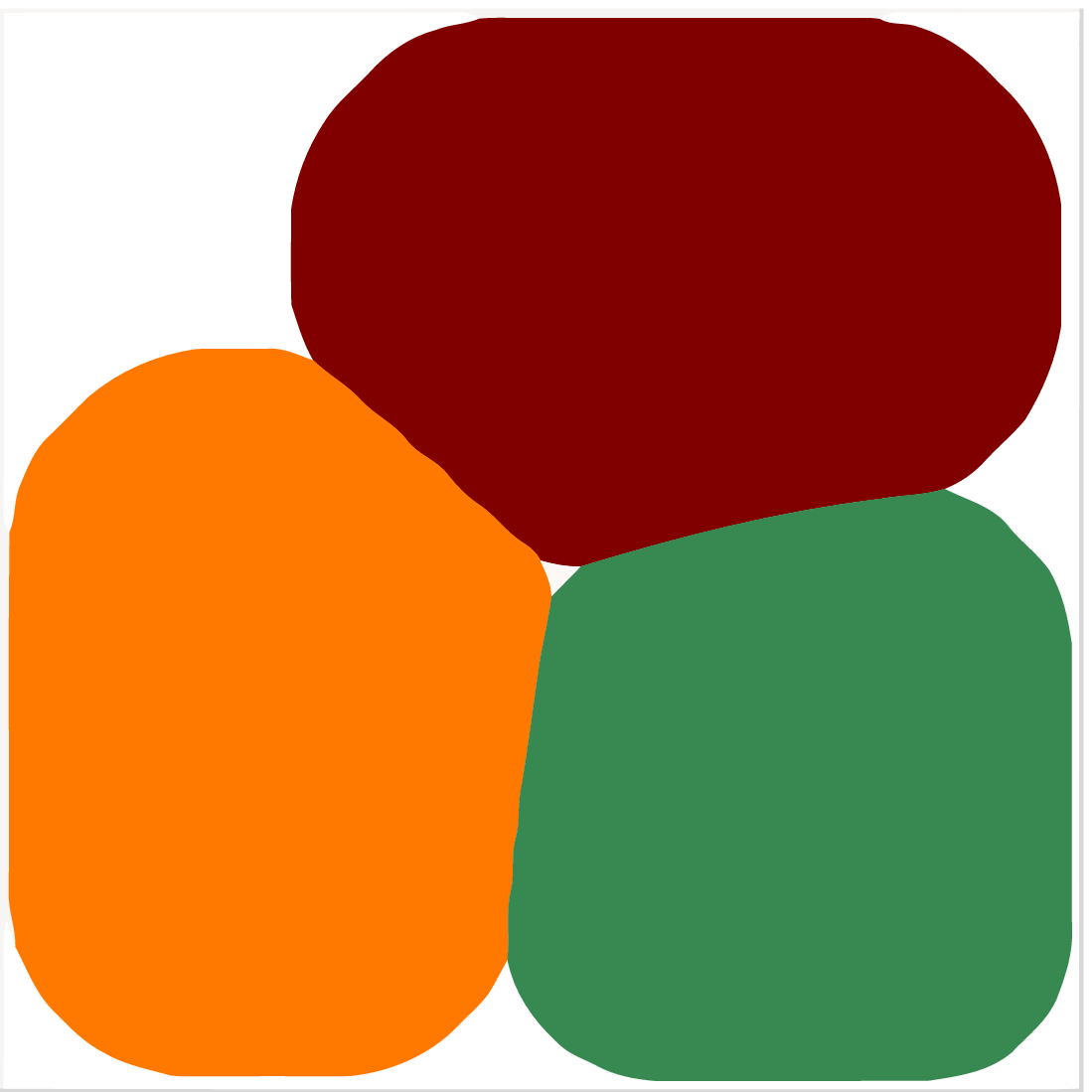} 
  ~
  \includegraphics[width=0.32\textwidth]{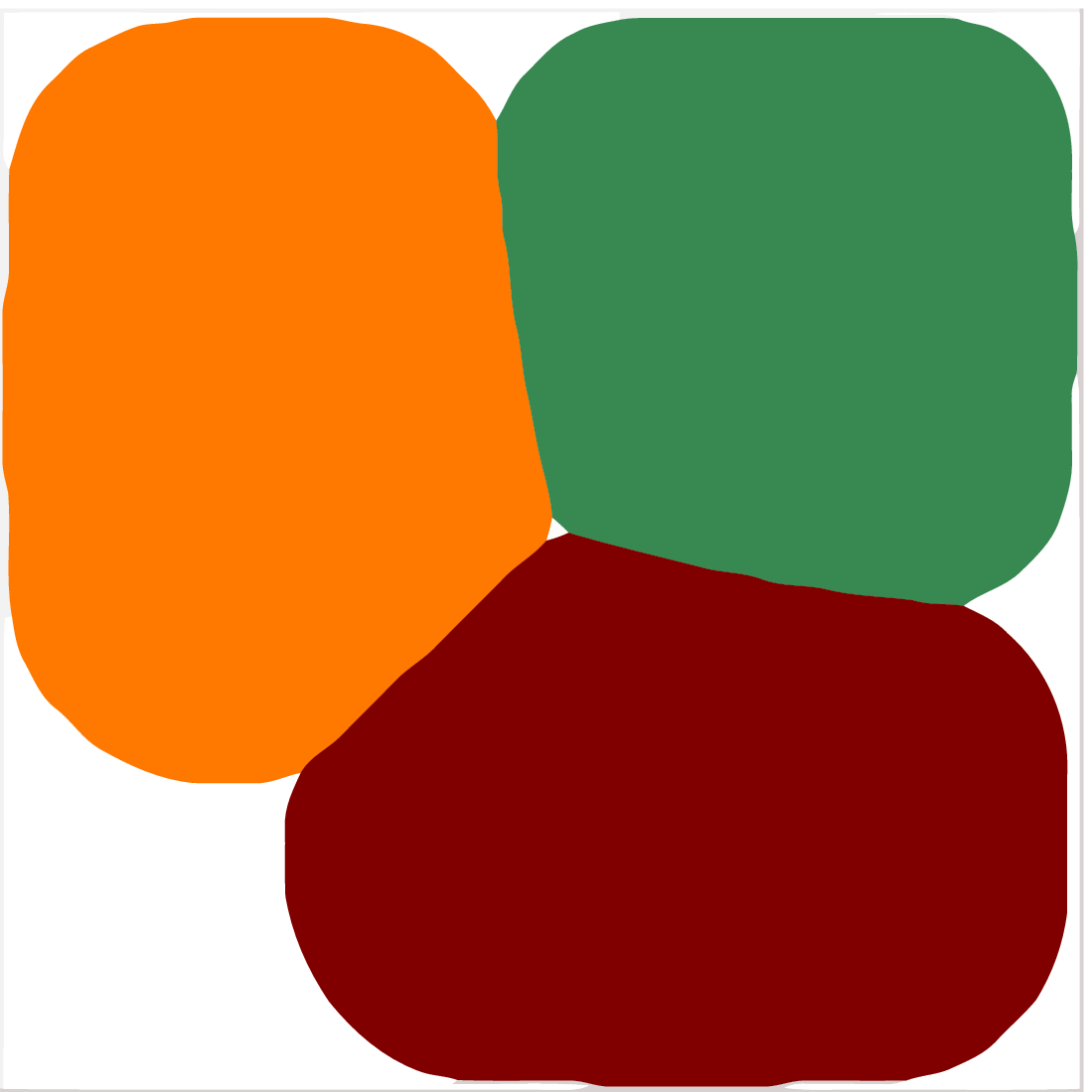}\\
  \includegraphics[width=0.32\textwidth]{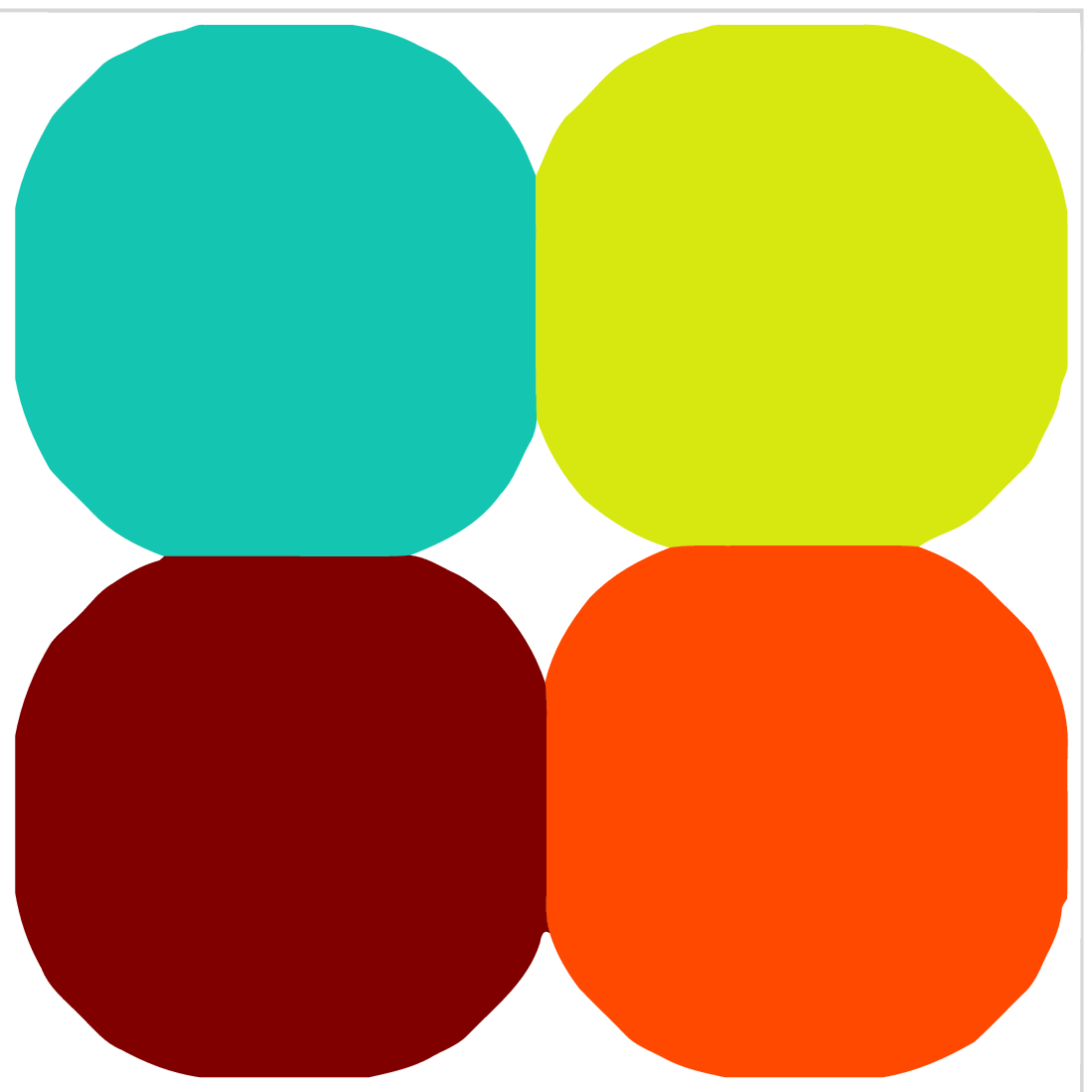}
  ~
  \includegraphics[width=0.32\textwidth]{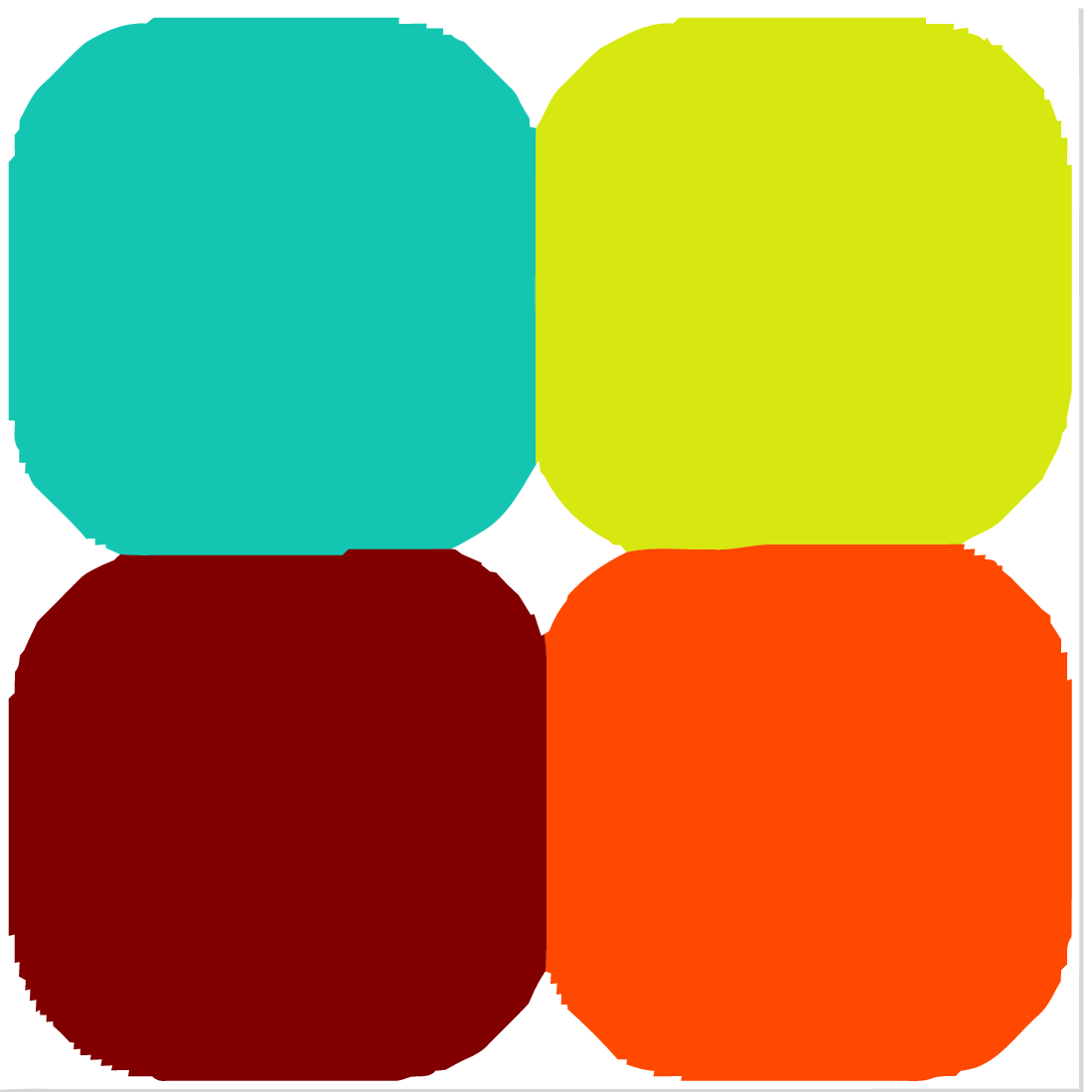}
  ~
  \includegraphics[width=0.32\textwidth]{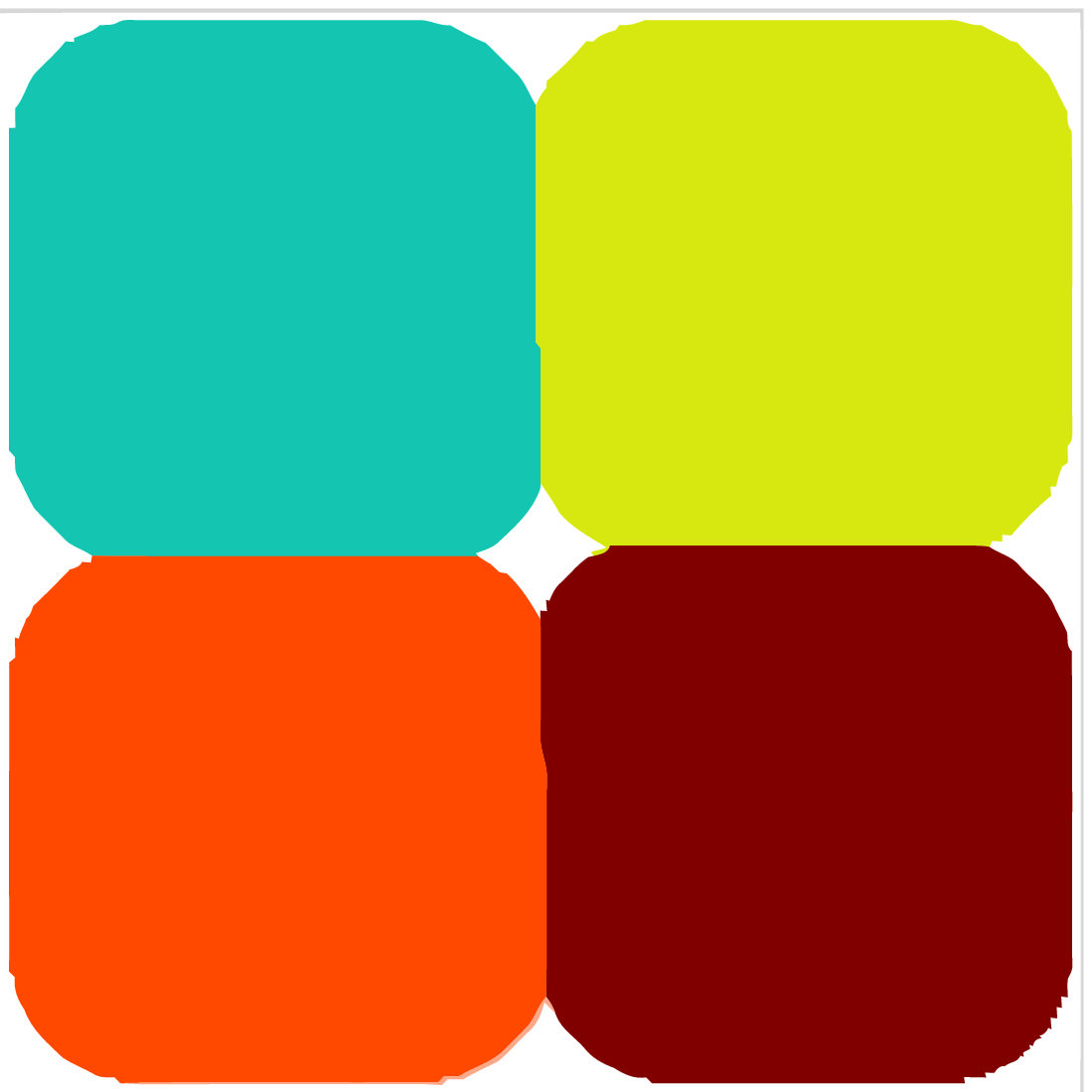}
  \caption{$k=1$, $200 \times 200$ non-periodic grid, $3$ phases ($\alpha=170,100,80$) and $4$ phases ($\alpha=250,150,100$)}
  \label{nonperiodic}
\end{figure}

In all the cases the lack of triple junction points, proved in \cite{buve}, is clearly observed, provided that the parameter $\alpha>0$ is large enough.
% so that the discretization can detect the small spaces where the junction points should be. This confirms the results of Theorems 4.6 and 5.3 from \cite{buve}.
The lack of double points on the boundary of the square proved in Theorem \ref{mainr2} can also be noticed in Figures \ref{nonperiodic},\ref{non-rectangular}. Another phenomenon that can be observed is that the sets $\Omega_i$ near the corners of $D$ do not fill the corner. This is a fact that can be easily proved by adding a ball $B$ (i.e. subsolution for the functional $\lambda_1+\alpha|\cdot|$) outside $\Dr$, for which the corner of the square lies on the sphere $\partial B$. Now the claim can be deduced by the monotonicity Theorem \ref{teo2phm} (B), as in Theorem \ref{mainr2}.
%  and that is another consequence of the Theorem 4.6 in \cite{buve}. All these facts can be clearly seen in the results presented in Figure \ref{nonperiodic}.  

\begin{figure}  
  \includegraphics[width=0.2\textwidth]{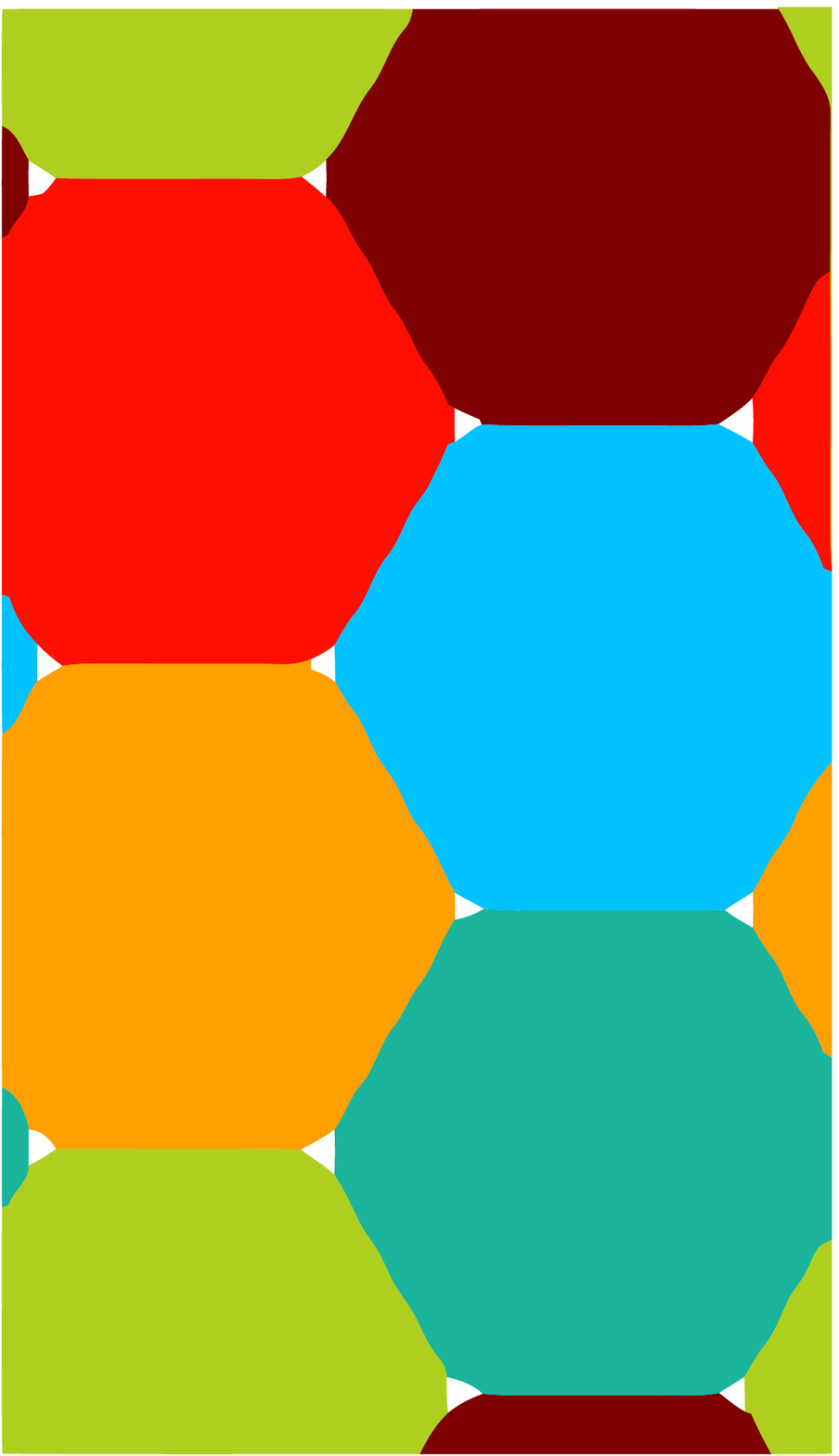}
  ~
  \includegraphics[width=0.2\textwidth]{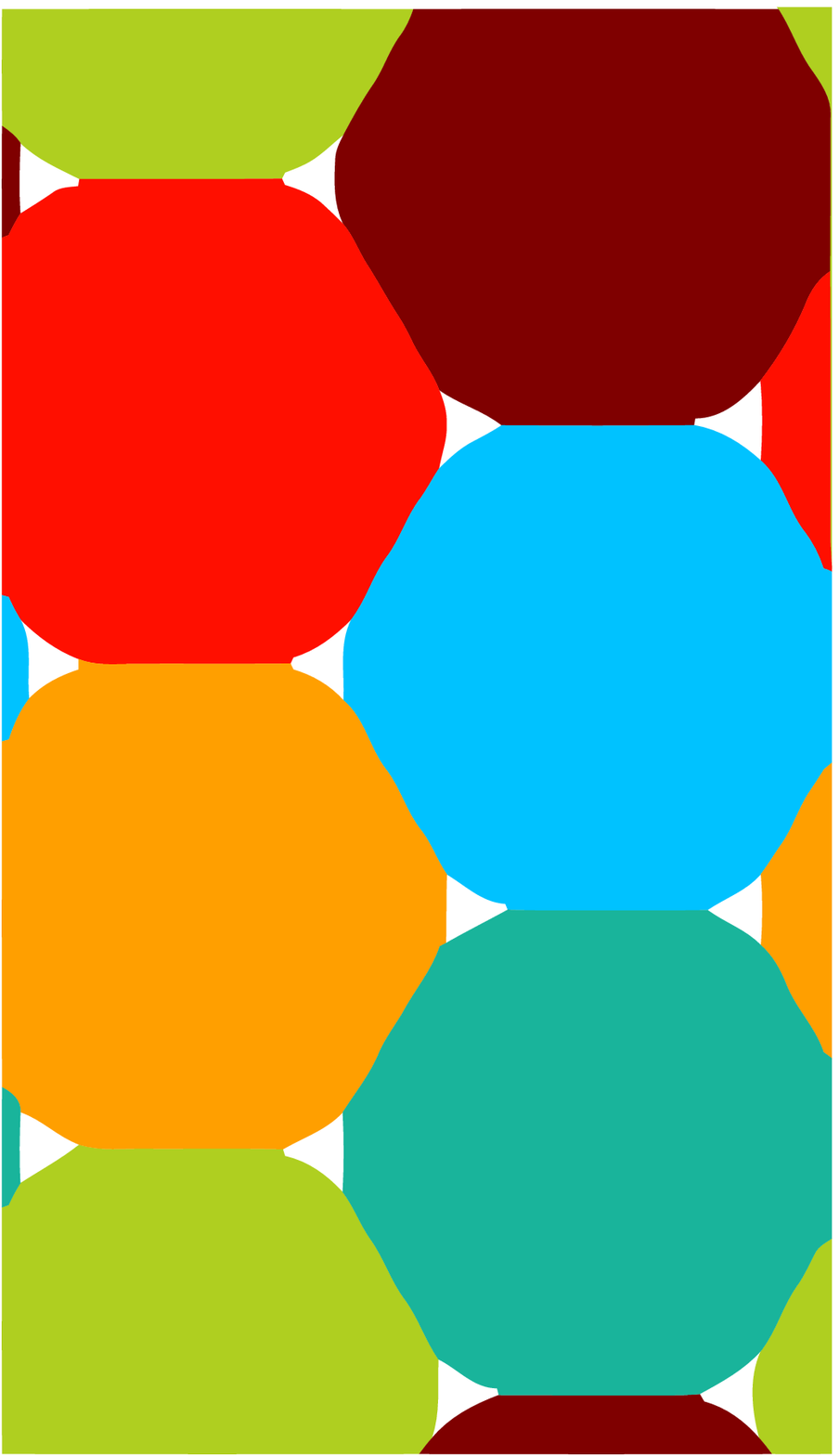}
  ~
  \includegraphics[width=0.2\textwidth]{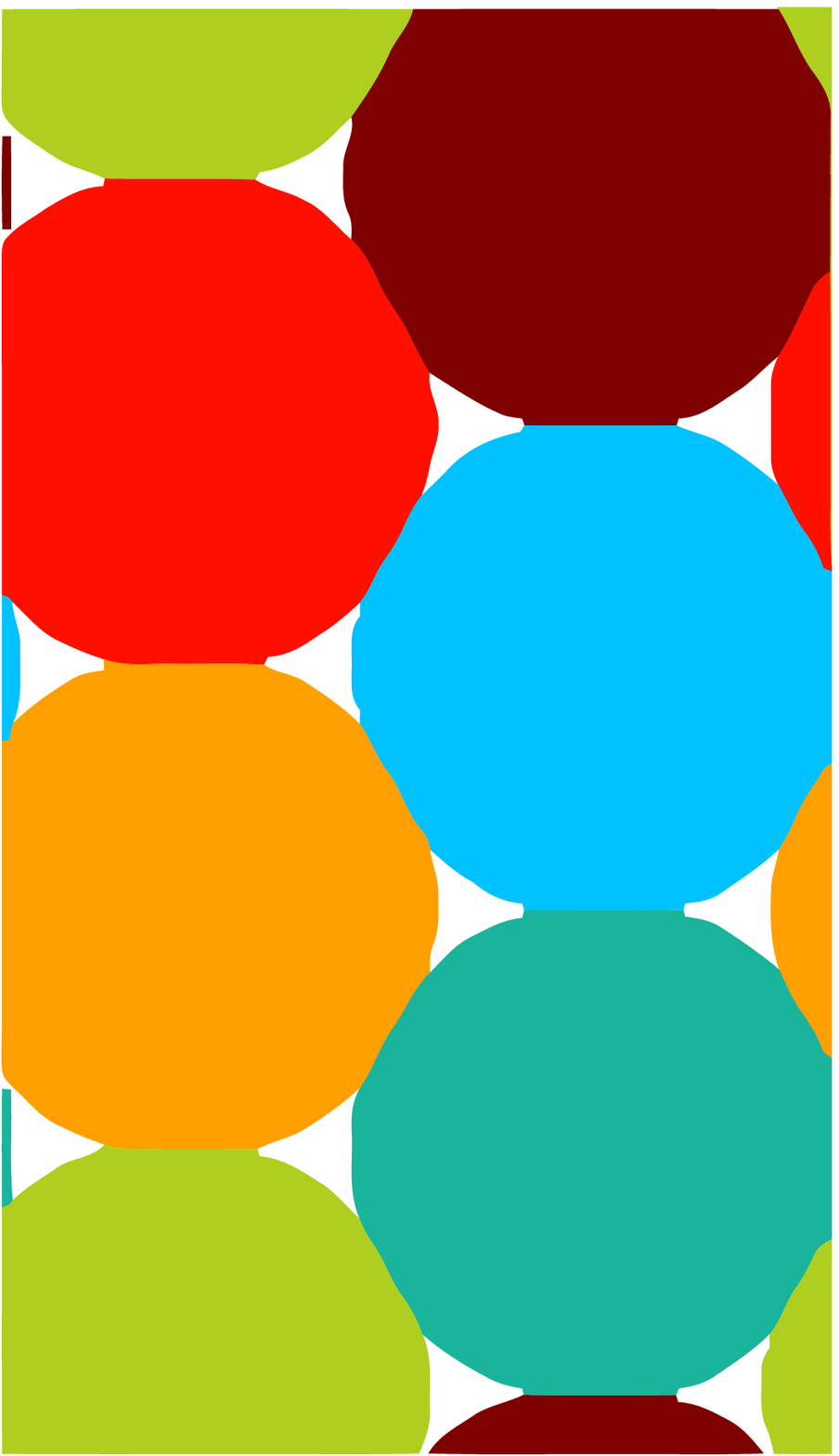}
  ~
  \includegraphics[width=0.2\textwidth]{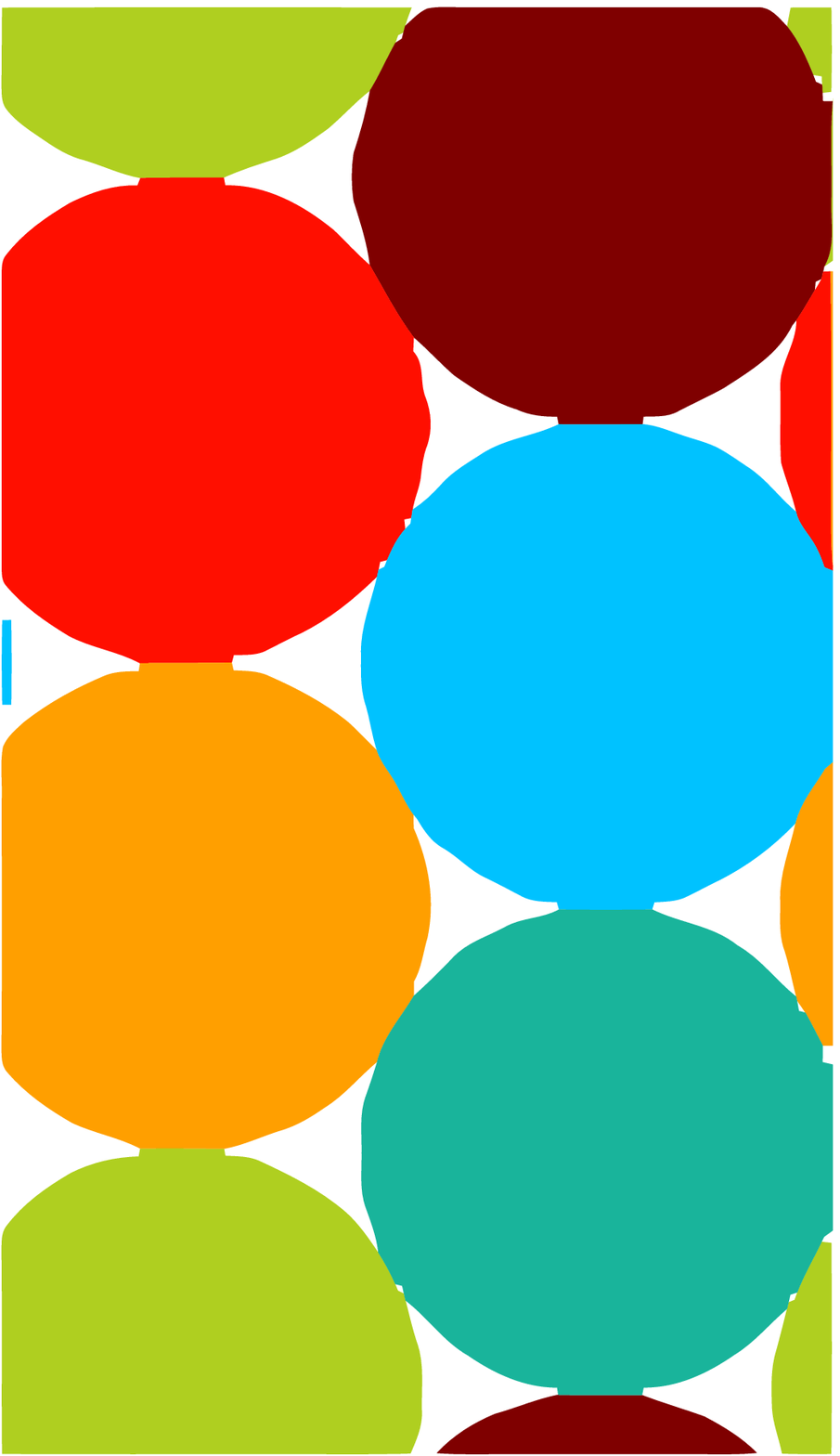}
  \caption{$k=1$, illustration of the monotonicity property. Values of $\alpha$: $150, 200, 250, 300$ (left to right)}
  \label{monotonic1} 
\end{figure}

\begin{figure}  
  \includegraphics[width=0.3\textwidth]{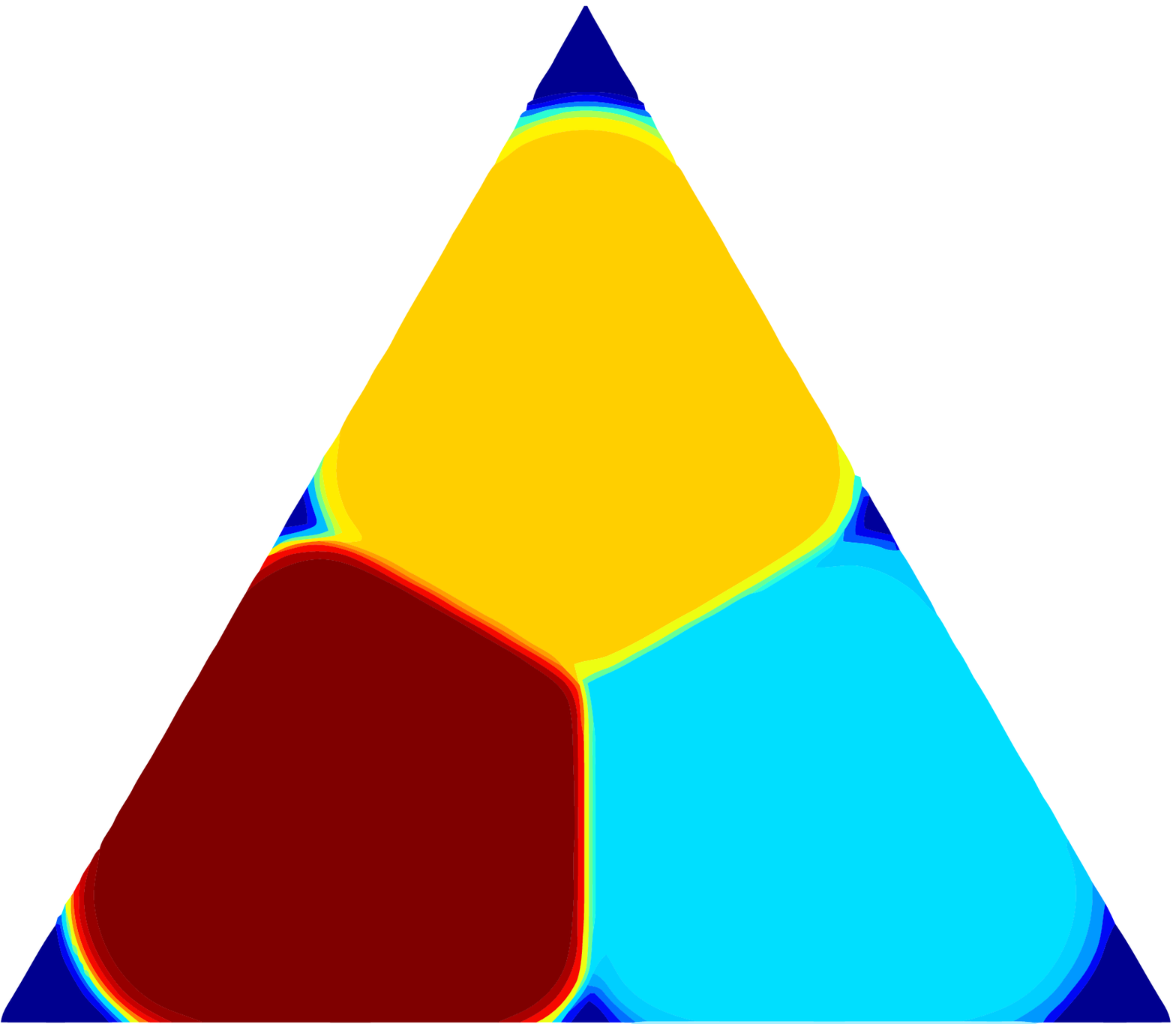}
  ~
  \includegraphics[width=0.3\textwidth]{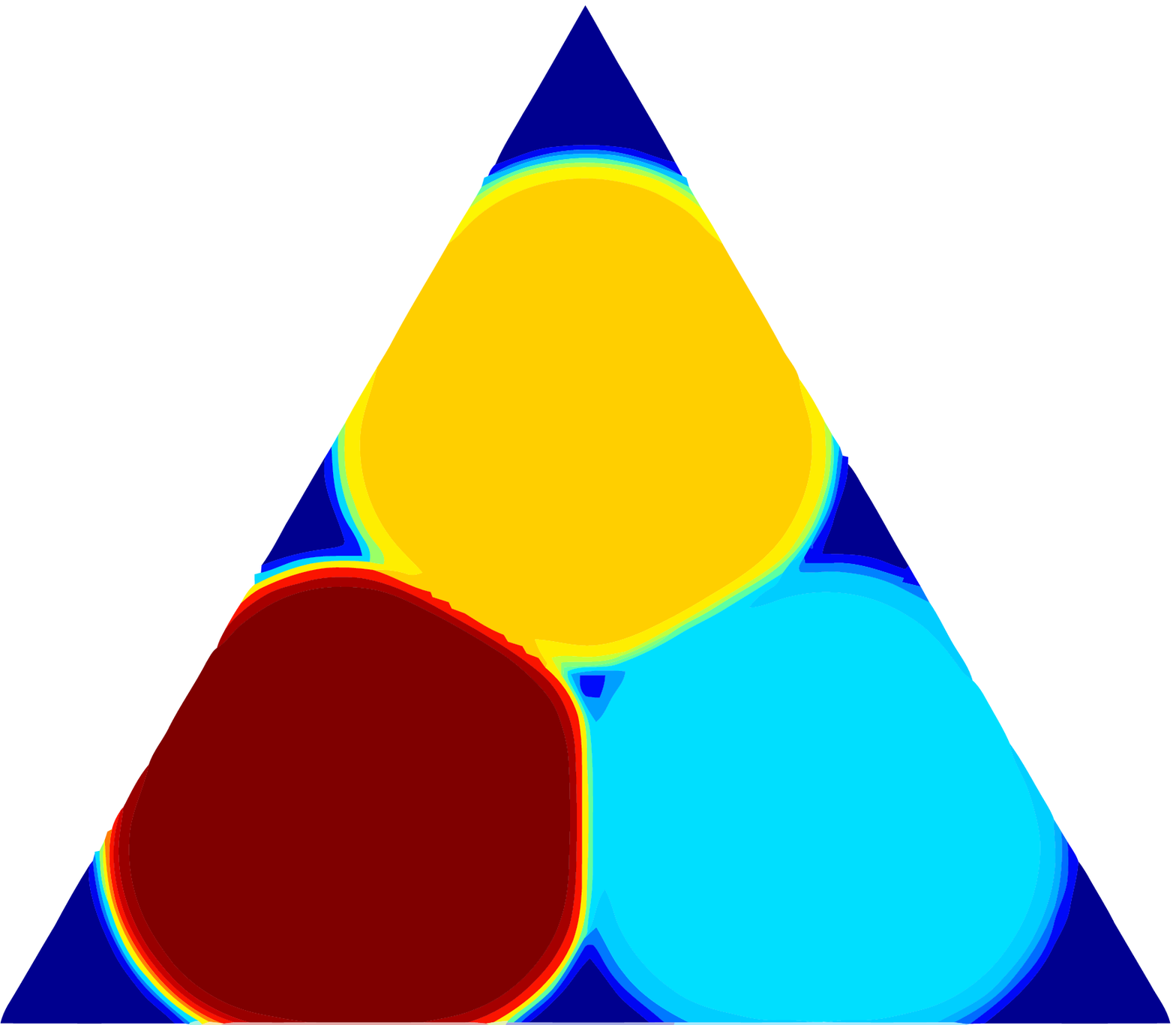}
  ~
  \includegraphics[width=0.3\textwidth]{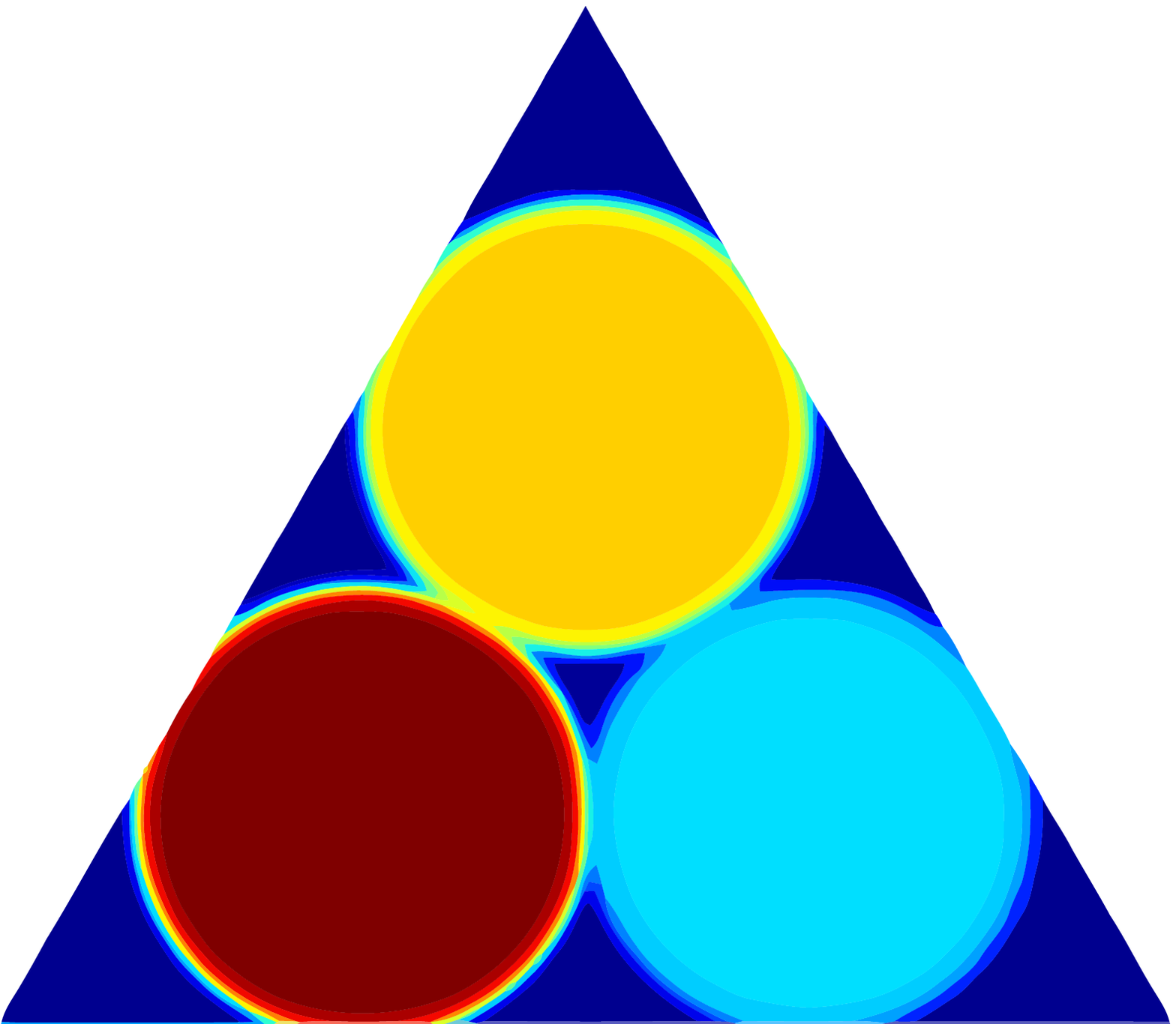}
  \caption{$k=1$, illustration of the monotonicity property in the case of an equilateral triangle. Values of $\alpha$: $10, 25,50$ (left to right)}
  \label{monotonic2} 
\end{figure}

Some fine qualitative properties of the optimal configurations $(\Omega_1,\dots,\Omega_h,\Omega_{h+1})$, which are still open questions, were observed during the numerical simulations.

\begin{itemize}
\item The set of one-phase points $\partial\Omega_j\cap\partial\Omega_{h+1}$ on the boundary of the $j$th optimal cell $\Omega_j$ is locally a graph of a \emph{convex function}.
\item For each pair of distinct indices $i,j\in\{1,\dots,h\}$, there are exactly two boundary two-phase points on the common boundary $\partial\Omega_i\cap\partial\Omega_j$, i.e.
$$\HH^0\big(\partial\Omega_i\cap\partial\Omega_j\cap\partial\Omega_{h+1}\big)=2.$$ 
\item If $x_0\in\partial\Omega_i\cap\partial\Omega_j\cap\partial\Omega_{h+1}$ is a boundary two-phase point, then the set $\Omega_i\cap\Omega_j$ has a cusp in $x_0$. More precisely, for $r>0$ small enough, the free boundaries $\partial\Omega_i\cap\partial\Omega_{h+1}\cap B_r(x_0)$ and $\partial\Omega_j\cap\partial\Omega_{h+1}\cap B_r(x_0)$ are graphs of convex functions meeting tangentially in the origin $x_0$.
\end{itemize} 

Finally, we considered the periodic version of the problem \eqref{optsum} on the square $[0,1]\times[0,1]$ and in other rectangular domains, in attempt to simulate a "partition" of the whole space $\R^2$ (see Figure \ref{periodic}, Figure \ref{monotonic1}). For small enough constants $\alpha>0$ we obtain a configuration with touching hexagons with rounded corners, in support of the numerical results in \cite{buboou}. 

Most of the tests we made were in the case $k=1$, but the algorithm works for $k=2$ as well. The main issue in the case of higher eigenvalues concerns the differentiability of the eigenvalues with respect to perturbations, which is well known to be closely related to their multiplicity. Secondly, we were not able to prove that for $k \geq 2$ the relaxed formulation converges to the actual problem when $C \to +\infty$. Nevertheless, we were able to obtain some interesting numerical results also in the case $k=2$ and one example can be seen in Figure \ref{periodic}. 

As stated in Theorem \ref{mainsurf} the theoretical results also extend to the case of the Laplace-Beltrami fundamental eigenvalues on surfaces. Using the same finite elements procedure as in the case of non-rectangular domains, we were able to compute numerically some optimal configurations on the sphere, observing the same behavior as in the plane: the lack of triple points and monotonicity with respect to $\alpha$. (see Figure \ref{sphere}) We notice that in the cases $h \in \{3,4,6,12\}$ the optimal configurations converge to the corresponding regular tiling of the sphere (Y partition, regular tetrahedron, cube, dodecahedron) as $\alpha \to 0$.

\begin{figure}  
  \includegraphics[width=0.32\textwidth]{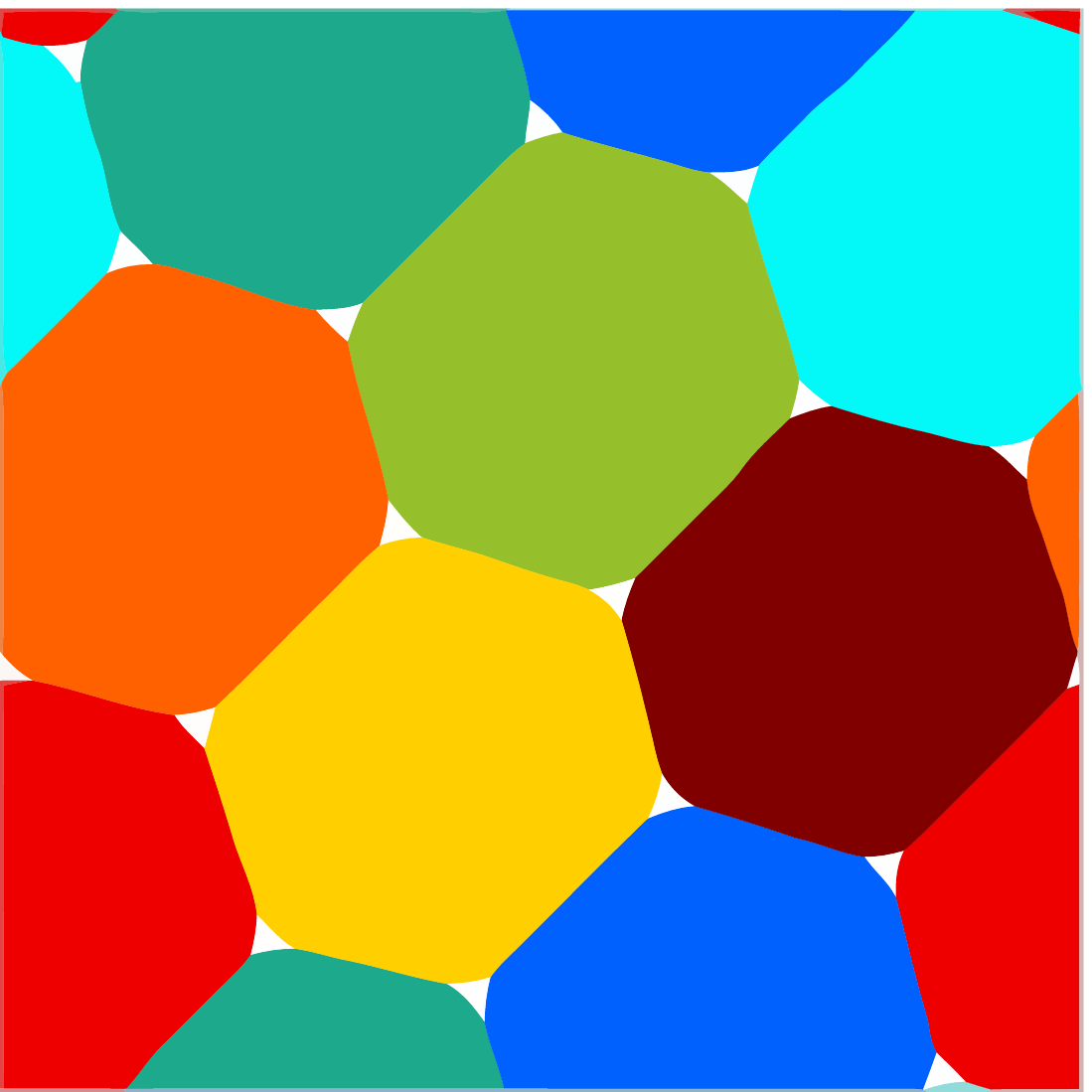}
  ~
  \includegraphics[width=0.32\textwidth]{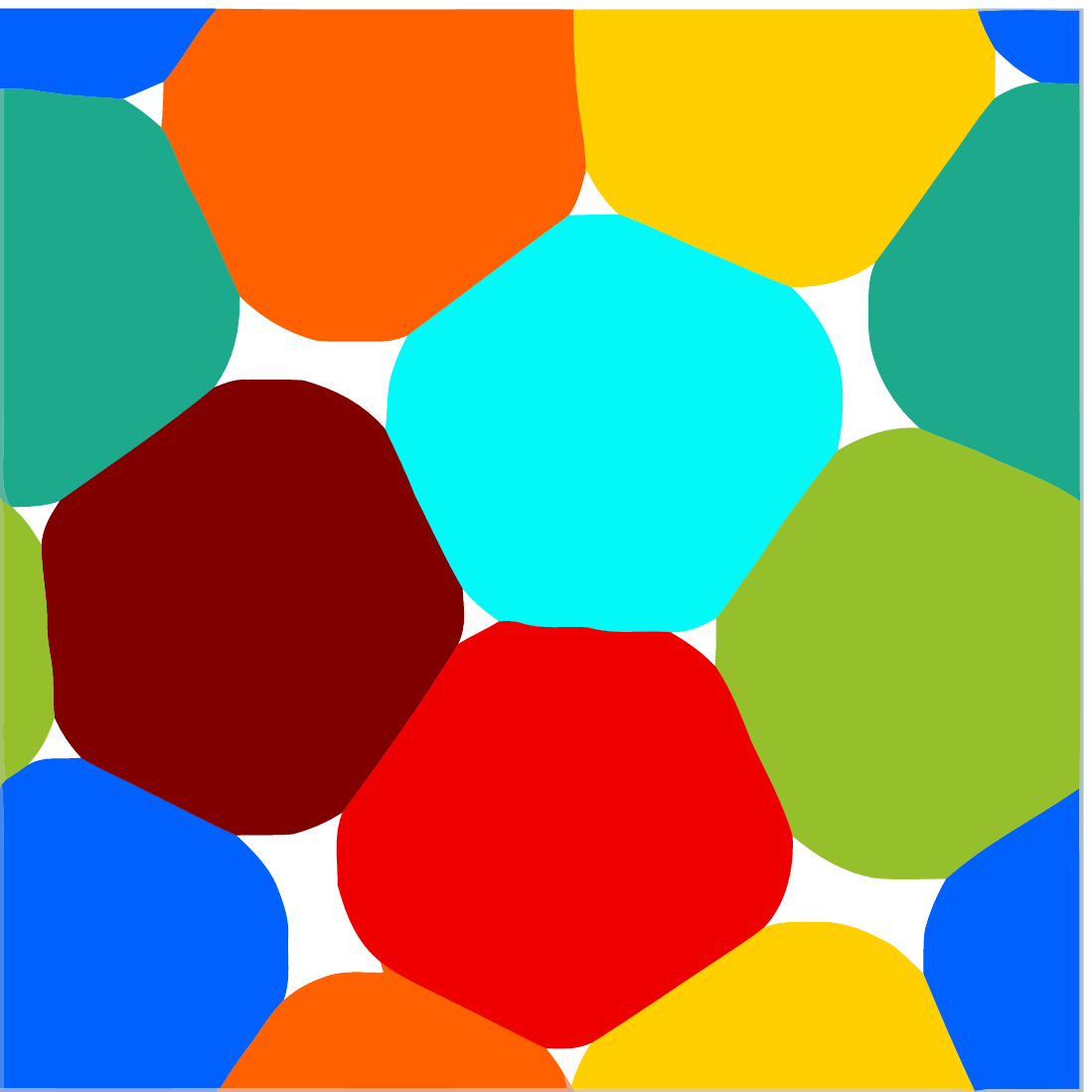}
  ~
  \includegraphics[width=0.32\textwidth]{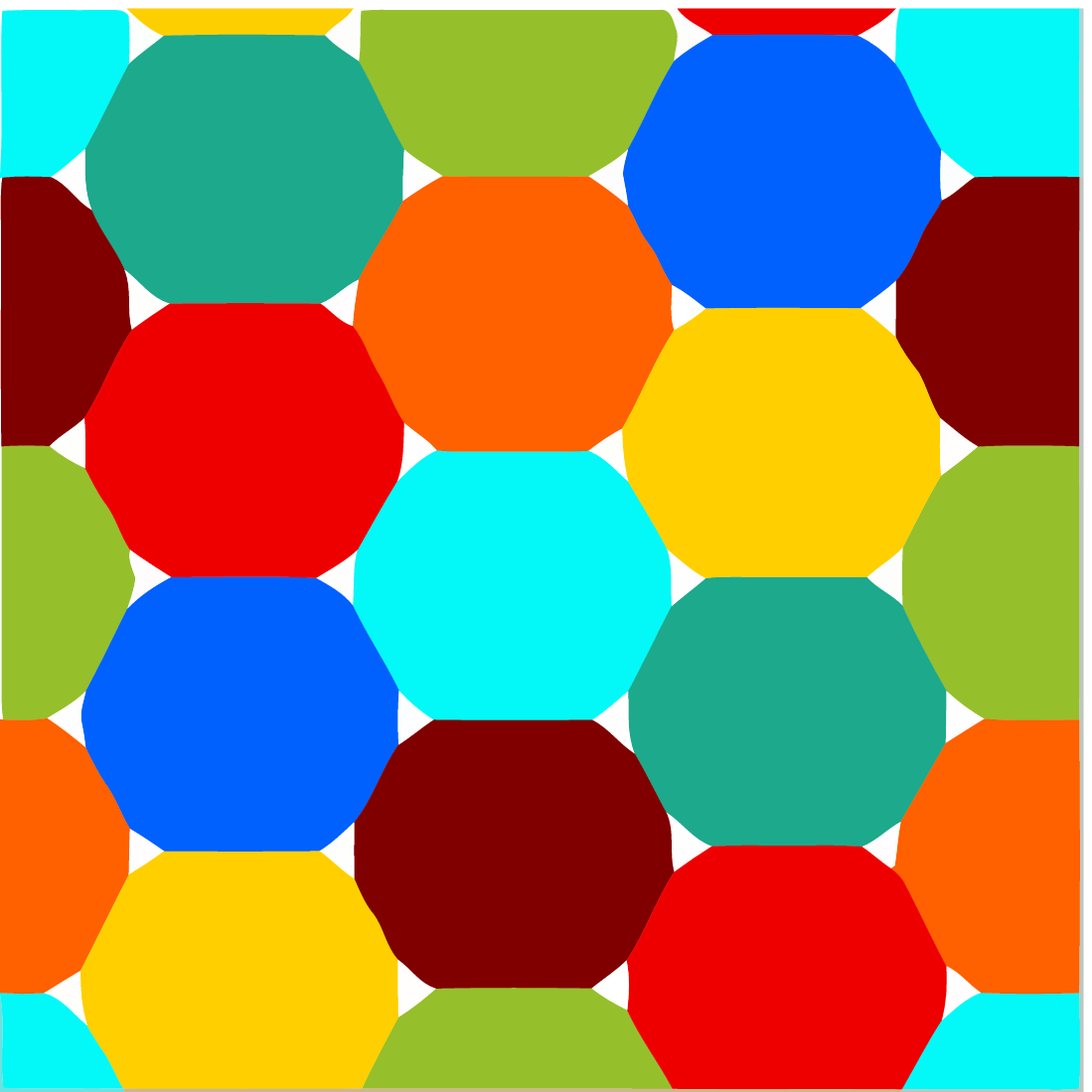}
  \caption{$k=1$, $200 \times 200$ periodic grid, $8$ phases, $\alpha=500,580$ and $k=2$, $8$ phases, $\alpha=270$}
  \label{periodic} 
\end{figure}

\begin{figure}  
  \includegraphics[width=0.32\textwidth]{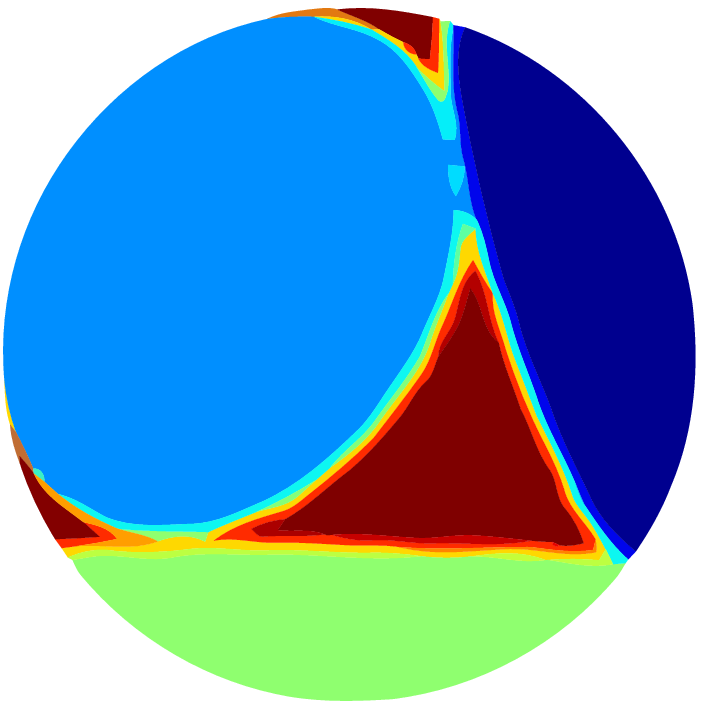}
  ~
  \includegraphics[width=0.32\textwidth]{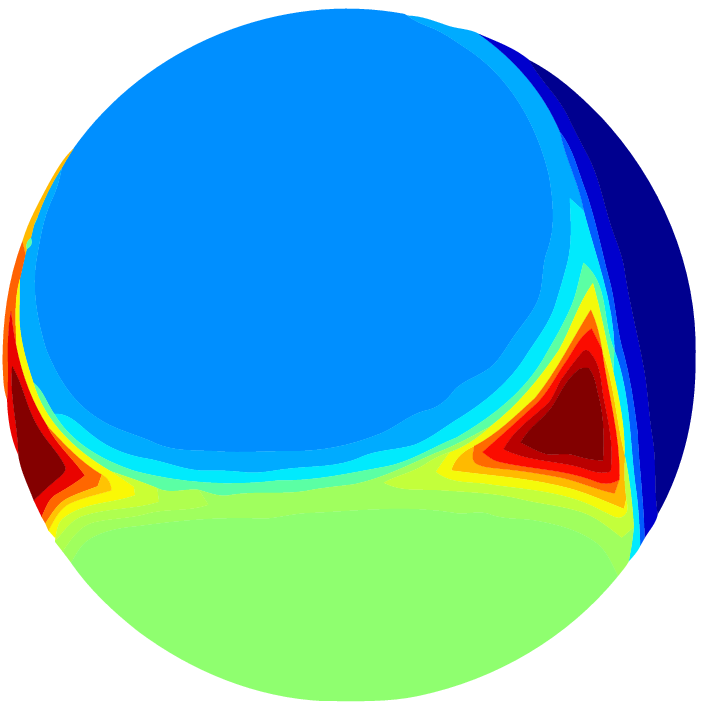}
  ~
  \includegraphics[width=0.32\textwidth]{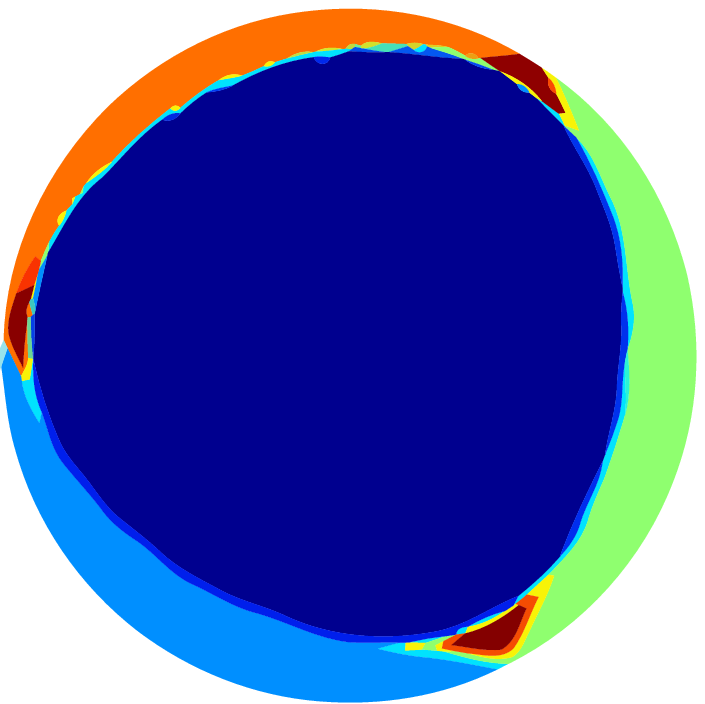}
  \caption{Optimal configurations on the sphere in the case of four phases, for decreasing values of $\alpha$}
  \label{sphere} 
\end{figure}

\begin{figure}
\centering
  \includegraphics[width=0.15\textwidth]{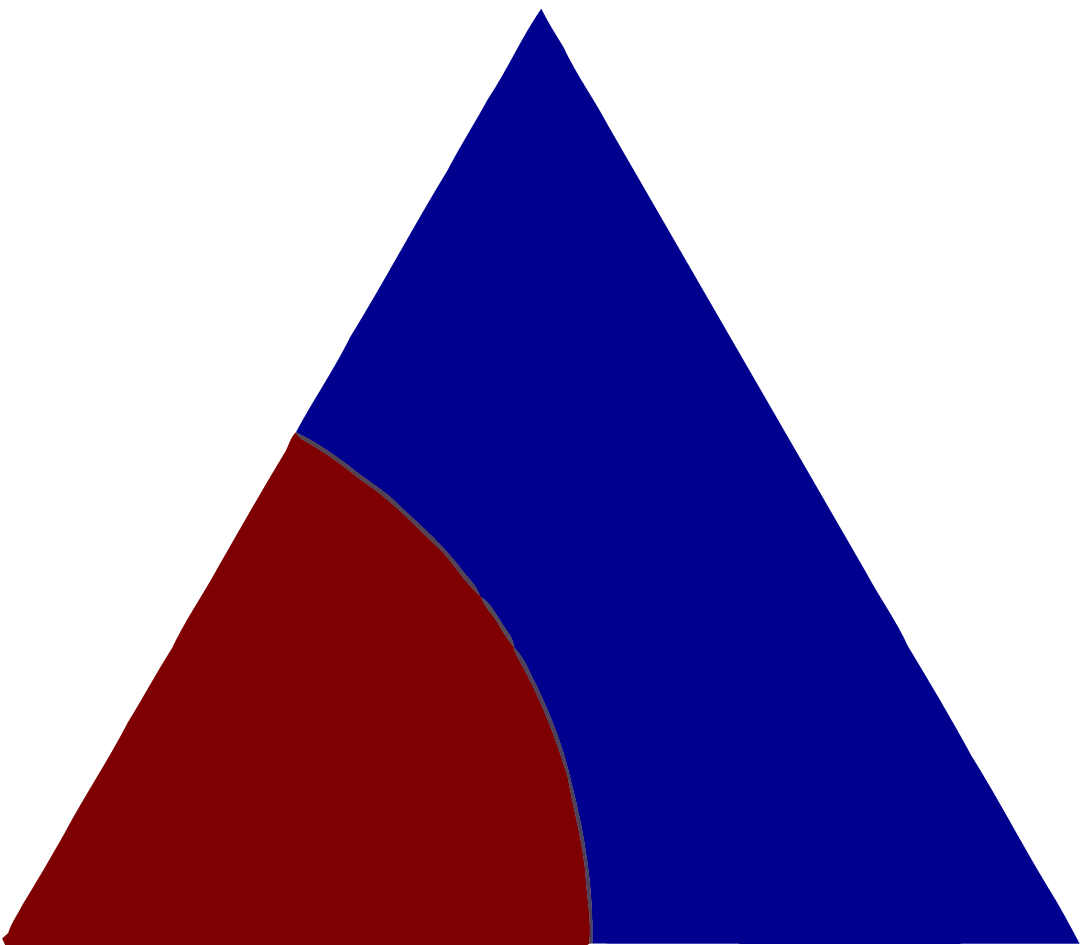}
  ~
  \includegraphics[width=0.15\textwidth]{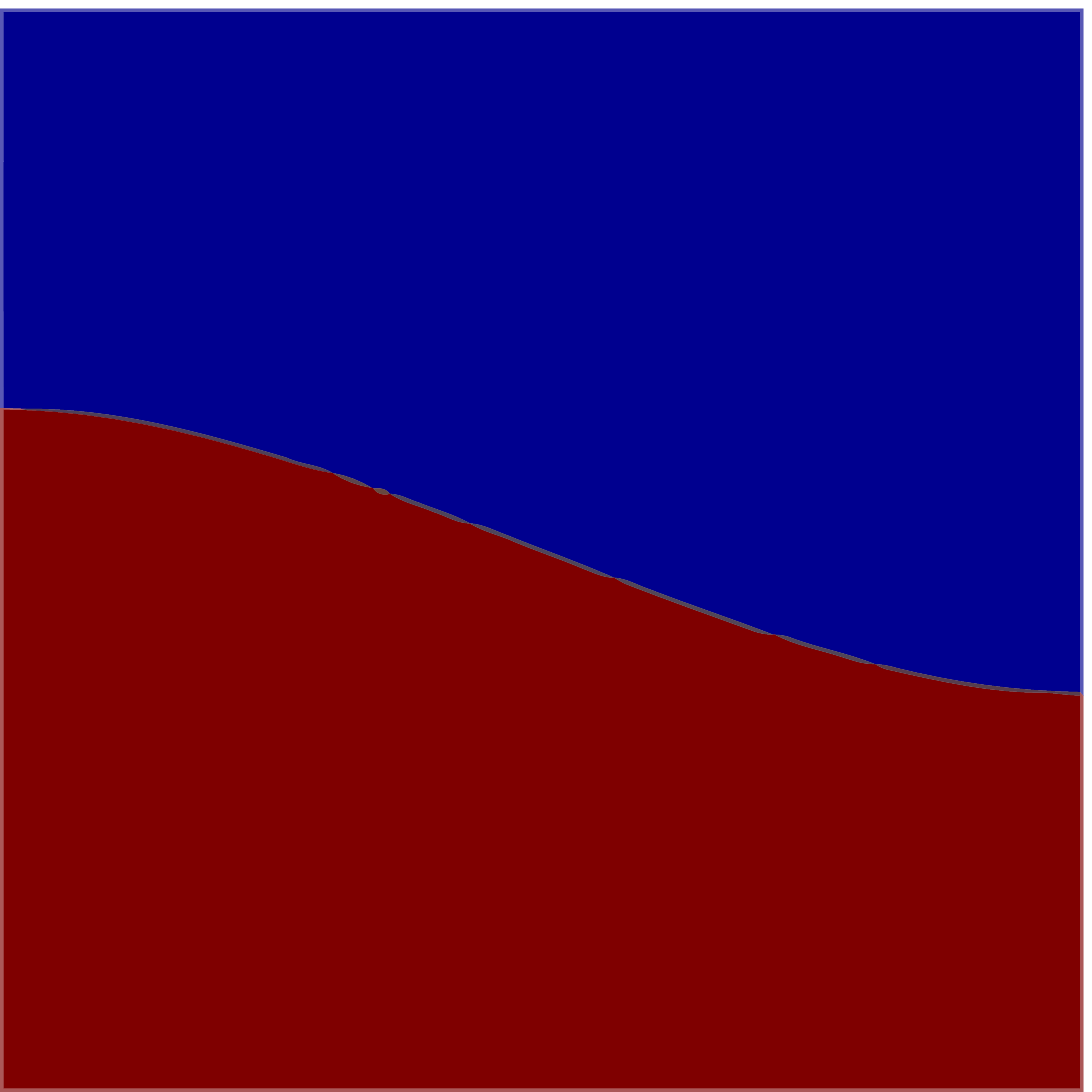} 
  ~
  \includegraphics[width=0.15\textwidth, angle = 90,origin=c]{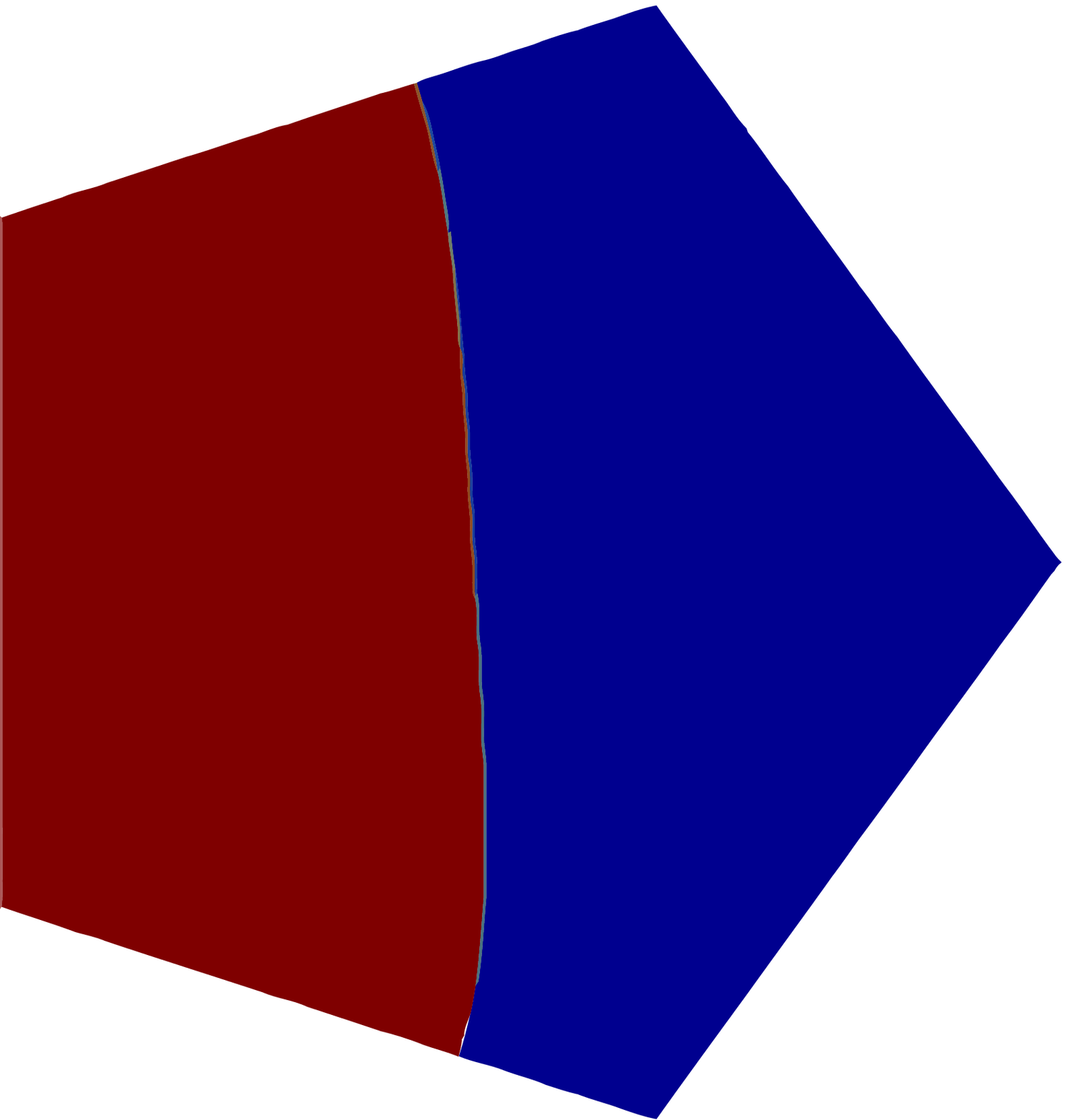}
  ~
  \includegraphics[width=0.15\textwidth]{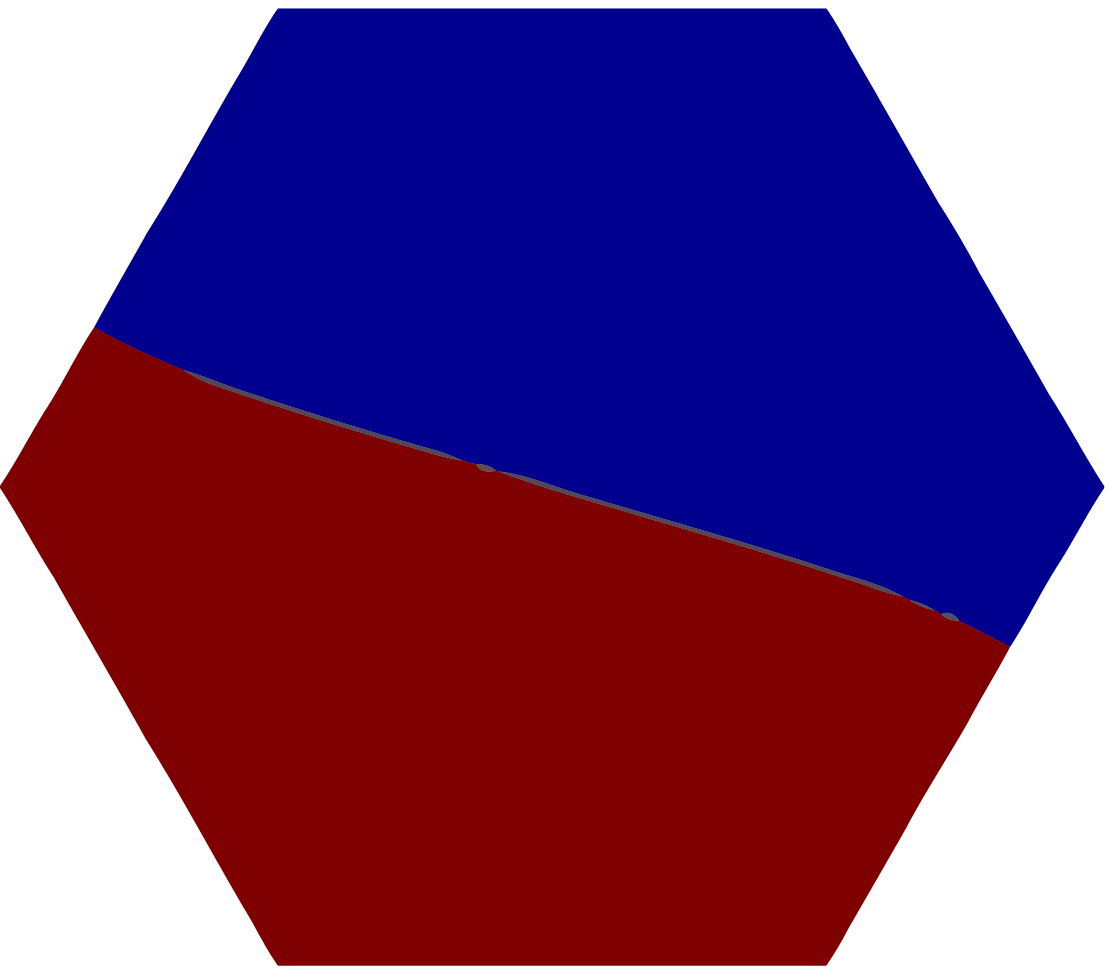}
  ~
  \includegraphics[width=0.15\textwidth]{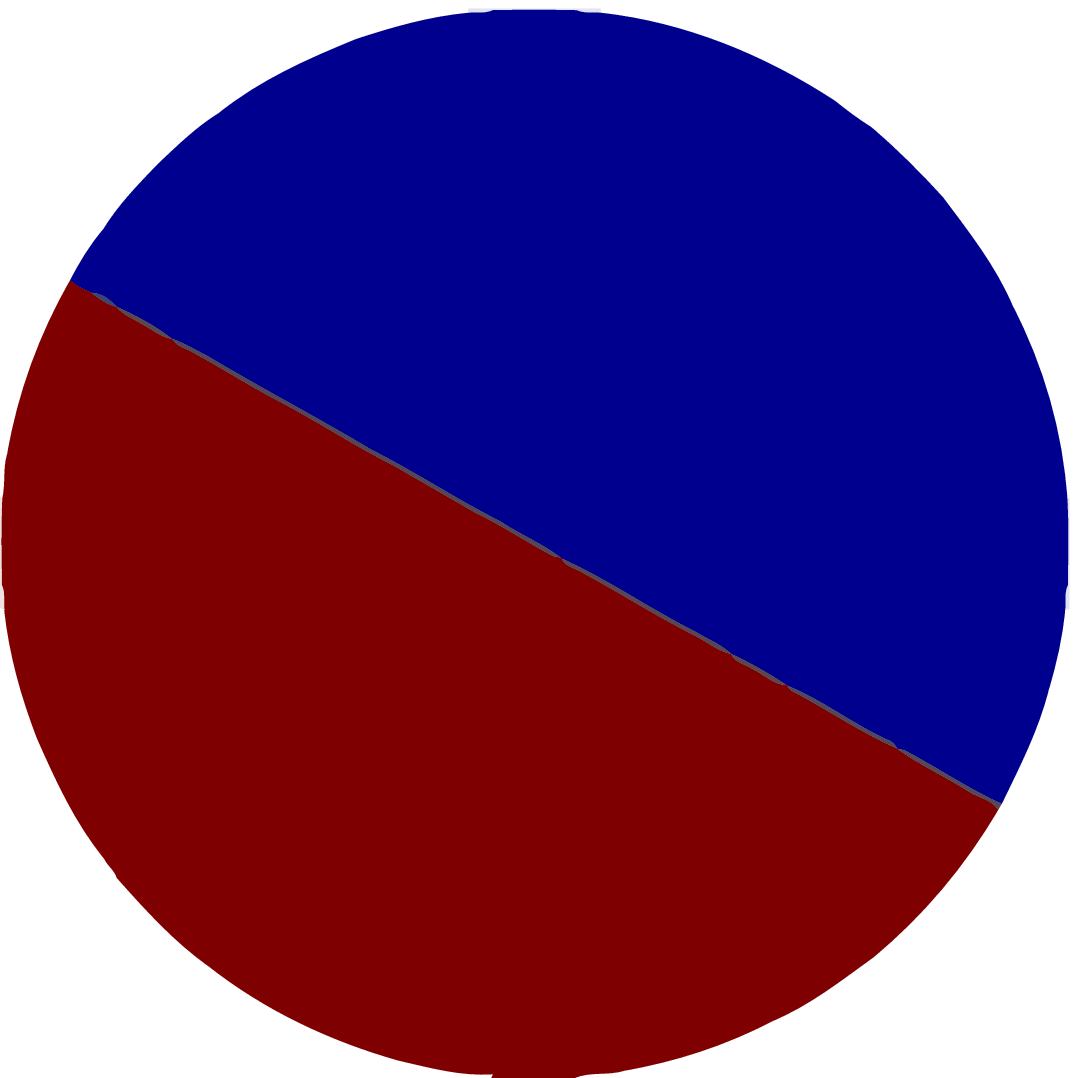}\\
  \includegraphics[width=0.15\textwidth]{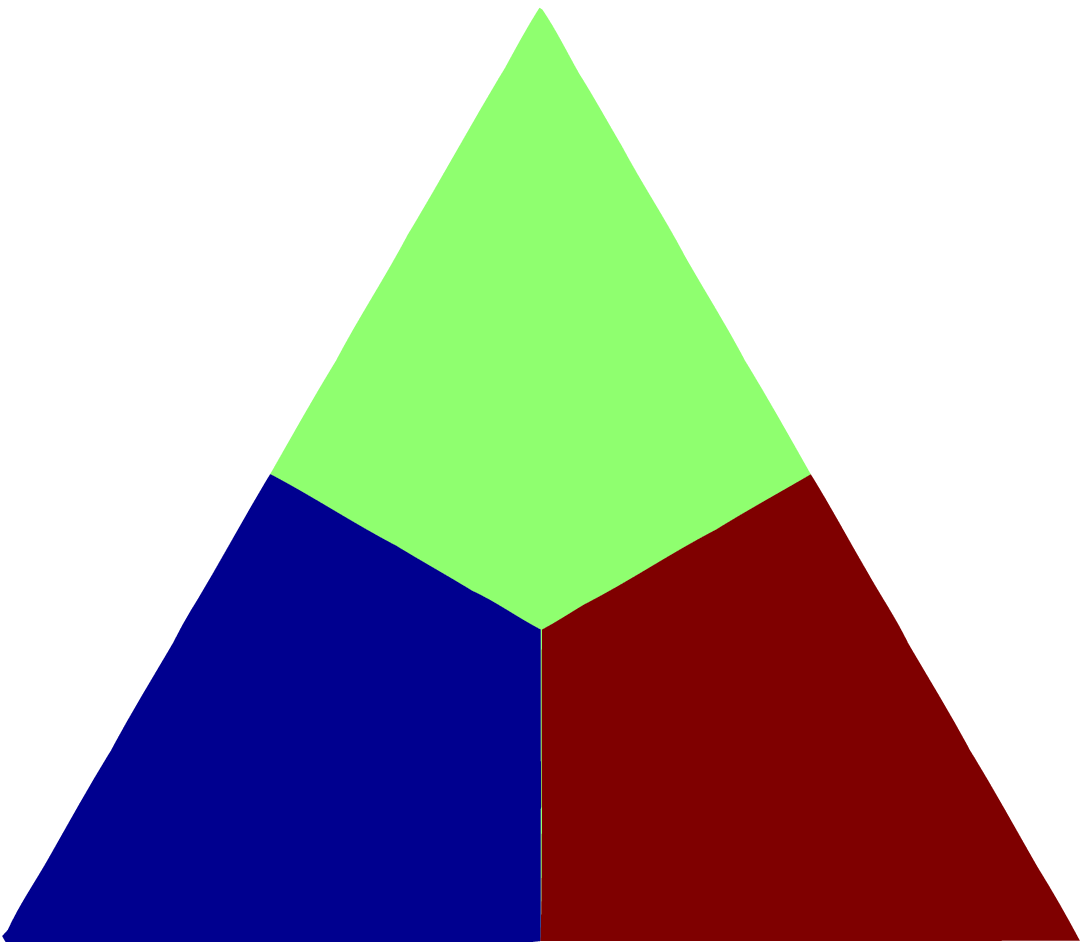}
  ~
  \includegraphics[width=0.15\textwidth]{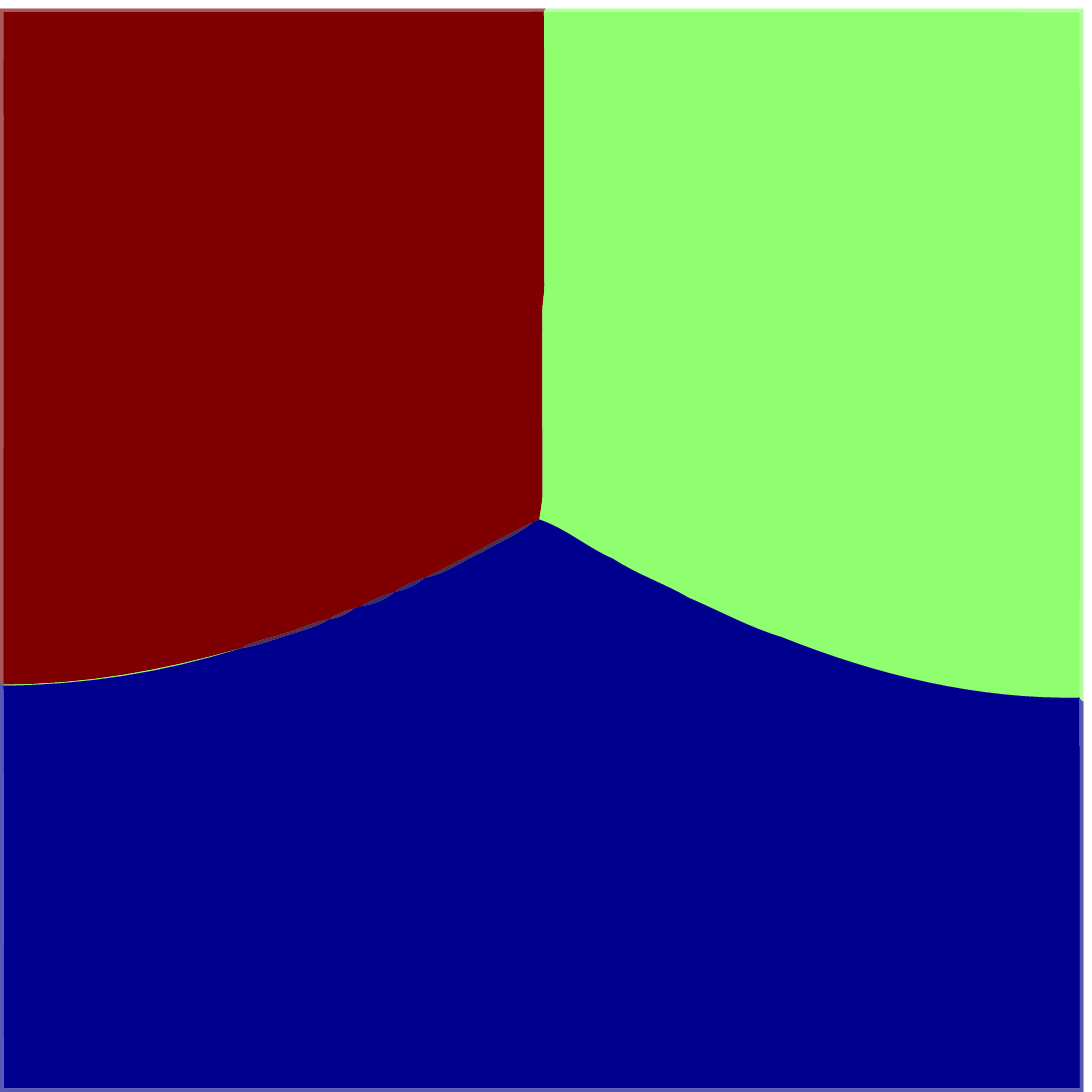} 
  ~
  \includegraphics[width=0.15\textwidth, angle = 90,origin=c]{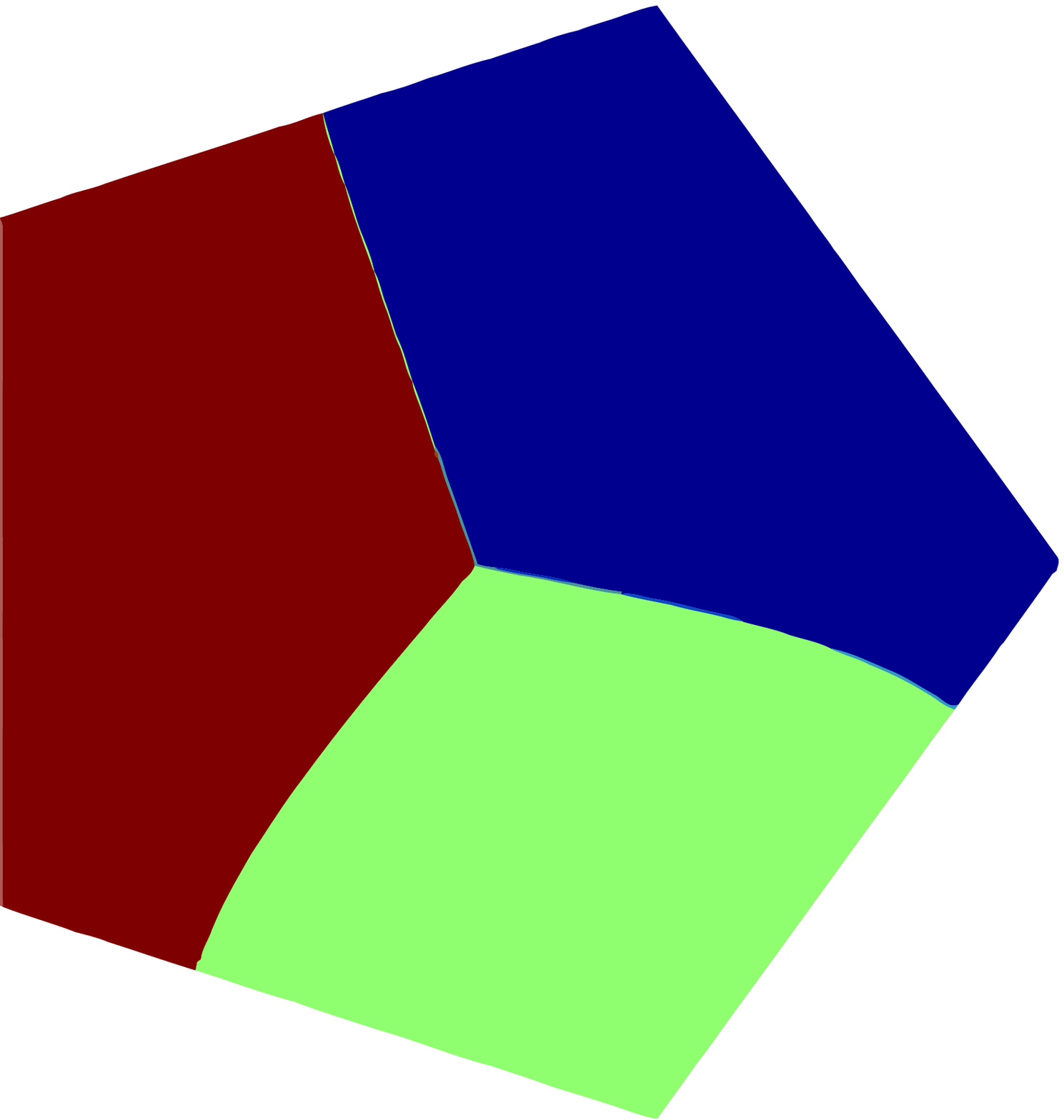}
  ~
  \includegraphics[width=0.15\textwidth]{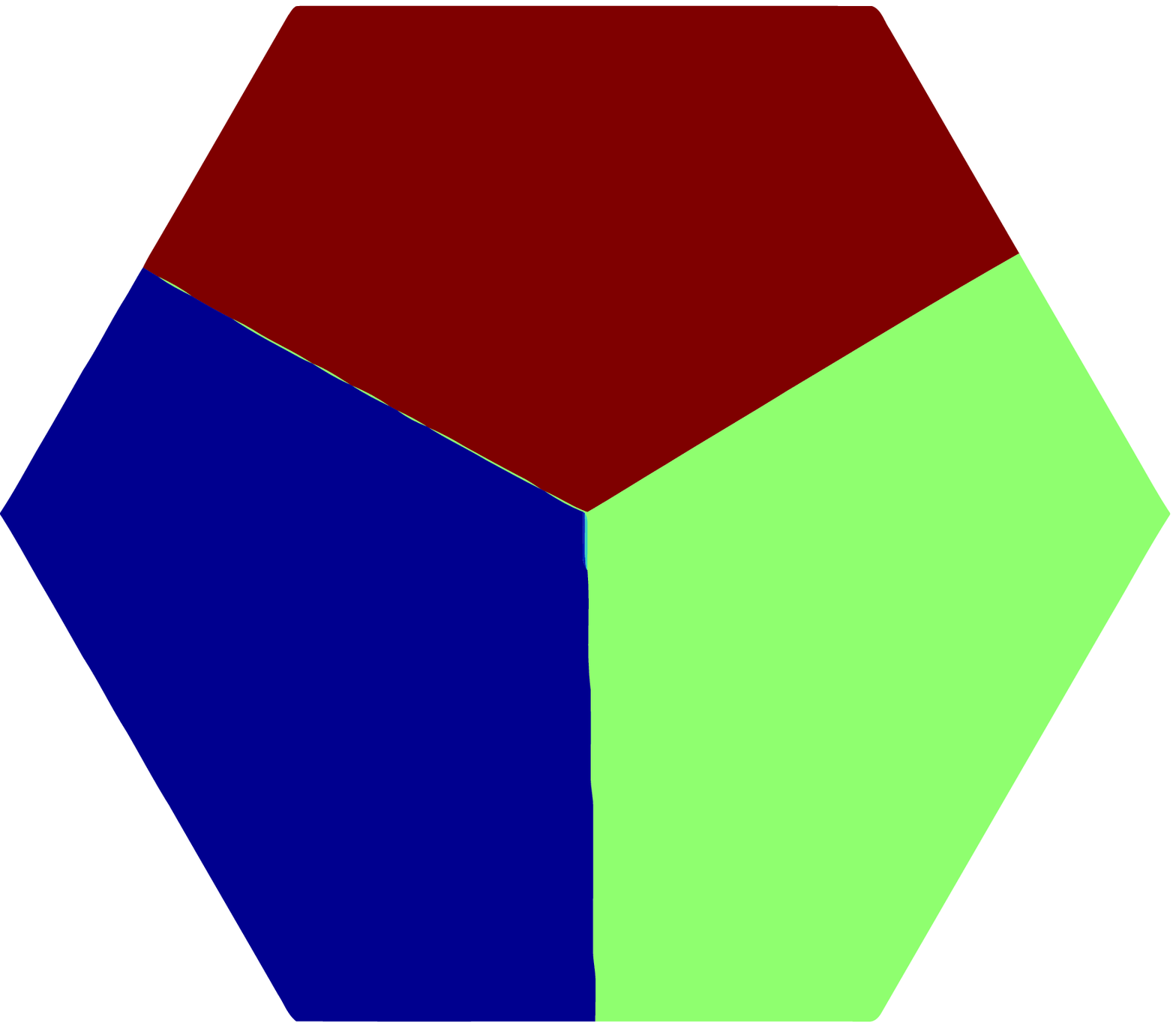}
  ~
  \includegraphics[width=0.15\textwidth]{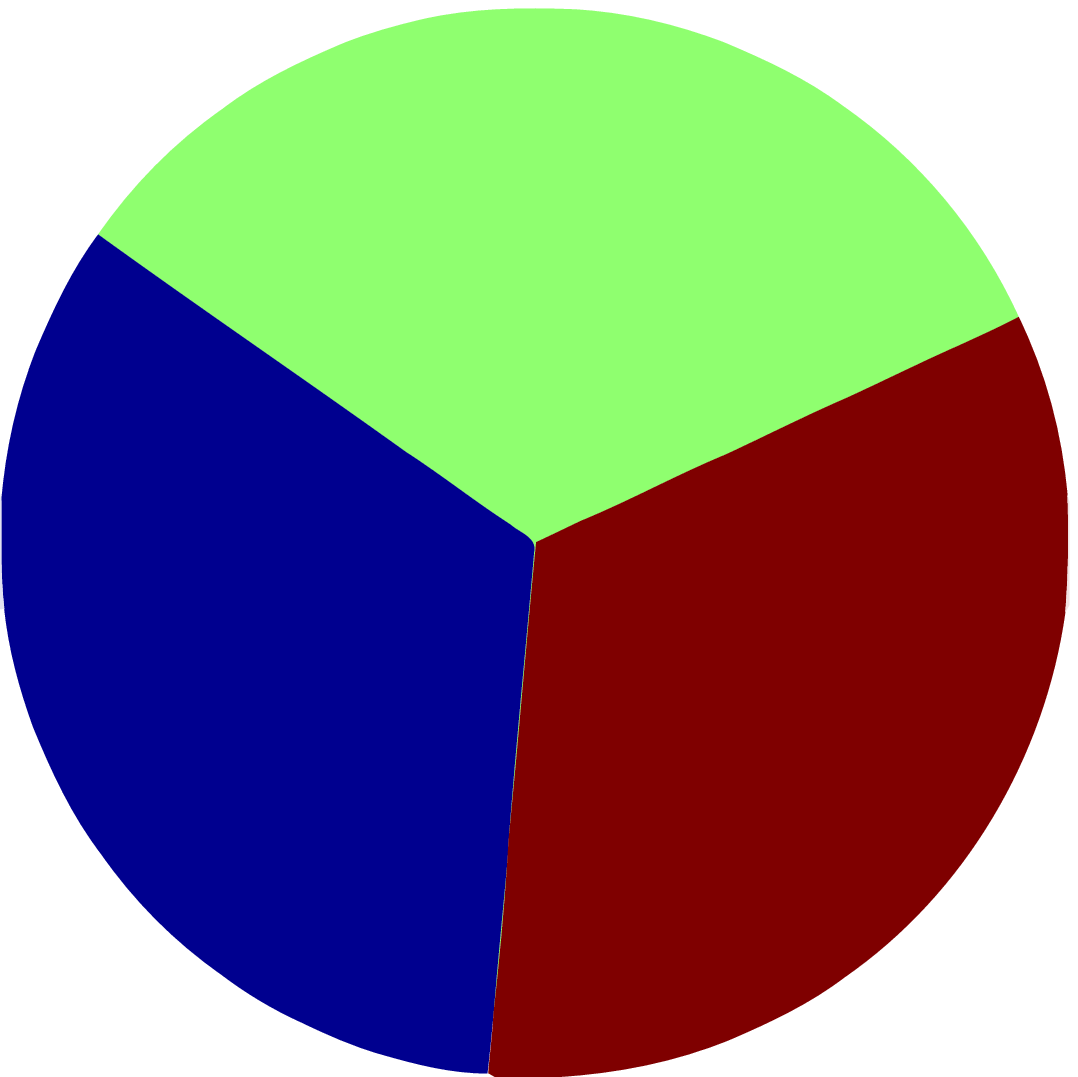}\\
  \includegraphics[width=0.15\textwidth]{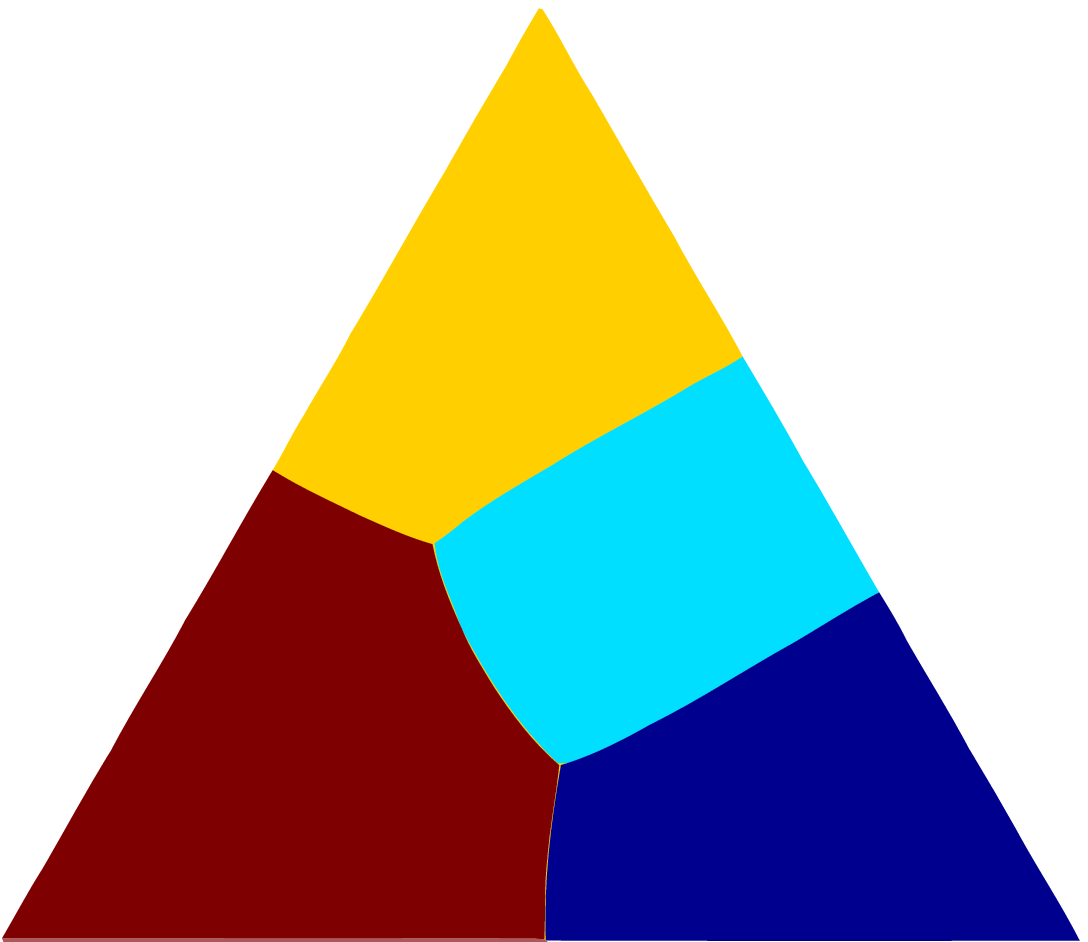}
  ~
  \includegraphics[width=0.15\textwidth]{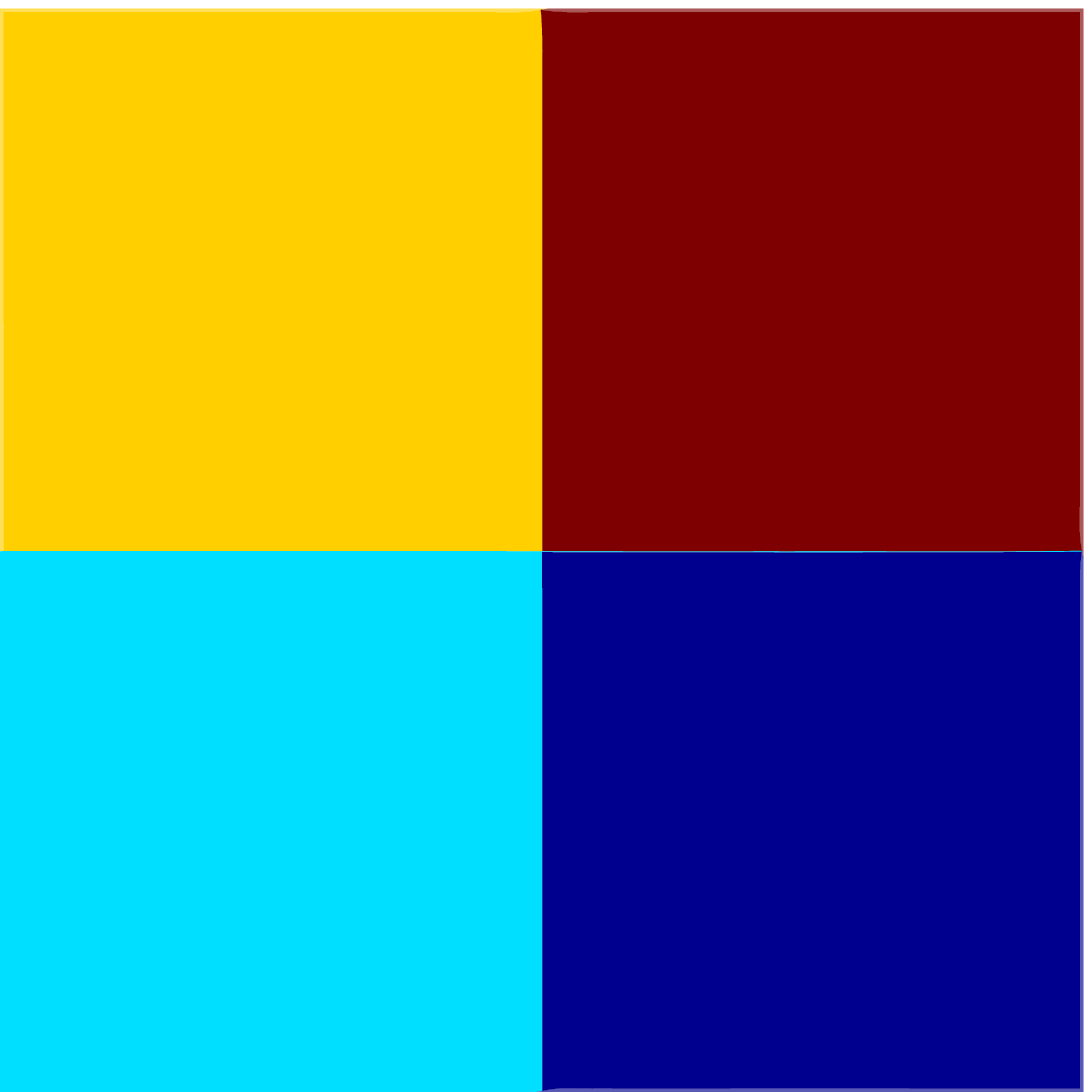} 
  ~
  \includegraphics[width=0.15\textwidth, angle = 90,origin=c]{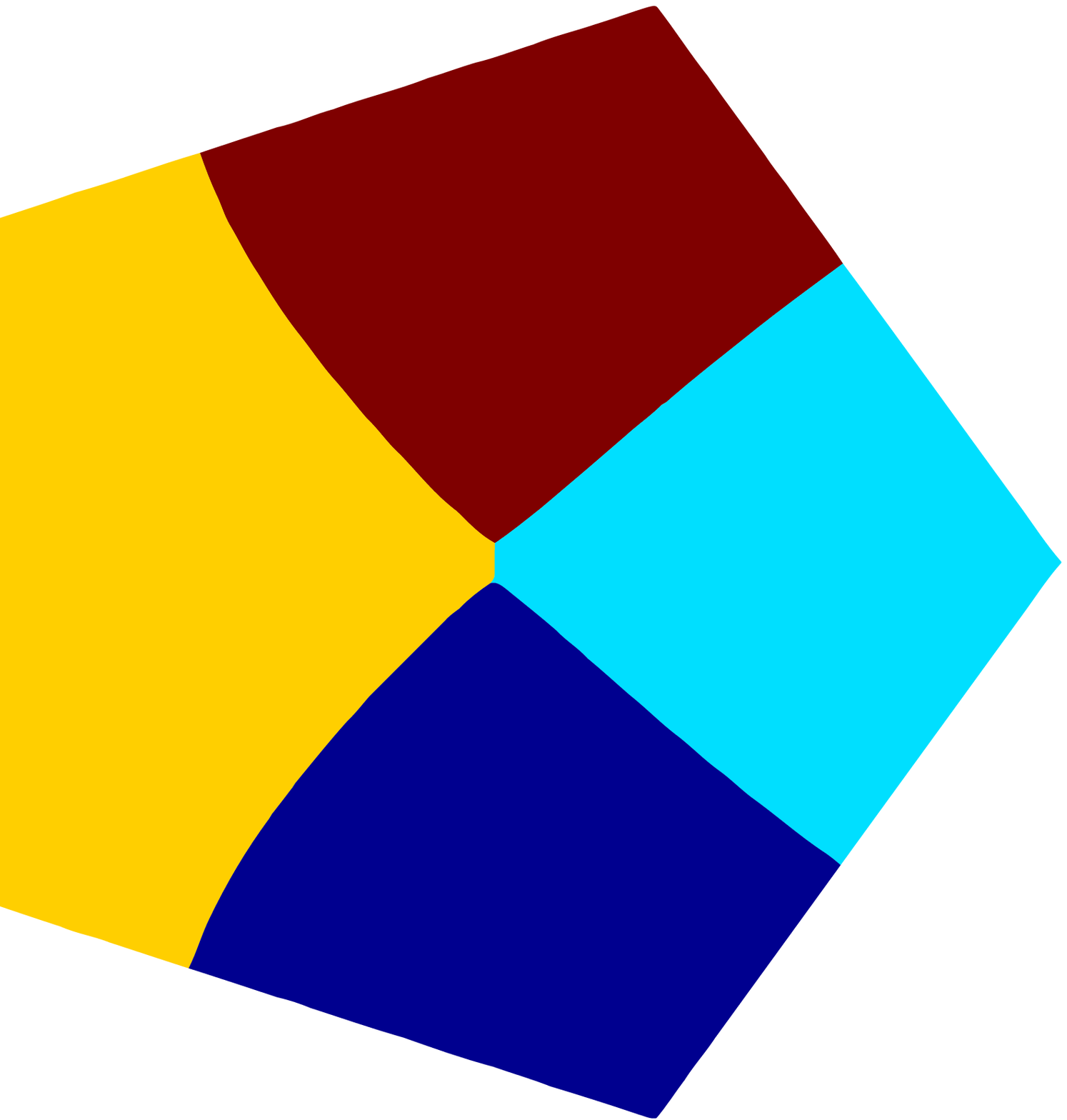}
  ~
  \includegraphics[width=0.15\textwidth]{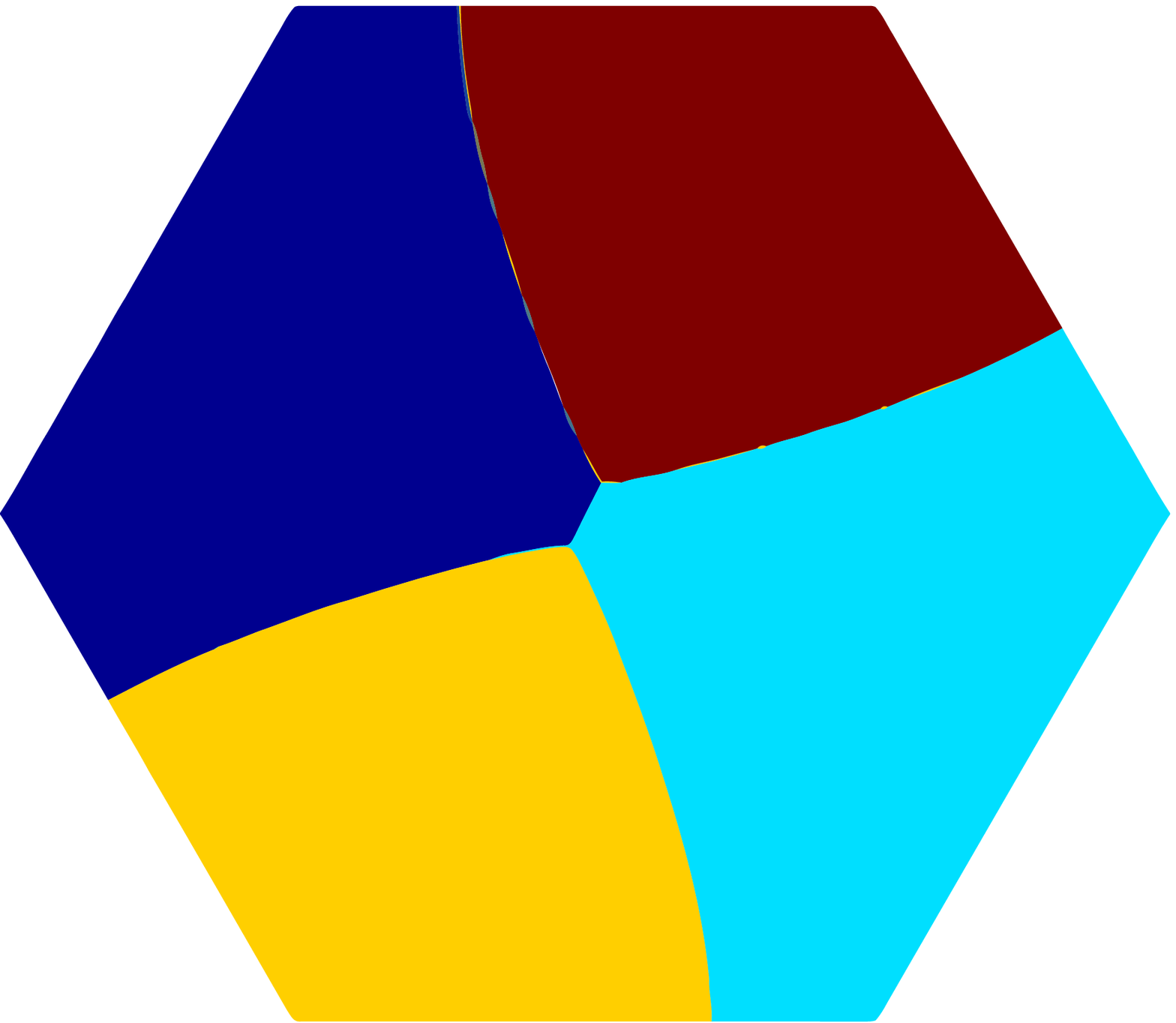}
  ~
  \includegraphics[width=0.15\textwidth]{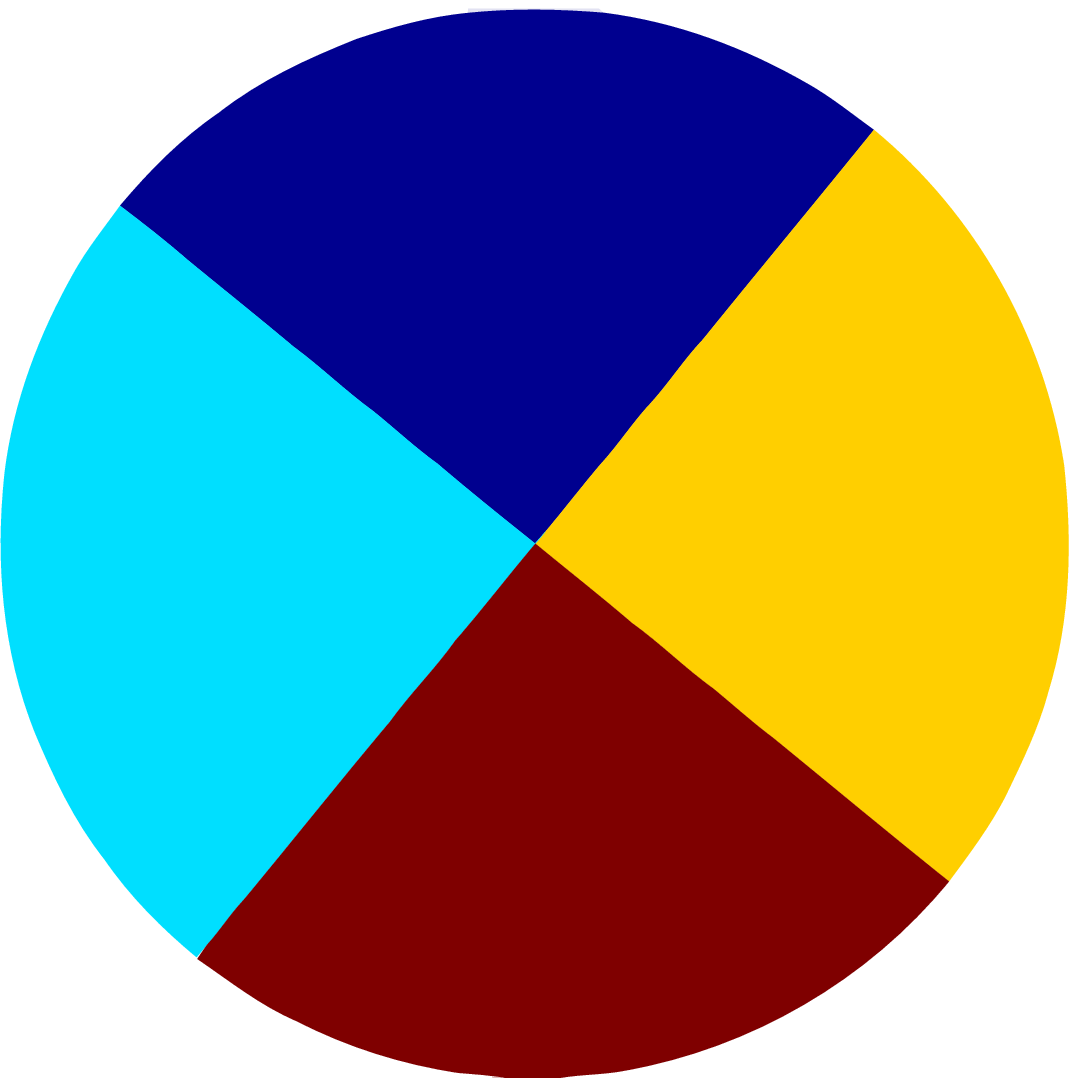}\\
  \includegraphics[width=0.15\textwidth]{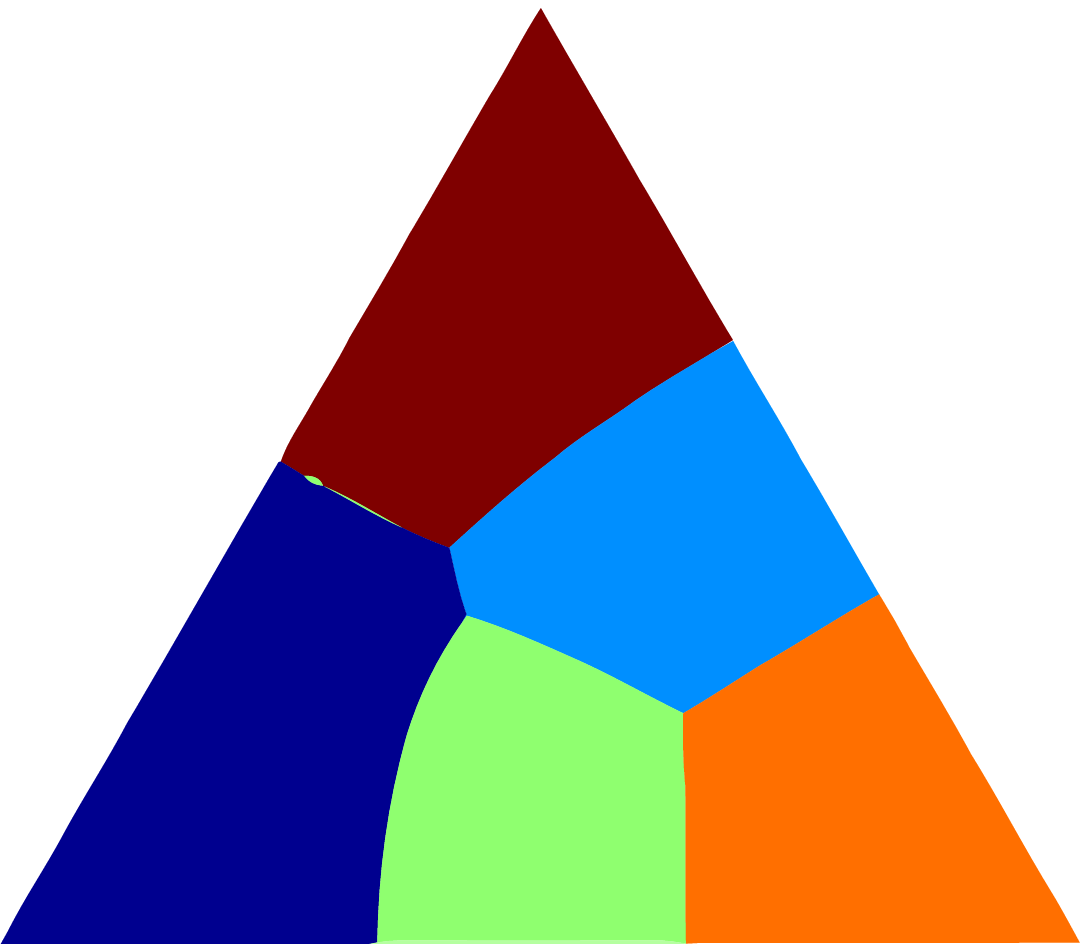}
  ~
  \includegraphics[width=0.15\textwidth]{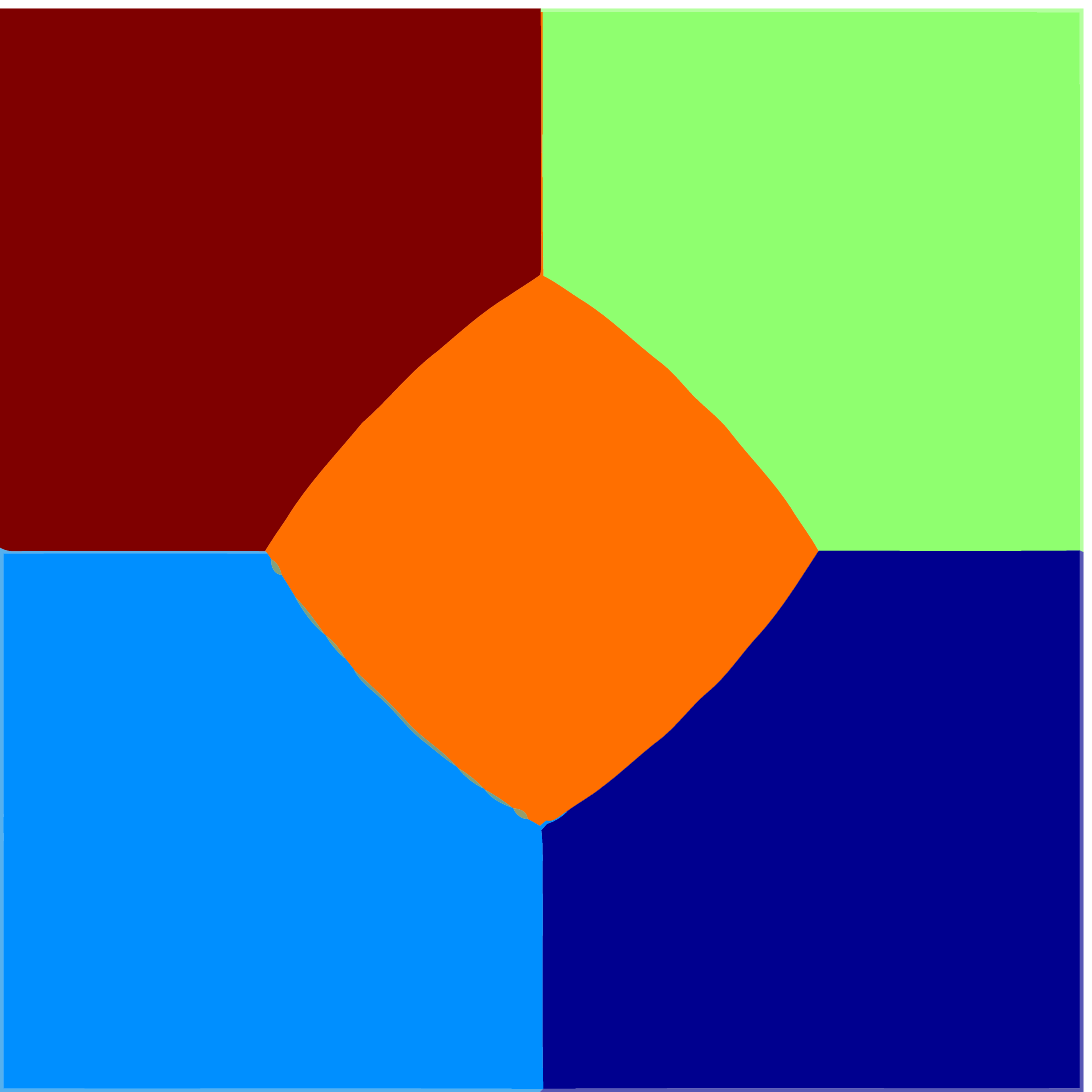} 
  ~
  \includegraphics[width=0.15\textwidth, angle = 90,origin=c]{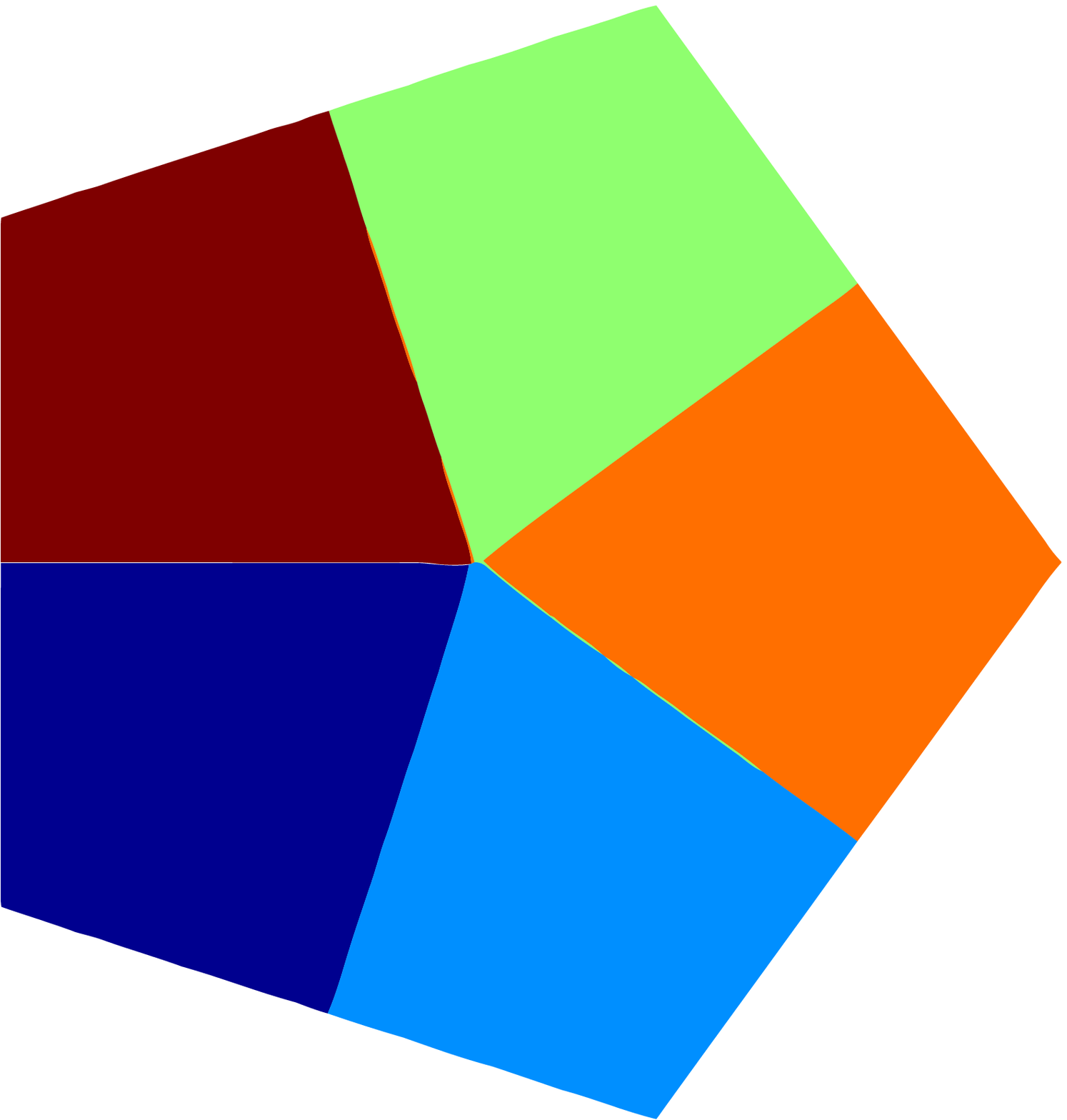}
  ~
  \includegraphics[width=0.15\textwidth]{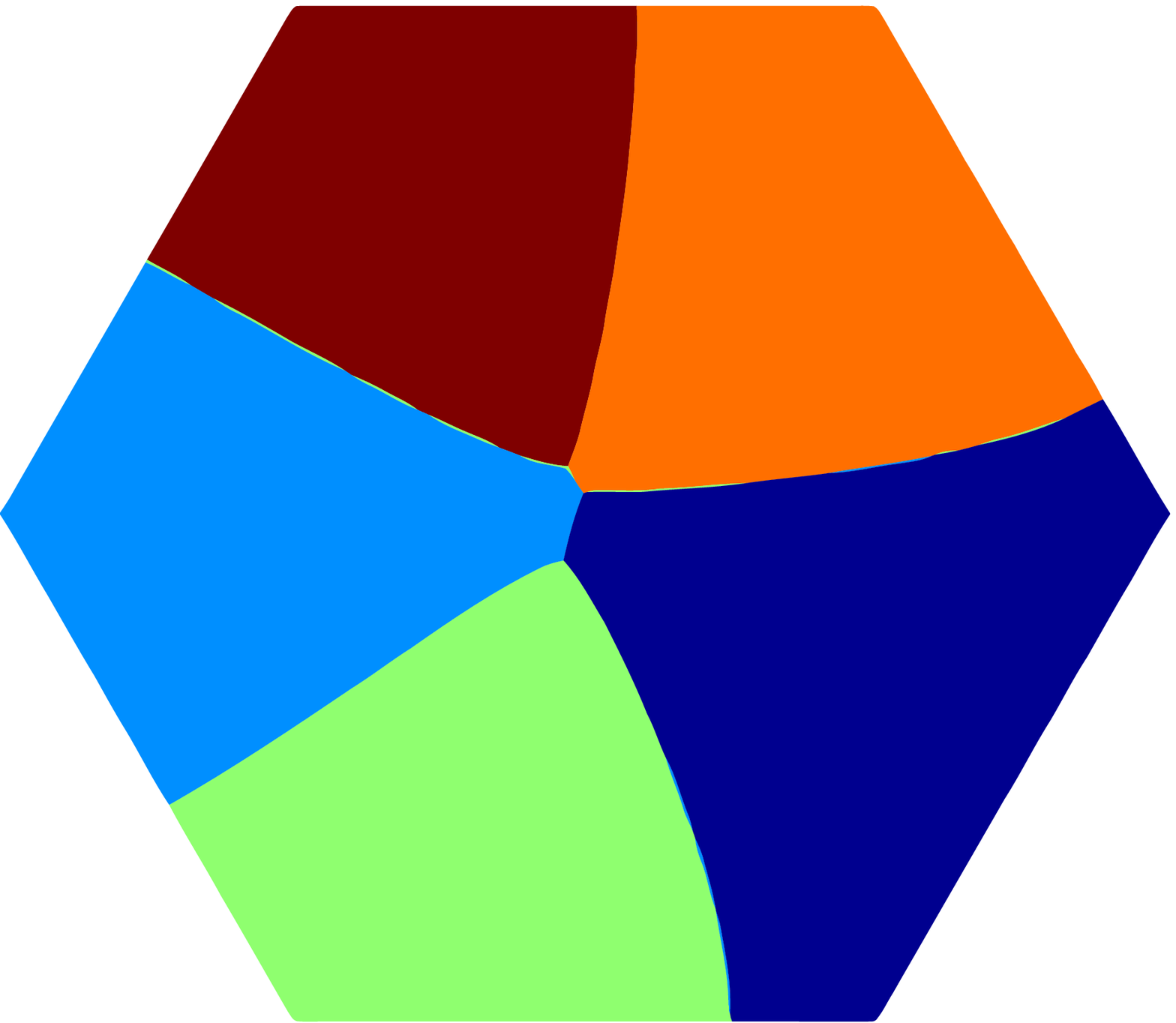}
  ~
  \includegraphics[width=0.15\textwidth]{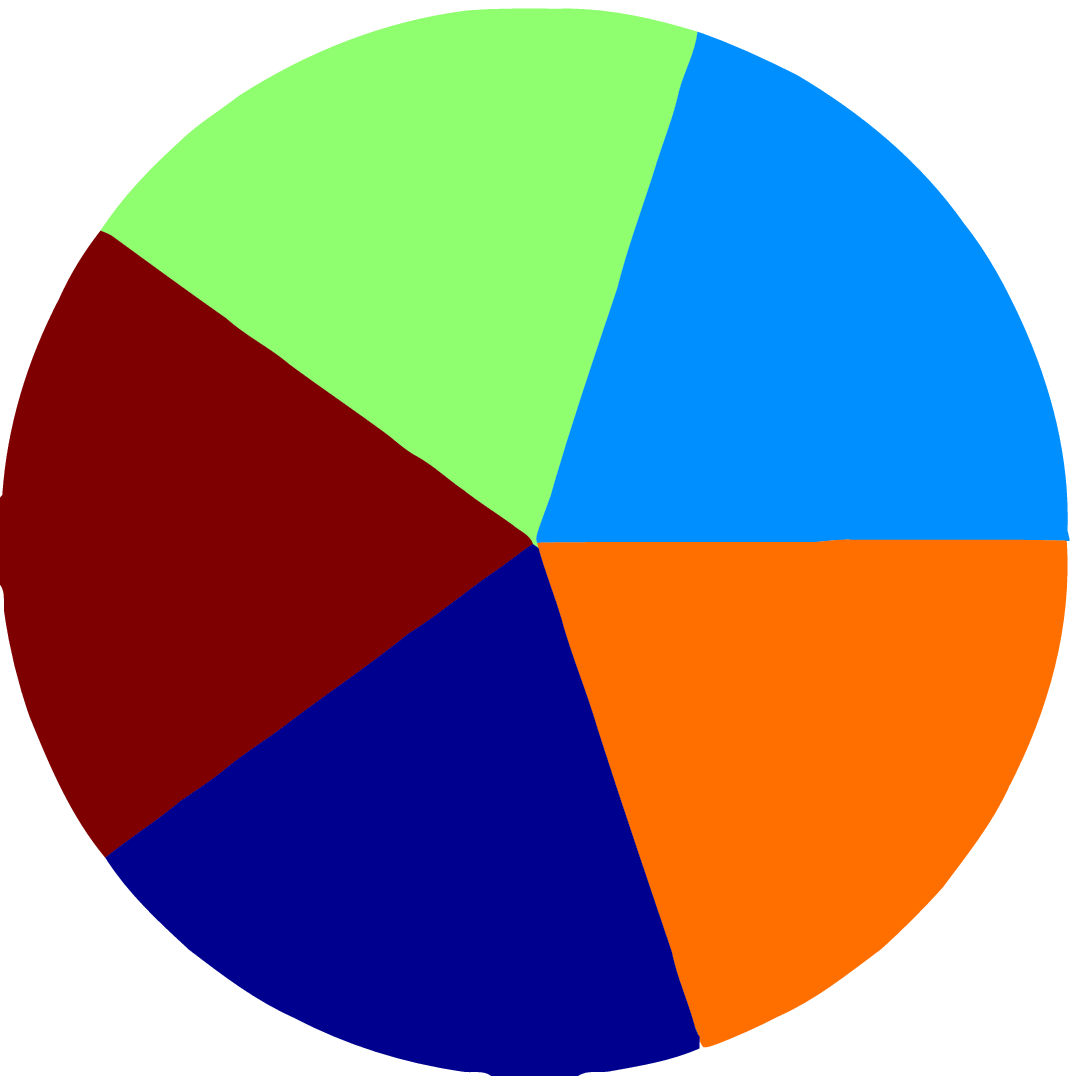}\\
  \includegraphics[width=0.15\textwidth]{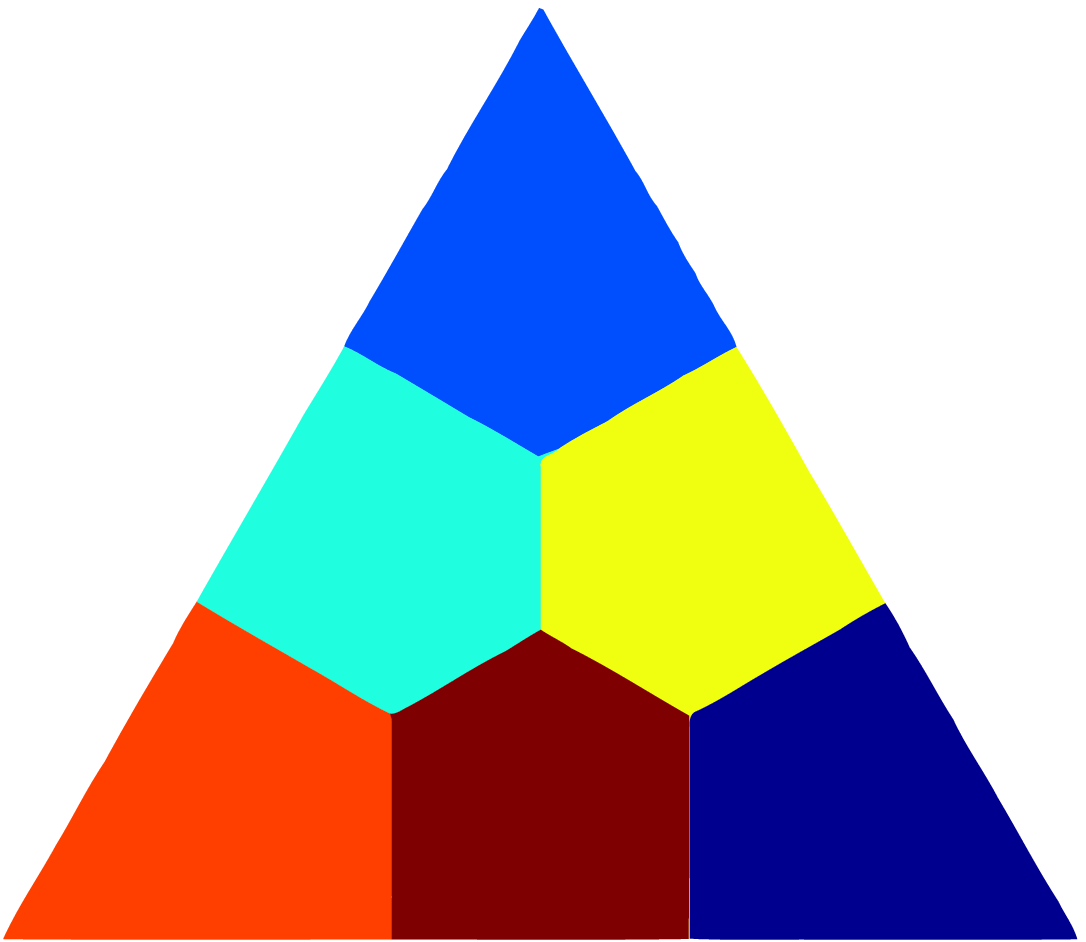}
  ~
  \includegraphics[width=0.15\textwidth]{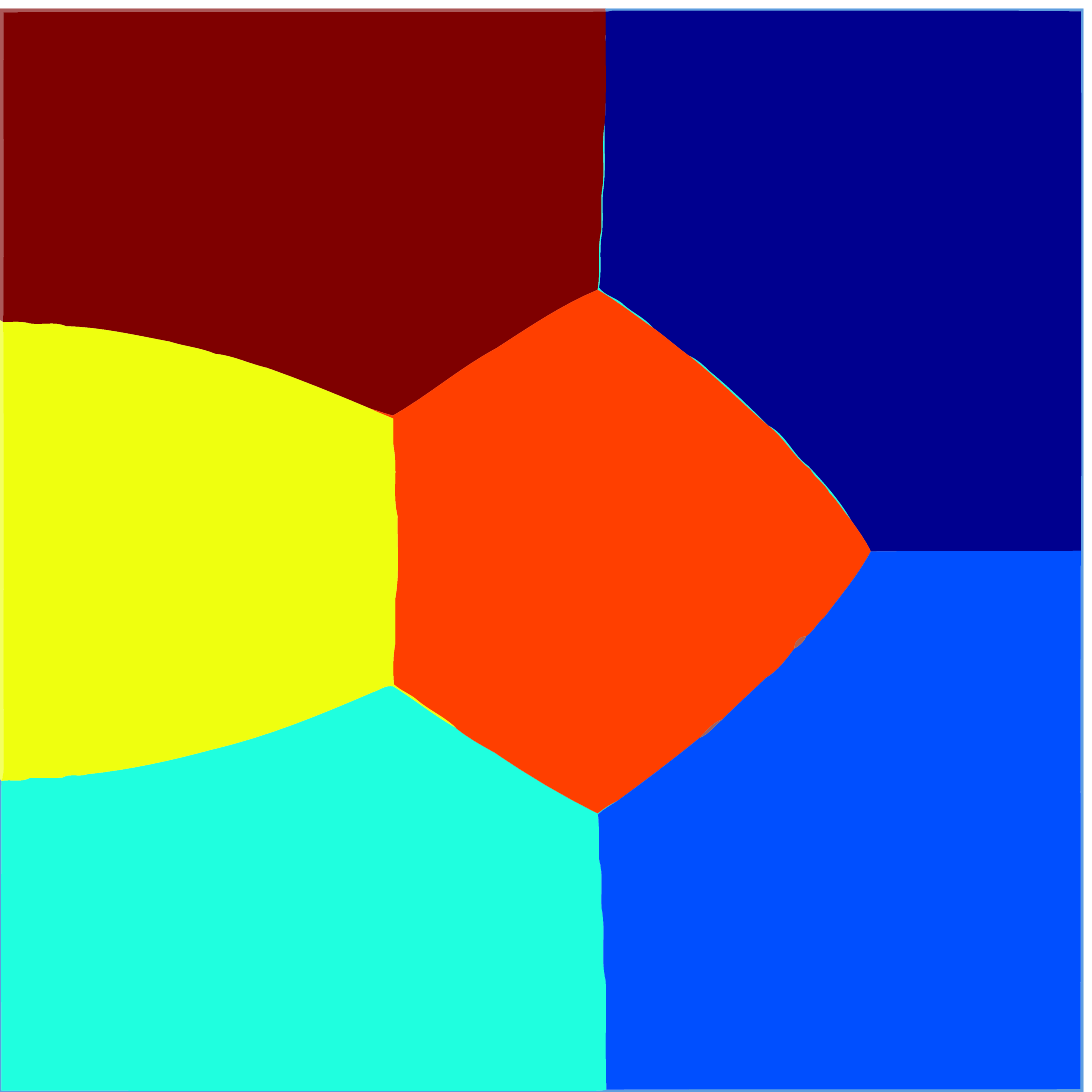} 
  ~
  \includegraphics[width=0.15\textwidth, angle = 90,origin=c]{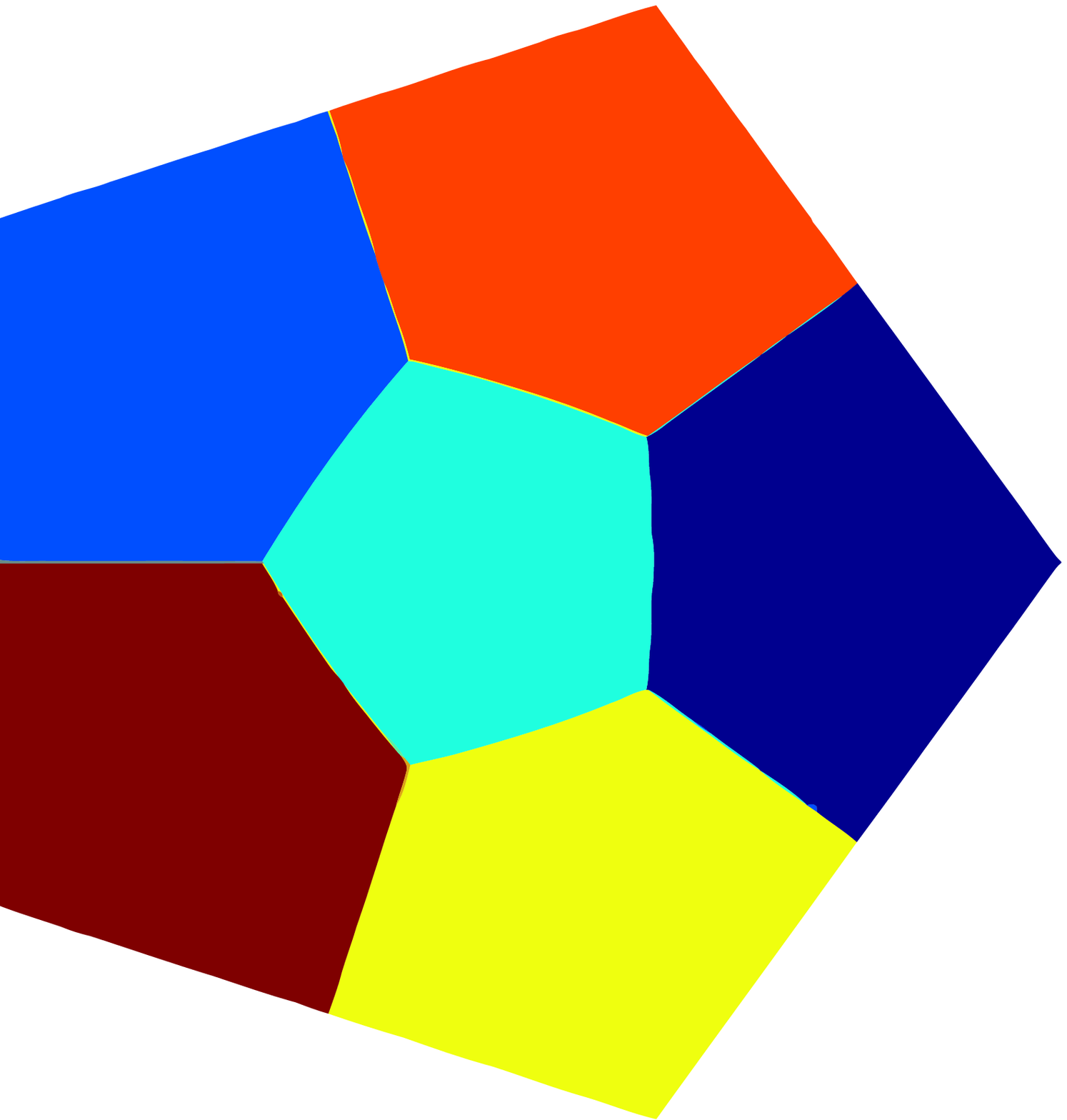}
  ~
  \includegraphics[width=0.15\textwidth]{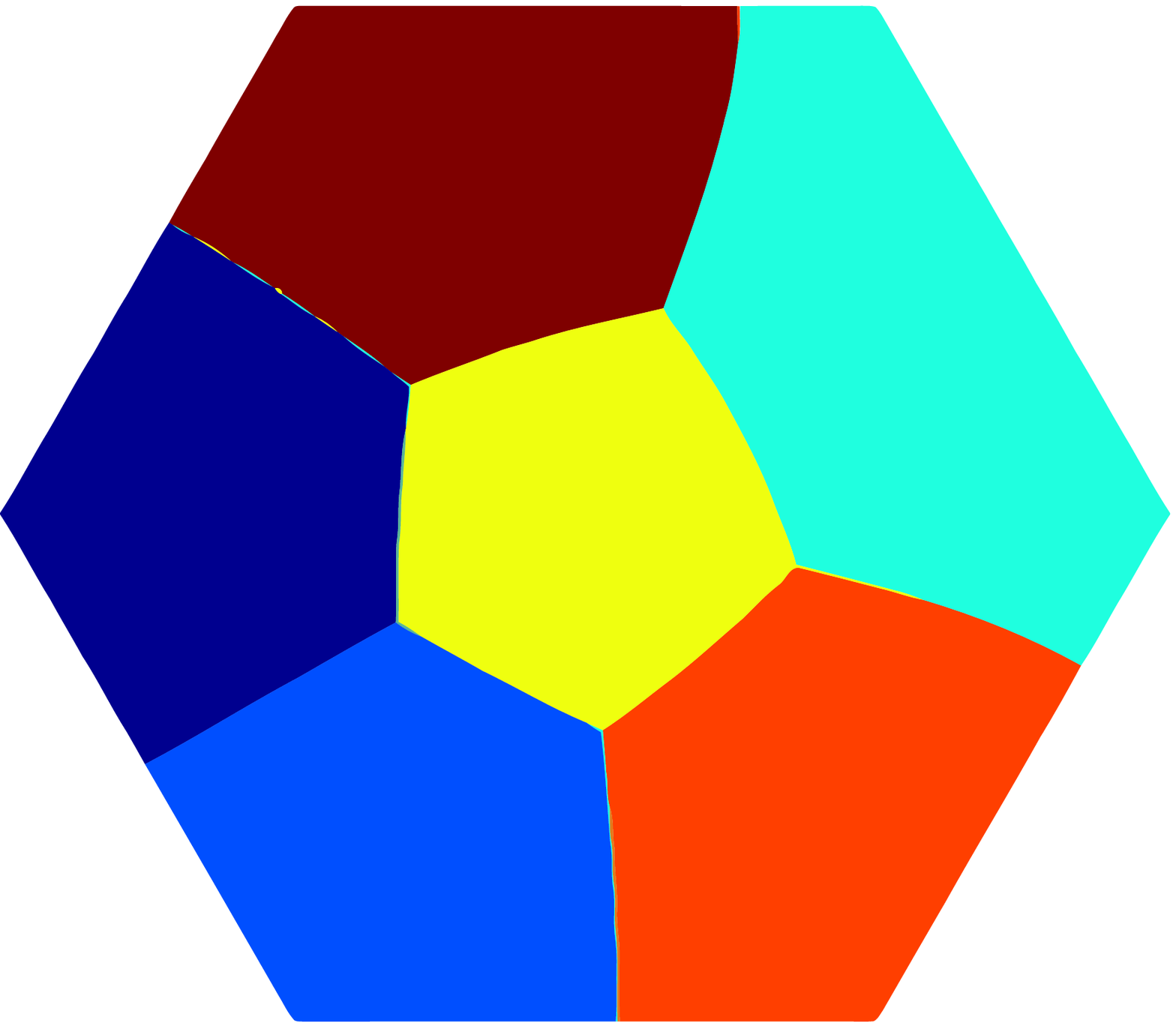}
  ~
  \includegraphics[width=0.15\textwidth]{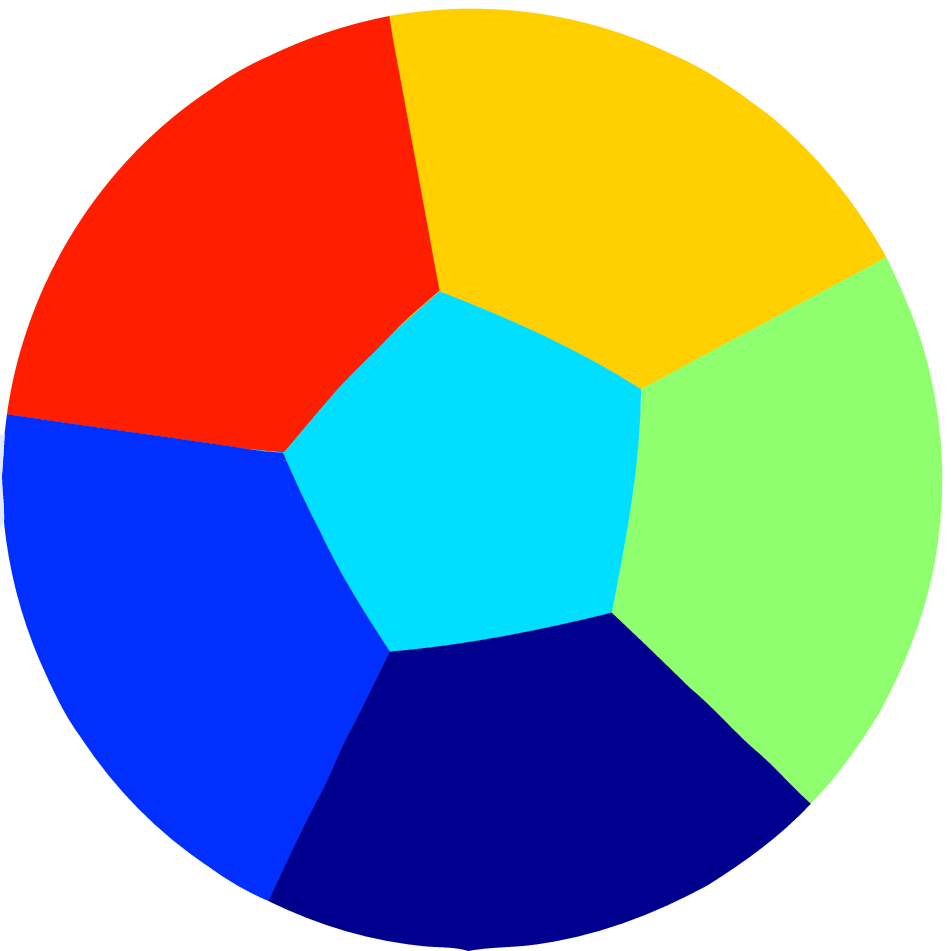}\\
  \includegraphics[width=0.15\textwidth]{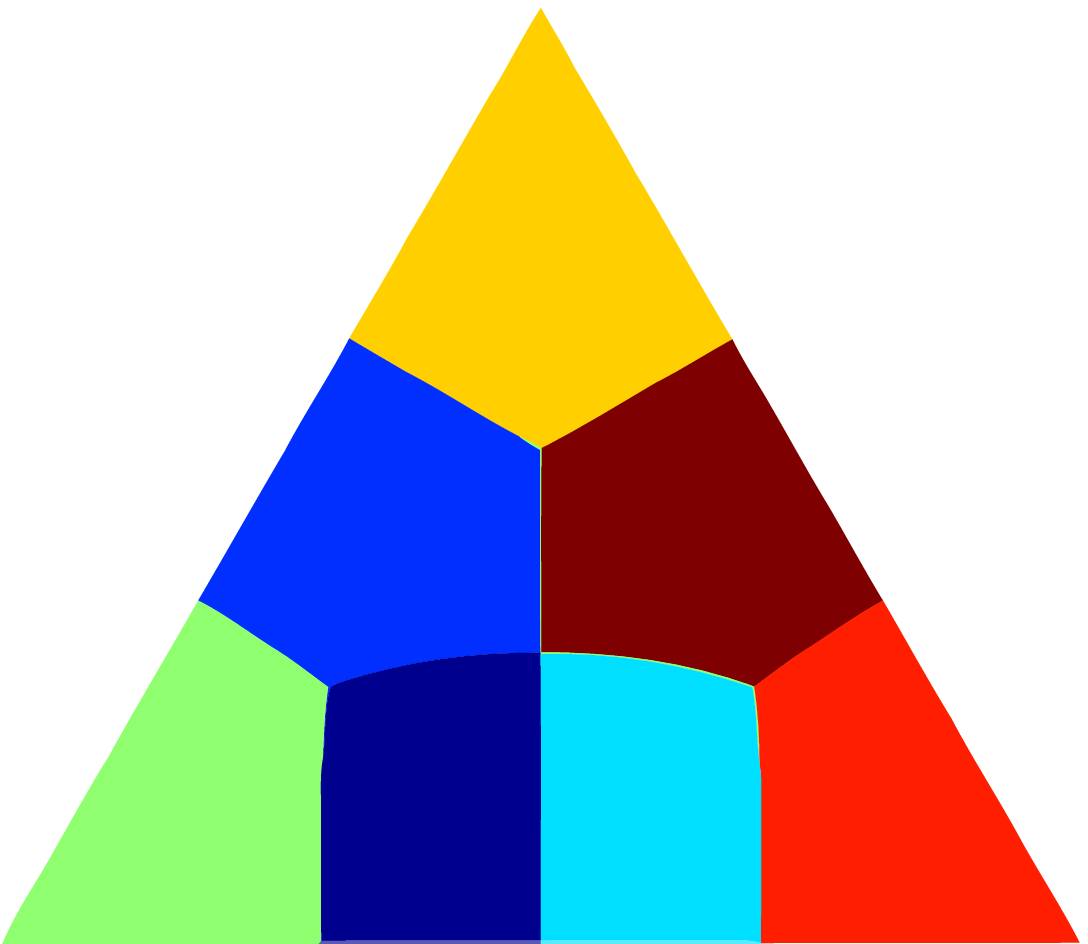}
  ~
  \includegraphics[width=0.15\textwidth]{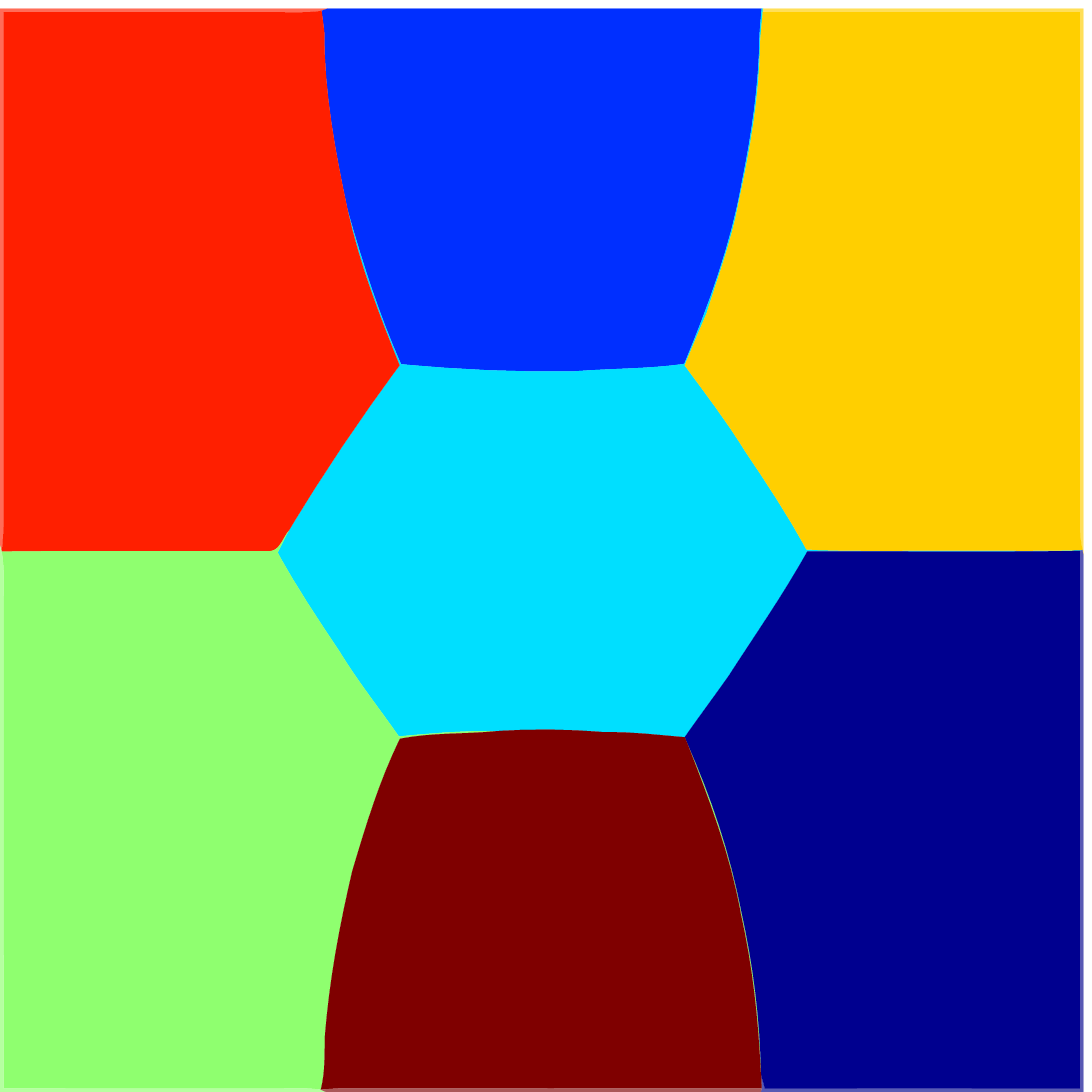} 
  ~
  \includegraphics[width=0.15\textwidth, angle = 90,origin=c]{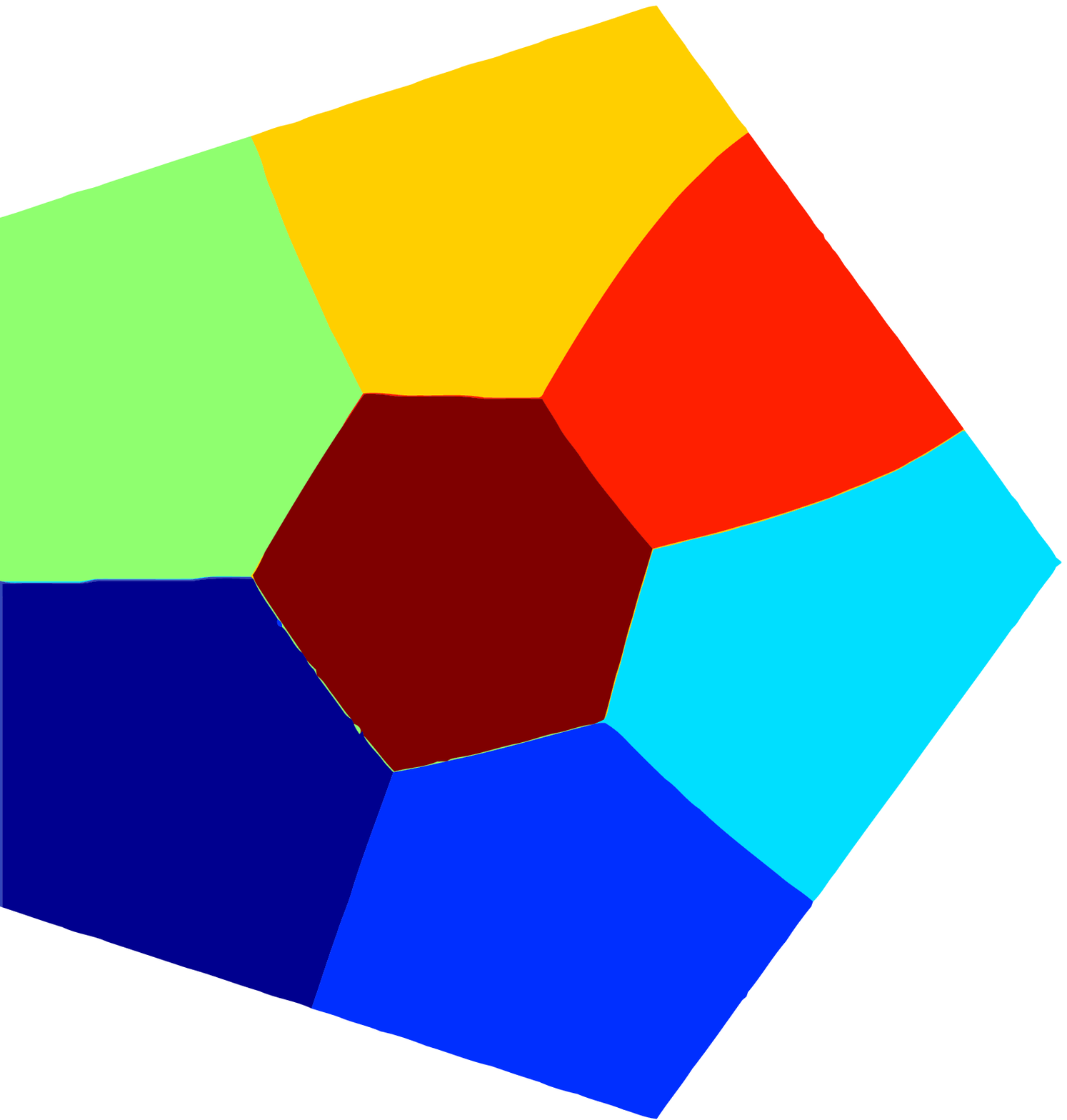}
  ~
  \includegraphics[width=0.15\textwidth]{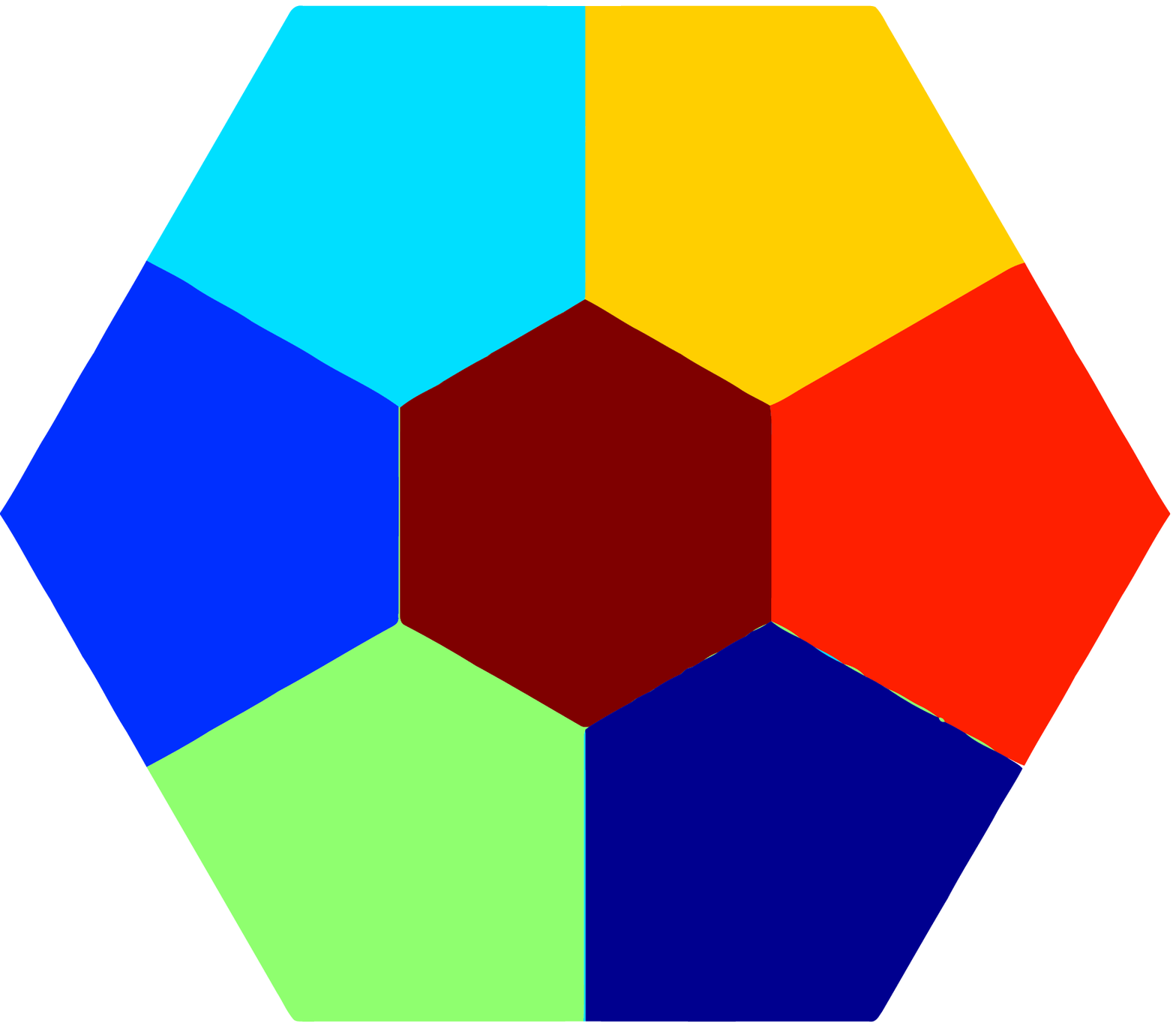}
  ~
  \includegraphics[width=0.15\textwidth]{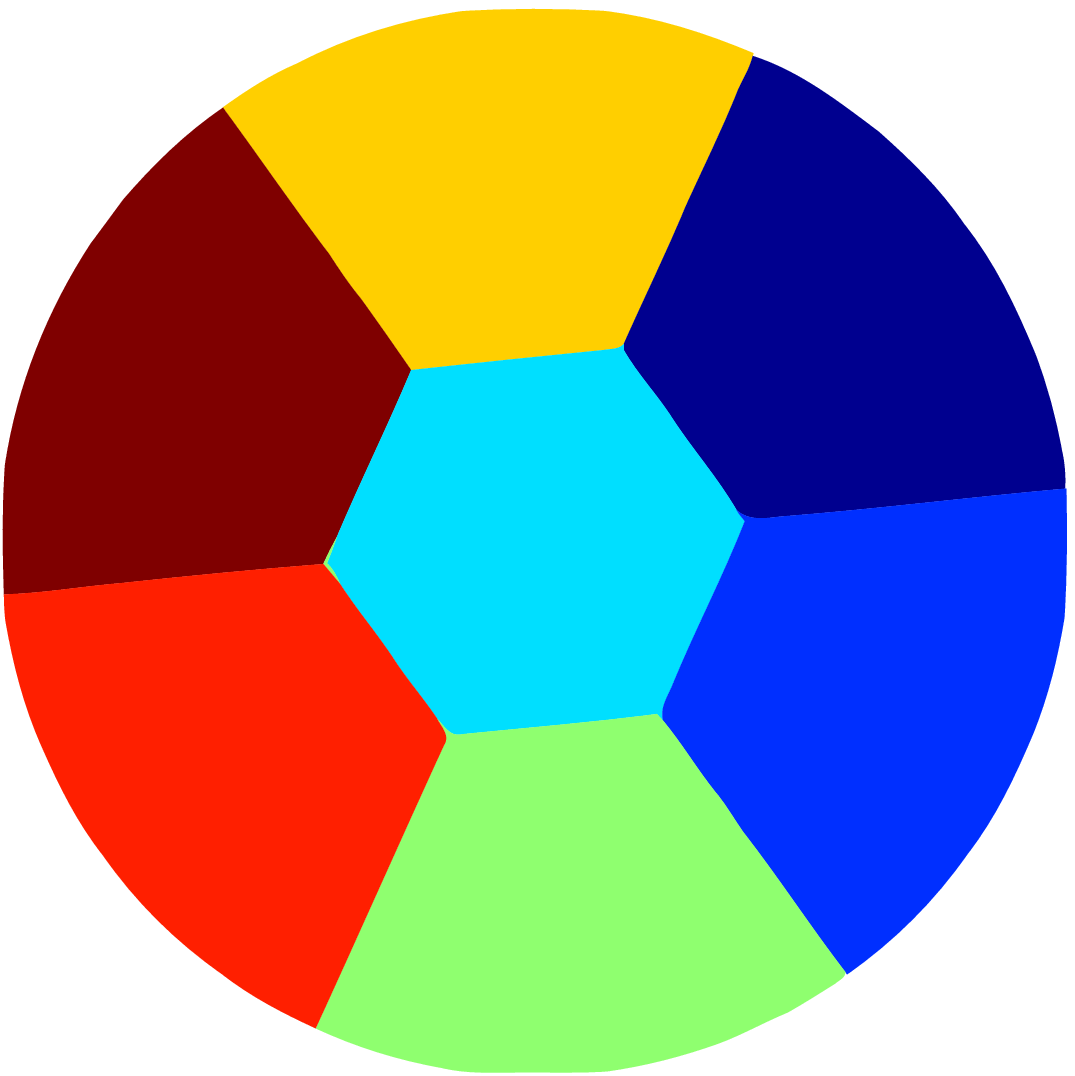}\\
  \includegraphics[width=0.15\textwidth]{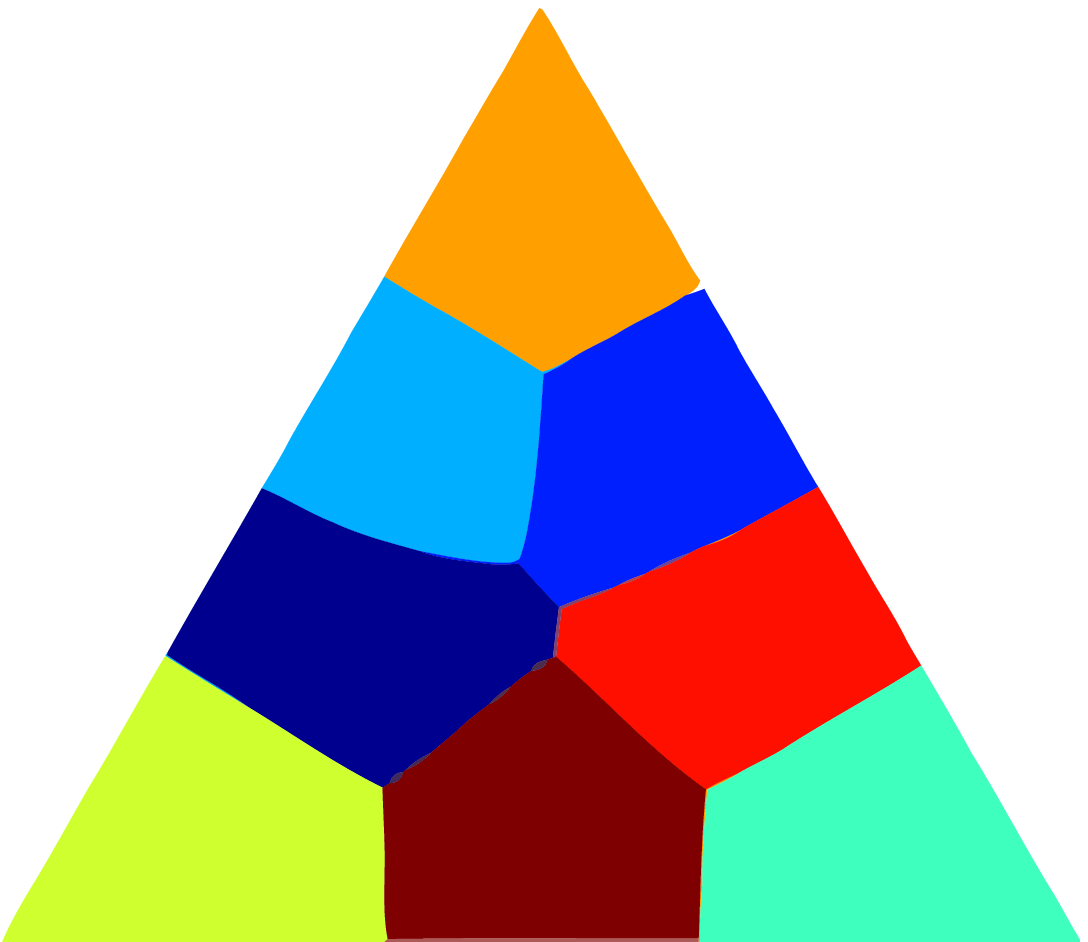}
  ~
  \includegraphics[width=0.15\textwidth]{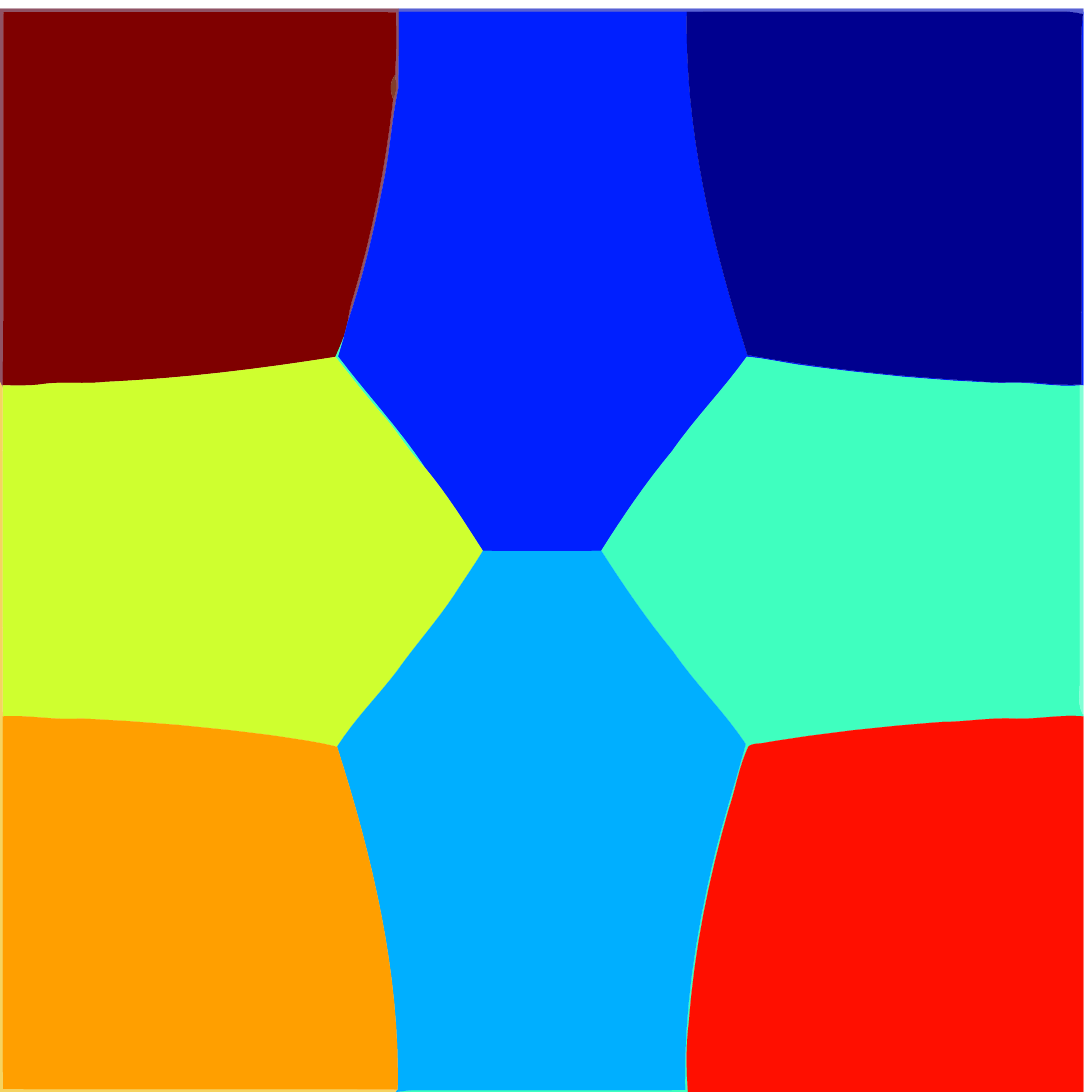} 
  ~
  \includegraphics[width=0.15\textwidth, angle = 90,origin=c]{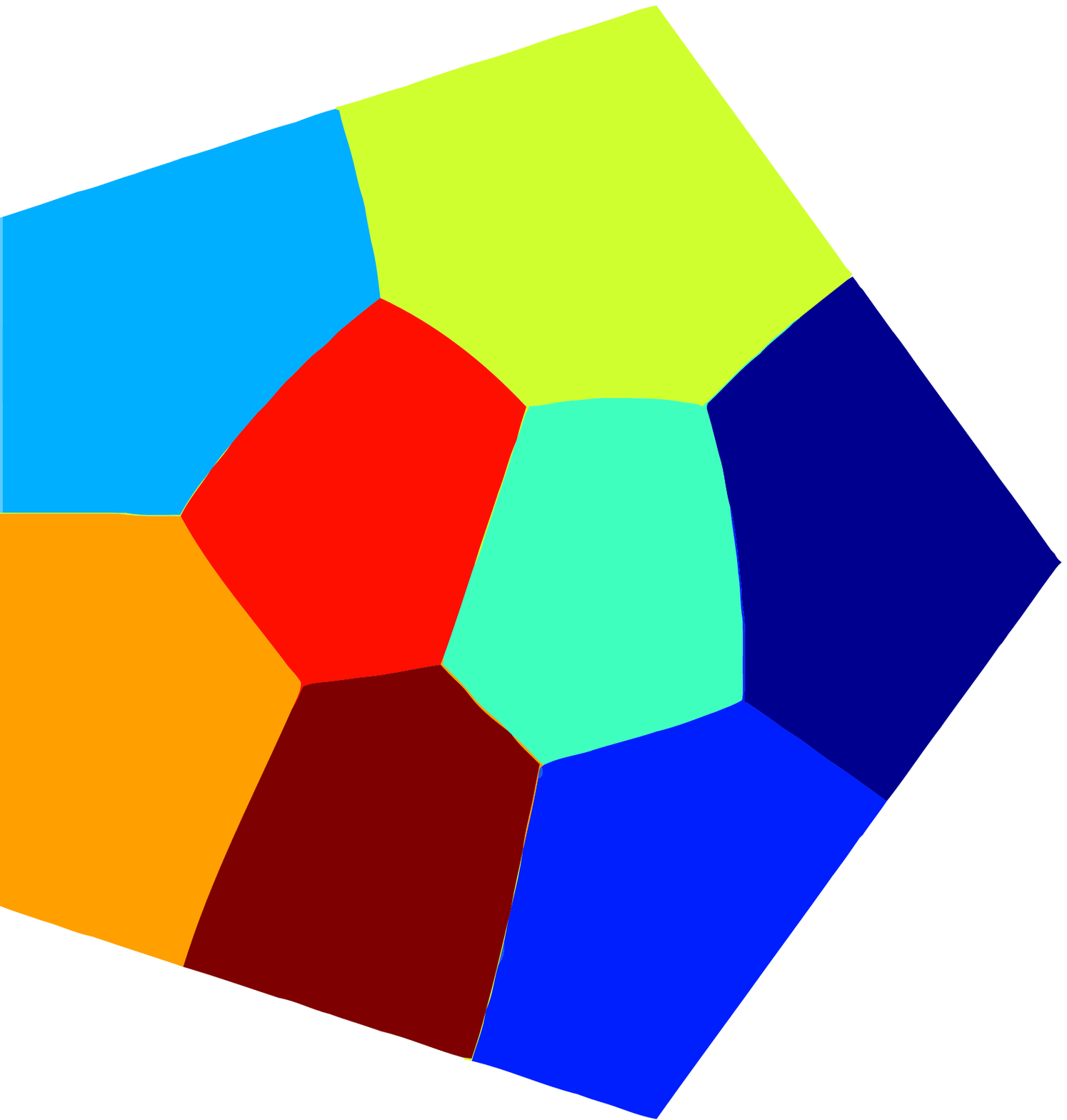}
  ~
  \includegraphics[width=0.15\textwidth]{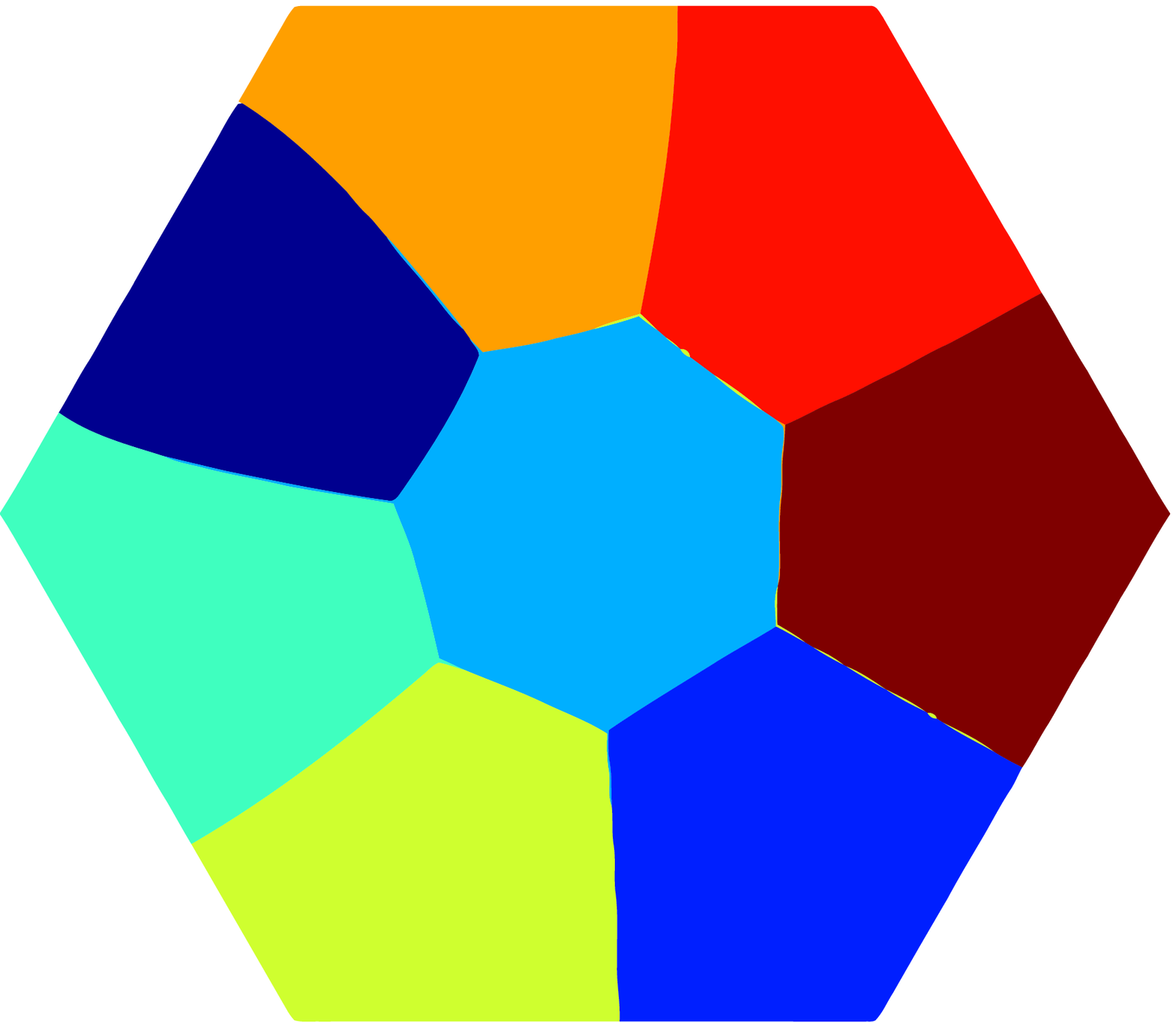}
  ~
  \includegraphics[width=0.15\textwidth]{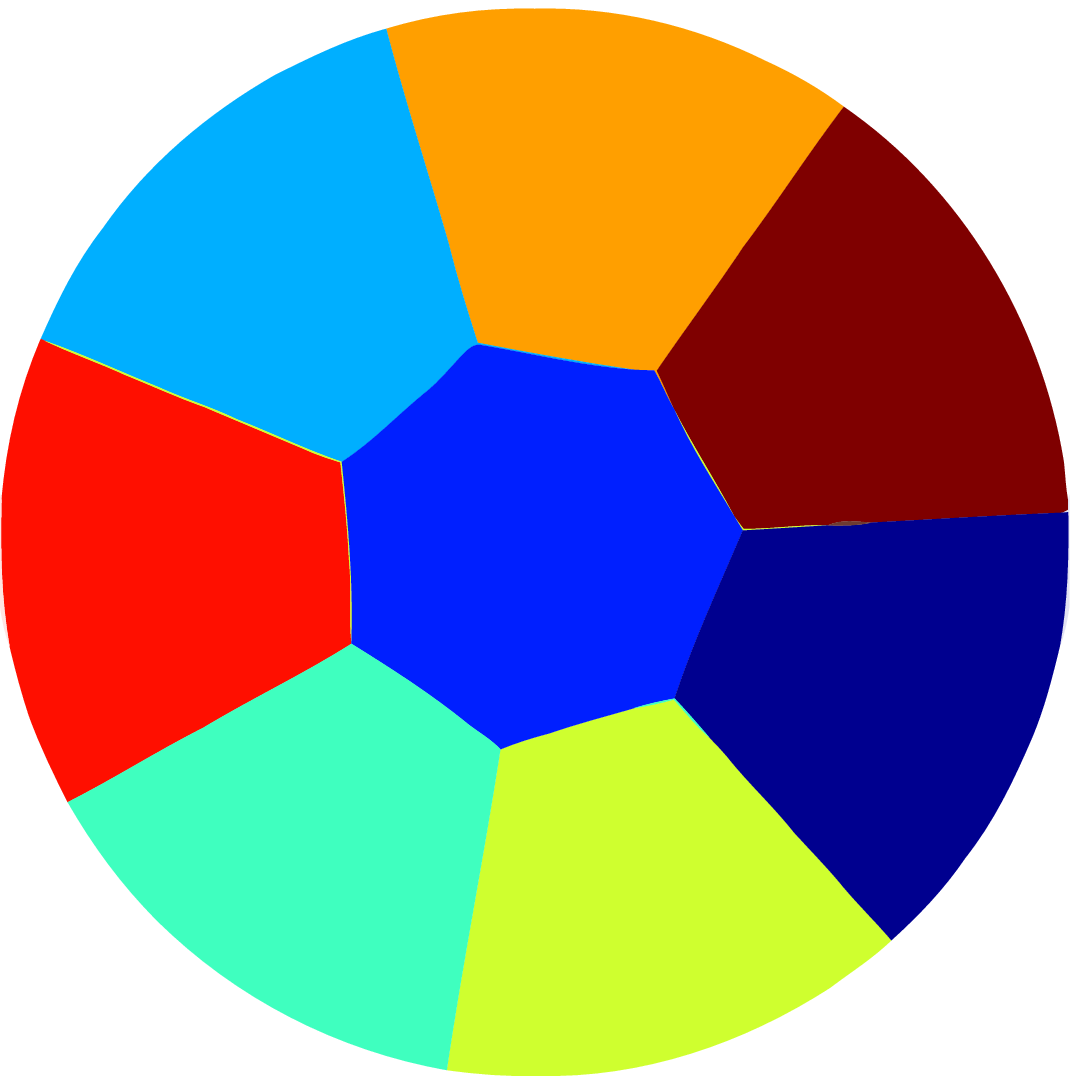}\\
  \includegraphics[width=0.15\textwidth]{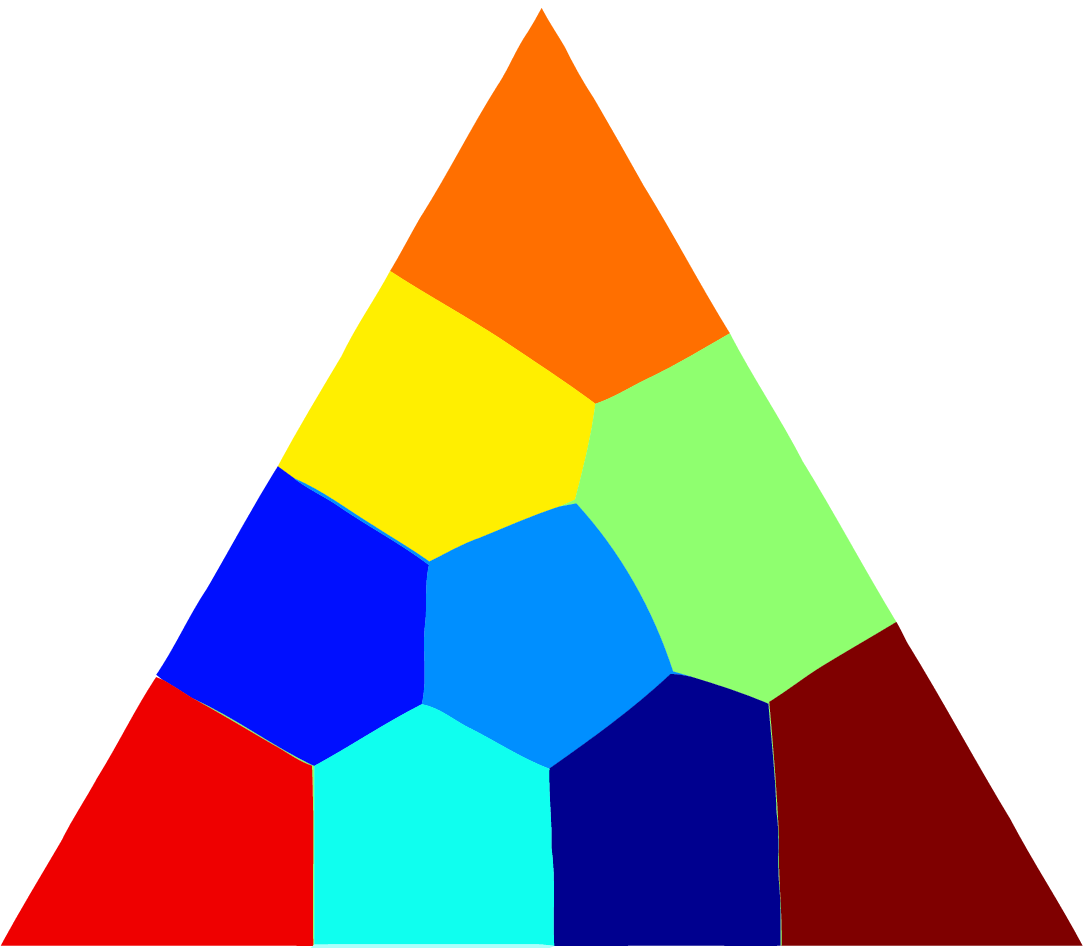}
  ~
  \includegraphics[width=0.15\textwidth]{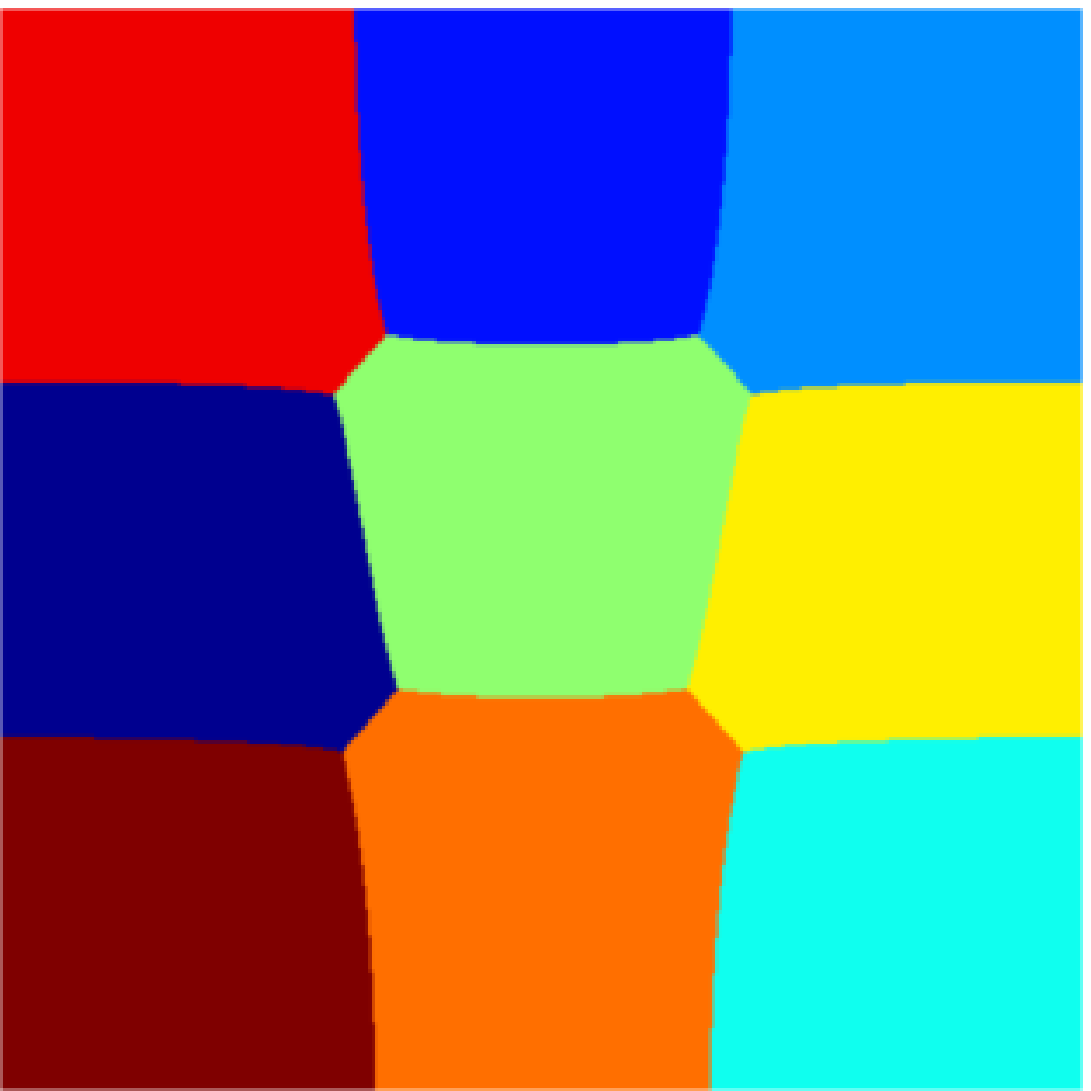} 
  ~
  \includegraphics[width=0.15\textwidth, angle = 90,origin=c]{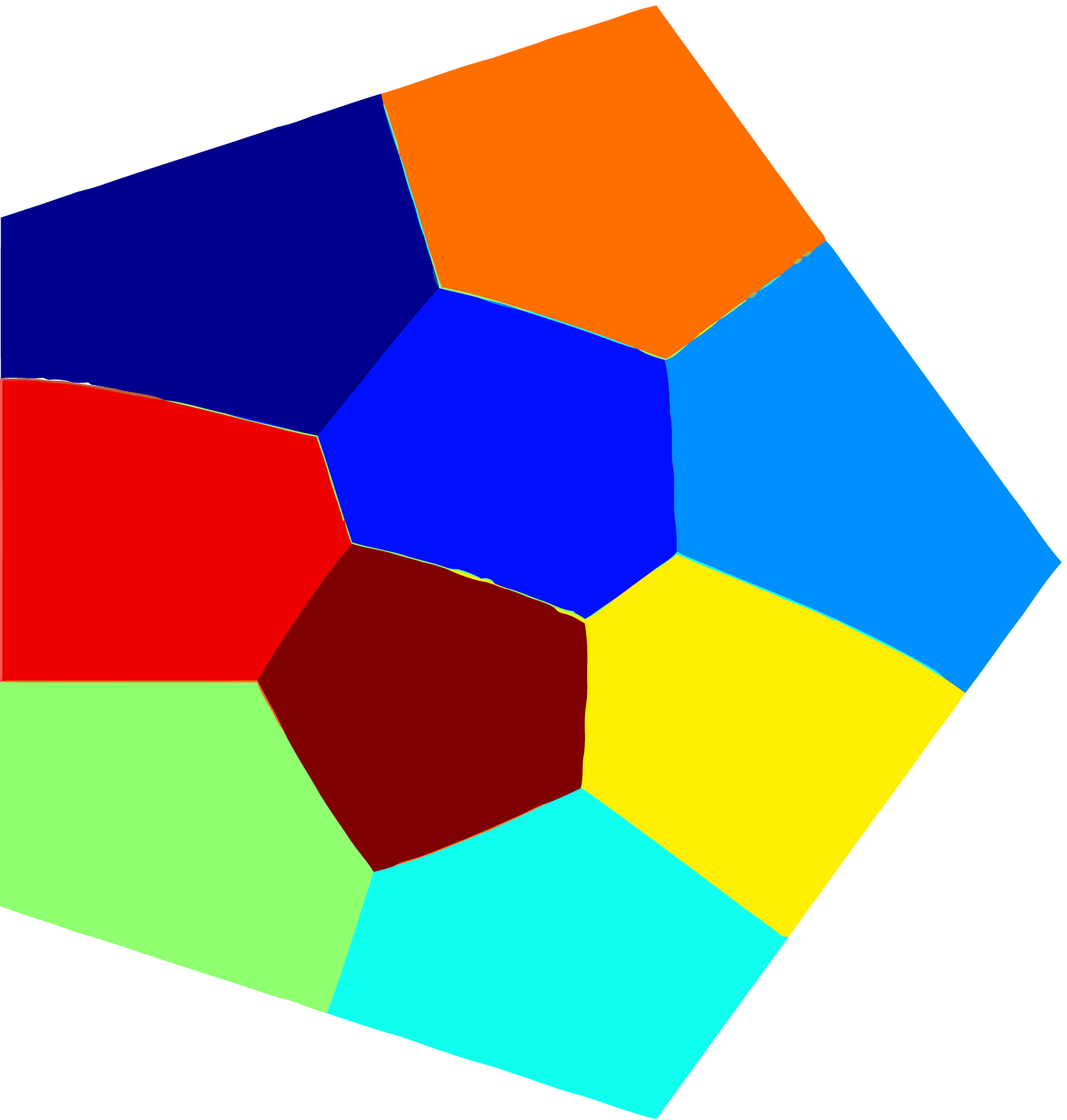}
  ~
  \includegraphics[width=0.15\textwidth]{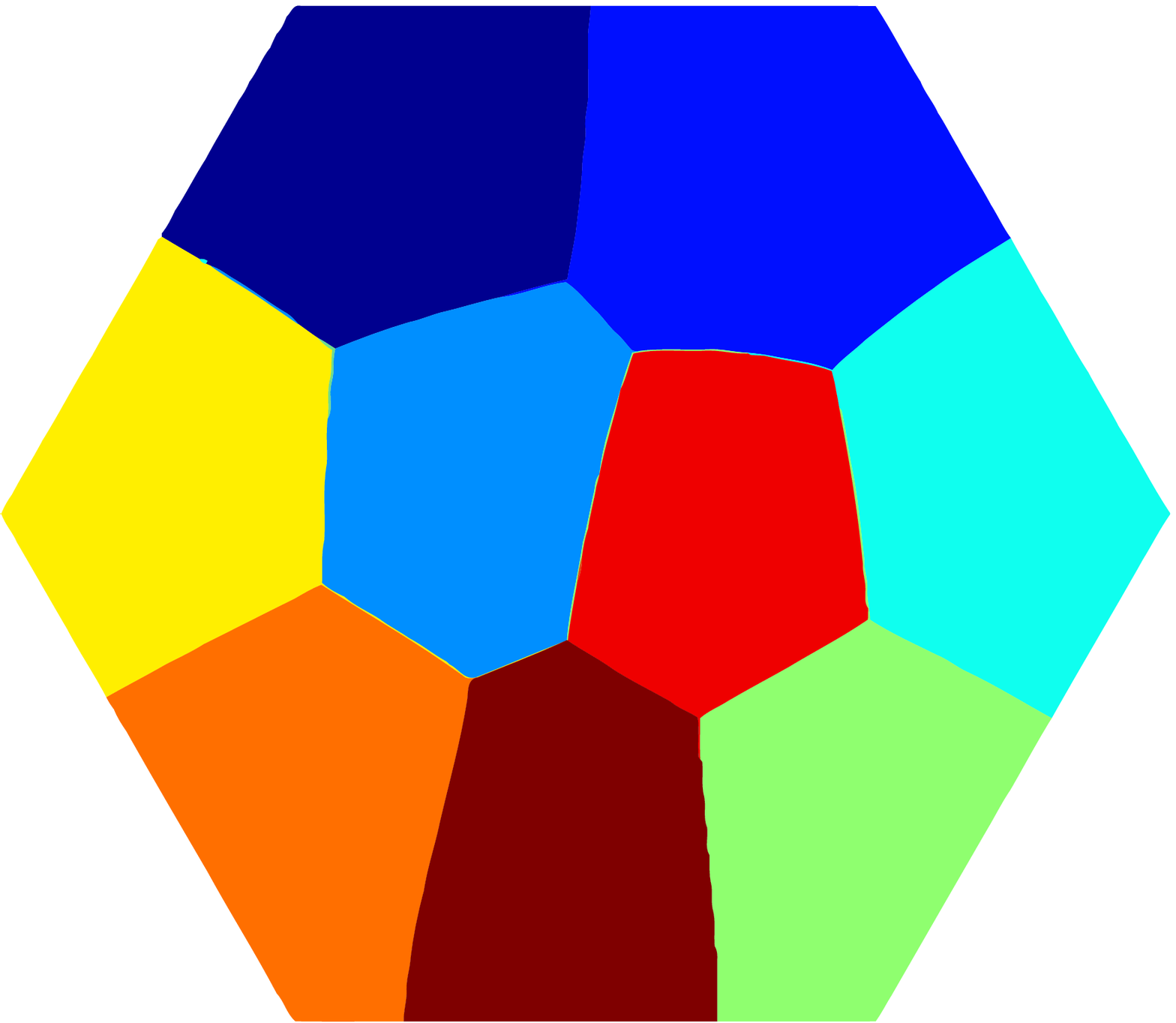}
  ~
  \includegraphics[width=0.15\textwidth]{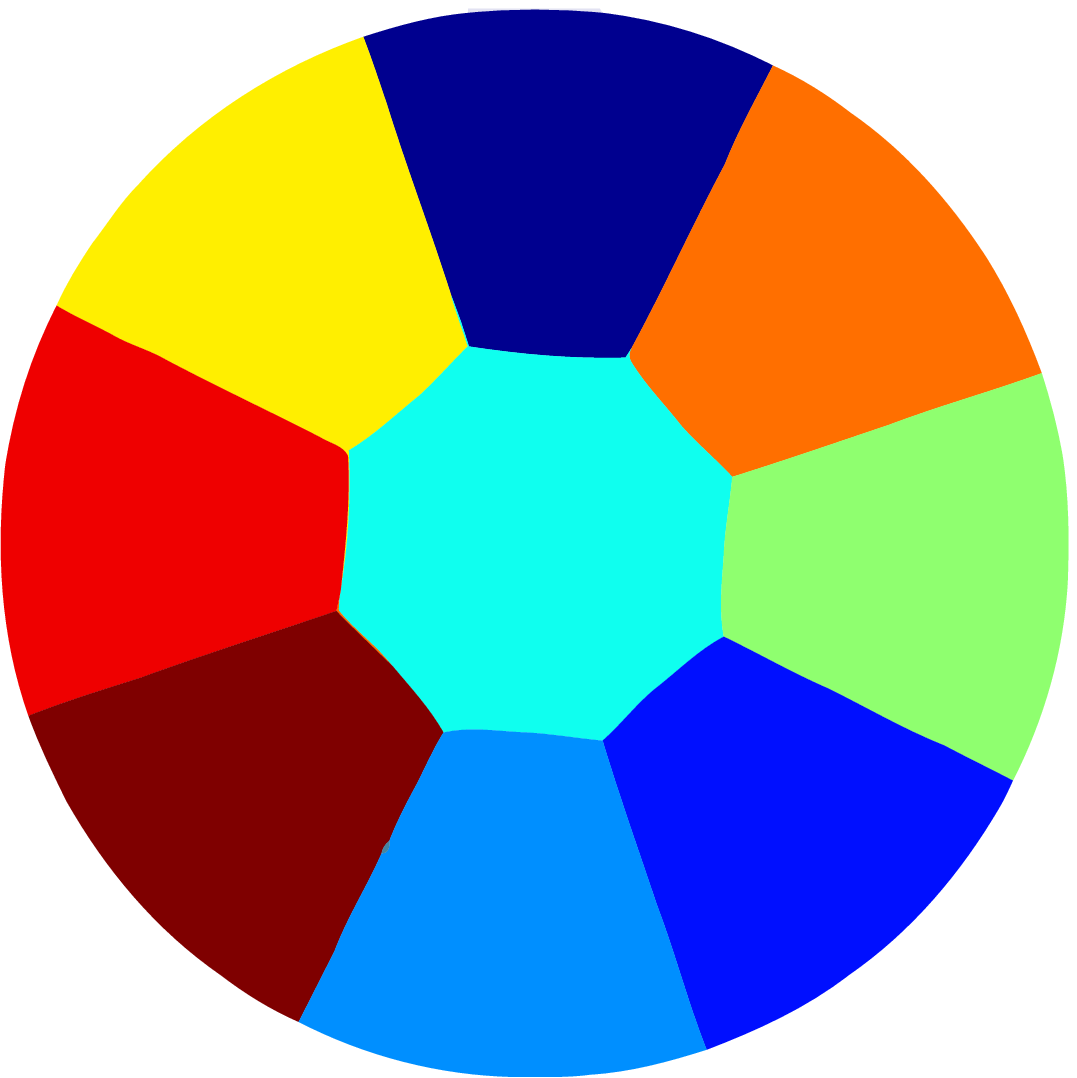}\\
  \includegraphics[width=0.15\textwidth]{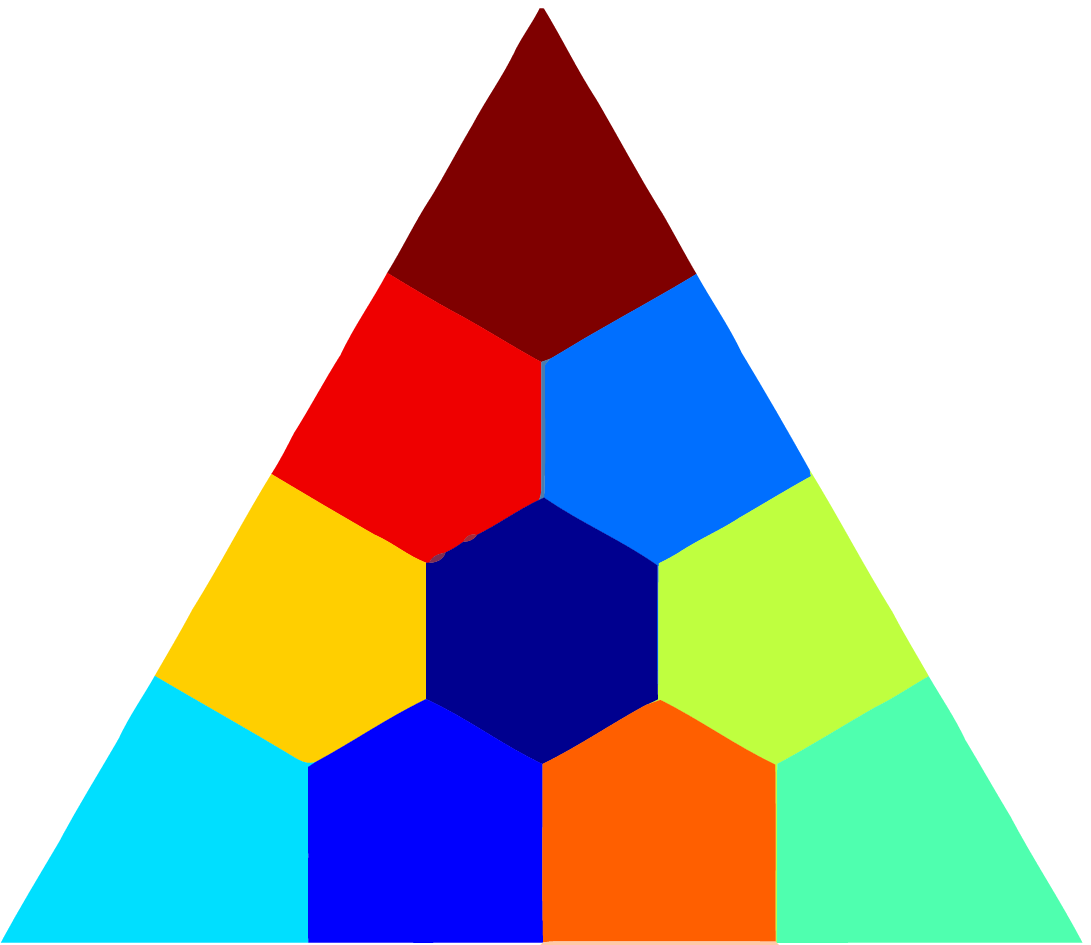}
  ~
  \includegraphics[width=0.15\textwidth]{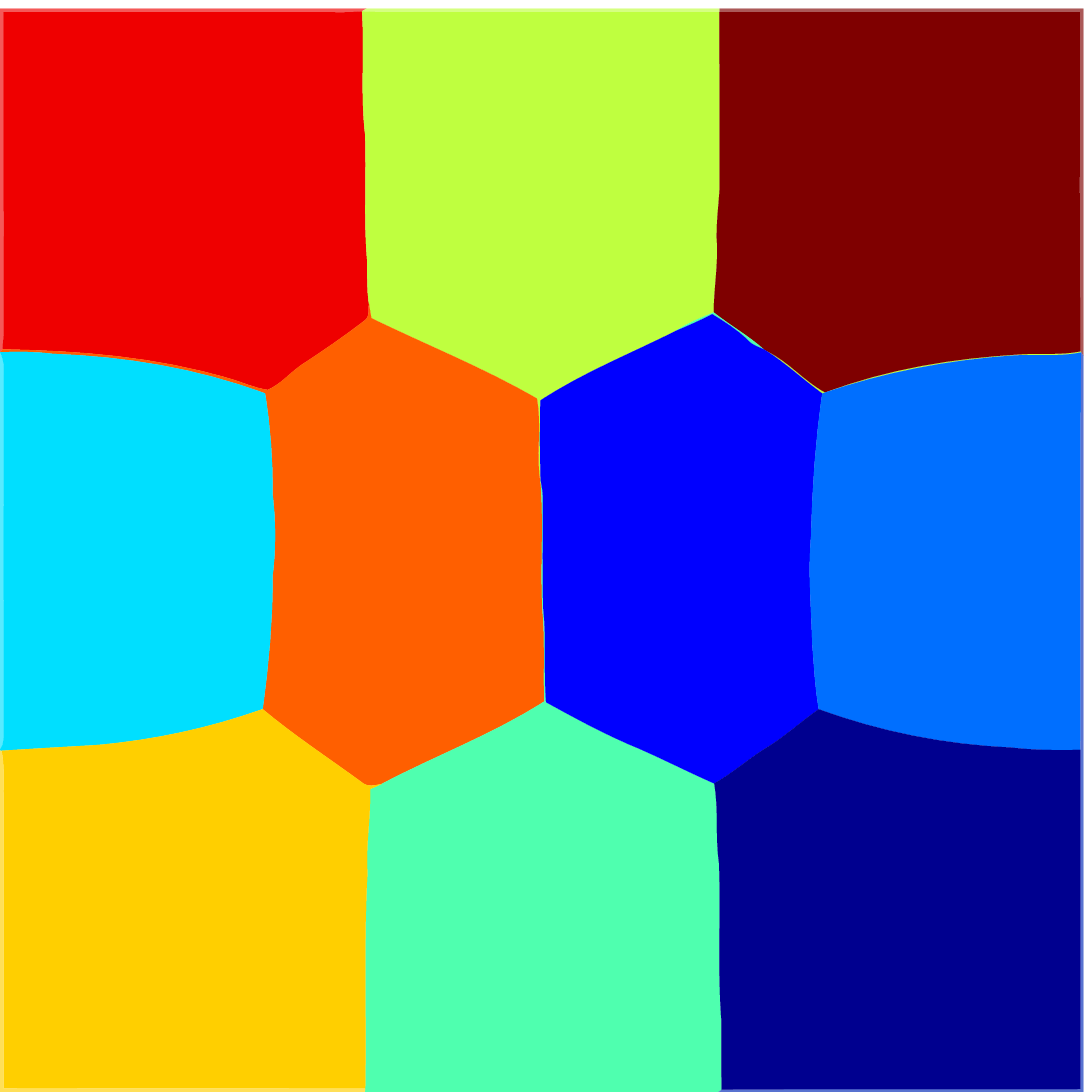} 
  ~
  \includegraphics[width=0.15\textwidth, angle = 90,origin=c]{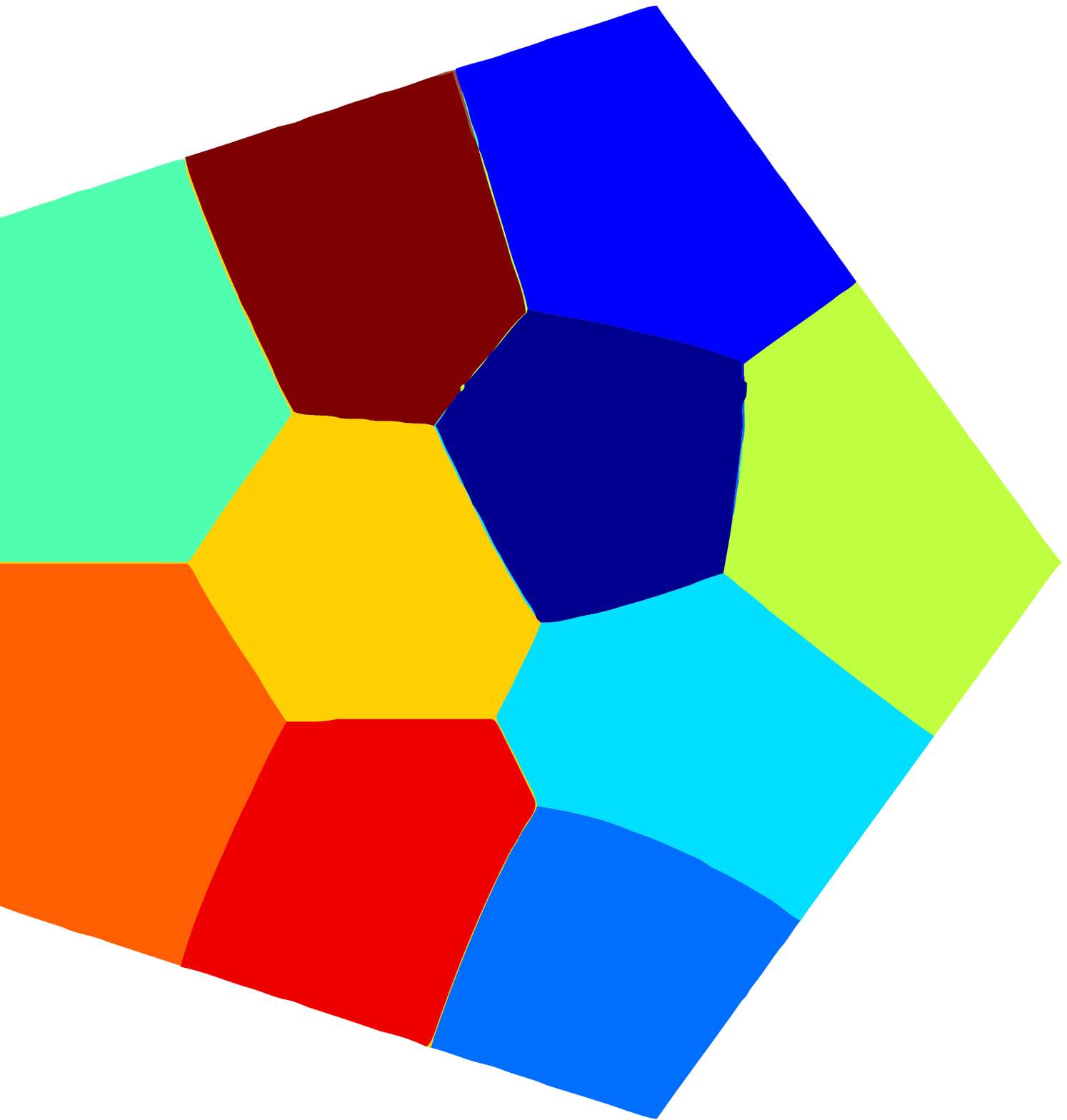}
  ~
  \includegraphics[width=0.15\textwidth]{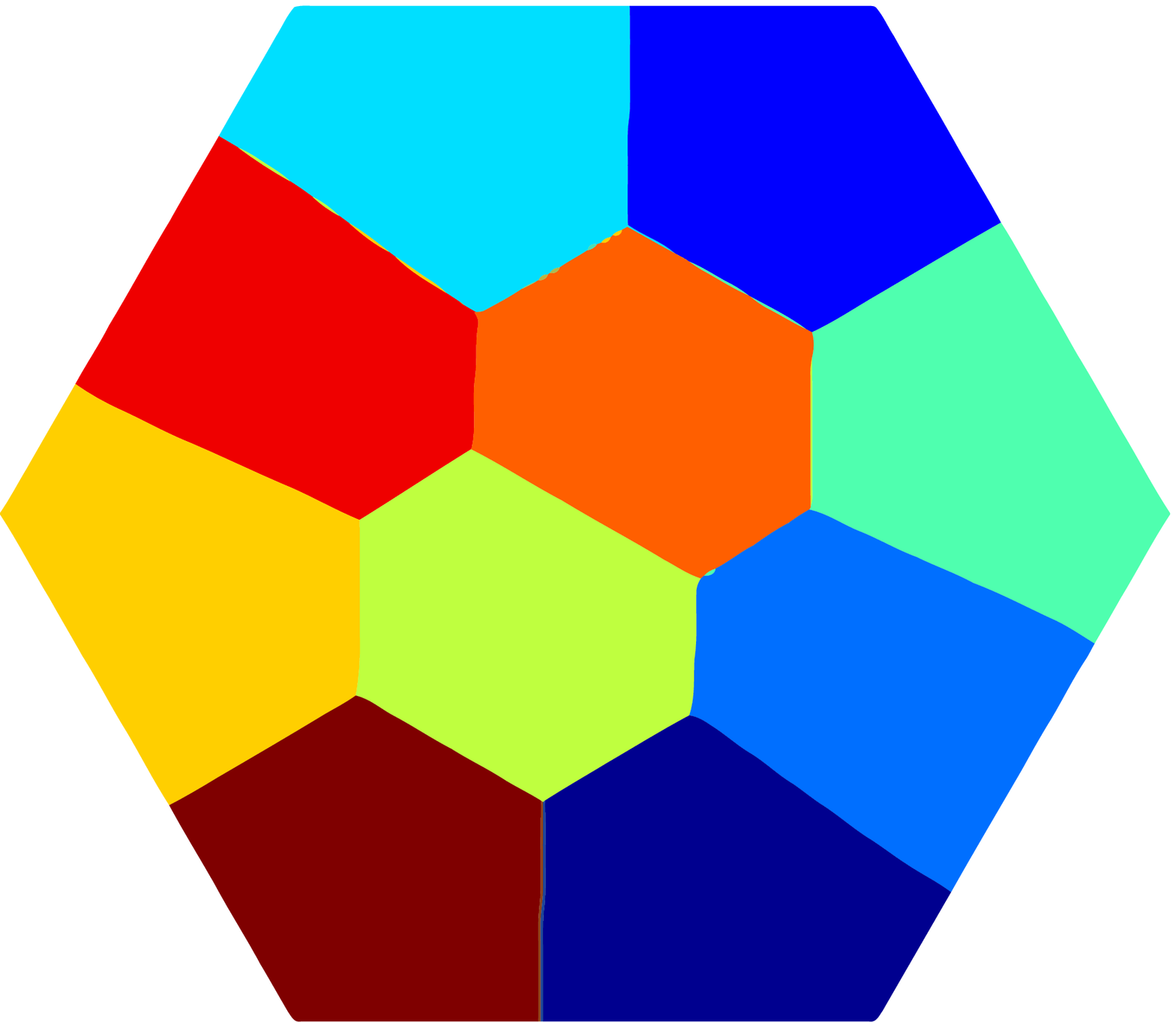}
  ~
  \includegraphics[width=0.15\textwidth]{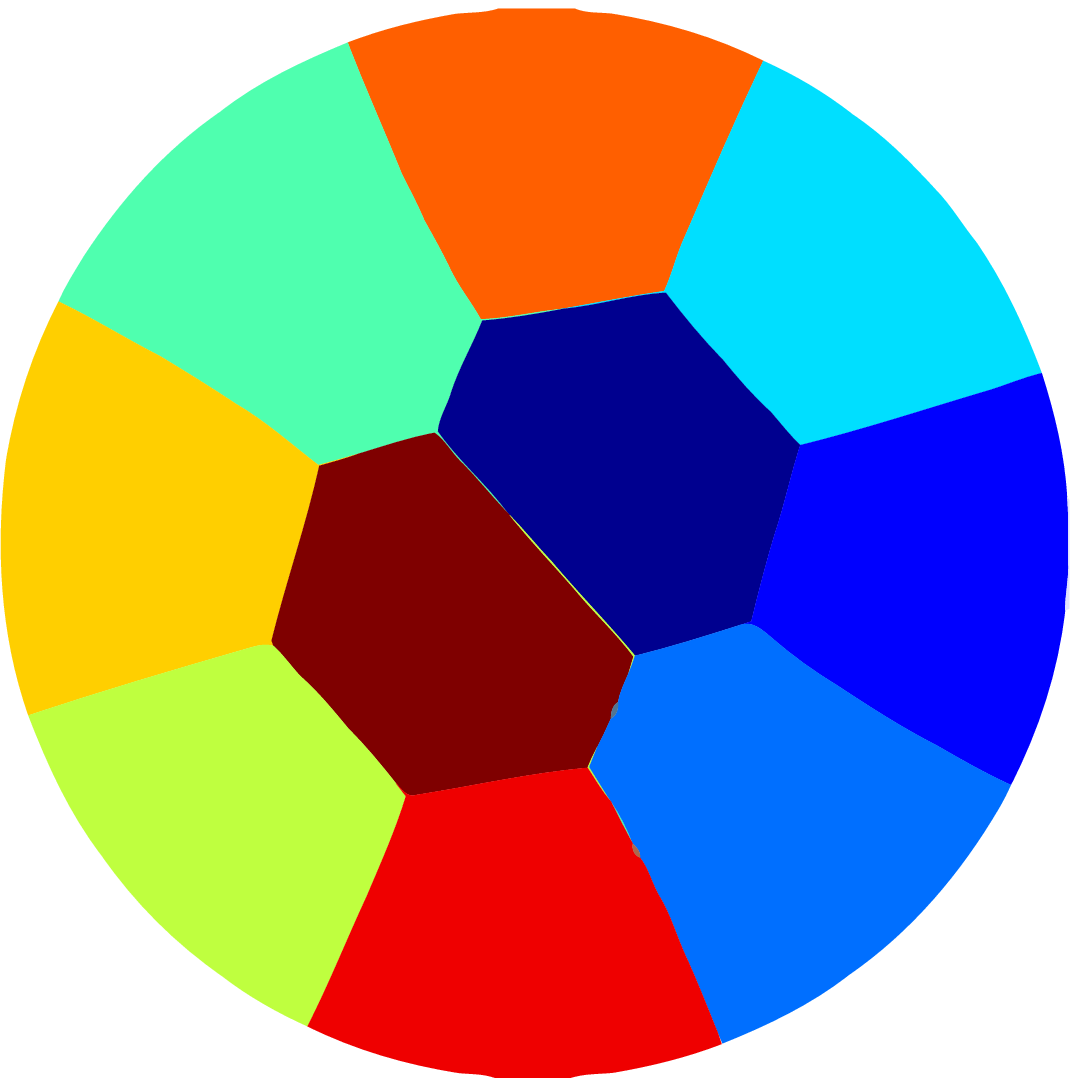}
  \caption{Optimal partitions ($\alpha=0$) for some generic domains $D$}
\label{non-rectangular}
\end{figure}

\bibliography{../master,master,revision_SINUM.bbl}
\bibliographystyle{abbrv}

\end{document}